\documentclass[12pt, a4paper]{amsart}

\usepackage{amsmath}
\usepackage{geometry} 
\usepackage{amsthm,graphics,tabularx,amssymb,shapepar, eucal,braket}
\usepackage{latexsym,amsfonts,amssymb,amsmath,amsxtra,bbm,color,mathrsfs}
\usepackage{amscd}
\usepackage{mathdots}
\usepackage{stackengine}
\stackMath

\allowdisplaybreaks[4]

\usepackage{hyperref}
\usepackage{pdfsync}
\usepackage[all]{xypic}

\usepackage{verbatim}

\newcommand{\BA}{{\mathbb {A}}}

\newcommand{\BC}{{\mathbb {C}}}

\newcommand{\BQ}{{\mathbb {Q}}}

\newcommand{\BZ}{{\mathbb {Z}}}

\newcommand{\CA}{{\mathcal {A}}}

\newcommand{\CE}{{\mathcal {E}}}
\newcommand{\CF}{{\mathcal {F}}}
\newcommand{\CG}{{\mathcal {G}}}

\newcommand{\CO}{{\mathcal {O}}}

\newcommand{\CR}{{\mathcal {R}}}

\newcommand{\CW}{{\mathcal {W}}}
\newcommand{\CX}{{\mathcal {X}}}

\newcommand{\RE}{{\mathrm {E}}}

\newcommand{\RG}{{\mathrm {G}}}
\newcommand{\RH}{{\mathrm {H}}}
\newcommand{\RI}{{\mathrm {I}}}

\newcommand{\RM}{{\mathrm {M}}}
\newcommand{\RN}{{\mathrm {N}}}
\newcommand{\RO}{{\mathrm {O}}}
\newcommand{\RP}{{\mathrm {P}}}

\newcommand{\RR}{{\mathrm {R}}}

\newcommand{\RU}{{\mathrm {U}}}

\newcommand{\RZ}{{\mathrm {Z}}}

\newcommand{\Aut}{{\mathrm{Aut}}}

\newcommand{\GL}{{\mathrm{GL}}}

\newcommand{\Hom}{{\mathrm{Hom}}}

\newcommand{\Ind}{{\mathrm{Ind}}}

\renewcommand{\Re}{{\mathrm{Re}}}

\newcommand{\rk}{{\mathrm{k}}}

\newcommand{\diag}{\operatorname{diag}}

\newcommand{\sgn}{\operatorname{sgn}}

\newcommand{\od}{\operatorname{d}}

\newcommand{\oL}{\operatorname{L}}

\newcommand{\oZ}{\operatorname{Z}}

\newcommand{\g}{\mathfrak g}

\renewcommand{\k}{\mathfrak k}

\newcommand{\p}{\mathfrak p}

\renewcommand{\a}{\mathfrak a}
\renewcommand{\b}{\mathfrak b}

\newcommand{\n}{\mathfrak n}

\renewcommand{\l}{\mathfrak l}

\newcommand{\s}{\mathfrak s}

\renewcommand{\rk}{\mathrm k}

\newcommand{\Z}{\mathbb{Z}}
\newcommand{\C}{\mathbb{C}}
\newcommand{\R}{\mathbb R}

\newcommand{\K}{\mathbb{K}}

\newcommand{\A}{\mathbb{A}}

\newcommand{\abs}[1]{\lvert#1\rvert}

\newcommand{\la}{\langle}
\newcommand{\ra}{\rangle}

\newcommand{\be}{\begin {equation}}
\newcommand{\ee}{\end {equation}}
\newcommand{\bee}{\begin {equation*}}
\newcommand{\eee}{\end {equation*}}

\newcommand{\cf}{\emph{cf.}~}

\theoremstyle{Theorem}

\theoremstyle{Theorem}

\newtheorem*{theoremA'}{Theorem A'}

\theoremstyle{Theorem}

\theoremstyle{Theorem}
\newtheorem{prp}{Proposition}[section]
\newtheorem{corp}[prp]{Corollary}
\newtheorem{lemp}[prp]{Lemma}
\newtheorem{thmp}[prp]{Theorem}

\theoremstyle{Plain}

\newtheorem{remarkp}[prp]{Remark}

\theoremstyle{Definition}

\newtheorem{dfnp}[prp]{Definition}

\numberwithin{equation}{section}

\begin{document}

	\title[Whittaker periods]{Betti-Whittaker periods of the contragredient representations for  $\GL(n)$}

	\author[Y. Jin]{Yubo Jin}
	\address{Institute for Advanced Study in Mathematics, Zhejiang University\\
		Hangzhou, 310058, China}\email{yubo.jin@zju.edu.cn}
	
	\author[D. Liu]{Dongwen Liu}
	\address{School of Mathematical Sciences,  Zhejiang University\\
		Hangzhou, 310058, China}\email{maliu@zju.edu.cn}

	\author[B. Sun]{Binyong Sun}
	\address{Institute for Advanced Study in Mathematics and  New Cornerstone Science Laboratory, Zhejiang University\\
		Hangzhou, 310058, China}\email{sunbinyong@zju.edu.cn}

	\subjclass[2020]{11F75, 11F67, 11F70}
	\keywords{Betti-Whittaker period, cohomological representation}

	\maketitle
	\begin{abstract}
   We define Betti–Whittaker periods for a broad class of cohomological automorphic representations and establish a relation between the periods associated with these representations and their contragredients. This extends a result of Shih-Yu Chen for certain cuspidal automorphic representations. 
	\end{abstract}
	
	\tableofcontents
	\section{Introduction and the main result}


Shih-Yu Chen proved in \cite[Theorem~1.1]{C24} a period relation between the (bottom-degree) Betti--Whittaker periods attached to a cohomological cuspidal automorphic representation of $\GL_n$ and those of its contragredient, under certain regularity assumptions. The main goal of the present article is to generalize this result by removing those assumptions and extending it to a larger class of automorphic representations. Such a generalization is essential for applying the functional equation to study the rationality of special values of certain $L$-functions (see \cite{JLS26}).


  \subsection{Generic cohomological representations}\label{seccoh}

Let $\rk$ be a number field, and write $\A=\rk_\infty\times \A_\mathrm f$ for its adèle ring, where $\rk_\infty:=\rk\otimes_\BQ \R$ is the infinite part and $\A_\mathrm f$ is the finite part. 
 Fix a positive integer $n$, and let $\mathrm G$ denote the algebraic group $\GL_n$ defined over $\rk$. 

  Let $\CF$ denote the coefficient space defined in \eqref{defcf}. It is a locally algebraic representation of $\RG(\rk\otimes_\BQ \C)$ in which every irreducible algebraic representation  occurs with multiplicity one.
  
 As usual, an irreducible Casselman-Wallach representation $\pi$ of $\RG(\rk_\infty)$ is said to be cohomological if there exists an irreducible subrepresentation $F_\pi\subset \CF$ such that  the total continuous cohomology (see \cite{Clo, HM62, VZ84})
\be\label{totalcoh0}
\RH^*_{\rm ct}(\RG(\rk_\infty)^0; F_\pi\otimes \pi)\neq \{0\}. 
\ee
Here and henceforth, a superscript `$0$' over a Lie group indicates its identity connected component.
We call the subrepresentation $F_\pi$ the coefficient system of $\pi$. The determinant homomorphism yields an identification $\pi_0(\rk_\infty^\times)=\pi_0(\RG(\rk_\infty))$ ($\pi_0$ indicates the connected component group). Via this identification, the space in \eqref{totalcoh0}  is naturally  a representation of $\pi_0(\rk_\infty^\times)$. 
Fix a quadratic character $\varepsilon: \pi_0(\rk_\infty^\times) \rightarrow \C^\times$. 

Let $\CW_\infty$ be the Archimedean Whittaker space defined in \eqref{defarwh}. Let  $\CR^{\mathrm{coge},\infty}$ denote the set of all cohomological irreducible subrepresentations $\pi$ of $\CW_\infty$. The representations in $\CR^{\mathrm{coge},\infty}$ are well understood by Proposition \ref{class00} (see also \cite{BW, C26, Clo}).
Denote by $\CR_\varepsilon^{\mathrm{coge},\infty}$ the subset of all such $\pi$ for which $\varepsilon$ occurs in the space \eqref{totalcoh0}. 

Write 
\[
  b:=r_1\cdot \left\lfloor\frac{n^2}{4}\right\rfloor+r_2\cdot  \frac{n(n-1)}{2},
  \]
  where $r_1$ and $r_2$ denote the numbers of real places and complex places of $\rk$, respectively. 
For every $\pi\in \CR^{\mathrm{coge},\infty}$, denote by 
\[
  \RH^b(\pi):=\RH^b_{\rm ct}(\RG(\rk_\infty)^0; F_\pi\otimes \pi),
\]
which is the cohomology space of minimal non-vanishing degree. 
Denote by $\RH_\varepsilon^b(\pi)\subset \RH^b(\pi)$ the  $\varepsilon$-isotypic subspace (similar notation with a subscript `$\varepsilon$' will be used for other cohomology spaces without explanation). 
This space is one-dimensional if  $\pi\in \CR_\varepsilon^{\mathrm{coge},\infty}$, and otherwise it is zero. 

As a consequence of Delorme’s lemma (\cite[Theorem III.3.3]{BW}), we know that the map
\[
  \CR_\varepsilon^{\mathrm{coge},\infty}\rightarrow \{\textrm{irreducible subrepresentations of $\CF$}\}, \quad \pi\mapsto F_\pi
\]
is bijective. In particular, there exists a unique representation $\pi_\varepsilon\in \CR_\varepsilon^{\mathrm{coge},\infty}$ with trivial coefficient system. Fix a generator  
\[
0\neq \kappa_\varepsilon\in  \RH_\varepsilon^b(\pi_\varepsilon).
\]
For every $\pi\in \CR_\varepsilon^{\mathrm{coge},\infty}$, by using  Zuckerman-Jantzen translations we have a canonical isomorphism (see Lemma \ref{iotapi})
\[
\iota_\pi: \RH_\varepsilon^b(\pi_\varepsilon)\xrightarrow{\sim} \RH_\varepsilon^b(\pi).
\]
Write 
\[
0\neq \kappa_{\varepsilon,\pi}:=\iota_\pi(\kappa_\varepsilon)\in  \RH_\varepsilon^b(\pi).
\]

\subsection{Cohomological tamely isobaric automorphic representations}

For every reductive linear algebraic group $\RR$ over $\rk$, write \[
\CA(\RR):=\CA(\RR(\rk)\backslash \RR(\A)) \supset \CA_\mathrm{cusp}(\RR):=\CA_\mathrm{cusp}(\RR(\rk)\backslash \RR(\A))
\]
for the space of smooth automorphic forms on $\RR(\rk)\backslash \RR(\A)$ and its subspace of cusp forms, respectively. Both spaces carry canonical topologies and are smooth representations of $\RR(\A)$ (see \cite[Section 3.2]{LS} and \cite{G23}). For every irreducible subrepresentation $\Pi'$ of $\CA_\mathrm{cusp}(\GL_{n'})$ ($n'\geq 1$), its exponent $\mathrm{ep}(\Pi')\in \R$ is defined by requiring that    \be\label{defep}
\textrm{$\Pi'\otimes \abs{\det}_\A^{-\mathrm{ep}(\Pi')}\ $ is unitarizable.}
\ee 
Here and as usual, $\abs{\,\cdot\,}_\A: \A\rightarrow \R$ is the absolute value map.

 Fix a partition 
    \[
     n=n_1+n_2+\dots+n_r \qquad ( r, n_1,n_2,\dots\, n_r\geq 1).
    \]
 When confusion is not possible, we will not distinguish an irreducible representation from its isomorphism class. Let  
\[
\Pi:=\Sigma_{1}\boxplus  \Sigma_{2}\boxplus  \dots \boxplus \Sigma_{r} 
\]
be an isobaric automorphic representation of $\GL_n(\A)$ so that (see Section \ref{secisob})
\[
\Pi_v\cong \Sigma_{1,v}\boxplus  \Sigma_{2,v}\boxplus  \dots \boxplus \Sigma_{r,v}\quad \textrm{for all local places $v$ of $\rk$,}
\]
where  $\Sigma_i$ ($1\leq i\leq r$) is an irreducible cuspidal smooth automorphic representation of $\GL_{n_i}(\A)$. 
Here and henceforth, a subscript `$v$' indicates the local component at $v$. Suppose that  the exponents satisfy
		\[
		\mathrm{ep}(\Sigma_1)\geq\mathrm{ep}(\Sigma_2)\geq\dots\geq\mathrm{ep}(\Sigma_r).
		\]

Write 
    \[
     \RM=\GL_{n_1}\times \GL_{n_2}\times \dots \times \GL_{n_r}, 
     \]
    which is viewed as a Levi subgroup of $\RG$ as usual. Denote by $\RP$ the block upper-triangular parabolic subgroup of $\RG$ with Levi factor $\RM$. 
 Set 
 \[
 \Sigma=\Sigma_1\widehat{\otimes} \Sigma_2\widehat{\otimes} \dots\widehat{\otimes} \Sigma_r\subset\CA_{\mathrm{cusp}} (\RM),
 \]
where    $\widehat{\otimes} $ indicates the completed inductive tensor product (see \cite[Definition 43.5]{Tr}). Define the standard representation attached to $\Pi$ to be 
\[
 \mathrm I(\Pi):=\Ind_{\RP(\A)}^{\RG(\A)}\Sigma.
\]
Here and henceforth, `$\Ind$' indicates normalized smooth induction. Let $\rk_v$ denote  the local field  associated with a local place $v$ of $\rk$.  
Write
\[
\mathrm I(\Pi)=\mathrm I_\infty(\Pi)\otimes \mathrm I_\mathrm f(\Pi)=\widehat \otimes' \RI_v(\Pi),
\]
where 
\[
\mathrm I_\infty (\Pi):=\Ind_{\RP(\rk_\infty)}^{\RG(\rk_\infty)} \Sigma_\infty\quad 
\mathrm I_\mathrm f (\Pi):=\Ind_{\RP(\A_\mathrm f)}^{\RG(\A_\mathrm f)}\Sigma_\mathrm f , \quad \textrm{and}\quad \mathrm I_v (\Pi):=\Ind_{\RP(\rk_v)}^{\RG(\rk_v)}\Sigma_v. 
\]
Here $\Sigma_\infty$ and $\Sigma_\mathrm f$ respectively denote the infinite part and the finite part of $\Sigma$ (similar notation will be used without further explanation). 
By Theorem  \ref{prop:std}, $\RI_v(\Pi)$ is isomorphic to the standard module whose Langlands quotient is isomorphic to $\Pi_v$. 

\begin{dfnp}\label{def:isobaric}
	An automorphic representation  of $\RG(\A)$ is said to be  tamely isobaric if it is an isobaric automorphic representation $\Pi$ as above such that for all distinct $i,j\in \{1,2,\dots, r\}$ with $n_i=n_j$,  the following (to be called the tame condition) is satisfied: 
         \be\label{deftame0}
         \Sigma_i\neq  \Sigma_j\quad \textrm{and}\quad   \Sigma_i\neq \Sigma_j\otimes \abs{\det}_\A.
     \ee
	\end{dfnp}
    We call the first inequality in \eqref{deftame0} the first tame condition, and the second one the second tame condition. 

As usual, a subrepresentation of $\CA(\RG)$ is said to be globally generic if  the   global Whittaker integral is not identically zero on it.
When $\Pi$ is tamely isobaric, Eisenstein series associated to $\RI(\Pi)$ yield a  globally generic subrepresentation $\RI^{\mathrm{aut}}(\Pi)$ of $\CA(\RG)$. It depends only on $\Pi$,  and is isomorphic to $\RI(\Pi)$ (see Proposition \ref{lemti0} and Remark \ref{iaut}). 
 
 Let $\CR^{\mathrm{coti}}$ denote the set of isomorphism classes of tamely isobaric automorphic representations $\Pi$ of $\RG(\A)$ such that 
\be\label{coti}
     \textrm{ $\Pi_\infty$ is generic and cohomological.}
     \ee
Note that $\Pi_\infty$ is generic if and only if    $\RI_\infty(\Pi)$ is irreducible, in which case $\Pi_\infty\cong \RI_\infty(\Pi)$. 
  By Proposition \ref{prpactpi}, we have a natural action $\Aut(\C) \curvearrowright \CR^{\mathrm{coti}}$. Moreover, 
  \cite[Conjecture 3.7]{Clo} holds for every representation $\Pi \in \CR^{\mathrm{coti}}$, namely its stabilizer group is open in $\Aut(\C)$ (see Section \ref{secsmac} for the topology on $\Aut(\C)$).

 Let   $\CR_\varepsilon^{\mathrm{coti}}\subset \CR^{\mathrm{coti}}$ denote the subset of all the representations whose infinite part is isomorphic to a representation in   $\CR_\varepsilon^{\mathrm{coge},\infty}$. This subset is $\Aut(\C)$-stable. 
 
 


\subsection{$\Aut(\C)$-actions}

Let $K_\infty$ denote the standard maximal compact subgroup of $\RG(\rk_\infty)$, which is a product of compact orthogonal groups and compact unitary groups. 
 Define 
  \be \label{CXn}
  \CX:=\RG(\rk)\backslash \RG(\BA)/ K_{\infty}^0.
    \ee
    For every open compact subgroup $K_\mathrm f$ of $\RG(\BA_\mathrm f)$, the locally algebraic representation $\CF$ defines a sheaf on $\CX/K_\mathrm f$, which is still denoted by $\CF$. 
   Define
\be\label{sheafcoh}
\RH^{b}(\CX):= 
\varinjlim_{K_\mathrm f}\RH^{b}(\CX/ K_\mathrm f,\CF),
\ee
where $K_\mathrm f$ runs over the directed system of open compact subgroups of $\RG(\BA_\mathrm f)$, and $\RH^{b}(\CX/ K_\mathrm f,\CF)$ is the usual sheaf cohomology space. 

Suppose that $\Pi$ is in $\CR^{\mathrm{coti}}$. 
Write  $F_\Pi :=F_{\RI_\infty(\Pi)}\subset \CF$ for   the coefficient system of $\RI_\infty(\Pi)\cong \Pi_\infty$. Set 
\[
 \RH^b(\RI^{\mathrm{aut}}(\Pi)):=\RH^b_{\rm ct}(\RG(\rk_\infty)^0; F_\Pi\otimes \RI^{\mathrm{aut}}(\Pi)).
 \]

By realizing the continuous cohomology  as the relative Lie algebra cohomology, and  the sheaf cohomology as de Rham cohomology, the inclusion $F_\Pi\hookrightarrow \CF$ yields a linear map
\[
\iota_\Pi: \RH^b(\RI^{\mathrm{aut}}(\Pi))\rightarrow \RH^{b}(\CX). 
\]
This map is injective (see Lemma \ref{injbetti00}). 

 The usual action  $\Aut(\C)\curvearrowright\CF$ (see \eqref{sigmaf0}) yields an action 
 $\Aut(\C)\curvearrowright \RH^{b}(\CX)$. By Lemma \ref{actbetti}, this  further induces an action 
    \be\label{actbetti0000}
    \Aut(\C)\curvearrowright\bigsqcup_{\Pi\in  \CR_\varepsilon^{\mathrm{coti}} } \RH_\varepsilon^b(\RI^{\mathrm{aut}}(\Pi)).
 \ee

    Let $\CW_\mathrm f$ be the non-Archimedean Whittaker space defined in \eqref{defarwh2}, which carries a well-known action of $\Aut(\C)$ (see Section \ref{secsesq}). This action, together with the action \eqref{actbetti0000}, yields an action 
    \be\label{actbetti001}
\Aut(\C)\curvearrowright\bigsqcup_{\Pi\in  \CR_\varepsilon^{\mathrm{coti}} } \Hom_{\RG(\A_\mathrm f)}(\RH_\varepsilon^b(\RI^{\mathrm{aut}}(\Pi)),\CW_\mathrm f).
    \ee
 Lemma \ref{lem20} implies that the bundle in \eqref{actbetti001} is a line bundle, and by Theorem \ref{smooth2}, the action \eqref{actbetti001} is smooth sesquilinear in the sense of Definition \ref{defsmooth}.


 Write $\RI_\infty^{\mathrm{whi}}(\Pi)\subset \CW_\infty$ for the realization of the representation  $\RI_\infty(\Pi)$ in the Archimedean Whittaker space. Set 
 \[
   \RH^b(\RI_\infty^{\mathrm{whi}}(\Pi)):=\RH^b_{\rm ct}(\RG(\rk_\infty)^0; F_\Pi\otimes \RI_\infty^{\mathrm{whi}}(\Pi)).
 \]
 Whittaker period integrals yield a homomorphism (see Lemma \ref{jmathpi}) 
 \[
  \jmath:  \RI^{\mathrm{aut}}(\Pi)\rightarrow \RI_\infty^{\mathrm{whi}}(\Pi)\otimes \CW_{\rm f}, 
 \]
 which further induces an isomorphism 
 \[
 \jmath: \RH^b_\varepsilon(\RI_\infty^{\mathrm{whi}}(\Pi))^\vee\xrightarrow{\sim} \Hom_{\RG(\A_\mathrm f)}(\RH_\varepsilon^b(\RI^{\mathrm{aut}}(\Pi)),\CW_\mathrm f).
 \]
 Here the superscript `$^\vee$' indicates the dual space. Using these isomorphisms for various $\Pi$, the action \eqref{actbetti001} induces a smooth sesquilinear action 
    \be\label{actbetti005}
\Aut(\C)\curvearrowright\bigsqcup_{\Pi\in  \CR_\varepsilon^{\mathrm{coti}} } \RH^b_\varepsilon(\RI_\infty^{\mathrm{whi}}(\Pi))^\vee.
    \ee

\subsection{Betti-Whittaker periods}

Using Lemma \ref{sectionpi00}, we fix an $\Aut(\C)$-equivariant section
\[
\varpi: \CR_\varepsilon^{\mathrm{coti}}\rightarrow  \bigsqcup_{\Pi\in  \CR_\varepsilon^{\mathrm{coti}} } \left(\RH^b_\varepsilon(\RI_\infty^{\mathrm{whi}}(\Pi))^\vee\setminus \{0\}\right)
\]
of the natural map
\[
\bigsqcup_{\Pi\in  \CR_\varepsilon^{\mathrm{coti}} }  \left(\RH^b_\varepsilon(\RI_\infty^{\mathrm{whi}}(\Pi))^\vee\setminus \{0\}\right)\rightarrow  \CR_\varepsilon^{\mathrm{coti}}.  
\]

Write $\kappa_{\varepsilon,\Pi}:=\kappa_{\varepsilon,\RI^{\mathrm{whi}}_\infty(\Pi)}$ for simplicity. We give the following definition after \cite[Section 3.3, 3.4]{Mah} and \cite{RS08} (see also \cite{H, Hi94} for $n=2$). 

  \begin{dfnp}
    For each $\Pi\in \CR_\varepsilon^{\mathrm{coti}}$, its  (bottom degree) Betti-Whittaker period is defined to be
    \[
   \Omega_\varepsilon(\Pi) := \Omega_\varepsilon(\Pi; \varpi, \kappa_\varepsilon):=\la \varpi(\Pi), \kappa_{\varepsilon,\Pi}\ra\in \C^\times. 
    \]
\end{dfnp}

The set $\CR_\varepsilon^{\mathrm{coti}}$ is stable under taking the contragredient (\cf \eqref{mvwoncoti}).  Let $\psi:\rk\backslash\A\to\C^{\times}$ be a non-trivial character.  By using MVW-involutions (\cite{MVW87}), in Section \ref{sec:mvwarch} we will define a line bundle  involution
\[
\imath_\psi: \bigsqcup_{\Pi\in  \CR_\varepsilon^{\mathrm{coti}} }\RH^b_\varepsilon(\RI_\infty^{\mathrm{whi}}(\Pi))^\vee\rightarrow \bigsqcup_{\Pi\in  \CR_\varepsilon^{\mathrm{coti}} }\RH^b_\varepsilon(\RI_\infty^{\mathrm{whi}}(\Pi))^\vee
\]
over the base set involution 
\[
\CR_\varepsilon^{\mathrm{coti}}\rightarrow\CR_\varepsilon^{\mathrm{coti}}, \qquad \Pi\mapsto \Pi^\vee. 
\]
Here $\Pi^\vee$ denotes the contragredient of $\Pi$.  
By Corollary \ref{mvwkappa}, $\imath_\psi$ is determined by the following property: 
for all $\Pi\in \CR_\varepsilon^{\mathrm{coti}}$, the diagram 
\[
\begin{CD}
\RH^b_\varepsilon(\RI_\infty^{\mathrm{whi}}(\Pi))^\vee@>\imath_\psi >>\RH^b_\varepsilon(\RI_\infty^{\mathrm{whi}}(\Pi^\vee))^\vee\\
    @V \textrm{transpose of $\iota_{\RI_\infty^{\mathrm{whi}}(\Pi)}$}  VV @VV  \textrm{transpose of $\iota_{\RI_\infty^{\mathrm{whi}}(\Pi^\vee)}$}  V\\
\RH^b_\varepsilon(\pi_\varepsilon)^\vee @>\textrm{multiplication by }\delta_n >>\RH^b_\varepsilon(\pi_\varepsilon)^\vee
\end{CD}
\]
commutes. Here the isomorphisms $\iota_{\RI_\infty^{\mathrm{whi}}(\Pi)}$ and $\iota_{\RI_\infty^{\mathrm{whi}}(\Pi^\vee)}$ are defined in \eqref{iotapi2} by using Zuckerman-Jantzen translations, and
\be \label{delta000}
\delta_n : = (-1)^{r_1\cdot\frac{m(m+1)}{2}},\quad \text{where } m:=\left\lfloor \frac{n-1}{2}\right\rfloor
\ee
(recall that  $r_1$ is the number of real places of $\rk$). In particular, the involution $\imath_\psi$ is independent of $\psi$. 

Denote by $\omega_{\Pi}$ the central character of $\Pi$. 
Now we define another section 
\begin{eqnarray*}
\breve \varpi_\psi  :\CR_\varepsilon^{\mathrm{coti}}&\rightarrow  &\bigsqcup_{\Pi\in  \CR_\varepsilon^{\mathrm{coti}} } \left(\RH^b_\varepsilon(\RI_\infty^{\mathrm{whi}}(\Pi))^\vee\setminus \{0\}\right),\\
 \Pi^\vee &\mapsto &\mathscr G(\omega_{\Pi}, \psi)^{1-n}\cdot \imath_\psi(\varpi({\Pi})),
\end{eqnarray*}
where $\mathscr G(\omega_{\Pi}, \psi)$ is the Gauss sum (see \eqref{gausssum}). 

The following is the main result of the present paper, which  generalizes \cite[Theorem 1.1]{C24}.  
As this paper was nearing completion, we became aware that Castellano, Chen, Darshan, and Raghuram (\cite{CCDR}) had recently and independently proved related period relations for Betti–Whittaker periods under duality for cuspidal automorphic representations, together with variants and applications to critical values of L-functions.

\begin{thmp}\label{maintheorem}
The section $\breve \varpi_\psi$ is  $\Aut(\C)$-equivariant, and  
\[
\Omega_\varepsilon( \Pi^\vee; \breve \varpi_\psi, \delta_n\cdot \kappa_\varepsilon)=\mathscr G(\omega_{\Pi}, \psi)^{1-n}\cdot \Omega_\varepsilon(\Pi;\varpi, \kappa_\varepsilon)
\]
 for every $\Pi\in \CR_\varepsilon^{\mathrm{coti}}$. 
 Moreover, the map
  \be\label{auteq0}
\CR_\varepsilon^{\mathrm{coti}}\rightarrow \C^\times,\quad \Pi\mapsto \frac{\Omega_\varepsilon(\Pi^\vee;  \varpi, \kappa_\varepsilon)}{\mathscr{G}(\omega_{\Pi},\psi)^{1-n}\cdot \Omega_\varepsilon(\Pi; \varpi, \kappa_\varepsilon)}
  \ee
  is $\Aut(\C)$-equivariant.
\end{thmp}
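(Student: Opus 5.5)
The second and third assertions follow formally from the first, so the real content is the $\Aut(\C)$-equivariance of $\breve\varpi_\psi$. Granting it, the period identity is a direct computation. By definition
\[
\Omega_\varepsilon(\Pi^\vee;\breve\varpi_\psi,\delta_n\kappa_\varepsilon)=\delta_n\,\mathscr G(\omega_\Pi,\psi)^{1-n}\langle \imath_\psi(\varpi(\Pi)),\kappa_{\varepsilon,\Pi^\vee}\rangle .
\]
The commutative diagram characterizing $\imath_\psi$ (Corollary \ref{mvwkappa}) gives
\[
\langle \imath_\psi(\lambda),\iota_{\Pi^\vee}(\kappa_\varepsilon)\rangle=\delta_n\langle\lambda,\iota_\Pi(\kappa_\varepsilon)\rangle .
\]
Since $\delta_n^2=1$, we obtain the identity. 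For the last claim, both $\varpi$ and $\breve\varpi_\psi$ are $\Aut(\C)$-equivariant nowhere-zero sections of the same line bundle, on which the action is smooth sesquilinear (Theorem \ref{smooth2}). So $\breve\varpi_\psi(\Pi^\vee)=c(\Pi)\varpi(\Pi^\vee)$ with $c:\CR^{\mathrm{coti}}_\varepsilon\to\C^\times$ equivariant, i.e.\ $c(\sigma\Pi)=\sigma(c(\Pi))$. Pairing with $\kappa_{\varepsilon,\Pi^\vee}$ shows that the ratio in \eqref{auteq0} equals $\delta_n c(\Pi)^{-1}$, which is then $\Aut(\C)$-equivariant.

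To prove equivariance of $\breve\varpi_\psi$, I would transport everything to the Whittaker side. Via the isomorphism $\jmath$, the fibre over $\Pi$ is identified with $\Hom_{\RG(\A_\mathrm f)}(\RH^b_\varepsilon(\RI^{\mathrm{aut}}(\Pi)),\CW_\mathrm f)$. The goal is to realize $\imath_\psi$ globally, as a map to the corresponding space for $\Pi^\vee$.

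The MVW involution $g\mapsto w_n\,{}^tg^{-1}w_n$ preserves $\RG(\rk)$, $K_\infty^0$ and the standard unipotent subgroup, and it carries $\psi$-Whittaker functions to $\psi^{-1}$-Whittaker functions. It therefore induces an automorphism of $\CX$ and of $\RH^b(\CX)$, twisting $\CF$ by the dual. This should send $\iota_\Pi(\RH^b(\RI^{\mathrm{aut}}(\Pi)))$ onto the corresponding space for $\Pi^\vee$. Here I expect to use that $\RI^{\mathrm{aut}}(\Pi^\vee)$ is the image of $\RI^{\mathrm{aut}}(\Pi)$ under $f\mapsto f(w_n{}^tg^{-1})$; this holds because the tame condition is symmetric under duality and the Eisenstein series attached to $\Pi^\vee$ match. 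This geometric map on sheaf cohomology commutes with the $\Aut(\C)$-action, since it comes from a $\rk$-rational automorphism and the dual coefficient system is compatible with \eqref{sigmaf0}.

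On the finite Whittaker side, the MVW map composed with the passage from $\psi^{-1}$ back to $\psi$ is a translation by a diagonal element $\diag(1,-1,1,\dots)$. This translation is not $\Aut(\C)$-equivariant for the action on $\CW_\mathrm f$. The defect is governed by the central character: the $\Aut(\C)$-action on $\CW_\mathrm f$ involves twisting by $t_\sigma$, and conjugating the diagonal element produces the factor $\omega_{\Pi,\mathrm f}(t_\sigma)^{\,n-1}$-type scalars. These are exactly absorbed by $\sigma(\mathscr G(\omega_\Pi,\psi))/\mathscr G(\omega_{\sigma\Pi},\psi)=\omega_{\Pi,\mathrm f}(t_\sigma)$, raised to the power $1-n$.

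The main obstacle is matching this global MVW transport with the archimedean $\imath_\psi$ normalized by $\delta_n$ through the translation maps $\iota_\pi$. Concretely, one must check that the archimedean MVW map on $\RI_\infty^{\mathrm{whi}}(\Pi)$, composed with the global identification, is the same map used to define $\imath_\psi$ in Section \ref{sec:mvwarch}. I would rely on Corollary \ref{mvwkappa} and on the construction there being literally the archimedean component of the global MVW map, so that the sign $\delta_n$ plays no role in equivariance. Tracking the Gauss-sum powers carefully at each finite place is the remaining bookkeeping.
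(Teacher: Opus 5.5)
Your proposal follows the paper's proof. Equivariance of $\breve\varpi_\psi$ is obtained by transporting to $\Hom_{\RG(\A_\mathrm f)}(\RH^b_\varepsilon(\RI^{\mathrm{aut}}(\Pi)),\CW_\mathrm f)$ through $\jmath$ (diagram \eqref{j}, via Lemma \ref{vw}), using the $\rk$-rationality of the MVW involution on $\RI^{\mathrm{aut}}$ and on $\RH^b(\CX)$ (Lemmas \ref{lemmvw24} and \ref{leminvf2}) and the Gauss-sum correction on $\CW_\mathrm f$ (Lemma \ref{leminvf}); the period identity and the equivariance of \eqref{auteq0} then follow by your pairing computation with Corollary \ref{mvwkappa} and Lemma \ref{sectionpi00}.

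One correction to your account of where the defect comes from. Translations by $\rk$-rational diagonal matrices, such as the maps $L_a$ and $\diag(1,-1,1,\dots)$, commute with $\mathbf t_\sigma$ and therefore with the $\Aut(\C)$-action on $\CW_\mathrm f$; moreover the paper's signed $w_n$ already preserves $\psi_{\RN}$, so no such translation is needed. The central-character scalar comes instead from the MVW involution itself: it sends $\mathbf t_\sigma$ to $t_{\sigma,\A}^{1-n}\mathbf t_\sigma$, which produces the factor $\omega_\Pi(t_{\sigma,\A})^{n-1}$. This factor is exactly absorbed using \eqref{gauss}, whose ratio is $(\sigma\circ\omega_\Pi)(t_{\sigma,\A})$, not $\omega_\Pi(t_{\sigma,\A})$ as you wrote.
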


Note that the quotient in \eqref{auteq0} is independent of $\varpi$ and $\kappa_\varepsilon$.

In addition to Theorem \ref{maintheorem}, several other results established herein are of independent interest. See Theorems \ref{prop:std}, \ref{lemti00}, and \ref{mvwpiK}.



\section{Preliminaries}

\subsection{Whittaker spaces and coefficient spaces}
Let $\mathrm N$ denote the algebraic subgroup of $\RG$ consisting of upper-triangular unipotent matrices. 
Recall the non-trivial character $\psi:\rk\backslash\A\to\C^{\times}$, which further induces an automorphic character
	\begin{equation}\label{psin}
		\begin{aligned}
			\psi_{\RN}:\RN(\rk)\backslash\RN(\A)&\to\C^{\times},\\
			[x_{i,j}]_{1\leq i,j\leq n}&\mapsto\psi\left(\sum_{i=1}^{n-1} x_{i,i+1}\right).
		\end{aligned}
	\end{equation}

Let 
$
\psi': \rk\backslash \A\rightarrow \C^\times
$
be another non-trivial character. Then there exists a unique element $a\in \rk^\times$ such that
\[
\psi'(x)=\psi(a^{-1}x)\quad \textrm{for all } x\in \A. 
\]
Define 
     \be\label{ta}
\mathbf{t}_a:={\rm diag}(a^{n-1}, \ldots, a, 1) \in \RG(\rk)
\ee
so that
\[
\psi'_{\RN}( u )=\psi_{\RN}(\mathbf t_a^{-1} u \mathbf t_a)\quad \textrm{for all } u\in \RN(\A). \]
Then we have a $\RG(\A)$-intertwining isomorphism 
\be\label{isopsi00}
  L_a: \Ind_{\RN(\A)}^{\RG(\A)}\psi_{\RN}\rightarrow \Ind_{\RN(\A)}^{\RG(\A)}\psi'_{\RN}, \qquad f\mapsto (x\mapsto f(\mathbf t_a^{-1} x)). 
\ee
Using the isomorphisms as in \eqref{isopsi00}, we form the representation 
\[
  \CW:=\varprojlim_{\psi} \Ind_{\RN(\A)}^{\RG(\A)}\psi_{\RN},
\]
to be called the global Whittaker space. Here $\psi$ runs through all non-trivial characters on $\rk\backslash \A$. 

Similarly to \eqref{isopsi00}, we have isomorphisms
\[
  L_a: \Ind_{\RN(\rk_\infty)}^{\RG(\rk_\infty)}\psi_{\RN,\infty}\rightarrow \Ind_{\RN(\rk_\infty)}^{\RG(\rk_\infty)}\psi'_{\RN,\infty}, 
\]
and \[
  L_a: \Ind_{\RN(\A_\mathrm f)}^{\RG(\A_\mathrm f)}\psi_{\RN,\mathrm f}\rightarrow \Ind_{\RN(\A_\mathrm f)}^{\RG(\A_\mathrm f)}\psi'_{\RN, \mathrm f}.
\]
 Using these isomorphisms 
we form the Archimedean Whittaker space 
\be\label{defarwh}
\CW_\infty:=\varprojlim_{\psi}\Ind_{\RN(\rk_\infty)}^{\RG(\rk_\infty)}\psi_{\RN,\infty}
\ee
and the non-Archimedean Whittaker space
\be\label{defarwh2}
\CW_\mathrm f:=\varprojlim_{\psi} \Ind_{\RN(\A_\mathrm f)}^{\RG(\A_\mathrm f)}\psi_{\RN,\mathrm f}.
\ee

We also have a $\RG(\rk\otimes_\BQ \C)$-intertwining isomorphism 
\be\label{isopsi03}
  L_a: \,^{\mathrm{alg}}\Ind_{\RN(\rk\otimes_\BQ \C)}^{\RG(\rk\otimes_\BQ \C)}\C\rightarrow \,^{\mathrm{alg}}\Ind_{\RN(\rk\otimes_\BQ \C)}^{\RG(\rk\otimes_\BQ \C)}\C, \qquad f\mapsto (x\mapsto f(\mathbf t_a^{-1} x)). 
\ee
Here and henceforth, `$\,^{\mathrm{alg}}\Ind$' stands for algebraic induction. Using the isomorphisms as in \eqref{isopsi03}, we form the representation 
\be\label{defcf}
  \CF:=\varprojlim_{\psi} \,^{\mathrm{alg}}\Ind_{\RN(\rk\otimes_\BQ \C)}^{\RG(\rk\otimes_\BQ \C)}\C,
\ee
to be called the coefficient space. Note that as a representation of $\RG(\rk\otimes_\BQ \C)$, $\CF$ is completely reducible, and every irreducible algebraic representation occurs exactly once in $\CF$.

\subsection{Whittaker periods}
Consider  the Whittaker period 
\be\label{generic}
\lambda_{\psi} : \CA(\RG)\rightarrow \C, \quad \phi\mapsto \int_{\mathrm{N}(\rk)\backslash \mathrm{N}(\A)} \phi(x) \psi_{\RN}^{-1}(x) \od\!  x,
\ee
 where $\od \! x$ is the $\mathrm{N}(\A)$-invariant Borel measure with total volume $1$.  We define   $\lambda_\psi: \CW\rightarrow \C$
 to be the composition of 
\be\label{lpsi00}
\CW\xrightarrow{\textrm{projection}}\Ind_{\RN(\A)}^{\RG(\A)}\psi_{\RN} \xrightarrow{\textrm{evaluation at the identity}}\C.
\ee

\begin{lemp}\label{jmathpi}
   There exists a unique homomorphism $$\jmath\in \Hom_{\RG(\A)}(\CA(\RG), \CW)$$ such that the diagram 
  \[
\xymatrix{
 \CA(\RG)  \ar[rrd]_{\lambda_{\psi}} \ar[rr]^{\qquad\jmath}&& \CW \ar[d]^{ \lambda_\psi} \\
& &  \C  \\
    }
\]
commutes for all non-trivial characters  $\psi: \rk \backslash \A\rightarrow \C^\times$. 
    \end{lemp}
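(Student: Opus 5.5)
Define $\jmath$ componentwise. For each non-trivial character $\psi$ of $\rk\backslash\A$, let $\jmath_\psi:\CA(\RG)\to \Ind_{\RN(\A)}^{\RG(\A)}\psi_{\RN}$ be the Whittaker coefficient map
\[
\jmath_\psi(\phi)(g):=\lambda_\psi(g.\phi)=\int_{\RN(\rk)\backslash\RN(\A)}\phi(xg)\psi_{\RN}^{-1}(x)\,\od\! x .
\]
I will check four things about $\jmath_\psi$. First, it lands in $\Ind_{\RN(\A)}^{\RG(\A)}\psi_{\RN}$: the substitution $x\mapsto xu^{-1}$ gives $\jmath_\psi(\phi)(ug)=\psi_{\RN}(u)\jmath_\psi(\phi)(g)$ for $u\in \RN(\A)$. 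Second, the function $g\mapsto \jmath_\psi(\phi)(g)$ is smooth, and indeed continuous into the smooth induced space with its usual topology. This holds because $\phi$ is smooth, $\RN(\rk)\backslash \RN(\A)$ is compact, and differentiation under the integral sign is allowed. Third, $\jmath_\psi$ is visibly $\RG(\A)$-equivariant. Fourth, $\lambda_\psi\circ\jmath_\psi=\lambda_\psi$ on $\CA(\RG)$, obtained by evaluating at $g=1$.

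Next I check compatibility with the transition maps $L_a$ in \eqref{isopsi00}, so that the $\jmath_\psi$ assemble into a map to the inverse limit $\CW$. Let $\psi'=\psi(a^{-1}\cdot)$ with $a\in\rk^\times$. I must show $L_a(\jmath_\psi(\phi))=\jmath_{\psi'}(\phi)$, that is,
\[
\int_{\RN(\rk)\backslash\RN(\A)}\phi(x\mathbf t_a^{-1}g)\psi_{\RN}^{-1}(x)\,\od\! x=\int_{\RN(\rk)\backslash\RN(\A)}\phi(yg)\psi_{\RN}'^{-1}(y)\,\od\! y .
\]
Write $x\mathbf t_a^{-1}=\mathbf t_a^{-1}(\mathbf t_a x\mathbf t_a^{-1})$. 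Left $\RG(\rk)$-invariance of $\phi$ removes $\mathbf t_a^{-1}\in\RG(\rk)$. Then substitute $y=\mathbf t_a x\mathbf t_a^{-1}$. This substitution preserves $\RN(\rk)$, and it preserves the measure: conjugation by a rational element has adelic modulus $1$ by the product formula, and in any case it preserves the total volume $1$. Finally, $\psi_{\RN}(x)=\psi_{\RN}(\mathbf t_a^{-1}y\mathbf t_a)=\psi'_{\RN}(y)$ by the defining property of $\mathbf t_a$. So the family $(\jmath_\psi)_\psi$ defines $\jmath\in\Hom_{\RG(\A)}(\CA(\RG),\CW)$ with the commuting triangle. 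The transition maps compose correctly, since $\mathbf t_a\mathbf t_b=\mathbf t_{ab}$, so the limit is well defined.

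For uniqueness, let $\jmath'$ be another such homomorphism and fix $\phi$. Its $\psi$-component $f=\mathrm{pr}_\psi\jmath'(\phi)\in\Ind_{\RN(\A)}^{\RG(\A)}\psi_{\RN}$ satisfies
\[
f(g)=\lambda_\psi(g.\jmath'(\phi))=\lambda_\psi(\jmath'(g.\phi))=\lambda_\psi(g.\phi)
\]
by equivariance and the triangle, so $f=\jmath_\psi(\phi)$. Since an element of the inverse limit is determined by its components, $\jmath'=\jmath$. In fact one $\psi$ already suffices, because the components are linked by the $L_a$.

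The only step needing care is topological: $\jmath$ should be continuous with respect to the canonical topology on $\CA(\RG)$ from \cite{LS}, if $\Hom$ is meant continuously. I would handle this by noting that, for fixed $K_{\mathrm f}$-invariants, the integral over the compact set $\RN(\rk)\backslash\RN(\A)$ is bounded by seminorms controlling $\sup$ of $\phi$ and its derivatives on compact sets. Convergence in the strict inductive limit topology then passes through the integral.
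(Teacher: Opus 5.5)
Your proof is correct and takes the same approach as the paper. The paper's one-line argument is that $\mathbf t_a.\lambda_\psi=\lambda_{\psi'}$ holds on both $\CA(\RG)$ and $\CW$, so Frobenius reciprocity produces compatible components and forces their uniqueness; your substitution $y=\mathbf t_a x\mathbf t_a^{-1}$ and your evaluation-at-the-identity uniqueness argument spell out exactly that.
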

\begin{proof}
Note that the equality
   \[\mathbf t_a. \lambda_\psi=\lambda_{\psi'}
    \]
    holds in both $\Hom_\C(\CA(\RG),\C)$ and 
$\Hom_\C(\CW,\C)$. This implies the lemma. 
\end{proof}

Let 
\be\label{jmath}
\jmath: \CA(\RG)\rightarrow  \CW
\ee
be as in Lemma \ref{jmathpi}. 

\subsection{Generic cohomological representations}\label{secgc}
We let  $\K = \R$ or $\C$ be an Archimedean local field. 
In this subsection, we formulate a classification of generic irreducible Casselman-Wallach representations $\pi$ of ${\rm GL}_n(\K)$ that are cohomological in the following sense: there exists an irreducible algebraic representation $F_\pi$ of $\GL_n(\K\otimes_\R \C)$  such that  the total continuous cohomology 
\be\label{totalcoh3}
\RH^*_{\rm ct}(\GL_n(\K)^0; F_\pi\otimes \pi)\neq \{0\}. 
\ee
Let $\CR^{\rm coge,\K}$ denote the set of all isomorphism classes of such representations.  


For $a, b\in \BC$ with $a-b\in \Z$, define a character 
\[
\chi_{a,b}: \C^\times \to \C^\times, \quad z\mapsto z^a\bar z^b := (z\bar z)^a \bar z^{b-a}.
\]
If moreover $a\neq b$, then we denote by $D_{a,b}$ the essentially square-integrable irreducible Casselman–Wallach representation of $\GL_2(\R)$ with infinitesimal character $(a,b)$. 

\begin{dfnp} 
A multiset $\gamma =\set{(a_1,b_1), (a_2, b_2), \ldots, (a_n, b_n)}$ of $n$ elements in $\left(\Z+\frac{n-1}{2}\right)^2$ is called 
incomparable if 
\[
(a_i-a_j)(b_i-b_j)<0 \quad \text{for all distinct }i, j =1,2,\ldots,n.
\]
It is called symmetric if 
\[
\gamma = \set{(b_1, a_1), (b_2, a_2), \ldots, (b_n, a_n)}.
\]
\end{dfnp}

In particular, an incomparable multiset $\gamma$ as above is a set. Denote
\[
\Gamma_n^\R:=
   \left\{\textrm{incomparable symmetric multisets of $n$ elements in $\left(\Z+\frac{n-1}{2}\right)^2$}\right\}
    \]
    and
 \[
   \Gamma_n^\C:=
\left\{\textrm{incomparable multisets of $n$ elements in $\left(\Z+\frac{n-1}{2}\right)^2$}\right\}.
\]
Note that for each $\gamma\in \Gamma_n^\R$, if $n$ is odd then  \be\label{agamma}
\textrm{there exists a unique $a_\gamma\in \Z+\frac{n-1}{2}$ such that $(a_\gamma,a_\gamma)\in \gamma$};
\ee
and if $n$ is even then no such $a_\gamma$ exists. 
 

Let $\mathscr K(\GL_n(\K))$ denote the Grothendieck group (with coefficients in $\Z$) of the category of Casselman-Wallach representations of $\GL_n(\K)$. An element of $\mathscr K(\GL_n(\K))$ is called a virtual representation of $\GL_n(\K)$. For every partition 
\be\label{parti}
n=n_1+n_2+\dots +n_r,\qquad (r, n_1, n_2, \dots n_r\geq 1),
\ee
the normalized smooth parabolic induction  yields a map 
\[
\begin{array}{rcl}
     \prod_{i=1}^r\mathscr K(\GL_{n_i}(\K))&\rightarrow &\mathscr K(\GL_n(\K)),   \\
\{\pi_i\}_{i=1,2,\dots,r}&\mapsto & \times_{i=1}^r \pi_i:=\pi_1\times \pi_2\times \dots \times \pi_r.
\end{array}
\]
Note that the virtual representation  $\times_{i=1}^r \pi_i$ remains unchanged when the $r$-tuple $(\pi_1, \pi_2, \dots, \pi_r)$ is replaced by a permutation of it.  
For
$
\gamma  \in \Gamma_n^\K,
$
define a virtual representation $\pi(\gamma)$ of $\GL_n(\K)$ by 
\be \label{pigamma}
\pi(\gamma):= \begin{cases}\times_{(a,b)\in \gamma} \chi_{(a,b)}, & \text{if $\K= \C$},\\
\times_{(a,b)\in \gamma_{1/2}} D_{a, b}, & \text{if $\K= \R$ and $n$ is even,}\\
(\times_{(a,b)\in \gamma_{1/2}} D_{a, b})\times ((\,\cdot\,)^{a_\gamma}\sgn^{\frac{1-n}{2}}), & \text{if $\K= \R$ and $n$ is odd},
\end{cases}
\ee
where `$\sgn$' denotes the sign character of $\R^\times$, $a_\gamma$ is as in \eqref{agamma}, and  $\gamma_{1/2}$ is a subset of $\gamma$ of $\lfloor \frac{n}{2}\rfloor$ elements such that
\[
  \gamma=\begin{cases}
      \gamma_{1/2}\cup \{(b,a)\,:\, (a,b)\in \gamma_{1/2}\}, & \text{if $n$ is even};\\
      \gamma_{1/2}\cup \{(b,a)\,:\, (a,b)\in \gamma_{1/2}\}\cup\{(a_\gamma,a_\gamma)\}, & \text{if $n$ is odd}.
  \end{cases}
\]

Let $\varepsilon_\K: \pi_0(\K^\times)\rightarrow \C^\times$ be a character, which is obviously identified with a quadratic character of $\K^\times$. 
Let $\CR_{\varepsilon_\K}^{\rm coge,\K}$ denote the set of all $\pi\in \CR^{\rm coge,\K}$ such that $\varepsilon_\K$ occurs in the total continuous cohomology \eqref{totalcoh3}. The sets $\CR_{\varepsilon_\K}^{\rm coge,\K}$ and $\CR^{\rm coge,\K}$ are obviously viewed as subsets of $\mathscr K(\GL_n(\K))$. As usual, every character of $\K^\times$ is  viewed as a character of $\GL_n(\K)$ by pullback through the determinant homomorphism (similar convention will be used for other general linear groups without explanation).

\begin{prp} \label{class00}
    The map 
    \[
    \Gamma_n^\K \to \CR^{\rm coge,\K}_{\varepsilon_\K},\quad \gamma \mapsto \pi(\gamma) \otimes \varepsilon_\K 
    \]
    is  well-defined and bijective. 
\end{prp}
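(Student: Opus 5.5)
We plan to combine three standard inputs: Vogan's classification of irreducible generic representations of $\GL_n(\K)$ as irreducible products of essentially tempered ones; the Vogan--Zuckerman description of representations with nonzero $(\g,K)$-cohomology; and Wigner's lemma comparing infinitesimal characters. The first step is to show that every $\pi\in\CR^{\rm coge,\K}$ lies in the image.

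Since $\pi$ is cohomological with coefficient $F_\pi$, Wigner's lemma says that the infinitesimal character of $\pi$ equals that of $F_\pi^\vee$, which is regular integral. Since $\pi$ is generic, it is a full irreducible induced representation $\times\pi_i$ with each $\pi_i$ essentially square integrable. Thus each $\pi_i$ is a character when $\K=\C$, and a character of $\R^\times$ or some $D_{a,b}$ when $\K=\R$.

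We then use regularity together with the known cohomological classification (for instance \cite{Clo}, or the Vogan--Zuckerman $A_\q(\lambda)$ list). For $\K=\C$, the parameters $(a_i,b_i)$ must be incomparable and lie in $\Z+\frac{n-1}{2}$. The latter is forced by the purity/algebraicity of $F_\pi$ after the normalized shift by $\rho$. For $\K=\R$, at most one character factor can occur, and only when $n$ is odd, since two characters would make the infinitesimal character non-regular or make the cohomology vanish. Its parameter then contributes the diagonal element $(a_\gamma,a_\gamma)$, and the $D_{a,b}$ contribute the symmetric pairs.

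The remaining freedom in the real case is a sign twist. Twisting by $\sgn$ fixes each $D_{a,b}$ and changes the character factor by $\sgn$. It also multiplies the $\pi_0$-action on cohomology by a computable sign. Requiring that the character $\varepsilon_\K$ occur pins down the twist as $\sgn^{\frac{1-n}{2}}\otimes\varepsilon_\K$ in the odd case. In the even case, $D_{a,b}\otimes\sgn\cong D_{a,b}$, so both $\varepsilon_\K$ occur and no choice arises.

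Well-definedness consists of two checks. First, for $\gamma\in\Gamma_n^\K$, incomparability gives regular integral infinitesimal character and makes the product irreducible, since no two factors are linked. So $\pi(\gamma)$ is an honest irreducible generic representation. Second, it is cohomological with $\varepsilon_\K$ occurring. For this we plan a Delorme/Borel--Wallach type computation: the cohomology of an induced representation is computed via the Levi, and that of each $D_{a,b}$ and each character is known. Alternatively, we can cite Clozel's lemma on cohomological tempered-up-to-twist representations.

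Injectivity follows from the uniqueness of the Langlands/Vogan data of an irreducible generic representation: $\gamma$ is recovered from the multiset of inducing factors. The main obstacle is the sign bookkeeping for $\K=\R$, $n$ odd. There we must verify that the $\pi_0(\R^\times)$-character on the bottom-degree cohomology of $\pi(\gamma)$ is exactly trivial, so that after twisting by $\varepsilon_\K$ it is $\varepsilon_\K$. We would compute this through the Künneth/Delorme decomposition, tracking the action of $\diag(-1,1,\dots,1)$ on the factor $(\cdot)^{a_\gamma}\sgn^{\frac{1-n}{2}}$ and on each $D_{a,b}$ (which contributes both signs). We would then check that the normalization $\sgn^{\frac{1-n}{2}}$ makes the total sign come out right.
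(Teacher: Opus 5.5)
Your proposal is correct and is essentially the paper's argument: the paper simply invokes Delorme's lemma \cite[Theorem III.3.3]{BW} together with calculations like those in \cite[Lemma 3.14]{Clo}, carried out beyond the essentially tempered case. Two small remarks. First, for surjectivity do not lean on \cite{Clo} or the Vogan--Zuckerman list, since they cover only essentially tempered, resp.\ unitary, representations, whereas $\Gamma_n^\K$ also yields non-unitary members such as $\chi_{3/2,-1/2}\times\chi_{-1/2,1/2}$ of $\GL_2(\C)$; incomparability should come from regularity plus the non-linking criterion for irreducibility, and cohomology from Delorme's lemma. Second, for $n$ odd the sign bookkeeping you flag is immediate: $-1_n$ represents the nontrivial component of $\GL_n(\R)$ and acts on every degree of the cohomology by $\omega_{F_\pi}(-1)\,\omega_\pi(-1)$, which equals $1$ for $\pi(\gamma)$ thanks to the factor $\sgn^{\frac{1-n}{2}}$.
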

\begin{proof}
    This is a consequence of Delorme's lemma (\cite[Theorem III.3.3]{BW}), and can be proved by similar calculations as in the essentially tempered case (see \cite[Lemma 3.14]{Clo}). 
\end{proof}
    
Suppose that we are given a partition as in \eqref{parti}. Denote
\be \label{nu_i}
\nu_i:=\frac{n-n_i}{2}-\sum_{j=1}^{i-1} n_j,\qquad\text{for }i=1,2,\dots,r.
\ee
 For every 
\be\label{gammaip}
\gamma_i':=\gamma_i-\nu_i:=\{(a-\nu_i, b-\nu_i)\,:\,{(a,b)\in \gamma_i}\},
\ee
where $\gamma_i\in \Gamma^\K_{n_i}$, define
\[
  \pi(\gamma_i')_{\varepsilon_\K}:=\pi(\gamma_i)\otimes (\varepsilon_\K\cdot \abs{\,\cdot \,}_\K^{-\nu_i}),
\]
which is a virtual representation of $\GL_{n_i}(\K)$. This is independent of $\varepsilon_\K$ unless $\K=\R$ and $n_i$ is odd. 


\begin{prp} \label{class}
   Let $\gamma'_i:=\gamma_i-\nu_i$ with $\gamma_i\in \Gamma_{n_i}^\K$, and let $\varepsilon_i$ be a character of $\pi_0(\K^\times)$ ($i=1,\ldots, r$). Then 
\[
\times_{i=1}^r \pi(\gamma_i')_{\varepsilon_i} \in \CR^{\rm coge,\K}_{\varepsilon_\K}
\]
if and only if
\[
\begin{cases}
    \textrm{the multiset union }
\gamma:=\gamma_1'\cup \cdots \cup \gamma_r' \in \Gamma_n^\K, \ \textrm{and}&\\
\pi(\gamma_i')_{\varepsilon_i}\cong \pi(\gamma_i')_{\varepsilon_\K}\quad\textrm{for all $i=1,2,\dots,r$.}
\end{cases}
\]
When  this is the case, it holds  that $\times_{i=1}^r \pi(\gamma_i')_{\varepsilon_i} \cong \pi(\gamma)\otimes {\varepsilon_\K}$. 
\end{prp}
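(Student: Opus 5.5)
The plan is to reduce both directions to Proposition \ref{class00} together with the classification of irreducibility of induced representations and Delorme's lemma, essentially by bookkeeping of infinitesimal characters and component-group characters.

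First, the shift $\nu_i$ is exactly the normalization that makes the infinitesimal character of the normalized induction $\times_{i=1}^r \pi(\gamma_i')_{\varepsilon_i}$ equal to the multiset union $\gamma=\gamma_1'\cup\dots\cup\gamma_r'$, viewed as a pair of $n$-tuples of complex numbers (holomorphic and antiholomorphic parts). We check this on each block. The infinitesimal character of $\pi(\gamma_i)$ is $\gamma_i$, and twisting by $\abs{\cdot}_\K^{-\nu_i}$ shifts it by $-\nu_i$ in both coordinates. Inducing then adds no further shift, since normalized induction preserves the infinitesimal character of the inducing data.

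For the "if" direction, assume $\gamma\in\Gamma_n^\K$ and $\pi(\gamma_i')_{\varepsilon_i}\cong\pi(\gamma_i')_{\varepsilon_\K}$. Using the definition \eqref{pigamma}, we expand each $\pi(\gamma_i)$ as a product of characters $\chi_{a,b}$, or of $D_{a,b}$'s and possibly one character. Twisting by $\varepsilon_\K\abs{\cdot}^{-\nu_i}$ moves each factor to the corresponding factor with parameter shifted by $-\nu_i$, twisted by $\varepsilon_\K$. In the real odd case we must verify that the sign character matches. Since $\nu_i\in\frac{n-n_i}{2}+\Z$, we have $a_{\gamma_i}-\nu_i\in\Z+\frac{n-1}{2}$, and the sign $\sgn^{\frac{1-n_i}{2}}$ combined with the $\abs{\cdot}$ twist agrees with $(\cdot)^{a-\nu_i}\sgn^{\frac{1-n}{2}}$ up to a sign parity. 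This residual parity is exactly what the hypothesis $\pi(\gamma_i')_{\varepsilon_i}\cong\pi(\gamma_i')_{\varepsilon_\K}$ absorbs. Since virtual induction is commutative, the product reassembles to $\pi(\gamma)\otimes\varepsilon_\K$. This is the last assertion, and membership in $\CR^{\rm coge,\K}_{\varepsilon_\K}$ then follows from Proposition \ref{class00}.

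For the "only if" direction, suppose the product lies in $\CR^{\rm coge,\K}_{\varepsilon_\K}$. By Proposition \ref{class00} it equals $\pi(\gamma'')\otimes\varepsilon_\K$ for some $\gamma''\in\Gamma_n^\K$. Comparing infinitesimal characters, which is legitimate in the Grothendieck group, gives $\gamma''=\gamma$ as multisets, so $\gamma\in\Gamma_n^\K$. For the character condition, we compare Langlands parameters. In the Grothendieck group, the standard modules $\times$ (characters and $D_{a,b}$) form a basis, so the multiset of inducing data is determined. For distinct pairs, $D_{a,b}\otimes\sgn\cong D_{a,b}$, so only the one-dimensional factors from odd blocks carry information. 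Each such factor $(\cdot)^{a_{\gamma_i}-\nu_i}\sgn^{\ast}\varepsilon_i$ must match a character factor of $\pi(\gamma)\otimes\varepsilon_\K$. Incomparability forces at most one diagonal pair $(a,a)$ in $\gamma$. Hence, when $n$ is odd, exactly one block is odd and its sign is forced to be that of $\varepsilon_\K$; when $n$ is even, there are no odd blocks, apart from a parity-zero coincidence that again forces no odd blocks. This yields $\pi(\gamma_i')_{\varepsilon_i}\cong\pi(\gamma_i')_{\varepsilon_\K}$ for all $i$.

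The main obstacle is the precise sign bookkeeping in the real case with $n_i$ odd, namely showing that the sign exponents $\frac{1-n_i}{2}$, the shift $\nu_i$, and $\frac{1-n}{2}$ combine consistently. I would first check the parity identity $\frac{1-n_i}{2}-\nu_i\equiv\frac{1-n}{2}$ modulo the relation $\abs{x}^{s}\sgn(x)^{k}=x^{s}\sgn(x)^{k-s}$ for $s\in\Z$, and then reduce the linear-independence claim to the standard basis of the Grothendieck group.
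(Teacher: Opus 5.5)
Your overall route, Proposition \ref{class00} plus uniqueness of the inducing data (linear independence of standard modules in $\mathscr K(\GL_n(\K))$), is the one the paper uses. However, the step in the ``only if'' direction where you get $\gamma''=\gamma$ by comparing infinitesimal characters does not work. For $\GL_n(\C)$ the infinitesimal character is a pair of $n$-tuples modulo $\mathfrak S_n\times\mathfrak S_n$ acting independently. It therefore remembers only the multiset of first coordinates and the multiset of second coordinates of $\gamma$, not how they are paired. For $\GL_n(\R)$ it remembers only the multiset of all parameters. An incomparable $\gamma''$ can be recovered from these data, since the pairing must be order-reversing. But $\gamma$ cannot, and whether $\gamma$ is incomparable is exactly what you are trying to prove. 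Concretely, take $\K=\C$ and $n=n_1+n_2=1+1$, so $\nu_1=\tfrac12$ and $\nu_2=-\tfrac12$, with $\gamma_1=\{(1,1)\}$ and $\gamma_2=\{(-1,-1)\}$. Then $\gamma=\{(\tfrac12,\tfrac12),(-\tfrac12,-\tfrac12)\}\notin\Gamma_2^\C$. Yet $\chi_{1/2,1/2}\times\chi_{-1/2,-1/2}$ has the same infinitesimal character as $\pi(\gamma'')$ with $\gamma''=\{(\tfrac12,-\tfrac12),(-\tfrac12,\tfrac12)\}\in\Gamma_2^\C$; in $\mathscr K(\GL_2(\C))$ it is $\pi(\gamma'')$ plus the trivial representation. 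The repair is to apply the basis argument you invoke later to the whole inducing data, not only to the signs. By \eqref{pigamma}, $\times_i\pi(\gamma_i')_{\varepsilon_i}$ equals $\times_{(a,b)\in\gamma}\chi_{a,b}$ when $\K=\C$. When $\K=\R$ it is the product of the $D_{a,b}$ attached to the off-diagonal pairs of the $\gamma_i'$, together with one character for each odd $n_i$. Matching these data with those of $\pi(\gamma'')\otimes\varepsilon_\K$ gives $\gamma=\gamma''$, shows the number of odd blocks is $0$ or $1$ according to the parity of $n$, and gives the sign condition, all at once. This is the paper's one-line proof.

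Second, your sign bookkeeping puts the parity in the wrong place. Take $\K=\R$ with $n_i$ and $n$ odd, so $\nu_i\in\BZ$. The one-dimensional inducing factor of $\pi(\gamma_i')_{\varepsilon_\K}$ is $x\mapsto x^{a_{\gamma_i}-\nu_i}\sgn(x)^{\frac{1-n_i}{2}-\nu_i}\varepsilon_\K(x)$. The corresponding factor of $\pi(\gamma)\otimes\varepsilon_\K$ is $x^{a_\gamma}\sgn(x)^{\frac{1-n}{2}}\varepsilon_\K(x)$. Their sign exponents differ by
\[
\tfrac{1-n_i}{2}-\nu_i-\tfrac{1-n}{2}=\sum_{j<i}n_j .
\]
So the congruence $\frac{1-n_i}{2}-\nu_i\equiv\frac{1-n}{2}\pmod 2$ that you plan to check first is false in general; it fails for the middle block of $3=1+1+1$. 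It holds exactly when $i$ is the unique odd block. You get that uniqueness from $\gamma\in\Gamma_n^\R$ (at most one diagonal pair) in the ``if'' direction, and from counting one-dimensional inducing factors in the ``only if'' direction. The hypothesis $\pi(\gamma_i')_{\varepsilon_i}\cong\pi(\gamma_i')_{\varepsilon_\K}$ absorbs no residual parity; it only lets you replace $\varepsilon_i$ by $\varepsilon_\K$ on that block. With these two corrections your argument becomes the paper's.
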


\begin{proof}
    This is immediate from Proposition \ref{class00} and the uniqueness of the inducing data (up to reordering) in \eqref{pigamma}.
\end{proof}



\subsection{Zuckerman-Jantzen translations}
Recall the notation from Section \ref{seccoh}. In particular,   $\CR_\varepsilon^{\mathrm{coge},\infty}$ denotes the set of all cohomological irreducible subrepresentations $\pi$ of $\CW_\infty$ such that $\RH^b_\varepsilon(\pi)\neq\{0\}$, and $\pi_\varepsilon$ is the unique representation in $\CR_\varepsilon^{\mathrm{coge},\infty}$ with trivial coefficient system. 

Similar to \eqref{lpsi00}, write $\lambda_\psi: \CW_\infty\rightarrow \C$ for the composition of  
\[  \CW_\infty\xrightarrow{\textrm{projection}}\Ind_{\RN(\rk_\infty)}^{\RG(\rk_\infty)}\psi_{\RN,\infty} \xrightarrow{\textrm{evaluation at the identity}}\C,
\]
and  write $\lambda_\psi: \CF\rightarrow \C$ for the composition of  
\[
  \CF\xrightarrow{\textrm{projection to the $\psi$-component }}\,^{\mathrm{alg}}\Ind_{\RN(\rk\otimes_\BQ \C)}^{\RG(\rk\otimes_\BQ \C)}\C\xrightarrow{\textrm{evaluation at the identity}}\C.
\]

\begin{lemp}\label{iotapi}
   For every $\pi\in \CR_\varepsilon^{\mathrm{coge},\infty}$,  there exists a unique homomorphism  $\iota_\pi\in \Hom_{\RG(\rk_\infty)}(\pi_\varepsilon, F_\pi\otimes \pi)$  such that the diagram 
    \[
\xymatrix{
 \pi_\varepsilon \ar[rr]^{ \iota_\pi}  \ar[rrd]_{\lambda_\psi}&& F_\pi\otimes \pi \ar[d]^{ \lambda_\psi\otimes \lambda_\psi} \\
& &  \C  \\
    }
\]
commutes for all non-trivial characters  $\psi: \rk \backslash \A\rightarrow \C^\times$. Moreover, $\iota_\pi$ induces a linear isomorphism 
\be\label{iotapi2}
\iota_\pi: \RH^b_\varepsilon(\pi_\varepsilon)\rightarrow \RH^b_\varepsilon(\pi).
\ee
\end{lemp}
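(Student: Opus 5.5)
The plan is to construct $\iota_\pi$ explicitly by Whittaker multiplication, and then deduce the cohomological statement from Zuckerman--Jantzen translation theory. Fix a nontrivial $\psi$ and regard $\pi_\varepsilon$, $\pi$ and $F_\pi$ as realized in the $\psi$-components. So $\pi_\varepsilon,\pi\subset \Ind_{\RN(\rk_\infty)}^{\RG(\rk_\infty)}\psi_{\RN,\infty}$, while $F_\pi\subset {}^{\mathrm{alg}}\Ind\,\C$ is a space of functions on $\RG(\rk_\infty)$ that are left $\RN$-invariant.

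\emph{Uniqueness.} The restriction of $\lambda_\psi\otimes\lambda_\psi$ to $F_\pi\otimes\pi$ factors through the tensor product of $\lambda_\psi|_{F_\pi}$, which is $\RN$-invariant, with a $\psi_\RN$-Whittaker functional on $\pi$. Hence any $\iota\in\Hom_{\RG(\rk_\infty)}(\pi_\varepsilon,F_\pi\otimes\pi)$ is determined by the Whittaker function $g\mapsto(\lambda_\psi\otimes\lambda_\psi)(g.\iota(v))$. If this function vanishes for all $v$, then the image of $\iota$ lies in the kernel of every translate of $\lambda_\psi\otimes\lambda_\psi$. That kernel intersection is zero: given $0\neq w\in F_\pi\otimes\pi$, evaluation of the two function factors at pairs of points separates $w$, and the diagonal restriction of a nonzero element of a tensor product of an algebraic representation with a function space is nonzero. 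I would verify the last claim by writing $w=\sum f_k\otimes \phi_k$ with the $f_k$ linearly independent regular functions and using Zariski density. Uniqueness follows.

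\emph{Existence.} Define $\iota_\pi$ as the composition of $\pi_\varepsilon\to F_\pi\otimes(F_\pi^\vee\otimes\pi_\varepsilon)$, coming from the unit $\C\to F_\pi\otimes F_\pi^\vee$, with the projection of $F_\pi^\vee\otimes\pi_\varepsilon$ onto its generalized infinitesimal character summand, which is the Zuckerman translation of $\pi_\varepsilon$. By Proposition~\ref{class00} and Delorme's lemma, this translate is irreducible and isomorphic to $\pi$. The reason is that $\pi_\varepsilon$ and $\pi$ are fully induced standard modules with inducing data indexed by $\gamma$, and translation commutes with induction, so it sends the discrete series or characters in the data to those in the data with the shifted parameter. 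Thus $\Hom(\pi_\varepsilon,F_\pi\otimes\pi)$ is one-dimensional, and the composite with $\lambda_\psi\otimes\lambda_\psi$ is a Whittaker functional on $\pi_\varepsilon$. By uniqueness of Whittaker functionals it is a scalar multiple of $\lambda_\psi$.

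The main obstacle is showing that this scalar is nonzero, so that a normalization exists. I would check it by restricting to a suitable vector, for instance a lowest weight vector of $F_\pi$ tensored with a Whittaker vector whose function is nonvanishing at $1$, together with the fact that the highest/lowest weight functional $\lambda_\psi$ on ${}^{\mathrm{alg}}\Ind$ is nonzero. After rescaling, the triangle commutes for this $\psi$. For another character $\psi'(x)=\psi(a^{-1}x)$, the isomorphisms $L_a$ intertwine both sides, so $\lambda_{\psi'}=\mathbf t_a.\lambda_\psi$ on $\pi_\varepsilon$ and on $F_\pi\otimes\pi$, exactly as in Lemma~\ref{jmathpi}. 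Therefore the diagram commutes for every $\psi$ at once.

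\emph{Isomorphism on cohomology.} Since $\RH^b_\varepsilon$ is one-dimensional on both sides, it suffices to show that $\iota_\pi$ induces a nonzero map. Write $\RH^b_\varepsilon(\pi)=\RH^b(\g,K;F_\pi\otimes\pi)_\varepsilon$. The map $\iota_\pi$ identifies $\pi_\varepsilon$ with the generalized infinitesimal character-$0$ component of $F_\pi\otimes\pi$, which is a direct summand by Zuckerman's theorem. Only this component contributes to relative Lie algebra cohomology with trivial coefficients, by Wigner's lemma. Hence $\iota_\pi$ induces an isomorphism on all cohomology, compatible with the $\pi_0$-action, and in particular an isomorphism in degree $b$.
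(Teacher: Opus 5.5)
Your overall architecture matches the paper's. The paper reduces the requirement ``for all $\psi$'' to a single $\psi$ via $\mathbf t_a.\lambda_\psi=\lambda_{\psi'}$, exactly as you do, and imports everything else from \cite[Proposition 1.1]{JLS25}. Your Zuckerman-translation construction, the one-dimensionality of $\Hom_{\RG(\rk_\infty)}(\pi_\varepsilon,F_\pi\otimes\pi)$ via adjunction, and the Wigner-lemma argument for the isomorphism on cohomology are a reasonable reconstruction of that cited result.

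However, the one genuinely nontrivial point is left unproved: that $(\lambda_\psi\otimes\lambda_\psi)\circ\iota\neq 0$. Your proposed check does not establish it. Evaluating at $v\otimes\phi$ with $\lambda_\psi(v)\neq 0$ and $\lambda_\psi(\phi)\neq 0$ only shows that $\lambda_\psi\otimes\lambda_\psi$ is nonzero on $F_\pi\otimes\pi$. But $F_\pi\otimes\pi$ is the direct sum of its generalized infinitesimal character components, and an elementary tensor $v\otimes\phi$ has components in essentially all of them. So this evaluation says nothing about the restriction of $\lambda_\psi\otimes\lambda_\psi$ to the particular summand $\iota(\pi_\varepsilon)$; a priori it could vanish there while being nonzero on the other summands. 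Without this, the normalization in the diagram cannot be achieved, and existence is not established.

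The uniqueness paragraph has a related flaw. Injectivity of $w\mapsto (g\mapsto w(g,g))$ on $F_\pi\otimes\pi$ does not follow from Zariski density. The $\phi_k$ are only smooth functions, and the relation $\sum_k f_k(g)\phi_k(g)=0$ holds only on the diagonal, so linear independence of the $f_k$ gives nothing. Compare $f_1\otimes f_2\phi-f_2\otimes f_1\phi$ in a function space stable under multiplication by regular functions.

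The kernel of this map is a subrepresentation on which $\lambda_\psi\otimes\lambda_\psi$ vanishes. So injectivity is equivalent to $\lambda_\psi\otimes\lambda_\psi$ being nonzero on every nonzero subrepresentation of $F_\pi\otimes\pi$, which is a stronger form of the very statement you are missing. Had you proved it, it would give the nonvanishing; as it stands, neither is proved. Uniqueness itself does not need it: once nonvanishing is known, uniqueness follows from $\dim\Hom_{\RG(\rk_\infty)}(\pi_\varepsilon,F_\pi\otimes\pi)=1$.

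To close the gap, you need an actual argument that $\lambda_\psi\otimes\lambda_\psi$ survives on the translated copy of $\pi_\varepsilon$. One option is to realize $\pi$ as a parabolically induced representation, so that $F_\pi\otimes\pi$ becomes induced from $F_\pi$ restricted to the parabolic tensored with the inducing data. You would then compute the functional on the relevant piece as a Jacquet integral. The other option is the citation \cite[Proposition~1.1]{JLS25} (see also \cite[Proposition~2.1]{LLS}), which is what the paper does.
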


\begin{proof}
    Note that the equality
    \[
    \mathbf t_a. \lambda_\psi=\lambda_{\psi'}
    \]
    holds in both $\Hom_\C(\CW_\infty,\C)$ and $\Hom_\C(\CF, \C)$. Thus the lemma follows from \cite[Proposition 1.1]{JLS25} (see also \cite[Proposition 2.1]{LLS}). 
\end{proof}


\subsection{The Gauss sum}
Let $\omega':  \A_\mathrm f^\times \rightarrow \C^\times$ be a character. Pick an element $y\in \A_\mathrm f^\times$ such that the character 
\[
\A_\mathrm f\rightarrow \C^\times, \quad x\mapsto \psi_\mathrm f (yx)
\]
has the same conductor as $\omega'$. Let $\widehat \CO$ denote the profinite completion of the ring $\CO$ of integers in $\rk$, which is the ring of integers of $\A_\mathrm f$.  The Gauss sum for $\omega'$ is defined to be
	\begin{equation}\label{gausssum}
		\mathscr G(\omega',\psi):=\int_{\widehat \CO^{\times}}\omega'(yx)^{-1}\cdot\psi_\mathrm f (yx)\mathrm{d}x,
	\end{equation}
	where $\mathrm{d}x$ is the normalized Haar measure so that $\widehat \CO^{\times}$ has volume $1$. We note that the above definition is independent of the choice of $y$.

    Let $\mu_{\infty}\subset \BC^\times$ be the subgroup of the  roots of unity. Recall the cyclotomic character 
\[
{\rm Aut}(\BQ(\mu_{\infty})/\BQ)\to \widehat \BZ^\times, \quad \sigma\mapsto t_{\sigma}
\]
defined by requiring that
\be \label{cyclo}
\sigma( \zeta )=\zeta^{t_{\sigma}} \quad \textrm{for all } \zeta \in \mu_{\infty}.
\ee
Here $\BQ(\mu_{\infty})\subset \C$ is the subfield generated by $\mu_\infty$, and $\widehat \Z$ is the profinite completion of $\Z$.
Write $\sigma\mapsto t_{\sigma, \BA}$ for the composition of 
\[
\Aut(\BC/\BQ)  \xrightarrow{\rm restriction} \Aut(\BQ(\mu_\infty)/\BQ) \xrightarrow{\sigma\mapsto t_{\sigma}}\widehat \BZ^\times \subset \BA_\mathrm f^\times. 
\]
Then for all characters  $\psi_0:\A_\mathrm f\rightarrow  \C^\times$,
\be\label{tsigma}
\sigma(\psi_0(x))=\psi_0(t_{\sigma,\A} x)\quad \textrm{for all $\sigma\in \Aut(\C)$ and $x\in \A_\mathrm f$.} 
\ee
    
 The Gauss sum satisfies the following property which can be easily checked (see \cite[(1.2)]{C24}):
\be \label{gauss}
\sigma({\mathscr G}(\omega',\psi)) = (\sigma\circ\omega')(t_{\sigma,\A})\cdot {\mathscr G}(\sigma\circ \omega',\psi),\quad \textrm{for all }\sigma \in \Aut(\C).
\ee

 When $\omega'$ equals the finite part of a Hecke character $\omega: \rk^\times\backslash \A^\times \rightarrow \C^\times$, we set $\mathscr G(\omega,\psi):=\mathscr G(\omega',\psi)$. 

\subsection{Smooth sesquilinear $\Aut(\C)$-actions}\label{secsmac}

We equip $\Aut(\C)$ with the coarsest topology such that the restriction map
\[
  \Aut(\C)\rightarrow \Aut(\overline \BQ)
\]
is continuous, where $\overline \BQ$ denotes the field of algebraic numbers in $\C$, and the absolute Galois group $\Aut(\overline \BQ)$ is equipped with the usual profinite topology from Galois theory. Then $\Aut(\C)$ is a topological group which is not Hausdorff. 

Let $\mathrm{E}$ be a number field in $\C$. The automorphism group $\Aut(\C/\mathrm{E})$ of $\C$ over $\mathrm{E}$ is an open subgroup of $\Aut(\C)$, and conversely, every open subgroup of $\Aut(\C)$ is uniquely of this  form.

Let $\Gamma$ be a set with an action 
\be\label{actgamma000}
\Aut(\C/\mathrm{E}) \curvearrowright \Gamma, \quad (\sigma, \gamma)\mapsto \,^\sigma \gamma. 
\ee
\begin{dfnp}\label{defsp}
    An element $\gamma\in  \Gamma$ is said to be smooth if  $\Aut(\C/\mathrm{E})_\gamma$ (the stabilizer of $\gamma$) is an open subgroup of $\Aut(\C/\mathrm{E})$. The action \eqref{actgamma000} is said to be locally finite if all elements of $\Gamma$ are smooth. 
\end{dfnp}
In the rest of this subsection we assume that the action \eqref{actgamma000} is locally finite. For each $\gamma\in \Gamma$, write 
\be\label{qgamma}
\mathrm{E}[\gamma]:=\{\textrm{the fixed points in $\C$ of $\Aut(\C/\mathrm{E})_\gamma$}\},
\ee
which is a number field in $\C$ containing $\mathrm{E}$ such that $\Aut(\C/\mathrm{E}[\gamma])=\Aut(\C/\mathrm{E})_\gamma$.

Let $\bigsqcup_{\gamma\in \Gamma} V_\gamma$ be a complex vector bundle with an action
\be\label{actgamma2}
\Aut(\C/\mathrm{E}) \curvearrowright \bigsqcup_{\gamma\in \Gamma} V_\gamma, \quad (\sigma, \phi)\mapsto \,^\sigma \phi
\ee
such that the natural map $\bigsqcup_{\gamma\in \Gamma} V_\gamma\rightarrow \Gamma$ is $\Aut(\C/\mathrm{E})$-equivariant. The action \eqref{actgamma2} is said to be sesquilinear if for all $\sigma\in \Aut(\C/\mathrm{E})$ and $\gamma \in \Gamma$,
\[
\,^\sigma(\phi+\phi')=\,^\sigma\phi+\,^\sigma\phi'\quad\textrm{and}\quad \,^\sigma(a\cdot \phi)=\sigma(a) \cdot \,^\sigma\phi
\]
for all $ \phi, \phi'\in  V_\gamma$ and $a\in \C$. 
When this is the case, the set of smooth elements in $V_\gamma$, to be denoted by $V_\gamma^{\mathrm{sm}}$, is a $\overline \BQ$-subspace of $V_\gamma$.

\begin{dfnp}\label{defsmooth}
    A sesquilinear action \eqref{actgamma2} is said to be smooth if for all $\gamma\in \Gamma$, $V_\gamma^{\mathrm{sm}}$ spans the complex vector space $V_\gamma$.
\end{dfnp}

We recall \cite[Lemma 3.2.1]{Clo} as follows. 

\begin{lemp}\label{ratk}
Let $\RE'$ be a subfield of $\overline \BQ$ and let $V'$ be an $\RE'$-vector space. Let  $V$ be an $\Aut(\C/\RE')$-stable subspace of $\C\otimes_{\RE'} V'$, where  $\Aut(\C/\RE')$ acts on  $\C\otimes_{\RE'} V'$ through its natural action on $\C$. Then 
$V =\C\otimes_{\RE'} (V\cap V')$.   
\end{lemp}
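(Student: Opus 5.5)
We prove the nontrivial inclusion $V\subset \C\otimes_{\RE'}(V\cap V')$ by a minimal-support argument, inducting on the size of the support; the reverse inclusion $\C\otimes_{\RE'}(V\cap V')\subset V$ is immediate because $V$ is a complex subspace.

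\emph{Setup.} Fix an $\RE'$-basis $\{e_i\}_{i\in I}$ of $V'$. Every $v\in \C\otimes_{\RE'}V'$ is then uniquely a finite sum $v=\sum_i c_i e_i$ with $c_i\in\C$. The action of $\sigma\in\Aut(\C/\RE')$ is $\,^\sigma v=\sum_i \sigma(c_i)e_i$. Call $\{i: c_i\neq 0\}$ the support of $v$. For a finite subset $S\subset I$, put
\[
V_S:=\{v\in V: \textrm{the support of $v$ is contained in $S$}\}.
\]
Since $\sigma$ preserves supports and $V$ is $\Aut(\C/\RE')$-stable, each $V_S$ is also $\Aut(\C/\RE')$-stable. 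It suffices to show, by induction on $|S|$, that $V_S\subset \C\otimes_{\RE'}(V\cap V')$. The case $S=\emptyset$ is trivial.

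\emph{Inductive step.} Let $S$ be finite and nonempty, and suppose $V_S\neq 0$ (otherwise there is nothing to prove).
\begin{enumerate}
\item Choose $0\neq v\in V_S$ whose support $T$ is minimal among the nonzero elements of $V_S$.
\item Pick $i_0\in T$ and rescale $v$ so that $c_{i_0}=1$.
\item For each $\sigma\in\Aut(\C/\RE')$, the element $\,^\sigma v-v$ lies in $V$ and has support in $T\setminus\{i_0\}$. By minimality of $T$ it must be $0$.
\item Hence every coefficient $c_i$ is fixed by $\Aut(\C/\RE')$. In characteristic $0$ the fixed field of $\Aut(\C/\RE')$ in $\C$ is exactly $\RE'$, so all $c_i\in\RE'$ and therefore $v\in V\cap V'$.
\end{enumerate}
Now let $u\in V_S$ be arbitrary with $i_0$-coefficient $u_{i_0}$. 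Then $u-u_{i_0}v\in V_{S\setminus\{i_0\}}$, since $i_0\in S$ and the $i_0$-coefficient cancels. By the induction hypothesis $u-u_{i_0}v\in \C\otimes_{\RE'}(V\cap V')$. Adding back $u_{i_0}v\in\C\otimes_{\RE'}(V\cap V')$ gives $u\in \C\otimes_{\RE'}(V\cap V')$, which completes the induction.

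\emph{Main obstacle.} The only delicate point is the Galois-theoretic input in step 4: that the fixed field of $\Aut(\C/\RE')$ in $\C$ equals $\RE'$. This is standard, since $\C$ is algebraically closed and $\C/\RE'$ is separable. The rest is linear algebra.
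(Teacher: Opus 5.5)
Your argument is correct. The paper gives no proof of its own; it simply recalls the statement from \cite[Lemma 3.2.1]{Clo}. Your minimal-support induction is the classical Artin--Speiser descent argument, so it supplies a complete, self-contained proof in place of the citation.

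Your argument also shows that the hypothesis $\RE'\subset\overline\BQ$ is not needed. The only field-theoretic input is $\C^{\Aut(\C/\RE')}=\RE'$, and this holds for every subfield of $\C$. That fixed-field fact is the one step worth writing out rather than attributing to ``separability''. It uses extension of isomorphisms through a transcendence basis of $\C$ over $\RE'$. This is needed both to send an algebraic element $x\notin\RE'$ to a different conjugate and to move a transcendental element, for example by $x\mapsto x+1$.

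You might also note explicitly that $\C\otimes_{\RE'}(V\cap V')\to\C\otimes_{\RE'}V'$ is injective, since field extensions are flat. The right-hand side of the lemma is therefore just the $\C$-span of $V\cap V'$, which is what your final step uses.
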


The following result is implied by Lemma \ref{ratk} and \cite[Proposition 11.16]{Spr}. 
\begin{lemp}\label{rationalform0}
  Suppose that  \eqref{actgamma2} is a   sesquilinear action. Then  it is smooth if and only if for all $\gamma\in \Gamma$,  the fixed point set $V_\gamma^{\Aut(\C/\mathrm{E})_\gamma}$ is an  $\mathrm{E}[\gamma]$-form of $V_\gamma$. 
\end{lemp}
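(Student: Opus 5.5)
The plan is to fix $\gamma\in\Gamma$ and work with the single space $V_\gamma$. Put $H:=\Aut(\C/\mathrm{E})_\gamma=\Aut(\C/\mathrm{E}[\gamma])$ and $V:=V_\gamma$. Since the projection to $\Gamma$ is equivariant, $H$ preserves $V$ and acts on it sesquilinearly. Everything then reduces to Galois descent for this semilinear $H$-action, using two standard facts.

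\begin{itemize}
\item The fixed field of $\Aut(\C/\mathrm{E}')$ in $\C$ is $\mathrm{E}'$ for any number field $\mathrm{E}'$.
\item Every open subgroup of $H$ has the form $\Aut(\C/\mathrm{E}')$ with $\mathrm{E}'\supset\mathrm{E}[\gamma]$ a finite extension. In particular such a subgroup has finite index in $H$ and contains the open normal subgroup $\Aut(\C/L)$, where $L$ is the Galois closure of $\mathrm{E}'$ over $\mathrm{E}[\gamma]$.
\end{itemize}

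\emph{Step 1: the fixed vectors are always $\mathrm{E}[\gamma]$-linearly disjoint.} I would show that the natural map $\C\otimes_{\mathrm{E}[\gamma]}V^{H}\to V$ is injective; this needs no hypothesis. Suppose not, and take a relation $\sum_i c_iv_i=0$ with $v_i\in V^H$ linearly independent over $\mathrm{E}[\gamma]$, with the number of terms minimal and $c_1=1$. Applying $\sigma\in H$ and subtracting gives the shorter relation $\sum_i(\sigma(c_i)-c_i)v_i=0$. By minimality all $\sigma(c_i)=c_i$, so $c_i\in\C^H=\mathrm{E}[\gamma]$, which contradicts independence. Hence $V^H$ is an $\mathrm{E}[\gamma]$-form of $V$ exactly when it spans $V$ over $\C$.

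\emph{Step 2: the direction ``form $\Rightarrow$ smooth''.} Every $v\in V^H$ has stabilizer in $\Aut(\C/\mathrm{E})$ equal to $H$, which is open, so $V^H\subset V^{\mathrm{sm}}$. If $V^H$ is a form, it spans $V$, so $V^{\mathrm{sm}}$ spans $V$.

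\emph{Step 3: the direction ``smooth $\Rightarrow$ form'', which I expect to be the main step.} By Step 1 it suffices to show that each smooth $v\in V$ lies in $\C\cdot V^H$.

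\begin{itemize}
\item Since the stabilizer $H_v$ is open in $H$, it has the form $\Aut(\C/\mathrm{E}')$ as above. Let $L$ be the Galois closure of $\mathrm{E}'$ over $\mathrm{E}[\gamma]$.
\item The orbit $Hv$ is finite, and every element of it is fixed by the normal subgroup $\Aut(\C/L)$.
\item Let $W'$ be the $L$-span of $Hv$. It is a finite-dimensional $L$-space carrying a semilinear action of $H/\Aut(\C/L)=\mathrm{Gal}(L/\mathrm{E}[\gamma])$.
\item Speiser's lemma (Hilbert 90 for $\GL$, \cite[Proposition 11.16]{Spr}) gives $W'=L\otimes_{\mathrm{E}[\gamma]}W'^{\mathrm{Gal}(L/\mathrm{E}[\gamma])}$.
\item Any vector of $W'$ fixed by $\mathrm{Gal}(L/\mathrm{E}[\gamma])$ is fixed by all of $H$, since $\Aut(\C/L)$ already fixes $W'$ pointwise. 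Hence $W'^{\mathrm{Gal}(L/\mathrm{E}[\gamma])}\subset V^H$ and $v\in\C\cdot V^H$.
\end{itemize}

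The delicate point is that $\Aut(\C/L)$ really acts trivially on $W'$, and not merely on the orbit $Hv$. This holds because elements of $\Aut(\C/L)$ act $L$-linearly, by sesquilinearity, and fix each element of the orbit, so they fix its $L$-span pointwise.

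Alternatively, one can reach the same conclusion through Lemma \ref{ratk}. Apply it with $\RE'=\mathrm{E}[\gamma]$, $V'=V^H$, to the $\C$-span of $V^H$ inside $V$, after first using the descent above to realize $V$ as $\C\otimes_{\mathrm{E}[\gamma]}V^H$. Together with Step 1 this shows $V^H$ is an $\mathrm{E}[\gamma]$-form of $V$, completing the equivalence.
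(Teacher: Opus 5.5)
Your proof is correct and follows essentially the route the paper indicates, namely Lemma \ref{ratk} combined with \cite[Proposition 11.16]{Spr}. Your Step 1 is the minimal-relation argument behind Lemma \ref{ratk} (linear disjointness of the fixed vectors), and Step 3 is the Galois descent of \cite[Proposition 11.16]{Spr}, applied after you reduce to a finite Galois extension $L/\mathrm{E}[\gamma]$; the closing ``alternatively'' paragraph presupposes the identification $V=\C\otimes_{\mathrm{E}[\gamma]}V^H$ it is meant to establish, so drop it, but the main argument does not depend on it.
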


Lemma \ref{rationalform0} easily implies the following result.

\begin{lemp}\label{sectionpi00}
    Suppose that  \eqref{actgamma2} is a   smooth sesquilinear action. If 
     $\dim V_\gamma=1$ for every $\gamma\in \Gamma$, then there exists an $\Aut(\C/\mathrm{E})$-equivariant section of the natural map 
    \[
    \bigsqcup_{\gamma\in \Gamma} (V_\gamma\setminus\{0\})\rightarrow \Gamma.
    \]
    Moreover,  
     such a section is unique up to rational multiples in the following sense: for any two such sections $$\varpi, \varpi': \Gamma \rightarrow   \bigsqcup_{\gamma\in \Gamma} (V_\gamma\setminus\{0\}),$$ the map
    \[
    \Gamma \rightarrow \C^\times, \quad \gamma \mapsto \frac{\varpi(\gamma)}{\varpi'(\gamma)} 
    \]
    is $\Aut(\C/\mathrm{E})$-equivariant. Here $\frac{\varpi(\gamma)}{\varpi'(\gamma)} \in \C^\times$ is specified by the equality
    \[
   \varpi(\gamma)= \frac{\varpi(\gamma)}{\varpi'(\gamma)} \cdot \varpi'(\gamma).
    \]
\end{lemp}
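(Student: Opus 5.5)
The plan is to prove the existence assertion by building the section one orbit at a time, and then to read off the uniqueness assertion from dimension one.

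Fix a set of representatives $\{\gamma_j\}_{j\in J}$ for the $\Aut(\C/\RE)$-orbits in $\Gamma$. Since the action is locally finite, the stabilizer of each $\gamma_j$ is open, and we have $\Aut(\C/\RE)_{\gamma_j}=\Aut(\C/\RE[\gamma_j])$.

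For a single representative $\gamma=\gamma_j$, apply Lemma \ref{rationalform0}. It says that $V_\gamma^{\Aut(\C/\RE[\gamma])}$ is an $\RE[\gamma]$-form of $V_\gamma$. Because $\dim V_\gamma=1$, this fixed-point set is a one-dimensional $\RE[\gamma]$-space, so we may choose a nonzero element $\phi_\gamma$ in it. Set $\varpi(\gamma):=\phi_\gamma$. Then extend along the orbit by
\[
\varpi({}^\sigma\gamma):={}^\sigma\phi_\gamma .
\]
This is well defined: if ${}^\sigma\gamma={}^{\tau}\gamma$, then $\tau^{-1}\sigma$ lies in the stabilizer of $\gamma$, hence fixes $\phi_\gamma$, and so ${}^\sigma\phi_\gamma={}^\tau\phi_\gamma$.

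The element ${}^\sigma\phi_\gamma$ is nonzero, because the action is sesquilinear and therefore invertible (with inverse given by the action of $\sigma^{-1}$). It lies in $V_{{}^\sigma\gamma}$, because the projection to $\Gamma$ is equivariant. Equivariance of $\varpi$ is immediate from the formula ${}^{\tau}\varpi({}^\sigma\gamma)=\varpi({}^{\tau\sigma}\gamma)$. Doing this on every orbit gives the desired section.

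For uniqueness, let $\varpi$ and $\varpi'$ be two equivariant sections, and write $c(\gamma)=\varpi(\gamma)/\varpi'(\gamma)$. Apply $\sigma$ to the identity $\varpi(\gamma)=c(\gamma)\varpi'(\gamma)$. Sesquilinearity and equivariance give
\[
\varpi({}^\sigma\gamma)=\sigma(c(\gamma))\,\varpi'({}^\sigma\gamma).
\]
Since $V_{{}^\sigma\gamma}$ is one-dimensional and $\varpi'({}^\sigma\gamma)\neq0$, this forces $c({}^\sigma\gamma)=\sigma(c(\gamma))$. That is precisely the statement that $c$ is equivariant.

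The only substantive input is Lemma \ref{rationalform0}, which supplies the nonzero fixed vector at each orbit representative. The rest is bookkeeping, and the one point needing care is the well-definedness check along each orbit.
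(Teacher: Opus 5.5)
Your proof is correct and follows essentially the same route as the paper, which says Lemma \ref{rationalform0} ``easily implies'' this result. You take a nonzero vector in the one-dimensional $\RE[\gamma]$-form at each orbit representative and transport it along the orbit, which is well defined because the stabilizer fixes that vector; uniqueness then follows from sesquilinearity together with $\dim V_\gamma=1$.
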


When $\Gamma$ is a singleton, the bundle $\bigsqcup_{\gamma\in \Gamma} V_\gamma$ is identified with a complex vector space. Thus the notions and terminologies developed in this subsection also apply  to $\Aut(\C/\mathrm{E})$ actions on complex vector spaces.

\subsection{Some sesquilinear $\Aut(\C)$-actions}\label{secsesq} 

Following \cite[pages 79--80]{H}
    and \cite[page 594]{Mah}, we define 
     \be\label{tnk}
\mathbf{t}_\sigma:=\mathbf{t}_{n, \sigma, \A} := {\rm diag}(t^{n-1}_{\sigma, \A}, \ldots, t_{\sigma, \A}, 1) \in \RG(\A_\mathrm f),
\ee
 and a sesquilinear action  
   \be \label{sigmaf000}
\Aut(\BC)\curvearrowright\Ind^{\RG(\A_\mathrm f)}_{{\rm N}(\A_\mathrm f)} \psi_{\RN,\mathrm f},\quad (\sigma, f)\mapsto \,^\sigma f
\ee
by
\be \label{sigmaf}
 {}^\sigma \! f(g):=\sigma\left( f(\mathbf{t}_\sigma^{-1}  g)  \right), \quad  g\in \RG(\A_\mathrm f).
\ee
It follows from \eqref{tsigma} that the above action is well-defined. One  easily verifies that the diagram 
    \[
\begin{CD}
   \Ind^{\RG(\A_\mathrm f)}_{{\rm N}(\A_\mathrm f)} \psi_{\RN,\mathrm f}@>\sigma >>\Ind^{\RG(\A_\mathrm f)}_{{\rm N}(\A_\mathrm f)} \psi_{\RN,\mathrm f}\\
    @V L_a VV @VV L_a V\\
    \Ind^{\RG(\A_\mathrm f)}_{{\rm N}(\A_\mathrm f)} \psi'_{\RN,\mathrm f}@>\sigma  >>\Ind^{\RG(\A_\mathrm f)}_{{\rm N}(\A_\mathrm f)} \psi'_{\RN,\mathrm f}
\end{CD}
\]
commutes for all  $\sigma\in \Aut(\C)$. Thus  the actions as in      \eqref{sigmaf000} yield a sesquilinear action  
   \be\label{actwhi00}
\Aut(\BC)\curvearrowright\CW_\mathrm f,\quad (\sigma, f)\mapsto \,^\sigma f. 
\ee
Note that the sesquilinear actions \eqref{sigmaf000} and \eqref{actwhi00} are not smooth. 

Similarly, define a sesquilinear action  
   \be \label{sigmaf0}
\Aut(\BC)\curvearrowright \,^{\mathrm{alg}}\Ind_{\RN(\rk\otimes_\BQ \C)}^{\RG(\rk\otimes_\BQ \C)}\C,\quad (\sigma, f)\mapsto \,^\sigma f
\ee
by
\be \label{sigmaF}
 {}^\sigma \! f(g):=\sigma\left( f(\sigma^{-1}.g)  \right), \quad g\in \RG(\rk\otimes_\BQ \C),
\ee
where $\Aut(\BC)$ acts on  $\RG(\rk\otimes_\BQ \C)$ through its natural action on $\C$. Note that this sesquilinear action is smooth. 
It is easily verified that the diagram 
    \[
\begin{CD}
  \,^{\mathrm{alg}}\Ind_{\RN(\rk\otimes_\BQ \C)}^{\RG(\rk\otimes_\BQ \C)}\C@>\sigma >>\,^{\mathrm{alg}}\Ind_{\RN(\rk\otimes_\BQ \C)}^{\RG(\rk\otimes_\BQ \C)}\C\\
    @V L_a VV @VV L_a V\\
    \,^{\mathrm{alg}}\Ind_{\RN(\rk\otimes_\BQ \C)}^{\RG(\rk\otimes_\BQ \C)}\C @>\sigma >>\,^{\mathrm{alg}}\Ind_{\RN(\rk\otimes_\BQ \C)}^{\RG(\rk\otimes_\BQ \C)}\C
\end{CD}
\]
commutes for all  $\sigma\in \Aut(\C)$. Thus the actions as in \eqref{sigmaf0} yield a smooth sesquilinear action  
   \[
\Aut(\BC)\curvearrowright\CF,\quad (\sigma, f)\mapsto \,^\sigma f. 
\]
The above action commutes with the action of $\RG(\rk)\subset\RG(\rk\otimes_{\BQ}\C)$, and hence it induces an action
 \be\label{actbetti000}
\Aut(\BC)\curvearrowright\RH^b(\CX),\quad (\sigma, \phi)\mapsto \,^\sigma \phi. 
\ee
This is also a smooth sesquilinear action.

\section{Franke's filtration}
In this section (and only in this section), we allow $\RG$ to be an arbitrary connected reductive linear algebraic group over $\rk$.  The main purpose of this section is to review Franke's filtration introduced in \cite{F} (see also \cite[Section 3]{G13}, \cite[Section 3]{LS}, and \cite[Section 3]{GG26}), which will be used in the next section.

\subsection{Generalized eigenspace decompositions}

Let $\RP=\RM \ltimes \RN$ be a parabolic subgroup of $\RG$ ($\RM$ is a Levi factor and $\RN$ is the unipotent radical, and similar convention will be used without explanation). Denote by \[
\CA_{\RP}(\RG):=\CA((\RM(\rk)\RN(\A))\backslash \RG(\A))
\]
the space of smooth automorphic forms on $(\RM(\rk)\RN(\A))\backslash \RG(\A)$. We have the usual identification
\[
\CA_{\RP}(\RG)=\Ind_{\RP(\A)}^{\RG(\A)} \CA(\RM)
\]
obtained by applying Frobenius reciprocity to the $(\RM(\rk)\RN(\A))$-invariant functional \[
\begin{array}{rcl}
\Ind_{\RP(\A)}^{\RG(\A)} \CA(\RM)&\rightarrow &\C,\\
\phi&\mapsto& \phi(1)(1),
\end{array}
\]
where $1\in \RM(\A)\subset \RG(\A)$ denotes the identity element. 

Define a complex vector space with a real form 
\[
   \a_{\RP,\C}:=\Hom(\Hom (\RM, \GL_1), \C)\supset \a_{\RP}:=\Hom(\Hom (\RM, \GL_1), \R),
\]
and a homomorphism
\be\label{homoa}
 \RM(\A)\rightarrow \a_{\RP},\quad g\mapsto \left(\chi\mapsto \log(\abs{\chi(g)}_\A)\right). 
\ee
By pullback through the above homomorphism, we identify 
$\C[\a_{\RP,\C}]$ (the space of polynomial functions) as a subspace of $\CA(\RM)$. 

Let $A_\RP$ denote the identity connected component of the real points of the largest central split torus 
in $\mathrm{Res}_{\rk/\BQ}\RM$ ($\mathrm{Res}_{\rk/\BQ}$ indicates the Weil restriction). By using the restriction of \eqref{homoa} to $A_\RP$, $\frak a_{\RP}$ is identified with  $A_\RP$. We also identify $A_\RP$ with its Lie algebra via  the exponential map, and hence $\frak a_{\RP}$ is also identified with the Lie algebra of $A_\RP$. As usual, the differential yields an  identification
\[
\Hom(A_\RP,\C^\times)=\check{\a}_{\RP,\C}
\]
so that every $\lambda\in \check{\a}_{\RP,\C}$ is identified with the character $e^\lambda\in \Hom(A_\RP,\C^\times)$. 
Here and henceforth, `$\check{\ }$' also indicates the dual space of a finite-dimensional vector space. In particular, $\check{\a}_{\RP,\C}$ is the dual space of ${\a}_{\RP,\C}$.

The group $A_\RP$ acts on $\CA(\RM)$ by
\[
  (a.\phi)(x):=\phi(ax), \quad a\in A_\RP, \,\phi\in \CA(\RM),\, x\in \RM(\A).
\]
With respect to this action, we have the generalized eigenspace decomposition
\[
\CA(\RM)=\bigoplus_{\lambda\in \check{\a}_{\RP,\C} } \CA(\RM)_\lambda,
\]
where $\CA(\RM)_\lambda$ is the generalized eigenspace with eigenvalue $\lambda$ (similar notation for generalized eigenspaces will be used without explanation). This induces a decomposition 
\be \label{decompapg1}
\CA_\RP(\RG)=\bigoplus_{\lambda\in \check{\a}_{\RP,\C} } \CA_\RP(\RG)_\lambda,\quad \textrm{where }\ \CA_\RP(\RG)_\lambda:=\Ind_{\RP(\A)}^{\RG(\A)} \CA(\RM)_\lambda.
\ee
For every $\phi\in \CA_\RP(\RG)$ and $\lambda\in \check{\a}_{\RP,\C}$, denote by $\phi_\lambda\in \CA_\RP(\RG)_\lambda$ the component of $\phi$ at $\lambda$ with respect to the decomposition \eqref{decompapg1}. 

\subsection{Cuspidal supports} \label{seccs}

For the convenience of the reader, in this subsection we will review the well-known decomposition of $\CA(\RG)$ in terms of the cuspidal representations of Levi subgroups of $\RG$. See \cite[Section 3]{F}, \cite[Section 1]{FS}, \cite[Chapter 15]{G23} and \cite{GZ} for more details. 

Recall that $\CA_{\mathrm{cusp}}(\RM)\subset \CA(\RM)$ denotes the subspace of smooth cusp forms. Denote by   $\CA_{\mathrm{eis}}(\RM)\subset \CA(\RM)$ the space of all $\phi\in \CA(\RM)$ such that
\[
   \int_{\RM(\rk)\backslash \RM(\A)^\circ} \phi(xg){\phi'(x)} \od\!x =0
\]
for all $\phi'\in \CA_{\mathrm{cusp}}(\RM)$ and $g\in \RM(\A)$. Here $\RM(\A)^\circ$ is the kernel of the homomorphism \eqref{homoa}, and $\od\!x$ is the $\RM(\A)^\circ$-invariant Borel measure of total volume 1.  
Then we have decompositions
\be\label{decompam1}
\CA(\RM)=\CA_{\mathrm{cusp}}(\RM)\oplus \CA_{\mathrm{eis}}(\RM)
\ee
and 
\be\label{decompam2}
\CA_{\mathrm{cusp}}(\RM)=\bigoplus_\Sigma (\C[\a_{\RP,\C}]\otimes \Sigma), 
\ee
where $\Sigma$ runs over all maximal primary  subrepresentations of $\CA_\mathrm{cusp}(\RM)$, and $\C[\a_{\RP,\C}]\otimes \Sigma$ is identified with a subspace of $\CA_\mathrm{cusp}(\RM)$ via the multiplication of functions. Here and as usual,  a representation is said to be primary if it  decomposes into a direct sum of irreducible subrepresentations that are all isomorphic  to each other.
It is known that the space \eqref{decompam2} is nonzero (\cf \cite[Theorem 1.1]{Mu10}), and when $\RM$ is a product of general linear groups, the multiplicity one theorem (see \cite{Sh74}) implies that all maximal primary  subrepresentations of $\CA_\mathrm{cusp}(\RM)$ are irreducible. 

The decompositions \eqref{decompam1} and \eqref{decompam2} induce a decomposition 
\be\label{decomautform}
\CA_{\RP}(\RG)=\left(\bigoplus_\Sigma  \CA_{\RP,\Sigma}(\RG)\right)\oplus \CA_{\RP, \mathrm{eis}}(\RG),
\ee
where 
\be\label{autp}
  \CA_{\RP,\Sigma}(\RG):=\Ind_{\RP(\A)}^{\RG(\A)} (\C[\a_{\RP,\C}]\otimes \Sigma)
  \ee
  and
  \[
  \CA_{\RP, \mathrm{eis}}(\RG):=\Ind_{\RP(\A)}^{\RG(\A)} (\CA_{\mathrm{eis}}(\RM)). 
  \]
 Recall the constant term map
 \[
 \CA(\RG)\rightarrow \CA_{\RP}(\RG), \quad \phi\mapsto \left(\phi_{\RP}: g\mapsto \int_{\RN(\rk)\backslash \RN(\A)} \phi(xg) \od\! x\right),
 \]
 where  $\od\!x$ is the $\RN(\A)$-invariant Borel measure of total volume 1. 
 Write 
  \[
  \CA(\RG)\rightarrow \CA_{\RP,\Sigma}(\RG), \quad \phi\mapsto \phi_{\RP, \Sigma}
  \]
  for the composition of 
  \[
    \CA(\RG)\xrightarrow{(\,\cdot\,)_\RP} \CA_{\RP}(\RG)\xrightarrow{\textrm{projection with respect to \eqref{decomautform}}} \CA_{\RP,\Sigma}(\RG).
  \]
It is known (see \cite[Lemma 3.7]{La2} and \cite[Theorem 4]{HC68}) that for every $\phi\in \CA(\RG)$, 
\be\label{cuspp}
\phi\neq 0\quad \textrm{ if and only if }\quad  \phi_{\RP,\Sigma}\neq 0 \textrm{ for some $\RP$ and $\Sigma$ as above}.
\ee

A cuspidal datum for $\RG$ is defined to be a $\RG(\rk)$-conjugacy class $[\Sigma]:=[(\RM,\Sigma)]$ of a pair $(\RM, \Sigma)$, where $\RM$ is a Levi subgroup of $\RG$ and $\Sigma\subset \CA_{\mathrm{cusp}}(\RM)$ is a maximal primary subrepresentation. 
For each $\phi\in \CA(\RG)$, its cuspidal support $\mathrm{c\text{-}Supp}(\phi)$ is defined to be the set of all cuspidal data $[(\RM, \Sigma)]$ for $\RG$ such that 
$\phi_{\RP,\Sigma}\neq 0$ for some parabolic subgroups $\RP$ of $\RG$ containing $\RM$ as a Levi factor. This definition is independent of the representative $(\RM,\Sigma)$ of the cuspidal datum $[(\RM, \Sigma)]$.

We have the well-known decomposition (see \cite[Theorem 1.4]{FS} and \cite{GZ})
\be\label{decautf}
  \CA(\RG)=\bigoplus_{[\Sigma]} \CA_{[\Sigma]}(\RG),
\ee
where $[\Sigma]$ runs over all cuspidal data for $\RG$, and 
\[
\CA_{[\Sigma]}(\RG):=\left\{\phi\in\CA(\RG)\,:\, \mathrm{c\text{-}Supp}(\phi)\subset \{[\Sigma]\}\right\}.
\]
We call $\CA_{[\Sigma]}(\RG)$ the $[\Sigma]$-component of $\CA(\RG)$. 
For every irreducible smooth automorphic representation $\Pi$ of 
$\RG(\A)$, there is a unique cuspidal datum $[(\RM, \Sigma)]$ for $\RG$ such that $\Pi$ is isomorphic to a subquotient of $\CA_{[\Sigma]}(\RG)$. 
We call this cuspidal datum $[(\RM, \Sigma)]$ the cuspidal support of $\Pi$.   

Applying \eqref{decautf} to $\RM$, we get a  decomposition 
\be\label{decautf2}
  \CA(\RM)=\bigoplus_{[\Sigma_0]_\RM} \CA_{[\Sigma_0]_\RM}(\RM),
\ee
where $[\Sigma_0]_\RM:=[(\RM_0, \Sigma_0)]_\RM$ runs over all cuspidal data for $\RM$. Here $\RM_0$ is a Levi subgroup of $\RM$, $\Sigma_0$ is a maximal primary subrepresentation of $\CA_{\mathrm{cusp}}(\RM_0)$, and $[(\RM_0, \Sigma_0)]_\RM$ is the $\RM(\rk)$-conjugacy class of $(\RM_0, \Sigma_0)$. 
By parabolic induction, \eqref{decautf2} induces a decomposition 
\be\label{decautf3}
  \CA_\RP(\RG)=\bigoplus_{[\Sigma_0]_\RM} \CA_{\RP,[\Sigma_0]_\RM}(\RG),\quad\textrm{where } \CA_{\RP,[\Sigma_0]_\RM}(\RG):=\Ind_{\RP(\A)}^{\RG(\A)}\CA_{[\Sigma_0]_\RM}(\RM). 
\ee

For every cuspidal datum $[\Sigma']$ for $\RG$, write 
\[
 \CA_{\RP,[\Sigma']}(\RG):=\bigoplus_{[\Sigma_0]_\RM\subset [\Sigma']} \CA_{\RP,[\Sigma_0]_\RM}(\RG)\subset \CA_\RP(\RG).
    \]
    Here the direct sum is taken  over all cuspidal data $[\Sigma_0]_\RM$ for $\RM$ such that $[\Sigma_0]_\RM\subset [\Sigma']$ (or equivalently, $[\Sigma_0]=[\Sigma']$). 

The following lemma is an immediate consequence of the definition of cuspidal supports. 
\begin{lemp}
   Let $[\Sigma']$ be a cuspidal datum for $\RG$.  Then for every $\phi\in \CA_{[\Sigma']}(\RG)$, its constant term
    \[
    \phi_\RP\in \CA_{\RP,[\Sigma']}(\RG).
    \]
\end{lemp}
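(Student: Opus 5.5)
The plan is to reduce to the definitions of cuspidal support and of the decompositions \eqref{decautf} and \eqref{decautf3}, using the compatibility of constant terms along a chain of parabolics. Let $\phi\in \CA_{[\Sigma']}(\RG)$. The constant term $\phi_\RP$ lies in $\CA_\RP(\RG)=\Ind_{\RP(\A)}^{\RG(\A)}\CA(\RM)$, which is identified with $\CA(\RM)$-valued functions on $\RG(\A)$. By \eqref{decautf3} we may write $\phi_\RP=\sum_{[\Sigma_0]_\RM}(\phi_\RP)_{[\Sigma_0]_\RM}$. It then suffices to show that every component with $[\Sigma_0]\neq[\Sigma']$ vanishes.

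First, fix $g\in\RG(\A)$ and consider the automorphic form $\phi^{g}:=\phi_\RP(g)\in\CA(\RM)$. Its $[\Sigma_0]_\RM$-component under \eqref{decautf2} is exactly $(\phi_\RP)_{[\Sigma_0]_\RM}(g)$, because the projections in \eqref{decautf2} are $\RM(\A)$-equivariant and the identification defining $\Ind$ is pointwise. By the definition of $\CA_{[\Sigma_0]_\RM}(\RM)$ and \eqref{cuspp} applied to $\RM$, this component vanishes as soon as the cuspidal support of $\phi^g$, as a form on $\RM$, does not contain $[\Sigma_0]_\RM$.

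Second, we compute the cuspidal support of $\phi^g$. Let $\RQ_0=\RM_0\ltimes\RN_0'$ be a parabolic subgroup of $\RM$ with Levi factor $\RM_0$. Then $\RQ:=\RM_0\ltimes(\RN_0'\RN)$ is a parabolic subgroup of $\RG$ contained in $\RP$. Transitivity of constant terms (Fubini on $\RN_{\RQ}(\rk)\backslash\RN_{\RQ}(\A)$, fibered over $\RN(\rk)\backslash\RN(\A)$) gives $(\phi^g)_{\RQ_0}(m)=\phi_{\RQ}(mg)$. Applying the cuspidal projection to $\Sigma_0$ on $\RM_0$, we get $(\phi^g)_{\RQ_0,\Sigma_0}\neq0$ only if $\phi_{\RQ,\Sigma_0}\neq0$. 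In that case $[(\RM_0,\Sigma_0)]\in\mathrm{c\text{-}Supp}(\phi)\subset\{[\Sigma']\}$. Hence every cuspidal datum $[\Sigma_0]_\RM$ in the support of $\phi^g$ satisfies $[\Sigma_0]=[\Sigma']$. Combined with the first step, this gives $\phi_\RP\in\CA_{\RP,[\Sigma']}(\RG)$.

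The only delicate point I expect is the bookkeeping in the second step. One must check that the projection $\phi\mapsto\phi_{\RQ,\Sigma_0}$ on $\RG$ agrees with the projection computed inside $\RM$ after evaluating at $g$. This holds because both come from the same decomposition \eqref{decomautform} of $\CA(\RM_0)$, applied pointwise, so the argument is essentially formal.
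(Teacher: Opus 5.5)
Your argument is correct, and it is exactly the unwinding of definitions that the paper means when it calls the lemma an immediate consequence of the definition of cuspidal supports: transitivity of constant terms through $\mathrm{Q}=\mathrm{Q}_0\RN\subset\RP$, the definition of $\mathrm{c\text{-}Supp}$, and \eqref{cuspp} applied to $\RM$. One bookkeeping correction: under the normalized-induction identification $\CA_\RP(\RG)=\Ind_{\RP(\A)}^{\RG(\A)}\CA(\RM)$ one has $(\phi^g)_{\mathrm{Q}_0}(m)=\delta_\RP^{-1/2}(m)\,\phi_{\mathrm{Q}}(mg)$, not $\phi_{\mathrm{Q}}(mg)$; the modular-character identity $\delta_{\mathrm{Q}}|_{\RM_0(\A)}=\delta_{\mathrm{Q}_0}\cdot\delta_\RP|_{\RM_0(\A)}$ makes the $\Sigma_0$-labels on the two sides agree, so your conclusion stands.
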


\subsection{Almost square-integrable automorphic forms and Franke's filtration}

Define the space of square-integrable automorphic forms on $\RG(\rk)\backslash \RG(\A)$ by 
\[
\CA^2(\RG):=\{\phi\in \CA(\RG)\,:\, \phi((\,\cdot\,) g)|_{\RG(\rk)\backslash \RG(\A)^\circ}\in \oL^2(\RG(\rk)\backslash \RG(\A)^\circ)\textrm{ for all $g\in \RG(\A)$}\}. 
\]
Here the space $\oL^2(\RG(\rk)\backslash \RG(\A)^\circ)$ is defined with respect to the  $\RG(\A)^\circ $-invariant Borel measure with total volume 1. Similarly, define $\CA^{\bar 2}(\RG)$ to be the space of all $\phi\in \CA(\RG)$ such that 
\[
\phi((\,\cdot\,) g)|_{\RG(\rk)\backslash \RG(\A)^\circ}\in \oL^{2-\epsilon}(\RG(\rk)\backslash \RG(\A)^\circ)\ \textrm{ for all $0<\epsilon<2$ and $g\in \RG(\A)$}. 
\]
Such a function $\phi\in\CA^{\bar 2}(\RG)$ is called an almost square-integrable 
automorphic form on $\RG(\rk)\backslash \RG(\A)$. 

To proceed further, we introduce some more notation. Recall that $\RP=\RM\ltimes \RN$ is a parabolic subgroup of $\RG$. Given a maximal primary subrepresentation $\Sigma\subset \CA_{\mathrm{cusp}}(\RM)$, its exponent, to be denoted by $\mathrm{ep}(\Sigma)$, is the unique element  $\lambda\in \check \a_\RP$ such that $\Sigma\otimes e^{-\lambda}$ is unitarizable. Here $e^{-\lambda}$, which is obviously a character of $\a_\RP$, is viewed as a character of $\RM(\A)$ via the pullback through \eqref{homoa}. We remark   that the exponent defined here is consistent with the one defined in   \eqref{defep} for general linear groups (in the case when $\RP=\RG=\GL_{n'}$ so that $\check \a_\RP=\R$). 

 Define a subset of $\check  \a_\RP$ by 
\[
  ^+\!\check \a_{\RP}:=\begin{cases}
      \left \{\sum_\alpha x_\alpha \alpha\,:\, \textrm{each $x_\alpha$ is a positive real number} \right \} , &\textrm{if } \RP\neq \RG; \\
      \{0\}, &\textrm{if } \RP=\RG,
  \end{cases}
\]
where $\alpha$ runs through all roots of $\RN(\rk_\infty)$ with respect to $A_\RP$. Define $\overline{\,^+\!\check \a_{\RP}}$ to be the closure  of $^+\!\check \a_{\mathsf P}$ in $\check \a_\RP$. 

Suppose that $\RP'=\RM'\ltimes \RN'\supset \RP$ is a parabolic subgroup of $\RG$, with $\RM'\supset \RM$. Then the inclusion  homomorphism $\RM\rightarrow \RM'$ induces a surjective linear map
\be\label{surj5}
 \a_{\RP}\twoheadrightarrow \a_{\RP'}, 
\ee
and the inclusion  homomorphism $A_{\RP'}\rightarrow A_\RP$ induces an injective  linear map
\be\label{inj5}
 \a_{\RP'}\hookrightarrow \a_{\RP}.
\ee
Note that \eqref{inj5} is a section of \eqref{surj5}, and by using these two maps we write 
\[
  \a_{\RP}=\a_{\RP'}\oplus \a_{\RP}^{\RP'}, \quad \textrm{where $\a_{\RP}^{\RP'}$ is the kernel of \eqref{surj5} }. 
\]
By taking the dual spaces, we have that
\[
  \check \a_{\RP}=\check \a_{\RP'}\oplus \check \a_{\RP}^{\RP'}, \quad \textrm{where $\check  \a_{\RP}^{\RP'}$ is the dual space  of $\a_{\RP}^{\RP'}$ }. 
\]
Note that $^+\!\check \a_{\mathsf P}\subset \check \a_\RP^\RG$. 

\begin{lemp}
    Let $\phi\in \CA(\RG)$. Then the followings are equivalent to each other.
    \begin{itemize}
        \item[(a)] $\phi\in \CA^2(\RG)$.
        \item[(b)] For every proper parabolic subgroup $\RP=\RM\ltimes \RN$ of $\RG$ and every $\lambda\in \check \a_{\RP,\C}$
    \[
      (\phi_\RP)_\lambda\neq 0\Rightarrow \Re(\lambda)\in \check \a_\RG - {}^+\!{\check \a}_\RP\qquad(\textrm{$\mathrm{Re}$ indicates the real part}).
    \]
    \item[(c)]
    For every proper parabolic subgroup $\RP=\RM\ltimes \RN$ of $\RG$ and every maximal primary subrepresentation $\Sigma\subset \CA_{\mathrm{cusp}}(\RM)$, 
    \[
      \phi_{\RP,\Sigma}\neq 0\Rightarrow \mathrm{ep}(\Sigma)\in \check \a_\RG- {}^+\!{\check \a}_\RP.
    \]
    \end{itemize}
\end{lemp}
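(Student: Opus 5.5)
The plan is to reduce all three conditions to statements about the cuspidal components of constant terms. The tool is Langlands' square-integrability criterion, in the form via Franke/Moeglin–Waldspurger (\cite[Lemma I.4.11]{MW} type statement): an automorphic form $\phi$ is square-integrable modulo the center exactly when its cuspidal exponents satisfy the strict negativity condition. Concretely, $\phi$ lies in $\CA^2(\RG)$ if and only if, for every proper standard $\RP$ and every $\lambda$ occurring in the cuspidal part of $\phi_\RP$, the projection $\Re(\lambda)$ to $\check\a_\RP^\RG$ lies in $-{}^+\!\check\a_\RP$. Since $\check\a_\RP=\check\a_\RG\oplus\check\a_\RP^\RG$, this is the same as $\Re(\lambda)\in\check\a_\RG-{}^+\!\check\a_\RP$. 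I would first record this criterion. I would also check that restricting to the cuspidal part involves no loss: the exponents of the full constant term $\phi_\RP$ along $A_\RP$ are controlled by the cuspidal exponents on smaller parabolics $\RP_0\subset\RP$, via the restriction map $\check\a_{\RP_0}\to\check\a_\RP$ (dual to \eqref{inj5}).

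For (a)$\Leftrightarrow$(c), I would use the decomposition \eqref{decomautform}. On $\CA_{\RP,\Sigma}(\RG)=\Ind(\C[\a_{\RP,\C}]\otimes\Sigma)$, the group $A_\RP$ acts through the single generalized eigenvalue $\mathrm{ep}(\Sigma)+i\mu$, with $\mu$ real. Polynomials only contribute nilpotent parts, and the unitary twist contributes an imaginary part. Hence $\phi_{\RP,\Sigma}\ne0$ exactly when a cuspidal exponent with real part $\mathrm{ep}(\Sigma)$ occurs, and the criterion above gives (a)$\Leftrightarrow$(c) directly.

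For (a)$\Rightarrow$(b), I would take $\lambda$ with $(\phi_\RP)_\lambda\neq0$. By \eqref{cuspp} applied to $\RM$ and the form $(\phi_\RP)_\lambda$, there is a parabolic $\RP_0\subset\RP$ with a cuspidal component of $\phi_{\RP_0}$ whose exponent $\lambda_0$ restricts to $\lambda$. Transitivity of constant terms is used here. By (c), $\Re(\lambda_0)\in\check\a_\RG-{}^+\!\check\a_{\RP_0}$. Projecting to $\check\a_\RP$ sends each root of $\RN_0$ to a nonnegative combination of the roots of $\RN$, and this combination is nonzero on the roots not in $\RM$. I would then need that the projection of ${}^+\!\check\a_{\RP_0}$ lands in ${}^+\!\check\a_\RP$. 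This follows because the projection of a positive combination of all simple roots of $\RP_0$ has positive coefficients on the simple roots of $\RP$. For (b)$\Rightarrow$(c), I would take $\lambda=\mathrm{ep}(\Sigma)+i\mu$ from a nonzero $\phi_{\RP,\Sigma}$. That component sits inside $(\phi_\RP)_\lambda$, which is therefore nonzero, so (b) applies.

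The main obstacle is the square-integrability criterion itself, in particular the implication from the exponent condition to $\oL^2$. This step needs Langlands' estimates on Siegel sets via the approximation $\phi\approx\sum_\RP\phi_\RP$ near the cusps. I would quote it from Moeglin–Waldspurger rather than reprove it. A second point needs care: the projection argument above requires that positivity passes from $\RP_0$ to $\RP$, including the nondegenerate case where only $\RP_0$-roots lying inside $\RM$ are involved.
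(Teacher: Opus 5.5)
Your proposal is correct and follows essentially the paper's route: (a)$\Leftrightarrow$(c) is quoted from \cite[Lemma I.4.11]{MW}, and (b)$\Leftrightarrow$(c) comes from the analog of \eqref{cuspp} for $\CA_\RP(\RG)$, which you make explicit through transitivity of constant terms. Your other ingredient is that restriction $\check\a_{\RP_0}\to\check\a_\RP$ carries ${}^+\!\check\a_{\RP_0}$ into ${}^+\!\check\a_\RP$; the degenerate case you worry about cannot occur, because elements of ${}^+\!\check\a_{\RP_0}$ have strictly positive coefficients on every root of $\RN_0$, including those lying in $\RN$.
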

\begin{proof}
     The equivalence of (b) and (c) is easily verified by using the analog of \eqref{cuspp} for $\phi\in \CA_\RP(\RG)$, while the equivalence of (a) and (c) is proved in \cite[Lemma I.4.11]{MW}.
\end{proof}

Similarly, the proof of \cite[Lemma I.4.11]{MW} also shows the following result.
\begin{lemp}
    Let $\phi\in \CA(\RG)$. Then the followings are equivalent to each other. \begin{itemize}
        \item[(a)] $\phi\in \CA^{\bar 2}(\RG)$.
        \item[(b)] For every proper parabolic subgroup $\RP=\RM\ltimes \RN$ of $\RG$ and every $\lambda\in \check \a_{\RP,\C}$
    \[
      (\phi_\RP)_\lambda\neq 0\Rightarrow \Re(\lambda)\in \check \a_\RG-\overline{^+\!{\check \a}_\RP}. 
    \]
    \item[(c)]
    For every proper parabolic subgroup $\RP=\RM\ltimes \RN$ of $\RG$ and every maximal primary subrepresentation $\Sigma\subset \CA_{\mathrm{cusp}}(\RM)$, 
    \[
      \phi_{\RP,\Sigma}\neq 0\Rightarrow \mathrm{ep}(\Sigma)\in \check \a_\RG -\overline{^+\!{\check \a}_\RP}.
    \]
    \end{itemize}
\end{lemp}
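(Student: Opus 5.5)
The plan is to run the proof of \cite[Lemma I.4.11]{MW} with the strict inequalities defining $\CA^2(\RG)$ replaced by non-strict ones, which turns $L^2$ into $L^{2-\epsilon}$ for every $\epsilon>0$.

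\textbf{(b)$\Leftrightarrow$(c).} This goes exactly as in the previous lemma. Apply the analog of \eqref{cuspp} to $\phi_\RP\in\CA_\RP(\RG)$; together with the transitivity of constant terms, it reduces the exponents of $\phi_\RP$ to the cuspidal exponents along parabolics $\RP_0\subset\RP$.

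For $\Sigma\subset\CA_{\rm cusp}(\RM)$, the generalized eigencharacters of $\C[\a_{\RP,\C}]\otimes\Sigma$ have real part $\mathrm{ep}(\Sigma)$. Hence (b) restricted to cuspidal components is precisely (c).

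For the converse direction one needs the following. If $\mathrm{ep}(\Sigma_0)\in\check\a_\RG-\overline{^+\check\a_{\RP_0}}$ for all cuspidal data $(\RP_0,\Sigma_0)$ occurring in $\phi$, then every exponent $\lambda$ of $\phi_\RP$ satisfies $\Re\lambda\in\check\a_\RG-\overline{^+\check\a_\RP}$. This follows because the exponents of $\phi_\RP$ are the restrictions to $\a_\RP$ of exponents of $\phi_{\RP_0}$ for $\RP_0\subset\RP$. The projection of $\overline{^+\check\a_{\RP_0}}$ onto $\check\a_\RP$ lands in $\overline{^+\check\a_\RP}$, since positive roots restrict to nonnegative combinations. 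Closedness is preserved under these projections.

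\textbf{(a)$\Leftrightarrow$(b).} Use reduction theory: cover $\RG(\rk)\backslash\RG(\A)^\circ$ by Siegel sets $\mathfrak S=\omega A_{\RP_0}(t)K$. Near each cusp, $\phi$ is approximated by its constant terms along the standard parabolics $\RP\supset\RP_0$, up to rapidly decreasing error. So $\phi$ is asymptotically a finite sum of terms
\[
a^{\lambda+\rho_\RP}\,p(\log a)
\]
on $A_\RP$, with $p$ polynomial.

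With the Haar measure $a^{-2\rho_\RP}\,da$ on the Siegel domain, the integral $\int|\phi|^{q}$ for $q=2-\epsilon$ converges if and only if
\[
q\,\Re(\lambda+\rho_\RP)-2\rho_\RP<0
\]
in the relevant cone, modulo $\check\a_\RG$. For this to hold for all $q<2$, the requirement is $\Re\lambda\le0$ against the appropriate cone, i.e.\ $\Re\lambda\in\check\a_\RG-\overline{^+\check\a_\RP}$. The polynomial factors are harmless because the inequality is strict once $q<2$. Conversely, the boundary case $\Re\lambda\in\partial(\ldots)$ is not $L^2$ but is $L^{q}$ for all $q<2$.

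\textbf{Main obstacle.} The hard step is the necessity direction: showing that a violating exponent genuinely forces divergence, i.e.\ that no cancellation can occur between exponents and that the truncation estimates are uniform. I would follow MW's argument verbatim, which handles this via linear independence of the functions $a^\lambda p(\log a)$ and an induction on parabolic rank, with $2$ replaced by $q$.
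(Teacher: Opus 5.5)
Your proposal is correct and follows the paper's route: (b)$\Leftrightarrow$(c) comes from the analog of \eqref{cuspp} for $\CA_\RP(\RG)$ together with transitivity of constant terms, and the analytic equivalence comes from rerunning the proof of \cite[Lemma I.4.11]{MW} with $2$ replaced by $q=2-\epsilon$, where the fact that $(q-2)\rho_\RP\in -{}^+\!\check\a_\RP$ is what turns the open cone into its closure. One adjustment: MW's argument is phrased with cuspidal exponents, so it gives (a)$\Leftrightarrow$(c), since near the $\RP$-cusp the $\RM$-variable is absorbed by a rapidly decreasing cusp form; your reduction-theory sketch should be read that way and then combined with (b)$\Leftrightarrow$(c), which is exactly how the paper argues.
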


Write 
\[
\CA^2_\RP(\RG):=\Ind_{\RP(\A)}^{\RG(\A)} \CA^2(\RM)\subset \CA^{\bar 2}_\RP(\RG):=\Ind_{\RP(\A)}^{\RG(\A)} \CA^{\bar 2}(\RM)\subset \CA_\RP(\RG).
\]
Let   $[\Sigma]$ be a cuspidal datum for $\RG$ as before. For every parabolic subgroup $\RP'$ of $\RG$, write 
\[
  \CA^2_{\RP',[\Sigma]}(\RG):= \CA^2_{\RP'}(\RG)\cap  \CA_{\RP',[\Sigma]}(\RG)\subset  \CA^{\bar 2}_{\RP',[\Sigma]}(\RG):= \CA^{\bar 2}_{\RP'}(\RG)\cap  \CA_{\RP',[\Sigma]}(\RG).
\]
Set 
\[
   \CA^{2,\mathrm{im}}_{\RP',[\Sigma]}(\RG):= \CA^2_{\RP',[\Sigma]}(\RG)\cap \left(\bigoplus_{\lambda\in \check \a_{\RG,\C}+\sqrt{-1}\cdot \check \a_{\RP'}^\RG} \CA_{\RP'}(\RG)_\lambda \right).
\]

Fix a minimal parabolic subgroup $\RP_0=\RM_0\ltimes \RN_0$ of $\RG$. Let
 $\mathscr P$ denote the set of standard parabolic subgroups of $\RG$, namely, parabolic subgroups containing $\RP_0$. We view $\mathscr P$ as a category such that a morphism from  $\RP=\RM\ltimes \RN\in \mathscr P$ to $\RP'=\RM'\ltimes \RN'\in \mathscr P$ ($\RM,\RM'\supset \RM_0$) is an element of 
 \[
   \RM'(\rk)\backslash \{w\in \RG(\rk)\,:\, w \RM w^{-1}=\RM'\}/\RM(\rk), 
 \]
and the identity morphism and the composition of morphisms are obviously defined. Then the assignment $\RP'\mapsto \CA^{2,\mathrm{im}}_{\RP',[\Sigma]}(\RG)$, together with the intertwining operators, forms a functor from $\mathscr P$ to the category of representations of $\RG(\A)$ (see \cite[Section 6]{La2}). The theory of Eisenstein series and intertwining operators then yield a homomorphism (see \cite[Section 5.2]{F} and \cite{BL24, Lap08})
\be\label{eis0}
\varinjlim_{\RP'\in \mathscr P} \CA^{2,\mathrm{im}}_{\RP',[\Sigma]}(\RG)\rightarrow \CA(\RG). 
\ee
The following lemma is a consequence of \cite[Theorem 14]{F} and \cite[Theorem 1.4, Remark 3.4]{FS}.

 \begin{lemp}\label{lembar2}
 The homomorphism \eqref{eis0} induces an isomorphism 
    \[
    \varinjlim_{\RP'\in \mathscr P} \CA^{2,\mathrm{im}}_{\RP',[\Sigma]}(\RG)\cong \CA^{\bar 2}_{[\Sigma]}(\RG).
    \]
    \end{lemp}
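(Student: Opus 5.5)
The plan is to derive the lemma from Franke's description of $\CA(\RG)$ by a filtration whose graded pieces are spaces of (derivatives of) Eisenstein series. We then identify $\CA^{\bar 2}_{[\Sigma]}(\RG)$ with the bottom step of that filtration, the one whose parameters have real part zero.

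First, fix the cuspidal datum $[\Sigma]$ and use \cite[Theorem 1.4]{FS}. It shows that Franke's filtration of $\CA(\RG)$ is compatible with the decomposition \eqref{decautf}, so it restricts to a filtration of $\CA_{[\Sigma]}(\RG)$. By \cite[Theorem 14]{F}, the graded pieces of this filtration are indexed by standard parabolics $\RP'\in\mathscr P$ and real parameters $\mu\in\overline{^+\!\check\a_{\RP'}}$ (up to the appropriate Weyl group equivalence). Each graded piece is isomorphic to a space of Eisenstein series, or of their derivatives in the parameter, built from $\CA^2_{\RP',[\Sigma]}(\RG)$ with exponents in $\mu+\check\a_{\RG,\C}+\sqrt{-1}\,\check\a^{\RG}_{\RP'}$. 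The bottom step is the one with $\mu=0$. By \cite[Section 5.2]{F} together with \cite[Remark 3.4]{FS}, which handles the passage to the direct limit over $\mathscr P$ via intertwining operators, the bottom step is precisely the image of \eqref{eis0}, and \eqref{eis0} is injective onto it. So it remains to show that this bottom step equals $\CA^{\bar 2}_{[\Sigma]}(\RG)$.

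For the inclusion of the image of \eqref{eis0} in $\CA^{\bar 2}_{[\Sigma]}(\RG)$, take $\phi\in\CA^{2,\mathrm{im}}_{\RP',[\Sigma]}(\RG)$ and compute the constant term along a proper standard $\RP$ of its Eisenstein series, regularized at the imaginary parameter as in \cite{F}. This constant term is a sum of terms of the shape (intertwining operators applied to constant terms of square-integrable forms on $\RM'$) times polynomials. Their exponents have real parts of the form $w(\lambda_0)$, with $\lambda_0$ the real part of an exponent of a square-integrable form (so in $\check\a_\RG-{}^+\!\check\a$ by the square-integrability criterion) and with the imaginary parameter contributing no real part. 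After Weyl conjugation these lie in $\check\a_\RG-\overline{^+\!\check\a_\RP}$, so condition (b) of the almost-square-integrability lemma holds.

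For the converse, let $\phi\in\CA^{\bar 2}_{[\Sigma]}(\RG)$ and suppose $\phi$ is not in the bottom step. Choose the largest filtration index in which $\phi$ has a nonzero image, corresponding to some $(\RP',\mu)$ with $\mu\neq 0$, $\mu\in\overline{^+\!\check\a_{\RP'}}$. The leading term of $\phi_{\RP'}$ then has an exponent with real part $\mu$ plus a real part from $\check\a_\RG$. This is the term $\varphi\, e^{\lambda+\rho}$ of the constant term of $E(\varphi,\lambda)$, possibly times a polynomial coming from derivatives. Since $\mu$ is nonzero and dominant, it does not lie in $\check\a_\RG-\overline{^+\!\check\a_{\RP'}}$, which contradicts criterion (b). I expect the main obstacle to be this step. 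One must show that the leading exponent term really survives in $\phi_{\RP'}$ and is not cancelled by contributions from lower filtration pieces or from other Weyl translates. I would handle this as Franke does, by passing to the graded piece and using the generalized $A_{\RP'}$-eigenspace decomposition \eqref{decompapg1}: exponents from other pieces have strictly smaller real parts in the ordering used to define the filtration, so they cannot cancel the $\mu$-eigencomponent. Combining the two inclusions with the injectivity from \cite[Theorem 14]{F} gives the isomorphism.
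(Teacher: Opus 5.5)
Your proposal takes the same route as the paper, whose entire proof is the remark that the lemma follows from Franke's Theorem 14 \cite{F} together with \cite[Theorem 1.4, Remark 3.4]{FS}; what you add is an explicit identification, via the exponent criterion, of the bottom ($\Re\lambda=0$) step of Franke's filtration with $\CA^{\bar 2}_{[\Sigma]}(\RG)$, which the paper leaves inside the citation. One justification in that identification needs fixing: in the inclusion into $\CA^{\bar 2}_{[\Sigma]}(\RG)$, the real parts landing in $\check\a_\RG-\overline{{}^+\!\check\a_\RP}$ does not follow from Weyl conjugation as such, since Weyl conjugation does not preserve the negative cone; it follows because each $w$ in the constant-term formula sends the roots of the unipotent radical of $\RM'\cap w^{-1}\RP w$ (along which you take the exponents of the square-integrable form) into the unipotent radical of $\RP$.
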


 For each $\RP'\in \mathscr P$, denote by $\Aut_{\mathscr P}(\RP')$ the automorphism group of the object $\RP'$ in the category $\mathscr P$.  
As a consequence of Lemma \ref{lembar2}, we have that 
    \be\label{dec2bar}
    \bigoplus_{[\RP']\subset \mathscr P} \CA^{2,\mathrm{im}}_{\RP',[\Sigma]}(\RG)_{\Aut_{\mathscr P}(\RP')}\cong \CA^{\bar 2}_{[\Sigma]}(\RG),
    \ee
    where $[\RP']$ denotes the isomorphism  class of $\RP'$ in the category $\mathscr P$, and the subscript group `$\Aut_{\mathscr P}(\RP')$' indicates the coinvariant space.



By using the Killing form of the  Lie algebra of $\RG(\rk_\infty)$, the space $\a_\RP^\RG$ is naturally an inner product space, and hence its dual space $\check \a_\RP^\RG$ is also an inner product space. The inner product of the latter space is denoted by $\la\,\cdot,\cdot\,\ra_{\check \a_\RP^\RG}$.
Set
\[
\check \a_\RP^+:=\check \a_\RG+\{x\in \check \a_\RP^\RG\,:\, \la x,y\ra_{\check \a_\RP^\RG}>0 \textrm{ for all }y\in \overline{^+\!\check \a_\RP}\setminus \{0\}\}. 
\]
Write $\rho_0^\vee\in \a_{\RP_0}^\RG$ for the element such that
\[
\la \rho_0^\vee, \alpha\ra=1\ \ \textrm{ for all $\alpha\in \check \a_{\RP_0}^\RG$ that is a simple root with respect to $\RN_0(\rk_\infty)$}.
\]

We formulate Franke's filtration as in the following proposition. It is essentially proved in \cite[Section 6]{F} (see also \cite[Theorem 3.3]{LS}, and 
a similar result in \cite[Section 3]{G13}).

\begin{prp}\label{franke}
    There exists a family 
   \[ \{  \CA_{[\Sigma]}(\RG)_{[\leq t]}\}_{t\in \R} 
   \]
   of subrepresentations of $\CA_{[\Sigma]}(\RG)$ with the following properties: 
   \begin{itemize}
       \item[(a)] $\CA_{[\Sigma]}(\RG)_{[\leq t]}=0$ whenever $t<0$;
       \item[(b)] $\CA_{[\Sigma]}(\RG)_{[\leq t_1]}\subset \CA_{[\Sigma]}(\RG)_{[\leq t_2]}$ whenever $t_1< t_2$;
       \item[(c)] $\bigcup_{t\in \R}\CA_{[\Sigma]}(\RG)_{[\leq t]}=\CA_{[\Sigma]}(\RG)$; 
       \item[(d)] the set $\{  \CA_{[\Sigma]}(\RG)_{[\leq t]}\,:\,t\in \R\}$ is finite;  
       \item[(e)] 
       for every $t\in \R$, 
        the map \[
        \begin{array}{rcl}
      \CA_{[\Sigma]}(\RG)_{[\leq t]}&\rightarrow & \bigoplus_{\RP'\in \mathscr P} \CA_{\RP'}(\RG),\smallskip \\
        \phi&\mapsto & \sum_{\RP'\in \mathscr P}  \sum_{\lambda\in \check \a_{\RP',\C}, \,\Re(\lambda)\in \check \a_{\RP'}^+,\,\la \Re(\lambda), \rho_0^\vee\ra=t}(\phi_{\RP'})_\lambda
   \end{array}
\]
induces an isomorphism 
\be\label{isogr}
  \CA_{[\Sigma]}(\RG)_{[t]}\cong  \bigoplus_{\RP'\in \mathscr P} \, \bigoplus_{\lambda\in \check \a_{\RP',\C}, \,\Re(\lambda)\in \check \a_{\RP'}^+,\,\la \Re(\lambda), \rho_0^\vee\ra=t}\CA_{\RP',[\Sigma]}^{\bar 2}(\RG)_\lambda
\ee
of  representations of $\RG(\A)$, where
\[
\CA_{[\Sigma]}(\RG)_{[t]}:=\frac{\CA_{[\Sigma]}(\RG)_{[\leq t]}}{\CA_{[\Sigma]}(\RG)_{[ <t]}}, \quad \textrm{and }\  \CA_{[\Sigma]}(\RG)_{[< t]}:=\bigcup_{t'\in \R, t'<t} \CA_{[\Sigma]}(\RG)_{[\leq t']}. 
\]
  \end{itemize}
\end{prp}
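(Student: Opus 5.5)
The plan is to construct the filtration directly from the constant terms, following \cite[Section 6]{F}, and then to identify the graded pieces via Lemma \ref{lembar2}. For $\phi\in \CA_{[\Sigma]}(\RG)$ I define its \emph{height} $t(\phi)$ as the maximum of $\la \Re(\lambda),\rho_0^\vee\ra$ over all pairs $(\RP',\lambda)$ with $\RP'\in\mathscr P$, $(\phi_{\RP'})_\lambda\neq 0$ and $\Re(\lambda)\in \check\a^+_{\RP'}$. If no such pair exists, I set $t(\phi)=0$. Then I put
\[
\CA_{[\Sigma]}(\RG)_{[\leq t]}:=\{\phi\in \CA_{[\Sigma]}(\RG)\,:\, t(\phi)\leq t\}.
\]
This is $\RG(\A)$-stable, because constant terms and the generalized eigenspace projections \eqref{decompapg1} are $\RG(\A)$-equivariant. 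Property (b) is immediate from the definition.

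Properties (a), (c) and (d) come from finiteness of exponents. For $\phi$ in the $[\Sigma]$-component, the lemma on constant terms shows that $\phi_{\RP'}\in \CA_{\RP',[\Sigma]}(\RG)$. Hence the exponents $\lambda$ that can occur are, up to the imaginary parts coming from $\check\a_\RG$, restrictions of Weyl-conjugates of $\mathrm{ep}(\Sigma)+\check\a_{\RP}^{\RG}$-directions built from the cuspidal datum. The real parts therefore lie in a finite set, which gives (d) and (c). For (a), the condition $\Re(\lambda)\in\check\a^+_{\RP'}$ says that the component of $\Re(\lambda)$ in $\check\a^{\RG}_{\RP'}$ pairs positively with the positive cone. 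Since $\rho_0^\vee$ lies in the dual positive chamber, this forces the $\check\a_{\RP'}^\RG$-part to pair nonnegatively with $\rho_0^\vee$. One has to normalize so that the $\check\a_\RG$-part does not contribute; I would replace $\Re(\lambda)$ by its projection to $\check\a_{\RP'}^\RG$ (note $\rho_0^\vee\in\a_{\RP_0}^\RG$ already kills $\check\a_\RG$). With this, $t(\phi)\geq 0$ for all $\phi$ and (a) follows.

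The substantive step is (e). The map in (e) sends $\CA_{[\Sigma]}(\RG)_{[<t]}$ to zero by definition, so it descends to the graded quotient, and it is injective there, again by definition of the height. The real work is to show two things: that each projected constant term lands in $\CA^{\bar2}_{\RP',[\Sigma]}(\RG)_\lambda$, and that the map is surjective. For the first, I will apply the criterion for almost square-integrability (the lemma preceding Lemma \ref{lembar2}) to the $\RM'$-part of $(\phi_{\RP'})_\lambda$. The point is maximality: if a further constant term along $\RP\subset\RP'$ had an exponent violating the $\overline{^+\check\a}$ condition, then it would produce an exponent of $\phi_\RP$ with strictly larger height in $\check\a^+_\RP$, which is a contradiction. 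This step uses the Langlands geometric lemma relating positive chambers and the transitivity $(\phi_{\RP'})_\RP=\phi_\RP$.

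For surjectivity, I would take $f\in \CA^{\bar 2}_{\RP',[\Sigma]}(\RG)_\lambda$ with $\Re(\lambda)\in\check\a^+_{\RP'}$. Using Lemma \ref{lembar2} applied to $\RM'$, I write $f$ as a derivative in $\lambda$ of Eisenstein series data from $\CA^{2,\mathrm{im}}$ on $\RM'$, and then form the residue/derivative of the Eisenstein series on $\RG$ at the regular point $\lambda$. Regularity holds because $\Re(\lambda)$ is in the positive cone, where the Eisenstein series is holomorphic by the Langlands convergence theorem. Its constant term along $\RP'$ equals $f$ plus intertwined terms of strictly smaller height, which shows that the Eisenstein series has height $t$ and maps to $f$. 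The key inequality needed is that $\la w\Re(\lambda),\rho_0^\vee\ra<\la\Re(\lambda),\rho_0^\vee\ra$ for nontrivial Weyl elements $w$ with $\Re(\lambda)$ positive. Getting this surjectivity argument precise, together with the holomorphy of the derivatives of Eisenstein series, is the main obstacle. Franke handles it in \cite[Section 6]{F}, and \cite[Theorem 3.3]{LS} treats it for the smooth topology, so I would follow these closely.
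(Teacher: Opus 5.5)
Your definition of the filtration by the height $t(\phi)$ is the right one. With it, the map in (e) is automatically well defined on $\CA_{[\Sigma]}(\RG)_{[t]}$ and injective. An easy induction over the finitely many heights also shows that any family satisfying (a)--(e) must coincide with yours.

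Your arguments for (a)--(d) are fine. For (a), the clean reason is this: $\RP'=\RG$ always contributes height $0$, because $\rho_0^\vee\in\a^{\RG}_{\RP_0}$ kills $\check{\a}_\RG$. For proper $\RP'$, the $\check{\a}^{\RG}_{\RP'}$-component of an element of $\check{\a}^+_{\RP'}$ is a nonzero dominant weight, hence has positive height.

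The paper gives no argument for this proposition; it simply quotes \cite[Section 6]{F} and \cite[Theorem 3.3]{LS}. Where you ultimately defer to those sources you agree with the paper. However, the parts of (e) you sketch yourself contain a step that fails.

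The gap is in surjectivity. You claim the Eisenstein series attached to $f\in\CA^{\bar 2}_{\RP',[\Sigma]}(\RG)_\lambda$ is holomorphic at $\lambda$ because $\Re(\lambda)\in\check{\a}^+_{\RP'}$, ``by the Langlands convergence theorem''. Absolute convergence only holds for $\Re(\lambda)\in\rho_{\RP'}+\check{\a}^+_{\RP'}$, where $\rho_{\RP'}$ is the half-sum of the roots in $\RN'$. In the part of $\check{\a}^+_{\RP'}$ outside that shifted cone, the meromorphic continuation can have poles, and these are exactly the poles producing residual spectrum.

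Concretely, take $\RG=\GL_2$, $\RP'=\RP_0$ and $\Sigma=\abs{\cdot}_\A^{1/2}\otimes\abs{\cdot}_\A^{-1/2}$. Then $\Re(\lambda_\Sigma)=\rho_{\RP_0}$ is strictly dominant and $\CA^{\bar 2}_{\RP_0,[\Sigma]}(\RG)_{\lambda_\Sigma}\supset\Ind_{\RP_0(\A)}^{\RG(\A)}\Sigma\neq 0$. Yet the Eisenstein series has a pole there, with constant residue. This is precisely the case excluded by the second tame condition in Proposition \ref{lemti0}(a).

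So evaluating (derivatives of) Eisenstein series at $\lambda$ does not produce preimages. You must instead take the constant Laurent coefficient at the singular point. You then have to prove that the polar parts contribute to constant terms only exponents that lie outside the positive chambers or have strictly smaller height. In the example, the extra terms along $\RP_0$ have exponent $-\rho_{\RP_0}$. Doing this in general is the real content of Franke's argument. It rests on his description of $\CA_{[\Sigma]}(\RG)$ via derivatives and Laurent coefficients of Eisenstein series (\cite{F}, \cite{FS}) and on Langlands' combinatorial lemma.

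There is a smaller related imprecision, in your ``key inequality'' and in the argument that the image lies in $\CA^{\bar 2}$. The exponents produced there ($w\Re(\lambda)$, respectively the violating exponent $\mu$ of $\phi_\RP$) need not lie in any $\check{\a}^+_{\RP''}$. You must therefore compare heights of the dominant parts of their Langlands decompositions; for $\mu$, the relevant parabolic $Q$ contains $\RP$. Comparing $\la w\Re(\lambda),\rho_0^\vee\ra$ directly, or speaking of an exponent ``in $\check{\a}^+_\RP$'', is not enough.

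As written, surjectivity in (e) is not established unless you import \cite[Section 6]{F} or \cite[Theorem 3.3]{LS}.
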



\section{Cohomological tamely isobaric  representations}

We retain the notation of the last section, and assume as in the Introduction that $\RG=\GL_n$ ($n\geq 1$). Further assume that $\RP_0=\RM_0\ltimes \RN_0$ is the Borel subgroup of upper-triangular matrices, with $\RM_0$ the diagonal torus. 

        \subsection{Isobaric sums and standard modules}\label{secisob}
    We fix a partition 
    \[
     n=n_1+n_2+\dots+n_r \qquad ( r, n_1,n_2,\dots\, n_r\geq 1)
    \]
    and write 
    \[
     \RM=\GL_{n_1}\times \GL_{n_2}\times \dots \times \GL_{n_r}, 
     \]
    which is viewed as a Levi subgroup of $\RG$ as usual. 
Let  $\RP=\RM\ltimes \RN\in\mathscr P$ be the standard parabolic subgroup of $\RG$ that contains $\RM$ as a Levi factor.

    Let $v$ be a local place of $\rk$, and denote by  $\abs{\cdot}_v$ the normalized absolute value on the local field $\rk_v$. Suppose that $\pi_i$ ($i=1,2,\dots, r$) is an irreducible admissible smooth representation of $\GL_{n_i}(\rk_v)$ (in the archimedean case, by convention this means an irreducible Casselman-Wallach representation). The isobaric sum 
    \[
    \pi_1\boxplus \pi_2\boxplus \dots\boxplus \pi_r
    \]
    is the  irreducible admissible smooth representation of $\RG(\rk_v)$ whose Langlands parameter is the direct sum of those of $\pi_i$'s. See \cite{Kn91, Ku91, HT01} for the notion of Langlands parameters, which are completely reducible finite-dimensional  representations of the Weil-Deligne group. Recall that the above isobaric sum is generic if and only if all $\pi_i$'s are generic and the representation $\Ind_{\RP(\rk_v)}^{\RG(\rk_v)}(\pi_1\widehat \otimes  \pi_2\widehat \otimes  \dots\widehat \otimes \pi_r)$ is irreducible (see \cite{BZ77, JS83, Vo78, Z80}). 

    Recall that by taking the Langlands quotients (the unique irreducible quotients), the isomorphism classes of   standard modules of $\RG(\rk_v)$  are in bijection with the isomorphism classes of  irreducible admissible smooth representations of $\RG(\rk_v)$ (see  \cite{Lan89, Z80}).  
    
    \begin{thmp}\label{prop:std}
    Let  $\pi_i$ be a unitarizable generic irreducible admissible smooth representation of $\GL_{n_i}(\rk_v)$, and let $\nu_i\in \mathbb R$ ($i=1,2, \dots, r)$. Suppose that $\nu_1\geq \nu_2\geq \dots \geq \nu_r$. Then 
    \[
    \Ind_{\RP(\rk_v)}^{\RG(\rk_v)} \left((\pi_1\otimes \abs{\det}^{\nu_1}_{v})\,\widehat \otimes\, (\pi_2\otimes \abs{\det}^{\nu_2}_{v})\,\widehat \otimes\, \dots \,\widehat \otimes\, (\pi_r\otimes \abs{\det}^{\nu_r}_{v})\right)
    \]
    is isomorphic to the standard module whose Langlands quotient is isomorphic to the isobaric sum
    \[
    (\pi_1\otimes \abs{\det}^{\nu_1}_{v})\boxplus (\pi_2\otimes \abs{\det}^{\nu_2}_{v})\boxplus \dots \boxplus(\pi_r\otimes \abs{\det}^{\nu_r}_{v}).
    \] 
\end{thmp}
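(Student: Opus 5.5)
We reduce the statement to the known structure of standard modules: each unitarizable generic $\pi_i$ is itself induced from essentially square-integrable (discrete series twisted) representations, and we need to show that, after collecting all the inducing data, the exponents can be put in decreasing order by intertwining isomorphisms that do not change the isomorphism class.

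\textbf{Step 1: unpack $\pi_i$.} By the classification of generic unitary representations of $\GL_{n_i}(\rk_v)$ (Tadić, Vogan), each $\pi_i$ is a full irreducible parabolic induction
\[
\pi_i\cong \delta_{i,1}\abs{\det}^{s_{i,1}}\times\cdots\times\delta_{i,k_i}\abs{\det}^{s_{i,k_i}},
\]
with $\delta_{i,j}$ unitary square-integrable and real $s_{i,j}\in(-\tfrac12,\tfrac12)$. (For $v$ archimedean, square-integrable means characters or $D_{a,b}$-type; the complementary series bound $|s|<1/2$ holds there too.) The multiset of exponents $\pm s_{i,j}$ is symmetric. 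By transitivity of induction, the representation in the theorem is isomorphic to the product of all $\delta_{i,j}\abs{\det}^{\nu_i+s_{i,j}}$, taken in the order $(i,j)$ lexicographic.

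\textbf{Step 2: reorder.} The standard module attached to the isobaric sum is the product of the same segments arranged with decreasing exponents. This is because the Langlands parameter of the isobaric sum is the direct sum of the parameters of the $\delta_{i,j}\abs{\det}^{\nu_i+s_{i,j}}$. It therefore suffices to show that the product is unchanged up to isomorphism when we pass from the lexicographic order to a decreasing order.

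We achieve this by adjacent swaps. Within a fixed $i$, the factor $\pi_i$ is irreducible, so any permutation of the $\delta_{i,j}\abs{\cdot}^{s_{i,j}}$ yields an isomorphic representation: an irreducible product is independent of the ordering. Across blocks $i<i'$ with $\nu_i\ge\nu_{i'}$, the only pairs appearing out of order are those with
\[
\nu_i+s_{i,j}<\nu_{i'}+s_{i',j'},
\]
which forces $|(\nu_i+s_{i,j})-(\nu_{i'}+s_{i',j'})|<1$. For two essentially square-integrable representations whose exponents differ by less than $1$, the product $\delta\abs{\cdot}^a\times\delta'\abs{\cdot}^b$ is irreducible. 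Non-archimedean, this follows from Zelevinsky's linkage criterion: linked segments have exponent differences that are at least $1$ in the relevant sense. Archimedean, it follows from the analogous $\GL_2$/$\GL_1$ reducibility criteria. Hence these swaps are isomorphisms (via normalized intertwining operators, or simply by irreducibility and equality of Jordan–Hölder content).

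\textbf{Main obstacle.} The delicate step is making the reordering rigorous when segments are equal or nearly linked. Concretely, we must justify that the product of a neighbouring pair is irreducible whenever the real exponents differ by less than $1$, including the borderline case where a difference of $1$ would arise from $s$'s approaching $\pm\tfrac12$. We also need the standard module to be well defined up to reordering among equal exponents. I would first try to cite the precise irreducibility criterion (Zelevinsky for $p$-adic, Speh/Vogan for real and complex) and handle the strict inequality $|s_{i,j}|<\tfrac12$. If the unitary-dual bound is only $\le\tfrac12$ in some degenerate case, I would treat those pairs separately using the genericity of $\pi_i$.
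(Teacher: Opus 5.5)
Your proposal is correct and follows essentially the same route as the paper. In both, each $\pi_i\otimes\abs{\det}^{\nu_i}$ is decomposed into essentially square-integrable factors whose exponents lie strictly within $\tfrac12$ of $\nu_i$ (by the classification of generic unitary representations). Any pair that has to be reordered then has exponents differing by less than $1$, so it is unlinked (Zelevinsky, resp.\ Moeglin--Waldspurger), the pair product is irreducible, and the two factors can be swapped.

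The only differences are cosmetic. The paper repeatedly moves the factor of maximal exponent to the front, and it uses pairwise non-linkage also within a block rather than the irreducibility of $\pi_i$. The obstacles you list do not arise: the complementary-series bound is strict, and factors with equal exponents differ by $0<1$, so they can be swapped freely.
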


\begin{proof}
    For $i=1,2,\ldots, r$, write $\pi_i\otimes\abs{\det}^{\nu_i}_v$ as a normalized induction 
    \[
    \pi_i\otimes\abs{\det}^{\nu_i}_v  = \delta_{l_{i-1}+1}\times \delta_{l_{i-1}+2}\times \cdots \times \delta_{l_i}
    \]
    from a block upper-triangular parabolic subgroup of $\GL_{n_i}(\rk_v)$, where $l_0:=0 < l_1 <\cdots < l_r$ and $\delta_j$, $j=1,2,\ldots, l_r$, are essentially square-integrable 
    representations of general linear groups over $\rk_v$. Similar notation for normalized inductions from block upper-triangular parabolic subgroups will be used below.
     
    For $j=1,2,\ldots, l_r$, write ${\rm ep}(\delta_j)$ for the real number such that $\delta_j \otimes \abs{\det}_v^{-{\rm ep}(\delta_j)}$ is unitarizable. 
    By the classification of unitarizable generic representations of general linear groups (see \cite{T,V} and \cite[Theorem 8.2 and p.1155]{BR}), we have 
    \[
    | {\rm ep}(\delta_j) - \nu_i| <\frac{1}{2}\quad \text{for all }l_{i-1}<j \leq l_i.
    \]
  Assume that $1\leq j<k \leq l_r$. Then 
    \[
    {\rm ep}(\delta_k) < {\rm ep}(\delta_j)+1.
    \]
    It follows that
    \begin{itemize}
     \item if $v$ is non-Archimedean and the segments of supercuspidal representations associated to $\delta_j$ and $\delta_k$ are linked in the sense of \cite[Section 4.1]{Z80}, then 
    ${\rm ep}(\delta_j)\geq {\rm ep}(\delta_k)+1$;
    \item if $v$ is Archimedean and $\delta_j$ and $\delta_k$ are linked in the sense of \cite[I.7]{MW89}, then 
    ${\rm ep}(\delta_j)\geq {\rm ep}(\delta_k)+1$.
    \end{itemize}
    
    Denote the induced representation in the theorem by $\pi$. Suppose that ${\rm ep}(\delta_k) = \max_{j=1,\ldots, l_r}{\rm ep}(\delta_j)$. By \cite[Theorem 4.2]{Z80} and \cite[Proposition I.9]{MW89}, $\delta_j \times \delta_k$ is irreducible for all $1\leq j<k$, which implies that 
    \[
    \pi \cong \delta_k \times \delta_1 \times\delta_2\times \cdots \times \delta_{k-1} \times \delta_{k+1} \times\cdots \times \delta_{l_r}.
    \]
    Repeating this process, we see that $\pi$ is isomorphic to a standard module with the  desired Langlands quotient.
\end{proof}

\subsection{Eisenstein series} 
As in the Introduction, let  
\be\label{isopi}
\Pi:=\Sigma_{1}\boxplus  \Sigma_{2}\boxplus  \dots \boxplus \Sigma_{r} 
\ee
be an isobaric automorphic representation of $\GL_n(\A)$,
where  $\Sigma_i$ ($1\leq i\leq r$) is an irreducible cuspidal smooth automorphic representation of $\GL_{n_i}(\A)$ and the exponents satisfy 
\[
		\mathrm{ep}(\Sigma_1)\geq\mathrm{ep}(\Sigma_2)\geq\dots\geq\mathrm{ep}(\Sigma_r).
		\]
Note that the cuspidal support of $\Pi$ is $[(\RM, \Sigma)]$,  where 
 \[
         \Sigma=\Sigma_1\,\widehat{\otimes} \,\Sigma_2\,\widehat{\otimes} \,\cdots \,\widehat{\otimes} \,\Sigma_r\subset\CA_{\mathrm{cusp}} (\RM).
         \]
      
    For every $\phi\in \CA_{\RP,\Sigma}(\RG)$ and $\underline{s}:=(s_1,\dots,s_r)\in\C^r$, define a function $\phi_{\underline{s}}$ on $\RG(\A)$ such that
    \[
    \phi_{\underline{s}}(g)=\phi(g) \cdot \prod_{i=1}^r\abs{\det m_i}_{\A}^{s_i}
    \]
    for all $g=\diag(m_1, m_2, \dots, m_r) u k \in\RG(\A)$ with $m_i\in \GL_{n_i}(\A)$ ($i=1,2,\dots, r$),  $u\in \RN(\A)$, and $k\in K_\infty \GL_n(\widehat \CO)$.  Consider the 
    Eisenstein series 
    \be\label{defeis}
\RE(\underline{s},\phi;g):=\sum_{\gamma\in\RP(\rk)\backslash\RG(\rk)}\phi_{\underline{s}}(\gamma g),\qquad g\in\RG(\A).
    \ee
   It converges absolutely when $\mathrm{Re}(s_i-s_j)\gg0$ for all $1\leq i<j\leq r$, and admits a meromorphic continuation to $\C^r$. Moreover, if $\RE(\underline{s},\phi;g)$ is holomorphic at $\underline{s}=\underline 0:=(0,0,\ldots, 0)\in \BC^r$ for all $g\in \RG(\A)$, then
   \be\label{ce00}
   \CE(\phi):=\RE(\underline{s}, \phi; \,\cdot\,)|_{\underline{s}=\underline{0}}\in \CA_{[\Sigma]}(\RG). 
   \ee
See \cite[Section 5.2]{F}, \cite{Lap08}, and \cite[Theorem 2.3]{BL24} for the general theory of Eisenstein series. The assertion that $\CE(\phi)\in \CA_{[\Sigma]}(\RG)$
follows from \cite[Theorem 1.4, Remark 3.4]{FS}.

   Recall from the Introduction the induced representation 
	\begin{equation}
		\label{Sigma}
\mathrm I(\Pi):=\mathrm{Ind}^{\RG(\A)}_{\RP(\A)}\Sigma=\widehat\otimes_v'\RI_v(\Pi)\subset \CA_{\RP,\Sigma}(\RG),
	\end{equation}
where 
\[
\RI_v(\Pi):=\mathrm{Ind}^{\RG(\rk_v)}_{\RP(\rk_v)}\Sigma_v.
\]
By Theorem \ref{prop:std}, $\RI_v(\Pi)$ is isomorphic to the standard module whose Langlands quotient is isomorphic to $\Pi_v$. 

\begin{prp}\label{lemti0}
Assume that $\Pi$ is tamely isobaric (see Definition \ref{def:isobaric}). 
Then the following assertions hold true.

\noindent (a)  The  Eisenstein series   \eqref{defeis} is holomorphic at $\underline s=\underline{0}$ 
  for all $\phi\in \CA_{\RP,\Sigma}(\RG) $ and $g\in \RG(\A)$.

   \noindent (b) The  diagram
    \be\label{iso2}
\xymatrix{
\CA_{\RP,\Sigma}(\RG)\ar[rr]^{ \CE}  \ar[rrd]_{=}&& \CA_{[\Sigma]}(\RG) \ar[d]^{ (\,\cdot\,)_{\RP,\Sigma}} \\
& &  \CA_{\RP,\Sigma}(\RG)  
    }
\ee
commutes.   

   
    \noindent (c) The Whittaker integral  $\lambda_\psi\circ \CE|_{\RI(\Pi)}\neq 0$. 
  
   \end{prp}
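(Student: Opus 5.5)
The plan is to compute constant terms of the Eisenstein series along $\RP$ and apply the standard theory of intertwining operators for $\GL_n$. For $\phi\in \CA_{\RP,\Sigma}(\RG)$ the constant term of $\RE(\underline s,\phi)$ along $\RP$, projected to the cuspidal components of $\CA(\RM)$, is
\[
\RE(\underline s,\phi)_{\RP}=\sum_{w} M(w,\underline s)\phi_{\underline s},
\]
where $w$ runs over the Weyl elements with $w\RM w^{-1}$ a standard Levi. Only those $w$ permuting blocks of equal size contribute, because $\Sigma_i$ is cuspidal.

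For (a), I would reduce holomorphy of $\RE(\underline s,\phi;g)$ at $\underline 0$ to holomorphy of all its cuspidal constant terms, using the standard fact that the poles of Eisenstein series are detected by constant terms (\cite{Lap08}, \cite{BL24}). Each $M(w,\underline s)$ factors into rank-one operators attached to pairs $i<j$ with $w(i)>w(j)$. Such an operator is the product of a normalizing factor
\[
\frac{L(s_i-s_j,\Sigma_i\times\Sigma_j^\vee)}{L(1+s_i-s_j,\Sigma_i\times\Sigma_j^\vee)\,\epsilon(\cdots)}
\]
and local normalized operators. By Jacquet--Shalika, $L(s,\Sigma_i\times\Sigma_j^\vee)$ has poles only at $s=0,1$ (after twisting), and only when $n_i=n_j$ and $\Sigma_i\cong\Sigma_j\otimes\abs{\det}^{\mathrm{ep}(\Sigma_i)-\mathrm{ep}(\Sigma_j)}$. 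Since $\mathrm{ep}(\Sigma_i)\geq \mathrm{ep}(\Sigma_j)$ for $i<j$, the two tame conditions exclude exactly these poles at $\underline s=\underline 0$. The denominator $L(1+\cdots)$ is nonvanishing there, since it lies on or to the right of the edge of the critical strip. The local normalized operators must be holomorphic on the relevant region; I would take this from the known holomorphy and nonvanishing of normalized intertwining operators for $\GL_n$ in the range of exponents with $\Re(s_i-s_j)\geq 0$. The full Eisenstein series is then holomorphic. This step is the main obstacle: one has to control the local normalized operators (including archimedean places) at points with $\mathrm{ep}(\Sigma_i)-\mathrm{ep}(\Sigma_j)\geq 0$ possibly large, and to justify that checking cuspidal constant terms suffices for holomorphy in several variables. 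For the latter I would restrict to a generic line through $\underline 0$.

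For (b), I would evaluate the formula above at $\underline 0$ and project onto the $\Sigma$-component. A term with $w\neq 1$ lands in the $w\Sigma$-component, where $w\Sigma=\Sigma_{w^{-1}(1)}\widehat\otimes\cdots$. By the first tame condition, $w\Sigma\neq \Sigma$ as a subrepresentation of $\CA_{\mathrm{cusp}}(\RM)$, together with its $\C[\a_{\RP,\C}]$ twists: one should check that the twist $\mathrm{ep}$ shifts cannot identify them, since the comparison is made only among equal-size blocks and exact equality $\Sigma_i=\Sigma_j$ is excluded. Hence only $w=1$ survives, and it gives $\phi$ itself, which proves the diagram \eqref{iso2}.

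For (c), I would unfold the Whittaker integral in the region of convergence in the standard way. This gives
\[
\lambda_\psi(\RE(\underline s,\phi))=\int_{\RN'(\A)}W_{\phi_{\underline s}}(w_0 u)\,\psi^{-1}(u)\,\od u,
\]
the Jacquet integral of the cuspidal Whittaker functions of the $\Sigma_i$, and then continue to $\underline 0$. The resulting Jacquet functional on $\RI(\Pi)$ factors as a product of local Jacquet functionals over all places. Each local factor is holomorphic and nonzero on the standard module $\RI_v(\Pi)$, because standard modules of $\GL_n$ are generic with a nonzero holomorphically continued Jacquet integral. Choosing $\phi$ to be a pure tensor with unramified vectors at almost all places, and using the Casselman--Shalika formula for normalization, shows that $\lambda_\psi\circ\CE|_{\RI(\Pi)}\neq 0$.
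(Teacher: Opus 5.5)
Your treatment of (a) and (b) follows the paper's route: the constant term formula, the factorization $M(w,\Sigma,\underline s)=r(w,\Sigma,\underline s)N(w,\Sigma,\underline s)$, the tame conditions removing the poles of the Rankin--Selberg ratios at $\underline 0$, holomorphy of the normalized operators from Moeglin--Waldspurger, and the first tame condition to discard the terms with $w\neq 1$ after projecting to the $\Sigma$-component. The genuine gap is in (c).

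\textbf{The gap in (c).} You claim that the continued global Jacquet functional at $\underline 0$ is the product over all places of local Jacquet functionals, each nonzero, and conclude from this. But at $v\notin S$ the local Jacquet integral of the normalized spherical vector is $\prod_{i<j}\oL_v(1+s_i-s_j,\Sigma_{i,v}\times\Sigma_{j,v}^\vee)^{-1}$. So the unfolding actually gives
\[
\lambda_\psi(\RE(\underline s,\phi))=\frac{\prod_{v\in S}\lambda_{v,\underline s}(\phi_v)}{\prod_{1\leq i<j\leq r}\oL^S(1+s_i-s_j,\Sigma_i\times\Sigma_j^\vee)}.
\]
The Euler product in the denominator converges absolutely only when $\Re(s_i-s_j)+\mathrm{ep}(\Sigma_i)-\mathrm{ep}(\Sigma_j)>0$. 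Its value at $\underline 0$ is therefore a value of the meromorphic continuation, and nonvanishing of the individual local factors says nothing about it.

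The missing ingredient is exactly the one the paper uses. By Jacquet--Shalika, $\oL^S(s,\Sigma_i\times\Sigma_j^\vee)$ (with $\mathrm{ep}(\Sigma_i)\geq\mathrm{ep}(\Sigma_j)$) can have a pole at $s=1$ only if $\Sigma_i=\Sigma_j$, and the first tame condition in \eqref{deftame0} excludes this. By Shahidi and Jacquet--Shalika it also does not vanish at $s=1$. Only then is $\prod_{i<j}\oL^S(1,\Sigma_i\times\Sigma_j^\vee)^{-1}$ a finite nonzero number, so that choosing $\phi_v$ with $\lambda_v(\phi_v)\neq 0$ for $v\in S$ gives (c).

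Your argument for (c) never uses the tame condition, so it would prove a false statement. Take $n=2$ and $\Sigma_1=\Sigma_2=\chi$ a unitary Hecke character. The Eisenstein series is still holomorphic at $\underline 0$, since the numerator and denominator of $r(w,\Sigma,\underline s)$ both have simple poles along $s_1=s_2$. All local Jacquet functionals are nonzero there. Yet the Whittaker coefficient carries the factor $\zeta_{\rk}^S(1+s_1-s_2)^{-1}$, which vanishes at $\underline 0$, so $\lambda_\psi\circ\CE|_{\RI(\Pi)}=0$.

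\textbf{A smaller point in (a).} Restricting to a generic line through $\underline 0$ does not detect holomorphy in several variables: $s_1/(s_1-s_2)$ is constant on every line through $0$ other than $\{s_1=s_2\}$, yet it is not holomorphic at $0$. You have to argue along each possible polar hypersurface through $\underline 0$. Its leading Laurent coefficient is a family of forms in $\CA_{[\Sigma]}(\RG)$ with vanishing cuspidal constant terms, hence zero by \eqref{cuspp}. This is what the lemma of Langlands cited in the paper provides.

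\textbf{On (b).} Your worry about $\mathrm{ep}$-shifts is unnecessary. Distinct maximal primary subrepresentations are distinct summands in \eqref{decompam2}, so $w\Sigma\neq\Sigma$ already suffices.
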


\begin{proof} (a) 
For each $w\in \frak S_r$ (the symmetric group), write 
\[
w\RM:=\GL_{n_{w^{-1}(1)}}\times \GL_{n_{w^{-1}(2)}}\times \dots \times \GL_{n_{w^{-1}(r)}},
\]
and
\[
w(\Sigma[\underline{s}]):=\Sigma_{w^{-1}(1)}\abs{\det}_\A^{s_{w^{-1}(1)}}\widehat{\otimes} \Sigma_{w^{-1}(2)}\abs{\det}_\A^{s_{w^{-1}(2)}}\widehat{\otimes} \dots\widehat{\otimes} \Sigma_{w^{-1}(r)}\abs{\det}_\A^{s_{w^{-1}(r)}},
\]
where $\underline s=(s_1, s_2, \dots, s_r)\in \C^r$, and 
$\Sigma[\underline{s}]:=\Sigma_1\abs{\det}_\A^{s_1}\widehat{\otimes} \Sigma_2\abs{\det}_\A^{s_2}\widehat{\otimes} \dots\widehat{\otimes} \Sigma_r\abs{\det}_\A^{s_r}$. 
Following the notation in \cite{MW89}, we have the intertwining operator 
\be\label{intertwin}
M(w, \Sigma, \underline{s}) = r(w,\Sigma,\underline{s}) N(w, \Sigma, \underline{s}): \mathrm{Ind}^{\RG(\A)}_{\RP(\A)}(\Sigma[\underline{s}]) \to \mathrm{Ind}^{\RG(\A)}_{\RP'(\A)}(w(\Sigma[\underline{s}])),
\ee
where $\RP'$ is the standard parabolic subgroup of $\RG$ with Levi factor $w\RM$, 
\[
r(w,\Sigma,\underline{s}) := \prod_{1\leq i<j \leq r, \, w(i)>w(j)} \frac{\oL(s_i-s_j, \Sigma_i\times \Sigma_j^\vee)}{\oL(s_i-s_j+1, \Sigma_i\times \Sigma_j^\vee)\varepsilon(s_i-s_j, \Sigma_i\times \Sigma_j^\vee)},
\]
and $N(w, \Sigma, \underline{s})$ is the normalized intertwining operator. 

By \cite[Theorem 5.2]{S81}, \cite[Theorem 5.3]{JS81a}, \cite[Proposition 3.6]{JS81b} and the second tame condition, $r(w, \Sigma, \underline{s})$ is holomorphic at $\underline{s}=\underline{0}$.
By \cite[Proposition I.10]{MW89}, $N(w,\Sigma, \underline{s})$ is holomorphic at $\underline{s}=\underline{0}$ as well.
It follows that the intertwining operator $M(w,\Sigma, \underline{s})$ in \eqref{intertwin} is holomorphic at $\underline{s}=\underline{0}$. Then by the constant term formula for Eisenstein series (\cite[II.1.7]{MW}) and \cite[Lemma 6.2]{La2}, we see that 
\be\label{holo1}
\textrm{$\RE(\underline{s},\phi;g)$ is holomorphic at $\underline{s}=\underline 0\in \BC^r$ for all $\phi\in \RI(\Pi)$ and $g\in \RG(\A)$. }
\ee


It is easy to see that $\CA_{\RP,\Sigma}(\RG)$ is spanned by functions of the form  $\varphi = D \phi_{\underline{\nu}}|_{\underline{\nu}=\underline{0}}$, where
$\phi\in \RI(\Pi)$ and $D$ is a constant coefficient differential operator on the complex vector space $\check \a_{\RP,\C} =\C^r$. If $\mathrm{Re}(s_i-s_j)\gg0$ for all $1\leq i<j\leq r$, then 
\[
\begin{aligned}
\RE(\underline{s},\varphi;g) & =\sum_{\gamma\in\RP(\rk)\backslash\RG(\rk)}\varphi_{\underline{s}}(\gamma g) \\
& =\sum_{\gamma\in\RP(\rk)\backslash\RG(\rk)}{((D \phi_{\underline{\nu}})}|_{\underline{\nu}=\underline{0}})_{\underline s}(\gamma g)\\
& =D\left.\left(\sum_{\gamma\in\RP(\rk)\backslash\RG(\rk)}\phi_{\underline{\nu}+\underline{s}}(\gamma g)\right)\right|_{\underline \nu=0}\\
&=D\left(\sum_{\gamma\in\RP(\rk)\backslash\RG(\rk)}\phi_{\underline{s}}(\gamma g)\right),
\end{aligned}
    \]
    where we have used the Lebesgue Dominated Convergence Theorem (\cite[Theorem 1.34]{Ru87}) and Morera's Theorem (\cite[Theorem 10.17]{Ru87}) in the second last equality.
By \eqref{holo1}, this implies part (a) of the proposition. 

\noindent (b) 
Take $\varphi = D\phi_{\underline{\nu}}|_{\underline{\nu}=\underline{0}}\in \CA_{\RP,\Sigma}(\RG)$ as in the proof of (a). By the constant term formula
and the above argument,  we have that 
\[
\CE(\varphi)_\RP =\varphi+  \sum_{w\in \frak S_r\setminus\{{\rm id}\}, \, w\RM =\RM}D\left(M(w, \Sigma, \underline{\nu})\phi_{\underline{\nu}}\right)|_{\underline{\nu}=\underline{0}},
\]
where `$\mathrm{id}$' denotes the identity element of the symmetric group $\frak S_r$. 
Then the first tame condition implies that $\CE(\varphi)_{\RP,\Sigma}=\varphi$. This proves part (b) of the proposition. 


\noindent (c) Let $S$ be a finite set of places of $\rk$ including all the Archimedean places such that  for all local places $v\notin S$ of $\rk$, 
\begin{itemize}
    \item  
$\Sigma_v$  is unramified; and 
\item the conductor of the $v$-component $\psi_v$ of $\psi$ equals the ring of integers $\CO_v$ of $\rk_v$. 
\end{itemize}

By the Fourier coefficient formula (see \cite[Chapter 7]{S10}), 
there exist Whittaker functionals 
\[
0\neq \lambda_v \in \Hom_{\RN(\rk_v)}(\RI_v(\Pi), \psi_{\RN, v})\qquad (v\in S)
\]
with the following property:  
if $\phi = \otimes_v \phi_v \in \RI(\Pi)$, with $\phi_v \in \RI_v(\Pi)$  the spherical vector whenever $v\not\in S$, then
\[
\lambda_\psi(\CE(\phi)) = \frac{1}{\prod_{1\leq i<j\leq r}\oL^S(1, \Sigma_i\times \Sigma_j^\vee)} \cdot \prod_{v\in S}\lambda_v(\phi_v).
\]
Here $\oL^S(s, \Sigma_i\times \Sigma_j^\vee)$ denotes the partial L-function outside $S$.  For all $1\leq i<j\leq r$, by \cite[Proposition 3.6]{JS81b}, the first tame condition implies that $\oL^S(s, \Sigma_i\times \Sigma_j^\vee)$ is  holomorphic at $s=1$; and  it is  nonzero at $s=1$ by  \cite[Theorem 5.2]{S81} and \cite[Theorem 5.3]{JS81a}.  Hence there exists $\phi \in \RI(\Pi)$ such that $\lambda_\psi(\CE(\phi))\neq 0$.
\end{proof}

  Suppose that $\Pi$ is tamely isobaric. 
Denote by $\RI^{\mathrm{aut}}(\Pi)$ the image of $\RI(\Pi)$ under the map $\CE$. 

\begin{remarkp}\label{iaut}
    Let $(\RM',\Sigma',\RP')$ be a triple such that $(\RM',\Sigma')\in [(\RM,\Sigma)]$ and $\RP'$ is a parabolic subgroup of $\RG$ containing $\RM'$ as a Levi factor. Attached to this triple we form the Eisenstein series $\RE(\nu, \phi;g)$ as in \eqref{defeis}, where $\phi\in \Ind_{\RP'(\A)}^{\RG(\A)} \Sigma'$, $g\in \RG(\A)$ and $\nu\in \check \a_{\RP',\C}$. Proposition \ref{lemti0} implies that these Eisenstein series are holomorphic at $\nu=0$, and it  follows from 
    \cite[Theorem IV.1.10]{MW} that 
    \[
      \{\RE(\nu,\phi;\,\cdot\,)|_{\nu=0}\,:\, \phi \in \Ind_{\RP'(\A)}^{\RG(\A)} \Sigma'\}=\RI^{\mathrm{aut}}(\Pi). 
    \]
    In particular, the subrepresentation $\RI^{\mathrm{aut}}(\Pi)\subset\CA(\RG)$ is independent of the representative $(\RM,\Sigma)$ of the cuspidal datum. 
\end{remarkp}

Recall from the Introduction the decomposition $
\mathrm I(\Pi)=\mathrm I_\infty(\Pi)\otimes \mathrm I_\mathrm f(\Pi).
$

   \begin{lemp}\label{lem20}
      One has that
       \be\label{twohom}
     \dim \Hom_{\RG(\rk_\infty)}(\RI_\infty(\Pi), \CW_\infty)=  \dim \Hom_{\RG(\A_\mathrm f)}(\RI_\mathrm f(\Pi), \CW_\mathrm f)= 1.
       \ee
       Moreover,  there are elements  \[
    0\neq  \jmath_{\Pi,\infty}\in \Hom_{\RG(\rk_\infty)}(\RI_\infty(\Pi), \CW_\infty)\quad \textrm{and}\quad    0\neq  \jmath_{\Pi,\mathrm f}\in  \Hom_{\RG(\A_\mathrm f)}(\RI_\mathrm f(\Pi), \CW_\mathrm f)
       \]
 such that the diagram 
 \be\label{comwhii}
\begin{CD}
   \RI(\Pi)=\RI_\infty(\Pi)\otimes \RI_{\mathrm f}(\Pi)@>\CE \ (\textrm{see \eqref{ce00}}) >>\CA(\RG)\\
    @V\jmath_{\Pi,\infty}\otimes \jmath_{\Pi, \mathrm f} VV @VV\jmath 
  V\\
    \CW_\infty\otimes \CW_{\mathrm f}@>\subset   >>\CW
\end{CD}
\ee
commutes. 
   \end{lemp}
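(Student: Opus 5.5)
The dimension statement \eqref{twohom} is local. For the archimedean part, $\RI_\infty(\Pi)=\Ind_{\RP(\rk_\infty)}^{\RG(\rk_\infty)}\Sigma_\infty$ is, by Theorem \ref{prop:std}, a standard module. Each $\Sigma_{i,v}$ is generic, being the local component of a cuspidal representation. Hence the inducing data are generic, and the uniqueness and existence of Whittaker functionals on standard modules with generic inducing data apply. These are due to Rodier, Jacquet--Shalika and Casselman--Hecht--Miličić / Kostant in the archimedean case. They give $\dim\Hom_{\RN(\rk_v)}(\RI_v(\Pi),\psi_{\RN,v})=1$ at every place $v$.

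Frobenius reciprocity identifies $\Hom_{\RG(\rk_\infty)}(\RI_\infty(\Pi),\Ind\psi_{\RN,\infty})$ with the space of continuous Whittaker functionals. Since $\CW_\infty$ is an inverse limit along the isomorphisms $L_a$, it is isomorphic to any single $\Ind_{\RN(\rk_\infty)}^{\RG(\rk_\infty)}\psi_{\RN,\infty}$. This gives dimension one at infinity.

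For the finite part, $\RI_\mathrm f(\Pi)$ is the restricted tensor product of the $\RI_v(\Pi)$ over finite $v$. A Whittaker functional on it is determined by its local factors, using the spherical normalization at almost all places: for almost all $v$ the spherical vector has Whittaker value $1$ by Shintani/Casselman--Shalika, because $\RI_v(\Pi)$ is then an unramified standard module with nonzero spherical Whittaker value. Local uniqueness then yields global uniqueness on the restricted tensor product, and the same Frobenius reciprocity argument gives dimension one. This restricted-tensor-product step is the one I would check most carefully. The issue is continuity and density for the smooth topology, and I would handle it by reducing to $K_\mathrm f$-invariants, where everything is algebraic.

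For the commutative diagram, consider the composite $\jmath\circ\CE:\RI(\Pi)\to\CW$. It is $\RG(\A)$-equivariant. By Proposition \ref{lemti0}(c) it is nonzero, since $\lambda_\psi\circ\jmath\circ\CE=\lambda_\psi\circ\CE\neq0$ on $\RI(\Pi)$.

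Project to a fixed $\psi$-component, so the composite lands in $\Ind_{\RN(\A)}^{\RG(\A)}\psi_\RN$. By Frobenius reciprocity it corresponds to the global Whittaker functional $\lambda_\psi\circ\CE$. Restrict this functional to pure tensors $\phi_\infty\otimes\phi_\mathrm f$. For fixed $\phi_\mathrm f$, it is a Whittaker functional on $\RI_\infty(\Pi)$, hence of the form $c(\phi_\mathrm f)\lambda_\infty(\phi_\infty)$ by uniqueness. The coefficient $c$ is in turn a Whittaker functional on $\RI_\mathrm f(\Pi)$. So the global functional factors as $\lambda_\infty\otimes\lambda_\mathrm f$.

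Now define $\jmath_{\Pi,\infty}$ and $\jmath_{\Pi,\mathrm f}$ as the Frobenius reciprocity images of $\lambda_\infty$ and $\lambda_\mathrm f$. Then $\jmath\circ\CE(\phi_\infty\otimes\phi_\mathrm f)(g_\infty g_\mathrm f)=\jmath_{\Pi,\infty}(\phi_\infty)(g_\infty)\,\jmath_{\Pi,\mathrm f}(\phi_\mathrm f)(g_\mathrm f)$. This is exactly the image under $\CW_\infty\otimes\CW_\mathrm f\subset\CW$.

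Independence of $\psi$ follows from the compatibility $\mathbf t_a.\lambda_\psi=\lambda_{\psi'}$, as in Lemma \ref{jmathpi}. Finally, both $\jmath_{\Pi,\infty}$ and $\jmath_{\Pi,\mathrm f}$ are nonzero because the composite $\jmath\circ\CE$ is nonzero.
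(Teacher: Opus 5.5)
Your proof is correct, but it is organized differently from the paper's. The paper gets the lower bound in \eqref{twohom} directly from the global nonvanishing in Proposition \ref{lemti0}(c), and the upper bound from local uniqueness (Rodier, and Wallach at infinity). It gets the commutativity of \eqref{comwhii} by quoting Shahidi's factorization of Whittaker coefficients of Eisenstein series \cite[Proposition 7.1.2]{S10} together with Frobenius reciprocity.

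You proceed in three steps. First, you produce local Whittaker functionals at every place and normalize them at spherical vectors. This is legitimate: for $i<j$, the Jacquet--Shalika bounds and $\mathrm{ep}(\Sigma_i)\geq\mathrm{ep}(\Sigma_j)$ rule out a pole of $\oL_v(s,\Sigma_{i,v}\times\Sigma_{j,v}^\vee)$ at $s=1$, so the Casselman--Shalika value is nonzero. Second, you spell out the restricted-tensor-product uniqueness that the paper leaves implicit. That argument needs no normalization, and no topology is involved, since $\RI_\mathrm f(\Pi)$ is just a smooth representation; so the continuity concern you flag there does not arise. Third, you replace Shahidi's explicit formula by a slicing argument in which archimedean uniqueness alone forces $\lambda_\psi\circ\CE=\lambda_\infty\otimes c$.

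The third step is more elementary than the paper's route, since it needs no computation of Fourier coefficients. It also makes your first step redundant: slicing the nonzero functional $\lambda_\psi\circ\CE$ already yields nonzero Whittaker functionals on both $\RI_\infty(\Pi)$ and $\RI_\mathrm f(\Pi)$. That is exactly how the paper obtains existence. In exchange, the paper's route identifies the local factors explicitly as normalized Jacquet integrals and makes the factor $\prod_{i<j}\oL^S(1,\Sigma_i\times\Sigma_j^\vee)^{-1}$ visible.

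The only analytic input both arguments share is continuity of $\phi_\infty\mapsto\lambda_\psi(\CE(\phi_\infty\otimes\phi_\mathrm f))$. This is needed to apply Frobenius reciprocity and uniqueness for Casselman--Wallach representations.
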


   \begin{proof}
     Part (c) of Proposition \ref{lemti0} implies that the two spaces of homomorphisms in \eqref{twohom}  are at least one-dimensional.  By \cite{Ro73} and 
     \cite[Theorem 15.6.7]{W92}, they are at most one-dimensional. 

    By Frobenius reciprocity, the second assertion of the lemma follows from the factorization of the Whittaker periods of Eisenstein series (see \cite[Proposition 7.1.2]{S10}). 
   \end{proof}

Let  $\jmath_{\Pi,\infty}$ and $\jmath_{\Pi,\mathrm f}$ be as in the above lemma. Let 
$\RI^{\mathrm{whi}}(\Pi)\subset \CW$ denote the image of $\RI^{\mathrm{aut}}(\Pi)$ under the homomorphism $\jmath$ in \eqref{jmath}, which is nonzero by part (c) of Proposition \ref{lemti0}. 
Denote by $\RI_\infty^{\mathrm{whi}}(\Pi)\subset \CW_\infty$ the image of $\jmath_{\Pi,\infty}$, and likewise denote by $\RI_\mathrm f^{\mathrm{whi}}(\Pi)\subset \CW_\mathrm f$ the image of $\jmath_{\Pi,\mathrm f}$. Then 
\[
\RI^{\mathrm{whi}}(\Pi)=\RI_\infty^{\mathrm{whi}}(\Pi)\otimes \RI_\mathrm f^{\mathrm{whi}}(\Pi).
\]
Recall that $\RI_v(\Pi)$ is isomorphic to a standard module for every place $v$ of $\rk$ by Theorem \ref{prop:std}. Hence by \cite[Lemma 2.5]{J09} and \cite{JS83}, we have an isomorphism 
\[
\jmath_{\Pi,\infty}\otimes \jmath_{\Pi, \mathrm f}: \RI(\Pi) \xrightarrow{\sim} \RI^{\mathrm{whi}}(\Pi).
\]





\subsection{$[\Sigma]$-component in the tamely isobaric case}

Assume that $\Pi$ is tamely isobaric. This subsection is devoted to a proof of the following theorem, which generalizes some results in \cite[Section 1.5]{G18}
and \cite[Section 3.1]{C26}.

\begin{thmp}\label{lemti00}
 The two arrows in 
  \[
 \CA_{\RP,\Sigma}(\RG)\xrightarrow{\CE}  \CA_{[\Sigma]}(\RG)\xrightarrow{(\,\cdot\,)_{\RP,\Sigma}} \CA_{\RP,\Sigma}(\RG)
  \]
  are both  isomorphisms of representations of $\RG(\A)$. 
\end{thmp}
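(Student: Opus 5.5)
Proposition \ref{lemti0}(b) says that $(\,\cdot\,)_{\RP,\Sigma}\circ\CE$ is the identity on $\CA_{\RP,\Sigma}(\RG)$. Hence $\CE$ is injective and $(\,\cdot\,)_{\RP,\Sigma}$ is surjective. It therefore suffices to prove one more thing: that $(\,\cdot\,)_{\RP,\Sigma}$ is injective on $\CA_{[\Sigma]}(\RG)$, or equivalently that $\CE$ is surjective. Then $\CE$ and $(\,\cdot\,)_{\RP,\Sigma}$ are mutually inverse $\RG(\A)$-isomorphisms. I plan to prove injectivity of $(\,\cdot\,)_{\RP,\Sigma}$.

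\textbf{Step 1: reduction to a single constant term.} Let $\phi\in\CA_{[\Sigma]}(\RG)$ with $\phi_{\RP,\Sigma}=0$. By \eqref{cuspp} and the definition of cuspidal support, $\phi\neq0$ would force $\phi_{\RP',\Sigma'}\neq0$ for some $(\RM',\Sigma')\in[(\RM,\Sigma)]$ and some parabolic $\RP'\supset\RM'$. Every such pair is conjugate to $(w\RM,w\Sigma)$ with $w\in\frak S_r$, and $\RP'$ ranges over parabolics with Levi factor $w\RM$. The standard relation between constant terms along associate parabolics is given by the intertwining operators $M(w,\Sigma,\cdot)$ (or their Laurent/derivative expansions for non-holomorphic sections), via the functional equation built into Lemma \ref{lembar2}. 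This relation should express each cuspidal component $\phi_{\RP',\Sigma'}$ in terms of $\phi_{\RP,\Sigma}$. The needed input is that these operators are holomorphic at $\underline 0$, which was established in the proof of Proposition \ref{lemti0}(a) using the second tame condition. For this step I would first try the simpler route: use the constant term formula, as in part (b), applied to $\phi-\CE(\phi_{\RP,\Sigma})$. That function lies in $\CA_{[\Sigma]}(\RG)$ and has vanishing $(\RP,\Sigma)$-component. So the claim becomes: \emph{any $\phi\in\CA_{[\Sigma]}(\RG)$ with $\phi_{\RP,\Sigma}=0$ is zero.}

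\textbf{Step 2: Franke's filtration.} To prove that claim, I would use Proposition \ref{franke}. Induct on $t$ and show that every graded piece $\CA_{[\Sigma]}(\RG)_{[t]}$ is detected by $(\,\cdot\,)_{\RP,\Sigma}$. By \eqref{isogr}, the graded pieces are sums of spaces $\CA^{\bar2}_{\RP',[\Sigma]}(\RG)_\lambda$. By Lemma \ref{lembar2} applied to the Levi $\RM'$ of $\RP'$, each of these is built from Eisenstein series on $\RM'$ attached to $\CA^{2,\mathrm{im}}$ data. For $\GL$, the discrete spectrum with cuspidal support $[\Sigma]$ on $\RM'$ consists of Speh-type residual representations. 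A residual representation occurs only if some $\Sigma_i\cong\Sigma_j\otimes|\det|_\A$, and the second tame condition excludes this. Hence only cuspidal $\CA^2$ pieces survive: those with $\RP'$ conjugate to $\RP$, where $\CA^{2,\mathrm{im}}$ is $\C[\a]\otimes w\Sigma$ on the unitary axis. In addition, the first tame condition makes the normalizer action $\Aut_{\mathscr P}(\RP')$ free on the relevant cuspidal data. Consequently \eqref{dec2bar} identifies $\CA^{\bar2}_{[\Sigma]}$ on each Levi with a single induced copy, and the graded pieces of $\CA_{[\Sigma]}(\RG)$ match, piece by piece, the $\lambda$-generalized eigenspaces of $\CA_{\RP,\Sigma}(\RG)$ under the map $(\,\cdot\,)_{\RP,\Sigma}$.

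\textbf{Main obstacle.} I expect the hardest part to be this bookkeeping. One must verify that the map \eqref{isogr} on $\CA_{[\Sigma]}(\RG)_{[t]}$, composed into $\CA_{\RP,\Sigma}$-components, is injective. Concretely, the nonzero $\lambda$-components along the various $\RP'$ must all be recovered from the $\RP$-component through the holomorphic, invertible intertwining operators $M(w,\Sigma,\underline 0)$ for $w$ with $w\RM=\RM$ or with $\RP'$ associate. Invertibility of $M(w,\Sigma,\underline 0)$ also uses the second tame condition: $r(w)^{-1}$ involves $\oL(s+1,\Sigma_i\times\Sigma_j^\vee)$ and $\varepsilon$-factors, which are nonvanishing at $0$ by \cite{S81,JS81a}. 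Once injectivity on every graded piece is in hand, properties (a)--(d) of Proposition \ref{franke} (finiteness of the filtration and exhaustion) give injectivity of $(\,\cdot\,)_{\RP,\Sigma}$ on all of $\CA_{[\Sigma]}(\RG)$, which completes the proof.
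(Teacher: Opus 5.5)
Your reduction is sound: by Proposition \ref{lemti0}(b) it suffices to show that $(\,\cdot\,)_{\RP,\Sigma}$ is injective on $\CA_{[\Sigma]}(\RG)$, and Franke's filtration together with the two tame conditions is the right toolkit. The gap is in Step 2, which misses the idea that makes the argument work: the filtration has exactly \emph{one} nonzero graded piece (Lemma \ref{lem25}). This is forced by three facts.
\begin{enumerate}
\item By the second tame condition, any nonzero square-integrable piece on a Levi $\RM''$ with support in $[\Sigma]$ is cuspidal, $\Sigma''=\Sigma_{w^{-1}(1)}\widehat\otimes\cdots\widehat\otimes\Sigma_{w^{-1}(r)}$. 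So the real part of its exponent is the vector $(\mathrm{ep}(\Sigma_{w^{-1}(k)}))_k$.
\item The condition defining $\CA^{2,\mathrm{im}}$ in Lemma \ref{lembar2} forces these exponents to be constant on each block of $\RP'$.
\item The condition $\Re(\lambda)\in\check\a^+_{\RP'}$ in \eqref{isogr} forces them to decrease strictly from block to block.
\end{enumerate}
Hence $\RP'=\RP_\Sigma$ and $\lambda=\lambda_\Sigma$. So $\phi\mapsto(\phi_{\RP_\Sigma})_{\lambda_\Sigma}$ is already an isomorphism $\CA_{[\Sigma]}(\RG)\to\CA^{\bar 2}_{\RP_\Sigma,[\Sigma]}(\RG)_{\lambda_\Sigma}$. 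What remains is a computation on $\RM_\Sigma$, where all exponents coincide, using \eqref{dec2bar}, the first tame condition and Proposition \ref{lemti0}(b); this is Lemma \ref{lem26}. Without this observation your induction on $t$ is not well-posed. First, $(\,\cdot\,)_{\RP,\Sigma}$ is not a map on the quotients $\CA_{[\Sigma]}(\RG)_{[t]}$. Second, $\CA_{\RP,\Sigma}(\RG)$ is a single generalized $A_\RP$-eigenspace, so there are not several eigenspaces for the graded pieces to match.

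The mechanism you propose instead also fails. You want to recover the components along other parabolics from the $(\RP,\Sigma)$-component through invertible operators $M(w,\Sigma,\underline 0)$. This breaks once $w$ moves some $\Sigma_i$ past $\Sigma_j$ with $\mathrm{ep}(\Sigma_i)>\mathrm{ep}(\Sigma_j)$.
\begin{enumerate}
\item The numerator of $r(w,\Sigma,\underline 0)$ contains $\oL(0,\Sigma_i\times\Sigma_j^\vee)$. This is an $L$-value of the unitary parts at the point $\mathrm{ep}(\Sigma_i)-\mathrm{ep}(\Sigma_j)$, and it can vanish when that gap lies in $(0,1)$. For instance, take $\Sigma_1=\tau\otimes\abs{\det}_\A^{1/4}$ and $\Sigma_2=\abs{\,\cdot\,}_\A^{-1/4}$, with $\tau$ unitary cuspidal on $\GL_2(\A)$ and $\oL(\frac12,\tau)=0$; this is tamely isobaric since $n_1\neq n_2$. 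Since $N(w,\Sigma,\underline 0)$ is holomorphic, $M(w,\Sigma,\underline 0)=0$ in such a case.
\item The nonvanishing results you invoke concern the line $\Re(s)=1$, and nonvanishing of $\oL(s+1,\cdot)$ is beside the point. Invertibility of $r$ needs, among other things, $\oL(0,\Sigma_i\times\Sigma_j^\vee)\neq0$.
\item Locally the same thing happens. $\RI_v(\Pi)$ is a standard module (Theorem \ref{prop:std}), and an intertwining operator reversing the exponent order has image equal to its Langlands quotient. So it is not injective when $\RI_v(\Pi)$ is reducible.
\item For a general $\phi\in\CA_{[\Sigma]}(\RG)$ there is no a priori relation between $\phi_{\RP',\Sigma'}$ and $\phi_{\RP,\Sigma}$ given by intertwining operators. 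That relation holds for Eisenstein series, and showing $\phi$ is one is exactly what you are trying to prove.
\end{enumerate}
In the paper's argument, intertwining operators are needed only among the $\Sigma_i$ of equal exponent, inside $\RM_\Sigma$. There they are absorbed into the coinvariants in \eqref{dec2bar}.
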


Write 
\[
  r= d_1+d_2+\dots+d_{r'}
\]
for the partition such that
\begin{eqnarray*}
&&\quad\,   \mathrm{ep}(\Sigma_1)=\mathrm{ep}(\Sigma_2)=\dots = \mathrm{ep}(\Sigma_{d_1})\\
&&> \mathrm{ep}(\Sigma_{d_1+1})=\mathrm{ep}(\Sigma_{d_1+2})=\dots = \mathrm{ep}(\Sigma_{d_1+d_2})\\
&&>\dots \, \dots \\
&&>\mathrm{ep}(\Sigma_{d_1+d_2+\dots + d_{r'-1}+1})= \mathrm{ep}(\Sigma_{d_1+d_2+\dots + d_{r'-1}+2})=\dots =  \mathrm{ep}(\Sigma_{d_1+d_2+\dots +d_{r'}}). 
\end{eqnarray*}
This gives a corresponding partition 
\[
n = m_1+m_2+\cdots+m_{r'},
\]
where for each $j=1,2,\ldots, r'$ we put
\[
m_j:=n_{d_1+d_2+\cdots+d_{j-1}+1}+n_{d_1+d_2+\cdots+d_{j-1}+2}+\cdots+n_{d_1+d_2+\cdots+d_j}.
\]
Denote by $\RP_\Sigma=\RM_\Sigma\ltimes \RN_\Sigma\in \mathscr P$ the standard parabolic subgroup associated to this partition of $n$, with $\RM_\Sigma=\GL_{m_1}\times \GL_{m_2}\times \dots\times \GL_{m_{r'}}$.

\begin{lemp}\label{m2m1}
    Let $\RM_1$ be a Levi subgroup of $\RG$, and let $[\Sigma_1]_{\RM_1}:=[(\RM_2, \Sigma_1)]_{\RM_1}$ be a cuspidal datum for $\RM_1$. Assume that 
    \[
    [\Sigma_1]_{\RM_1}\subset [\Sigma]\quad \textrm{and}\quad \CA^2_{[\Sigma_1]_{\RM_1}}(\RM_1)\neq \{0\}.
    \]
    Then $\RM_2=\RM_1$. 
\end{lemp}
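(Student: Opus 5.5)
We will use the Mœglin–Waldspurger classification of the discrete spectrum of general linear groups, together with the tame condition in Definition \ref{def:isobaric}. Write $\RM_1=\GL_{k_1}\times\dots\times\GL_{k_s}$, up to conjugation. Then
\[
\CA^2_{[\Sigma_1]_{\RM_1}}(\RM_1)=\widehat\bigotimes_{l}\CA^2_{[\Sigma_1^{(l)}]}(\GL_{k_l}),
\]
because both the square-integrability condition and cuspidal supports factor along the blocks. It therefore suffices to treat a single block. So assume $\RM_1=\GL_k$ with cuspidal datum $[(\RM_2,\Sigma_1)]$, where $\RM_2=\GL_{k'_1}\times\dots\times\GL_{k'_t}$ and $\Sigma_1=\tau_1\widehat\otimes\dots\widehat\otimes\tau_t$. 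Here each $\tau_j$ is a cuspidal representation twisted by a power of $\abs{\det}_\A$ attached to $[\Sigma]$. Because $[\Sigma_1]_{\RM_1}\subset[\Sigma]$, the multiset $\{\tau_1,\dots,\tau_t\}$ is a sub-multiset of $\{\Sigma_1,\dots,\Sigma_r\}$.

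Suppose $\CA^2_{[\Sigma_1]}(\GL_k)\ne 0$. A nonzero element generates a subrepresentation, and since $\CA^2$ is admissible and has finite length in each cuspidal component, it has an irreducible square-integrable subquotient with this cuspidal support. By \cite{MW89} (the residual spectrum of $\GL_k$), such a representation is a Speh representation. Its cuspidal support consists of a single cuspidal unitary $\tau$, after central twisting, taken with the exponents
\[
\tau\abs{\det}^{(d-1)/2},\ \tau\abs{\det}^{(d-3)/2},\ \dots,\ \tau\abs{\det}^{-(d-1)/2},\qquad k=d\cdot k',
\]
all twisted by a common power of $\abs{\det}_\A$. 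If $d\ge2$, the support contains two consecutive members $\tau_i=\tau\abs{\det}^{a}$ and $\tau_j=\tau\abs{\det}^{a-1}$, which have equal size and satisfy $\tau_i=\tau_j\otimes\abs{\det}_\A$. This violates the second tame condition. (Equal members would violate the first tame condition, though these do not occur in a segment.) Hence $d=1$, so $t=1$ and $\RM_2=\RM_1$ on this block. Doing this for every block gives $\RM_2=\RM_1$.

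The main obstacle is the factorization claim: one must justify passing from $\CA^2_{[\Sigma_1]_{\RM_1}}(\RM_1)\ne0$ to the existence of an irreducible discrete-spectrum constituent on each $\GL_{k_l}$ factor with the prescribed cuspidal support. Here I would note that square-integrability modulo the center is checked through cuspidal exponents, by Lemma I.4.11 of \cite{MW} as recalled earlier. Given that, a nonzero $\phi$ produces a nonzero square-integrable automorphic form on each factor, via the projection $\phi\mapsto\phi_{\RP,\Sigma}$ and criterion (c). Alternatively, one can argue directly with criterion (c) of the $\CA^2$ lemma: the exponents must lie in $\check\a_\RG-{}^+\check\a_\RP$. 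If the $\tau_j$ were pairwise non-linked (no two differing by $\abs{\det}_\A$ and no equal pairs), the constant terms would be sums of holomorphic intertwined terms whose exponents are permutations of the original ones. Their average would be zero modulo the center, which contradicts the strict cone condition unless $t=1$. I would try this direct route first, and fall back on the Mœglin–Waldspurger classification if it does not go through.
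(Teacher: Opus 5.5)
Your proof is correct and is essentially the paper's: the paper's one-line argument is exactly that the Moeglin--Waldspurger description of the discrete spectrum forces a nonzero $\CA^2$ on a block with cuspidal support of length $d\geq 2$ to come from a Speh segment, whose consecutive members $\tau\abs{\det}_\A^{a}$ and $\tau\abs{\det}_\A^{a-1}$ violate the second tame condition. The ``direct route'' you mention is unnecessary, and as sketched it is incomplete: a central average of the possible exponents does not contradict the cone condition unless you show that a specific dominantly ordered term actually occurs with nonzero coefficient, so keep the Moeglin--Waldspurger argument.
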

\begin{proof}
In view of the second tame condition, this follows from the description of the discrete spectrum for general linear groups  in \cite{MW89}. 
\end{proof}

Write $\lambda_\Sigma\in \check \a_{\RP_\Sigma,\C}$ for the element such that $A_{\RP_\Sigma}$ acts on $\Ind_{\RM(\A)}^{\RM_\Sigma(\A)}\Sigma$ by the character $e^{\lambda_\Sigma}$. 

\begin{lemp}\label{lem25}
    Let $\RP'=\RM'\ltimes \RN'\in \mathscr P$ ($\RM'\supset \RM_0$) and let $\lambda\in \check \a_{\RP',\C}$. If 
    \be\label{relambda}
    \Re(\lambda)\in \check \a_{\RP'}^+\quad \textrm{and}\quad \CA_{\RP',[\Sigma]}^{\bar 2}(\RG)_\lambda\neq \{0\},
    \ee
    then $\RP'=\RP_\Sigma$ and $\lambda=\lambda_\Sigma$. 
\end{lemp}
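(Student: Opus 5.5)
Suppose $\Re(\lambda)\in \check \a_{\RP'}^+$ and $\CA_{\RP',[\Sigma]}^{\bar 2}(\RG)_\lambda\neq \{0\}$. The plan is to use Lemma \ref{m2m1} to pin down which cuspidal data on $\RM'$ can occur. We then use the $\bar 2$ condition together with the positivity of $\Re(\lambda)$ to force the block structure of $\RM'$ to be exactly the grouping of the $\Sigma_i$ by equal exponents.

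\textbf{Step 1: reduce to the cuspidal case on $\RM'$.} By \eqref{decautf3}, there is a cuspidal datum $[(\RM_2,\Sigma_0)]_{\RM'}\subset[\Sigma]$ with $\CA^{\bar 2}_{[\Sigma_0]_{\RM'}}(\RM')_\lambda\neq 0$. Lemma \ref{lembar2}, applied to $\RM'$, writes $\CA^{\bar 2}_{[\Sigma_0]_{\RM'}}(\RM')$ as a sum of Eisenstein-induced pieces $\CA^{2,\mathrm{im}}_{\RQ,[\Sigma_0]}(\RM')$, where $\RQ$ runs over standard parabolics of $\RM'$. Each such piece contains a nonzero square-integrable form on the Levi of $\RQ$ in a cuspidal datum contained in $[\Sigma]$. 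Lemma \ref{m2m1} then says that this Levi already carries the cuspidal datum itself, i.e.\ the relevant square-integrable forms are cusp forms twisted by imaginary characters.

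Hence every nonzero generalized eigencomponent of $\CA^{\bar 2}_{[\Sigma_0]_{\RM'}}(\RM')$ has real exponent governed by $\mathrm{ep}(\Sigma_0)$. Concretely, $\RM'=\GL_{k_1}\times\dots\times\GL_{k_s}$, and each block $\GL_{k_j}$ receives a subcollection $S_j$ of the $\Sigma_i$ (up to permutation, by conjugacy in $[\Sigma]$). The almost-square-integrability criterion (c) of the $\bar 2$-lemma, applied on $\GL_{k_j}$ to the cuspidal constant terms, forces the following. For each block, the exponents of the $\Sigma_i\in S_j$, ordered along any standard parabolic of $\GL_{k_j}$ where they appear, satisfy $\mathrm{ep}\in\check\a_{\GL_{k_j}}-\overline{^+\check\a}$.

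\textbf{Step 2: show exponents are constant in each block.} The constant term of an Eisenstein series from $\RQ$ involves every Weyl permutation of the cuspidal pieces. This uses the first tame condition and the holomorphy of the intertwining operators, both established in the proof of Proposition \ref{lemti0}. Therefore the $\bar 2$ condition must hold for all orderings of the $\Sigma_i$ within the block. For an ordering with a strictly larger exponent placed first, the criterion fails unless all exponents in $S_j$ are equal. So $\Re(\lambda)$ restricted to block $j$ is the common exponent $e_j$ of $S_j$.

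I expect this step to be the main obstacle. It requires knowing that no cancellation among the Weyl terms can make a non-equal-exponent block almost square-integrable. I would try to derive it from the first and second tame conditions, which keep the relevant $\Sigma_i$ non-isomorphic and not linked, so that the terms with different cuspidal exponents cannot cancel.

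\textbf{Step 3: use positivity to identify $\RP'$ and $\lambda$.} Now $\Re(\lambda)=(e_1,\dots,e_s)$, with each $e_j$ normalized per block. The condition $\Re(\lambda)\in\check\a_{\RP'}^+$ means that $\Re(\lambda)$ pairs positively with the simple roots, i.e.\ $e_1>e_2>\dots>e_s$ strictly. Since each block consists of $\Sigma_i$'s of a single exponent, and the blocks have strictly decreasing exponents, the blocks must exactly be the groups of equal exponent: $s=r'$, $k_j=m_j$, and $S_j=\{\Sigma_i\}$ for the $j$-th group. Thus $\RP'=\RP_\Sigma$, and $\lambda$ is the central character of $A_{\RP_\Sigma}$ on $\Ind_{\RM(\A)}^{\RM_\Sigma(\A)}\Sigma$, i.e.\ $\lambda=\lambda_\Sigma$. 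The imaginary parts also match, because the central characters of the cuspidal pieces are fixed by the datum.
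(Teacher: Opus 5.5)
\textbf{Verdict: genuine gap in Step 2.} Your Steps 1 and 3 follow the paper's argument. Step 2, which you flag as the main obstacle and leave open, is not established, and the route you sketch for it does not work as stated.

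That route needs every Weyl permutation of the cuspidal pieces to occur with a \emph{nonzero} term in the cuspidal constant terms. Only then can the ``largest exponent first'' ordering be tested against criterion (c). Nothing supplies this:
\begin{itemize}
\item Proposition \ref{lemti0} only shows that $M(w,\Sigma,\underline s)$ is holomorphic at $\underline 0$, not that it is nonzero there.
\item The normalizing factors $r(w,\Sigma,\underline 0)$ are ratios of Rankin--Selberg $L$-values, which can vanish, e.g.\ at central points.
\item The tame conditions say nothing about such vanishing, nor about cancellation between Weyl terms.
\end{itemize}

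The missing idea is already in your Step 1: it is the superscript ``$\mathrm{im}$'' in Lemma \ref{lembar2}. Your phrase ``cusp forms twisted by imaginary characters'' points at it, but you never draw the conclusion. Applying Lemma \ref{lembar2} to $\RM'$ gives a standard $\RP''=\RM''\ltimes\RN''\subset\RP'$ (your $\mathrm{Q}$) and $\lambda'\in\check\a_{\RP'',\C}$ with $\CA^{2}_{[\Sigma'']_{\RM''}}(\RM'')_{\lambda'}\neq\{0\}$ and $\lambda'|_{\a_{\RP',\C}}=\lambda$. By the definition of $\CA^{2,\mathrm{im}}$, you also get $\Re(\lambda')\in\check\a_{\RP'}$, i.e.\ $\Re(\lambda')$ vanishes on $\a_{\RP''}^{\RP'}$.

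Lemma \ref{m2m1} says $(\RM'',\Sigma'')$ is a permutation of $(\RM,\Sigma)$ by some $w$. Hence
\[
\Re(\lambda')=(\mathrm{ep}(\Sigma_{w^{-1}(1)}),\dots,\mathrm{ep}(\Sigma_{w^{-1}(r)}))\in\R^r .
\]
Vanishing on $\a_{\RP''}^{\RP'}$ says exactly that these exponents are constant on each block of $\RM'$, which is your Step 2. Moreover $\Re(\lambda')$ is then identified with $\Re(\lambda)\in\check\a_{\RP'}^+$, so the block values strictly decrease. This gives $\RP'=\RP_\Sigma$, and $w$ preserves each group of equal exponents, hence $\lambda=\lambda_\Sigma$. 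No analysis of constant terms, intertwining operators, or cancellation is needed.
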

\begin{proof}
    The assumption of the lemma implies that
   \be\label{notzero}
   \CA_{[\Sigma']_{\RM'}}^{\bar 2}(\RM')_\lambda\neq \{0\} 
   \ee
   for some cuspidal datum $[\Sigma']_{\RM'}$ for $\RM'$ such that $[\Sigma']_{\RM'}\subset [\Sigma]$. Applying Lemma \ref{lembar2} to $\RM'$, we get a standard  parabolic subgroup $\RP''=\RM''\ltimes \RN''\subset \RP'$ of $\RG$ (with $\RM_0\subset \RM''\subset \RM'$) such that
     \be\label{notzero2}
   \CA_{[\Sigma'']_{\RM''}}^{ 2}(\RM'')_{\lambda'}\neq \{0\}, 
   \ee
   where $[\Sigma'']_{\RM''}$ is a cuspidal datum for $\RM''$ such that $[\Sigma'']_{\RM''}\subset [\Sigma']_{\RM'}$, and $\lambda'\in \check \a_{\RP'',\C}$ is an element such that 
   \be\label{lamdbdap}
     \lambda'|_{\a_{\RP',\C}}=\lambda\quad \textrm{and}\quad  \mathrm{Re}(\lambda')|_{\a_{\RP''}^{\RP'}}=0.
   \ee
The equalities in \eqref{lamdbdap} and the first condition in \eqref{relambda} imply that
\be\label{rel2}
\mathrm{Re}(\lambda')\in \check \a_{\RP'}^+\subset \check \a_{\RP''}.
\ee

In view of \eqref{notzero2}, 
 Lemma \ref{m2m1} implies that  there is a permutation $w\in \frak S_r$ (the symmetric group) such that
\[
\RM''=\GL_{n_{w^{-1}(1)}}\times \GL_{n_{w^{-1}(2)}}\times \dots \times \GL_{n_{w^{-1}(r)}}
\]
and
\[
\Sigma''=\Sigma_{{w^{-1}(1)}}\widehat{\otimes} \Sigma_{{w^{-1}(2)}}\widehat{\otimes} \dots\widehat{\otimes} \Sigma_{{w^{-1}(r)}}\subset\CA_{\mathrm{cusp}} (\RM'').
\]
Then \eqref{notzero2} implies that
\[
\Re(\lambda')=(\mathrm{ep}(\Sigma_{{w^{-1}(1)}}), \mathrm{ep}(\Sigma_{{w^{-1}(2)}}), \dots, \mathrm{ep}(\Sigma_{{w^{-1}(r)}}))\in \a_{\RP''}=\R^r.
\]
By using \eqref{rel2}, it is then easy to see that $\RP'=\RP_\Sigma$, and 
\be\label{sta0}
 \textrm{$w$ stabilizes the set }\left \{1+\sum_{i=1}^{k-1} d_i, 2+\sum_{i=1}^{k-1} d_i, \dots, \sum_{i=1}^k d_i\right \}
\ee
for all $k=1,2, \dots , r'$.

By \eqref{notzero2}, we know that  $A_{\RP''}$ acts on $\Sigma''$ by the character $e^{\lambda'}$. Together with the first equality in \eqref{lamdbdap}, this implies that  $A_{\RP_\Sigma}$ acts on $\Ind_{\RM(\A)}^{\RM_\Sigma(\A)}\Sigma''$ by the character $e^{\lambda}$. Finally it follows from \eqref{sta0} that $A_{\RP_\Sigma}$ acts on $\Ind_{\RM(\A)}^{\RM_\Sigma(\A)}\Sigma$ and $\Ind_{\RM(\A)}^{\RM_\Sigma(\A)}\Sigma''$ via the same character. Thus $\lambda=\lambda_\Sigma$. This finishes the proof of the lemma. 
\end{proof}

\begin{lemp}\label{lem26}
The composition of the  constant term map 
\[
\CA_{\RP_\Sigma}(\RG)\rightarrow \CA_{\RP}(\RG), \quad \phi\mapsto \left(\phi_{\RP}: g\mapsto \int_{\RN(\rk)\backslash \RN(\A)} \phi(xg) \od\! x\right)
\]
and the projection map (with respect to the decomposition \eqref{decomautform}) 
\[
     \CA_{\RP}(\RG)\rightarrow\CA_{\RP,\Sigma}(\RG)
  \]
induces an isomorphism 
\be\label{constermp}
 (\,\cdot\,)_{\RP,\Sigma}:  \CA_{\RP_\Sigma,[\Sigma]}^{\bar 2}(\RG)_{\lambda_\Sigma}\rightarrow  \CA_{\RP,\Sigma}(\RG):=\Ind_{\RP(\A)}^{\RG(\A)} (\C[\a_{\RP,\C}]\otimes \Sigma). 
  \ee
\end{lemp}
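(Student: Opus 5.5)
The plan is to reduce everything to the Levi subgroup $\RM_\Sigma$ by induction in stages, and there to identify the space of almost square-integrable forms with an Eisenstein image. Write $\RP_\Sigma\cap \RM_\Sigma$-relative notation as follows: let $\RQ:=\RP\cap\RM_\Sigma$, a standard parabolic subgroup of $\RM_\Sigma$ with Levi factor $\RM$. Since $\CA_{\RP_\Sigma}(\RG)=\Ind_{\RP_\Sigma(\A)}^{\RG(\A)}\CA(\RM_\Sigma)$ and $\RN=\RN_\Sigma\cdot(\RN\cap\RM_\Sigma)$, the constant term along $\RP$ of an element of $\CA_{\RP_\Sigma}(\RG)$ is the induction of the constant term along $\RQ$ in $\CA(\RM_\Sigma)$. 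The projection to the $\Sigma$-part is compatible with this as well. Generalized eigenspaces for $A_{\RP_\Sigma}$ and the $\bar 2$-condition are both defined inside $\CA(\RM_\Sigma)$. Hence it suffices to show that
\[
(\,\cdot\,)_{\RQ,\Sigma}:\ \bigoplus_{[\Sigma_1]_{\RM_\Sigma}\subset[\Sigma]}\CA^{\bar 2}_{[\Sigma_1]_{\RM_\Sigma}}(\RM_\Sigma)_{\lambda_\Sigma}\longrightarrow \Ind_{\RQ(\A)}^{\RM_\Sigma(\A)}(\C[\a_{\RP,\C}]\otimes\Sigma)
\]
is an isomorphism; the lemma then follows by applying the exact functor $\Ind_{\RP_\Sigma(\A)}^{\RG(\A)}$.

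First I will pin down which cuspidal data for $\RM_\Sigma$ contribute. I will apply Lemma \ref{lembar2} to $\RM_\Sigma$, followed by Lemma \ref{m2m1}, exactly as in the proof of Lemma \ref{lem25}. This shows that a nonzero contribution to the $\lambda_\Sigma$-eigenspace comes only from $\CA^{2,\mathrm{im}}_{\RQ',[\Sigma'']}$ with $(\RM'',\Sigma'')$ a permutation $w$ of $(\RM,\Sigma)$. Comparing real parts of the eigencharacters forces $w$ to preserve the exponent blocks of sizes $d_1,\dots,d_{r'}$. Consequently the only $\RM_\Sigma$-cuspidal datum that survives is $[(\RM,\Sigma)]_{\RM_\Sigma}$.

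Next I will compute the space $\CA^{2,\mathrm{im}}_{\RQ',[\Sigma]}(\RM_\Sigma)_{\lambda_\Sigma}$ for each such $\RQ'$. Inside each block $\GL_{m_j}$ all exponents coincide, so by Lemma \ref{m2m1} the square-integrable forms on $\RM''$ with this support are cuspidal. The space is therefore $\Ind_{\RQ'(\A)}^{\RM_\Sigma(\A)}(\C[\a_{\RP'',\C}]\otimes \Sigma'')_{\lambda_\Sigma}$, and the imaginary-part condition is automatic because $\Re(\lambda')$ lies in $\check\a_{\RM_\Sigma}$. By the colimit description in Lemma \ref{lembar2} (equivalently \eqref{dec2bar}), the source is the Eisenstein image $\CE^{\RM_\Sigma}$ of $\Ind_{\RQ(\A)}^{\RM_\Sigma(\A)}(\C[\a_{\RP,\C}]\otimes\Sigma)_{\lambda_\Sigma}$. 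Here the block-preserving permutations are identified via the intertwining operators, which are holomorphic and invertible at $\underline 0$ by the tame conditions, as in the proof of Proposition \ref{lemti0}(a). In particular the Eisenstein map from this space onto the source is surjective. Note also that the $\lambda_\Sigma$-eigenspace of $\Ind_{\RQ(\A)}^{\RM_\Sigma(\A)}(\C[\a_{\RP,\C}]\otimes\Sigma)$ is all of it, up to the polynomial factors in the $\a_{\RP_\Sigma}$-direction; I will check that these match on both sides.

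Finally I will apply Proposition \ref{lemti0}(b) to the group $\RM_\Sigma$ in place of $\RG$; its hypotheses are inherited from the tame conditions. This gives that $(\,\cdot\,)_{\RQ,\Sigma}\circ\CE^{\RM_\Sigma}$ is the identity, because the first tame condition kills every nontrivial Weyl term. So $(\,\cdot\,)_{\RQ,\Sigma}$ is a left inverse of a surjection onto the source, and hence an isomorphism. The main obstacle I expect is the bookkeeping in the second step. There I must check that the generalized $\lambda_\Sigma$-eigenspace, with its polynomial factors in $\C[\a_{\RP,\C}]$, corresponds exactly under the Eisenstein map, and that the colimit over the category $\mathscr P$ for $\RM_\Sigma$ collapses to the single object $\RQ$. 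I will first try to handle this by the derivative trick $D\phi_{\underline\nu}|_{\underline\nu=\underline 0}$ used in the proof of Proposition \ref{lemti0}(a).
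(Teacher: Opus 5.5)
Your proposal is correct and follows essentially the same route as the paper: reduce to the Levi $\RM_\Sigma$ via $\RP\cap\RM_\Sigma$, show that only the datum $[\Sigma]_{\RM_\Sigma}$ contributes, use \eqref{dec2bar} with Lemma \ref{m2m1} to see that the almost square-integrable space is the Eisenstein image from the single associate class of $\RP\cap\RM_\Sigma$, and conclude with Proposition \ref{lemti0}(b). The only difference is minor. The paper uses the first tame condition to identify the $\Aut_{\mathscr P}$-coinvariants with $\CA_{\RP,\Sigma}$, so that the Eisenstein map is already an isomorphism, whereas you deduce bijectivity from surjectivity plus the left inverse; also, the invertibility of the intertwining maps you invoke is automatic from functoriality, so no tame condition is needed at that point.
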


\begin{proof}
Write $\RP_1:=\RP\cap \RM_\Sigma=\RM\ltimes \RN_1$, where $\RN_1:=\RN\cap \RM_\Sigma$. 
The lemma is easily reduced to the following assertion: the  composition of the  constant term map 
\be\label{ct1}
\CA(\RM_\Sigma)\rightarrow \CA_{\RP_1}(\RM_\Sigma), \quad \phi\mapsto \left(\phi_{\RP_1 }: g\mapsto \int_{\RN_1(\rk)\backslash \RN_1(\A)} \phi(xg) \od\! x\right)
\ee
and the projection map
\be\label{pr1}
     \CA_{\RP_1}(\RM_\Sigma)\rightarrow\CA_{\RP_1,\Sigma}(\RM_\Sigma)
  \ee
induces an isomorphism 
   \be\label{iso2bar}
  \bigoplus_{[\Sigma']_{\RM_\Sigma}\subset [\Sigma]} \CA_{[\Sigma']_{\RM_\Sigma}}^{\bar 2}(\RM_\Sigma)_{\lambda_\Sigma}\cong \CA_{\RP_1,\Sigma}(\RM_\Sigma):=\Ind_{\RP_1(\A)}^{\RM_\Sigma(\A)} (\C[\a_{\RP,\C}]\otimes \Sigma). 
  \ee
  Here the direct sum is taken over all cuspidal data $[\Sigma']_{\RM_\Sigma}$ for $\RM_\Sigma$ such that $[\Sigma']_{\RM_\Sigma}\subset [\Sigma]$. 

  It is clear that 
  \[
  \CA_{[\Sigma']_{\RM_\Sigma}}^{\bar 2}(\RM_\Sigma)_{\lambda_\Sigma}\neq 0\Rightarrow [\Sigma']_{\RM_\Sigma}=[\Sigma]_{\RM_\Sigma}.
  \]
  This implies that the left hand side of \eqref{iso2bar} is equal to $\CA_{[\Sigma]_{\RM_\Sigma}}^{\bar 2}(\RM_\Sigma)$, and it remains to show that the composition of the constant term map \eqref{ct1} and the projection map \eqref{pr1} yields an isomorphism 
   \be\label{iso2bar1}
 \CA_{[\Sigma]_{\RM_\Sigma}}^{\bar 2}(\RM_\Sigma)\cong \CA_{\RP_1,\Sigma}(\RM_\Sigma). 
  \ee

  Without loss of generality, we assume that $r'=1$ so that $\RP_\Sigma=\RM_\Sigma=\RG$ and $\RP_1=\RP$. 
  Then by  the second tame condition and the description of the discrete spectrum for general linear groups  in \cite{MW89}, for every $\RP'\in \mathscr P$,  
\[
\CA^{2}_{\RP',[\Sigma]}(\RG)\neq \{0\}\quad \Rightarrow \quad \begin{cases}
    [\RP']=[\RP] \ \ (\textrm{that is, $\RP'$ is isomorphic to $\RP$ in $\mathscr P$}); &\smallskip\\
    \CA^{2,\mathrm{im}}_{\RP',[\Sigma]}(\RG)=\CA^{2}_{\RP',[\Sigma]}(\RG).&
\end{cases}
\]  
Therefore, by \eqref{dec2bar}, the Eisenstein series map yields an isomorphism 
 \[
    \CA^{2}_{\RP,[\Sigma]}(\RG)_{\Aut_{\mathscr P}(\RP)}\cong \CA^{\bar 2}_{[\Sigma]}(\RG).
    \]
 Note that by the  first tame  condition, the natural map
 \[
 \CA_{\RP,\Sigma}(\RG)=\CA^{2}_{\RP,\Sigma}(\RG)\rightarrow \CA^{2}_{\RP,[\Sigma]}(\RG)_{\Aut_{\mathscr P}(\RP)}
 \]
 is an isomorphism. Therefore by part (b) of Proposition \ref{lemti0}, \eqref{iso2bar1} follows. This finishes the proof of the lemma. 
\end{proof}

Now  we are ready to prove Theorem \ref{lemti00}. It follows from Proposition \ref{franke} and  Lemma \ref{lem25} that the map 
\[
\CA_{[\Sigma]}(\RG)\rightarrow \CA_{\RP_\Sigma,[\Sigma]}^{\bar 2}(\RG)_{\lambda_\Sigma}, \quad \phi\mapsto (\phi_{\RP_\Sigma})_{\lambda_\Sigma}
\]
is an isomorphism. As in \eqref{constermp}, we have an isomorphism 
\[
   (\,\cdot\,)_{\RP,\Sigma}: \CA_{\RP_\Sigma,[\Sigma]}^{\bar 2}(\RG)_{\lambda_\Sigma}\rightarrow  \CA_{\RP,\Sigma}(\RG). 
  \]
 The composition of the above two maps equals
\[
  \CA_{[\Sigma]}(\RG)\rightarrow  \CA_{\RP,\Sigma}(\RG), \quad \phi\mapsto  ((\phi_{\RP_\Sigma})_{\lambda_\Sigma})_{\RP,\Sigma}=\phi_{\RP,\Sigma}. 
\]
Therefore the second arrow in Theorem \ref{lemti00} is an isomorphism. Together with part (b) of Proposition \ref{lemti0}, this implies that the first arrow in Theorem \ref{lemti00} is also an isomorphism. This finishes the proof of Theorem  \ref{lemti00}.

\subsection{$\Aut(\C)$-action on $\CR^{\mathrm{coti}}$}

Let $\Pi$ be an isobaric automorphic representation of $\RG(\A)$ as in \eqref{isopi}. 
Now we assume that $\Pi$ is regular algebraic in the sense of \cite[Definition 3.12]{Clo}, namely the infinitesimal character of $\Pi_\infty$ coincides with that of an algebraic finite-dimensional representation of $\RG(\rk\otimes_\BQ \C)$. The following result generalizes \cite[Theorem 3.13]{Clo} and \cite[Lemma 1.2]{G18}. 

\begin{lemp}\label{sigmaiso}
   Let $\sigma\in \Aut(\C)$. Up to isomorphism, there exists a unique  regular algebraic isobaric automorphic representation $\,^{\sigma}\Pi$ of $\RG(\A)$ such that for all $v\nmid \infty$, $({}^{\sigma}\Pi)_v$ is isomorphic to the $\sigma$-twist of $\Pi_v$ (or equivalently, there exists a $\RG(\rk_v)$-equivariant $\sigma$-linear isomorphism $\Pi_v\rightarrow ({}^{\sigma}\Pi)_v$).
\end{lemp}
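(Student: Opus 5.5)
Uniqueness comes from strong multiplicity one for isobaric representations (Jacquet--Shalika): two isobaric automorphic representations of $\GL_n(\A)$ whose local components agree at almost all places are isomorphic. So the whole task is existence, which we reduce to the cuspidal case and then reassemble.

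\textbf{Cuspidal case.} Write $\Sigma_i\otimes\abs{\det}_\A^{-\mathrm{ep}(\Sigma_i)}$ as a unitary cuspidal representation twisted by a real exponent. Since $\Pi$ is regular algebraic, the infinitesimal character of $\Pi_\infty$ is a union of those of the $\Sigma_{i,\infty}$. Concretely, at each Archimedean place the parameter of $\Pi_v$ is the direct sum of the parameters of the $\Sigma_{i,v}$, and the integrality and regularity of the infinitesimal character of $\Pi_v$ forces each $\Sigma_{i,\infty}$ to have infinitesimal character of the form
\[
(\text{integral, regular})+\text{a common half-integral shift}.
\]
Thus there is $t_i\in\frac12\Z$ with $t_i\equiv \frac{n-n_i}{2}\bmod \Z$ (compare $\nu_i$ in \eqref{nu_i}) such that $\Sigma_i':=\Sigma_i\otimes\abs{\det}_\A^{t_i}$ is regular algebraic cuspidal. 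Here I would use the classification in Propositions \ref{class00} and \ref{class}: the shift needed to make each block integral is determined by the parity of $n-n_i$.

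By Clozel's theorem \cite[Theorem 3.13]{Clo}, for each $\sigma$ there is a regular algebraic cuspidal ${}^\sigma\Sigma_i'$ whose finite components are the $\sigma$-twists of $\Sigma'_{i,v}$.

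\textbf{Reassembly.} Define
\[
{}^\sigma\Pi:=\boxplus_i\,\big({}^\sigma\Sigma_i'\otimes\abs{\det}_\A^{-t_i}\big).
\]
At a finite place $v$, the $\sigma$-twist of $\abs{\det}_v^{s}$ with $s\in\frac12\Z$ is $\abs{\det}_v^{s}$ times a possibly nontrivial quadratic unramified character, namely the twist of $q_v^{1/2}$. So the finite component ${}^\sigma(\Pi_v)$, viewed as an isobaric sum, must be compared carefully. The $\sigma$-twist of an isobaric sum is the isobaric sum of $\sigma$-twists, because $\sigma$-twisting preserves Langlands parameters up to the usual half-integral normalization: a $\sigma$-linear isomorphism commutes with parabolic induction once $\delta_P^{1/2}$ is accounted for.

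The key point is that the global correction must be uniform. Since every $t_i$ has the same parity shift relative to $\frac{n-1}{2}$, the combined sign character $\sigma(q_v^{1/2})/q_v^{1/2}$ appears consistently. I would absorb it by instead using Clozel's statement for $\Pi$-normalized twists $\Sigma_i\otimes\abs{\det}^{(n_i-n)/2}$, i.e.\ $\nu_i$-shifts. Then ${}^\sigma\Pi_v\cong {}^\sigma(\Pi_v)$ follows from the compatibility of $\sigma$-twists with normalized induction, namely the Clozel/Waldspurger convention, via Theorem \ref{prop:std} identifying $\Pi_v$ as a Langlands quotient of $\RI_v(\Pi)$. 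Regular algebraicity of ${}^\sigma\Pi$ follows from the explicit permutation of Archimedean parameters.

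The main obstacle is this bookkeeping of half-integral twists: verifying that $\sigma$-twist commutes with normalized induction and Langlands quotients only after the $\abs{\cdot}^{1/2}$-shifts are matched, so that the local identity holds at every finite place and not merely almost everywhere.
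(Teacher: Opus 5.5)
Your proposal is correct and is essentially the paper's proof. Uniqueness comes from strong multiplicity one \cite[Theorem 4.4]{JS81b}. For existence, you twist each $\Sigma_i$ by a half-integral power of $\abs{\det}_\A$ to make it regular algebraic, apply \cite[Theorem 3.13]{Clo}, untwist, form the isobaric sum, and check the finite components via Theorem \ref{prop:std}.

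The paper takes the shift to be exactly $\nu_i$ from \eqref{nu_i}, which is the exponent of $\delta_{\RP}^{1/2}$ on the $i$-th block. So the unramified quadratic characters that $\sigma$ produces from $\abs{\det}_v^{\nu_i}$ and from $\delta_{\RP}^{1/2}$ coincide block by block. Since only $t_i$ modulo $\Z$ matters and $t_i\equiv\nu_i$, your first choice of $t_i$ already works, and no extra ``absorbing'' correction is needed.
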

\begin{proof}
    The uniqueness follows from the strong multiplicity one theorem for isobaraic automorphism representations for general linear groups (see \cite[Theorem 4.4]{JS81b}). 
    
    To prove the existence, note that
$\Sigma_i\otimes\abs{\det}_\A^{\nu_i}$ is  regular algebraic and cuspidal, where 
$\nu_i$ is given in \eqref{nu_i}. By \cite[Theorem 3.13]{Clo}, we have the $\sigma$-twist $\,^\sigma \!\left(\Sigma_i\otimes\abs{\det}_\A^{\nu_i}\right)$. It is a regular algebraic irreducible cuspidal  automorphic representation of $\GL_{n_i}(\A)$ whose $v$-component is isomorphic to the $\sigma$-twist of $\Sigma_{i,v}\otimes\abs{\det}_v^{\nu_i}$. Define the $\sigma$-twist of $\Sigma_i$ to be
\[
  \,^{\sigma} \Sigma_i:=\,^\sigma \!\left(\Sigma_i\otimes\abs{\det}_\A^{\nu_i}\right) \otimes \abs{\det}_\A^{-\nu_i},
\]
which is also an irreducible subrepresentation of $\CA_{\mathrm{cusp}}(\GL_{n_i})$. It is clear that $\mathrm{ep}( \,^{\sigma} \Sigma_i)=\mathrm{ep}(\Sigma_i) $.

Using Theorem \ref{prop:std}, it is easy to see that the isobaric sum
\be\label{isobspi}
\,^\sigma \Pi:=\,^\sigma\Sigma_1\boxplus  \,^\sigma\Sigma_2\boxplus \dots\boxplus \,^\sigma\Sigma_r
\ee
satisfies the requirement of the lemma. 
\end{proof}



Let $\,^\sigma \Pi$ be as in Lemma \ref{sigmaiso}, to be called the $\sigma$-twist of $\Pi$. Then $(\sigma,\Pi)\mapsto \,^\sigma \Pi$ defines an action of $\Aut(\C)$ on the set of all  isomorphism classes of regular algebraic isobaric automorphic representations of $\RG(\A)$. 
By \cite[Theorem 3.13]{Clo} and the proof of Lemma \ref{sigmaiso}, this action is locally finite. 


\begin{prp}\label{prpactpi}
   Let $\sigma\in \Aut(\C)$. If $\Pi\in \CR^{\mathrm{coti}}$, then  $\,^\sigma \Pi\in \CR^{\mathrm{coti}}$ as well.
\end{prp}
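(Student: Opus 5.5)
We need to check three things for ${}^\sigma\Pi$, which is defined in \eqref{isobspi}: it is tamely isobaric, its infinite part is generic, and its infinite part is cohomological. First note that $\Pi\in\CR^{\mathrm{coti}}$ is regular algebraic, because $\Pi_\infty$ is cohomological and hence has the infinitesimal character of an algebraic representation. So Lemma \ref{sigmaiso} applies, and ${}^\sigma\Pi={}^\sigma\Sigma_1\boxplus\cdots\boxplus{}^\sigma\Sigma_r$ with $\mathrm{ep}({}^\sigma\Sigma_i)=\mathrm{ep}(\Sigma_i)$. The exponents therefore keep the same ordering.

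\emph{Tameness.} Take $i\neq j$ with $n_i=n_j$. Suppose ${}^\sigma\Sigma_i={}^\sigma\Sigma_j$ or ${}^\sigma\Sigma_i={}^\sigma\Sigma_j\otimes\abs{\det}_\A$. Comparing exponents, the second equality forces $\mathrm{ep}(\Sigma_i)=\mathrm{ep}(\Sigma_j)+1$, so $\nu_i-\nu_j$ is an integer. In either case we apply $\sigma^{-1}$ to the twisted cuspidal representations ${}^\sigma(\Sigma_i\otimes\abs{\det}^{\nu_i})$, using Clozel's $\sigma$-twist and strong multiplicity one for cusp forms. The twist $\abs{\det}^{k}_\A$ with $k\in\Z$ has rational values on the finite adeles, so it is $\sigma$-invariant. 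This gives $\Sigma_i=\Sigma_j$ or $\Sigma_i=\Sigma_j\otimes\abs{\det}_\A$, contradicting \eqref{deftame0}. So ${}^\sigma\Pi$ is tamely isobaric. One small check: the $\nu_i$ in the proof of Lemma \ref{sigmaiso} depend only on the position $i$. Because of this, the comparison needs the half-integrality bookkeeping to match, and $n_i=n_j$ ensures that the normalizations of $\Sigma_i$ and $\Sigma_j$ differ by an integer power of $\abs{\det}$.

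\emph{Infinite part.} This is the main obstacle: the $\sigma$-twist only controls finite places, and we must pin down $({}^\sigma\Pi)_\infty$. By Theorem \ref{prop:std}, $({}^\sigma\Pi)_\infty$ is the Langlands quotient of $\RI_\infty({}^\sigma\Pi)=\times_i({}^\sigma\Sigma_i)_\infty$, so it suffices to show that this induced representation is irreducible and lies in $\CR^{\mathrm{coge},\infty}$. I would use Proposition \ref{class}. Since $\Pi_\infty\cong\RI_\infty(\Pi)$ is generic and cohomological, for each archimedean place $v$ we have $\Sigma_{i,v}=\pi(\gamma'_{i,v})_{\varepsilon_{i,v}}$ with multisets $\gamma_{i,v}$. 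Their shifted union $\gamma_v$ lies in $\Gamma_n^{\K_v}$, and the compatibility $\pi(\gamma'_{i,v})_{\varepsilon_{i,v}}\cong\pi(\gamma'_{i,v})_{\varepsilon_{\K_v}}$ holds.

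By \cite[Theorem 3.13]{Clo} and its proof, the infinity type of ${}^\sigma(\Sigma_i\otimes\abs{\det}^{\nu_i})$ is obtained from that of $\Sigma_i\otimes\abs{\det}^{\nu_i}$ by permuting the archimedean embeddings via $\sigma$. Concretely, the infinity type is $\{\gamma_{i,\sigma^{-1}\circ\iota}\}$ at the embedding $\iota$, with the pairs $(a,b)$ swapped where a real/complex conjugation is involved. The sign characters for odd $n_i$ at real places are governed by the central character: they are determined by the finite part via the sign of a Hecke character, and hence correspond under $\sigma$. After twisting back by $\abs{\det}^{-\nu_i}$, the multisets for ${}^\sigma\Sigma_i$ at $v$ are the $\gamma'$'s of the place $\sigma^{-1}v$. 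Their union is then $\gamma_{\sigma^{-1}v}$, or its swap, which is again incomparable, and symmetric at real places. The sign compatibility condition is likewise transported, with the global $\varepsilon$ replaced by its transport. Proposition \ref{class} then shows that $\RI_\infty({}^\sigma\Pi)$ is isomorphic to the irreducible representation $\pi(\gamma)\otimes\varepsilon'$, which is generic and cohomological.

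Hence $({}^\sigma\Pi)_\infty$ is generic and cohomological, and together with tameness this gives ${}^\sigma\Pi\in\CR^{\mathrm{coti}}$.
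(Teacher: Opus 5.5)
Your plan follows the paper's proof. Tameness comes from the construction ${}^\sigma\Sigma_i={}^\sigma(\Sigma_i\otimes\abs{\det}_\A^{\nu_i})\otimes\abs{\det}_\A^{-\nu_i}$; your argument, using $\nu_i-\nu_j\in\Z$ when $n_i=n_j$, is fine. The archimedean part is then reduced, via Proposition \ref{class}, to showing that the multisets attached to the ${}^\sigma\Sigma_{i,v}$ have incomparable union, which must also be symmetric when $v$ is real.

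The gap is in that last check, where you pass to ``the place $\sigma^{-1}v$''. Clozel's theorem only says that ${}^\sigma\Sigma_{i,v}$ is governed by the weights at the two embeddings $\sigma^{-1}\circ\iota_v$ and $\sigma^{-1}\circ\bar\iota_v$. So its multiset is $\{(a^{\sigma^{-1}\circ\iota_v}_{i,j},a^{\sigma^{-1}\circ\bar\iota_v}_{i,n_i+1-j})\}_j$. Two problems arise from this.
\begin{itemize}
\item $\sigma$ need not commute with complex conjugation on the embeddings of $\rk$ (take $\rk=\BQ(\sqrt[3]{2})$). So $\sigma^{-1}\circ\bar\iota_v$ need not equal $\overline{\sigma^{-1}\circ\iota_v}$, and these two embeddings need not form a place.
\item When $v$ is real but $\iota':=\sigma^{-1}\circ\iota_v$ is complex, the multiset $\{(a^{\iota'}_{i,j},a^{\iota'}_{i,n_i+1-j})\}$ uses only $\iota'$. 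The multiset $\gamma_w$ of the place $w$ of $\iota'$ instead pairs $a^{\iota'}$ with $a^{\bar\iota'}$.
\end{itemize}
Incomparability of a union is a statement about how the different blocks interleave, and it does not transfer from $\gamma_w$ to these multisets for free. For instance, with two blocks of size $2$ at a real $v$, you need the intervals $[a^{\iota'}_{i,2},a^{\iota'}_{i,1}]$, $i=1,2$, to be nested. Incomparability of $\gamma_w$ alone does not force this: these intervals can interleave while $\gamma_w$ stays incomparable. Your phrase about swapping pairs ``where a real/complex conjugation is involved'' does not address this.

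The missing ingredient is Clozel's purity lemma \cite[Lemma 4.9]{Clo}, applied to the regular algebraic cuspidal representations $\Sigma_i\otimes\abs{\det}_\A^{\nu_i}$ and their $\sigma$-twists. It gives $a^{\sigma^{-1}\circ\bar\iota}_{i,j}=a^{\overline{\sigma^{-1}\circ\iota}}_{i,j}$ for every embedding $\iota$. With this identity the check goes through:
\begin{itemize}
\item At a complex $v$, your new union is literally $\gamma_w$ or its swap, for the place $w$ of $\sigma^{-1}\circ\iota_v$ (which may be real or complex).
\item At a real $v$ with $\sigma^{-1}\circ\iota_v$ complex, it gives $a^{\iota'}=a^{\bar\iota'}$. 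So the union equals $\gamma_w$ and is therefore incomparable, while being visibly symmetric.
\end{itemize}
This is exactly how the paper finishes.

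Your discussion of transporting the sign characters, by contrast, is not needed for this statement. Incomparability and symmetry at a real place allow at most one block of odd size, and the blocks of even size do not depend on the sign. So the compatibility condition in Proposition \ref{class} can always be met by choosing $\varepsilon$ suitably.
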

\begin{proof}
    It is clear from the proof of Lemma \ref{sigmaiso} that $\,^\sigma \Pi$ is tamely isobaric. It remains to show that $\RI_\infty(\,^\sigma \Pi)$ is irreducible and cohomological. 
    To this end, we use the explicit realization of $\RI_\infty(\Pi)$ in Proposition \ref{class00} and \eqref{pigamma}. 
   
    Let $\CE_\rk$ be the set of field embeddings $\iota: \rk\to\BC$. For every dominant  weight 
    \[
    \mu = (\mu^\iota)_{\iota\in \CE_\rk} = (\mu^\iota_1,\mu^\iota_2,\ldots, \mu^\iota_n)_{\iota\in\CE_\rk}\in (\BZ^n)^{\CE_\rk},
    \]
    where $\mu^\iota_1\geq \mu^\iota_2\geq \cdots \geq\mu^\iota_n$, let $F_\mu\subset \CF$ be the  irreducible subrepresentation with highest weight $\mu$. 
    For every character $\varepsilon = \otimes_{v\mid\infty}\varepsilon_v: \pi_0(\rk_\infty^\times)\to\C^\times$, let $\pi_{\mu,\varepsilon}$ be the unique representation in  $\CR^{\rm coge,\infty}_\varepsilon$ with coefficient system $F_\mu$. 

   For every $\iota\in \CE_\rk$, put
   \[
   \tilde\mu^\iota_k := -\mu^\iota_k + k- \frac{n+1}{2}\qquad (k=1,2,\ldots, n).
   \]
   For every place $v\mid\infty$ of $\rk$, write
   \[
    \{\textrm{field embeddings $\rk\to\C$ inducing $v$}\} = 
        \{\iota_v, \bar\iota_v\}\subset \CE_\rk
     \]
   so that $\iota_v=\bar\iota_v$ if and only if $v$ is real, and 
   \[
   \CE_\rk = \bigcup_{v\mid\infty}\{\iota_v, \bar\iota_v\}.
   \]
   Here and below `$\bar{\ }$' indicates the complex conjugation. 
   By using $\iota_v$, $\rk_v$ is identified with
 \[
  \mathbb K_v:=  \begin{cases}
    \R,\quad& \textrm{if $v$ is real};\\
       \C,\quad& \textrm{if $v$ is complex}.
   \end{cases}
   \]
  
  Recall  the notation of Section \ref{secgc}. By Proposition \ref{class00}, we have $\pi_{\mu,\varepsilon} \cong \pi_\mu\otimes \varepsilon$, where 
   \[
    \pi_\mu = \widehat\otimes_{v\mid\infty}\pi(\gamma_v)\quad  \textrm{and} \quad  \gamma_v: = \Set{(\tilde\mu^{\iota_v}_1, \tilde\mu^{\bar\iota_v}_n), (\tilde\mu^{\iota_v}_2, \tilde\mu^{\bar\iota_v}_{n-1}),\ldots, (\tilde\mu^{\iota_v}_n, \tilde\mu^{\bar\iota_v}_1) } \in \Gamma_n^{\K_v}.
   \]
   Now assume that $\RI_\infty(\Pi)\cong \pi_{\mu,\varepsilon}$. By Proposition \ref{class}, for every place $v\mid \infty$ of $\rk$ there exists $\gamma_{i,v}\in \Gamma_{n_i}^{\K_v}$ ($i=1,2,\ldots, r$) such that 
   \[
   \Sigma_{i,v} \cong \pi(\gamma_{i,v}')_{\varepsilon_v} 
   \]
   and 
   \be\label{gammav}
    \gamma_{1,v}'\cup\gamma_{2,v}'\cup\cdots \cup \gamma_{r, v}' =\gamma_v\in \Gamma^{\K_v}_n,
   \ee
   where $\gamma_{i,v}'$ is defined as in \eqref{gammaip}.


For every $i=1,2,\ldots,r$, write 
  \[
\left \{ a^\iota_{i,j}\in \Z+\frac{n-1}{2}\right\}_{j=1,2,\ldots, n_i, \ \iota\in \CE_\rk}
  \]
  for the family 
  such that
  \begin{itemize}
\item  $\gamma_{i,v}' = \Set{(a_{i,1}^{\iota_v}, a_{i,n_i}^{\bar\iota_v}), (a_{i,2}^{\iota_v}, a_{i,n_i-1}^{\bar\iota_v}),\ldots, (a_{i,n_i}^{\iota_v}, a_{i,1}^{\bar\iota_v})}$ for all $v\mid \infty$; and 

 \item  $a_{i,1}^{\iota}> a_{i,2}^{\iota} > \cdots > a_{i,n_i}^{\iota}$ for all $\iota\in\CE_\rk$.
   \end{itemize}
   By the purity lemma (\cite[Lemma 4.9]{Clo}) we have that 
   \be \label{pure}
   a^{\sigma^{-1}\circ \bar\iota}_{i,j} = a^{\overline{\sigma^{-1}\circ\iota}}_{i,j},\quad \textrm{for all }\ \  j=1,2,\ldots, n_i, \ \iota\in\CE_\rk.
   \ee

Let $v\mid \infty$ be an Archimedean place of $\rk$. 
  For all  $i=1,2,\ldots, r$, define 
   \[
   \,^\sigma\gamma'_{i, v}:=\Set{(a_{i,1}^{\sigma^{-1}\circ \iota_v}, a_{i,n_i}^{\sigma^{-1}\circ \bar\iota_v}), (a_{i,2}^{\sigma^{-1}\circ \iota_v}, a_{i,n_i-1}^{\sigma^{-1}\circ \bar\iota_v}),\ldots, (a_{i,n_i}^{\sigma^{-1}\circ\iota_v}, a_{i,1}^{\sigma^{-1}\circ \bar\iota_v})}.
   \]
   It follows from \eqref{isobspi} that 
   \[
   \RI_\infty(\,^\sigma\Pi)_v\cong \pi(\,^\sigma\gamma_{1,v}')_{\varepsilon_v}\times \cdots \times \pi(\,^\sigma\gamma_{r,v}')_{\varepsilon_v}.
   \]
   
   By Proposition \ref{class} again, it suffices to show that 
   \be \label{incompare}
{}^\sigma\gamma'_{1,v}\cup{}^\sigma\gamma'_{2,v}\cup\cdots\cup \,^\sigma\gamma'_{r, v} \in \Gamma_n^{\K_v}.
   \ee
    Now we consider complex and real places separately. 
   \begin{itemize}
     \item If $v$ is complex, then \eqref{gammav} and \eqref{pure} imply \eqref{incompare}.

 \item Assume that $v$ is real. Then ${}^\sigma\gamma'_{1,v}\cup\cdots\cup \,^\sigma\gamma'_{r,v}$ is symmetric.
 \begin{itemize}
     \item 
If $\sigma^{-1}\circ \iota_v$ is real, then \eqref{incompare} is clear from \eqref{gammav}.
 \item If $\sigma^{-1}\circ \iota_v$ is complex, then \eqref{pure} gives
 \[
a^{\sigma^{-1} \circ\iota_v}_{i,j} = a^{\sigma^{-1}\circ\bar\iota_v}_{i,j} = a^{\overline{\sigma^{-1}\circ \iota_v}}_{i,j},
 \]
which together with \eqref{gammav} implies \eqref{incompare}. 
 \end{itemize}
\end{itemize}
Thus we obtain \eqref{incompare} and thereby finish the proof of the proposition. 
\end{proof}

In view of Proposition \ref{prpactpi}, we get a locally finite action
\[
 \Aut(\C)\curvearrowright \CR^{\mathrm{coti}}, \quad (\sigma, \Pi)\mapsto \,^\sigma \Pi.
\]

\subsection{$\Aut(\C)$-action on the Betti cohomology}

Suppose that $\Pi\in\CR^{\mathrm{coti}}$. 
As defined in the Introduction, we have the cohomology space
\[
\RH^b(\RI^{\mathrm{aut}}(\Pi)):=\RH_{\mathrm{ct}}^b(\RG(\rk_\infty)^0; F_{\Pi}\otimes \RI^{\mathrm{aut}}(\Pi))=\RH^b(\g,K_\infty^0; F_{\Pi}\otimes \RI^{\mathrm{aut}}(\Pi)).
\]
Here $\g$ denotes the complexified Lie algebra of $\RG(\rk_\infty)$.

\begin{lemp}\label{injbetti00}
 The natural map 
 \[
  \RH^b(\RI^{\mathrm{aut}}(\Pi)) \rightarrow \RH^b(\g,K_\infty^0; F_{\Pi}\otimes \CA_{[\Sigma]}(\RG))
  \]
 is an isomorphism of representations of $\pi_0(\rk_\infty^\times)\times \RG(\A_\mathrm f)$. 
 

\end{lemp}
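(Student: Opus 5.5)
By Theorem \ref{lemti00}, the map $\CE:\CA_{\RP,\Sigma}(\RG)\to \CA_{[\Sigma]}(\RG)$ is an isomorphism of $\RG(\A)$-representations. Since it commutes with the action of $\RG(\rk_\infty)$ (hence of $\g$, $K_\infty$ and $\pi_0(\rk_\infty^\times)$) and of $\RG(\A_\mathrm f)$, it induces an isomorphism
\[
\RH^b(\g,K_\infty^0;F_\Pi\otimes \CA_{\RP,\Sigma}(\RG))\xrightarrow{\sim} \RH^b(\g,K_\infty^0;F_\Pi\otimes \CA_{[\Sigma]}(\RG)),
\]
equivariant for $\pi_0(\rk_\infty^\times)\times \RG(\A_\mathrm f)$. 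Under $\CE$, the subspace $\RI(\Pi)=\Ind_{\RP(\A)}^{\RG(\A)}\Sigma\subset \CA_{\RP,\Sigma}(\RG)$ maps onto $\RI^{\mathrm{aut}}(\Pi)$. The lemma is therefore reduced to a statement about induced representations: the inclusion $\RI(\Pi)\hookrightarrow \CA_{\RP,\Sigma}(\RG)=\Ind_{\RP(\A)}^{\RG(\A)}(\C[\a_{\RP,\C}]\otimes\Sigma)$ should induce an isomorphism in degree $b$.

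To prove this reduced statement, I filter $\C[\a_{\RP,\C}]$ by polynomial degree. The successive quotients $\Ind(\C[\a_{\RP,\C}]_{\le d}\otimes\Sigma)/\Ind(\C[\a_{\RP,\C}]_{\le d-1}\otimes\Sigma)$ are isomorphic to finite direct sums of copies of $\RI(\Pi)$. Hence $\CA_{\RP,\Sigma}(\RG)$ is an increasing union of finite-length iterated extensions of $\RI(\Pi)=\RI_\infty(\Pi)\otimes \RI_\mathrm f(\Pi)$. Relative Lie algebra cohomology commutes with this direct limit and only sees the Archimedean factor. It therefore suffices to show that for the finite-dimensional unipotent extensions $V_d:=\Ind_{\RP(\rk_\infty)}^{\RG(\rk_\infty)}(\C[\a_{\RP,\C}]_{\le d}\otimes\Sigma_\infty)$, the map $\RH^b(\g,K^0_\infty;F_\Pi\otimes V_0)\to \RH^b(\g,K^0_\infty;F_\Pi\otimes V_d)$ is an isomorphism, where $V_0=\RI_\infty(\Pi)$.

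The key input is that $\RI_\infty(\Pi)\cong\Pi_\infty$ is irreducible, generic and cohomological, and that $b$ is the \emph{bottom} degree. I will compute the cohomology of $V_d$ via Shapiro's lemma/Delorme, reducing to $(\frak m,K_M)$-cohomology of $\C[\a_{\RP,\C}]_{\le d}\otimes\Sigma_\infty$ twisted by $\frak n$-cohomology of $F_\Pi$. The polynomial factor $\C[\a_{\RP,\C}]_{\le d}$ is a finite-dimensional unipotent $\a_\RP$-module with trivial generalized character. Its contribution is a Koszul-type computation for $\a_\RP^\RG$: $\RH^*(\a,\C[\a]_{\le d})$ is $\C$ in degree $0$ for the full polynomial module. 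In the limit, the inclusion of constants induces an isomorphism in degree $0$ and kills the exterior powers $\wedge^{\ge1}\check\a$ that would appear for $V_0$. Working this out via the spectral sequence, only the bottom-degree term survives in degree $b$, identified with the image from $V_0$.

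The main obstacle is precisely this last claim: that the extra classes created by the unipotent polynomial factor (degree shifts by $\wedge^k\check\a_\RP^\RG$) never land in degree $b$, or cancel there. My first attempt is a direct long-exact-sequence argument. Take $0\to V_{d-1}\to V_d\to V_0^{\oplus m}\to 0$. By Lemma \ref{iotapi}/Proposition \ref{class00}, $\RH^{q}$ of $F_\Pi\otimes V_0$ vanishes for $q<b$. Injectivity in degree $b$ then follows by induction, and surjectivity reduces to showing that the connecting maps $\RH^b(F_\Pi\otimes V_0^{\oplus m})\to \RH^{b+1}(F_\Pi\otimes V_{d-1})$ are injective. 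This should hold because the extension is nonsplit along $\a_\RP^\RG$ and the connecting map is cup product with the corresponding classes in $\wedge^1\check\a$.

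If this direct route fails, the fallback is to identify $\RH^*(\g,K^0_\infty;F_\Pi\otimes\CA_{[\Sigma]}(\RG))$ with Franke's computation via Proposition \ref{franke} and Lemma \ref{lem25}, which gives the cohomology as that of $\Sym(\check\a_{\RP_\Sigma}^{\RG})\otimes$ a square-integrable piece, in the style of Franke and Grbac. The bottom degree then matches $\RH^b(\RI(\Pi))$.
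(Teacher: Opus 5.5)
Your proposal follows the paper's proof: the paper likewise uses Theorem \ref{lemti00} to reduce to the claim that the inclusion $\Ind_{\RP(\A)}^{\RG(\A)}\Sigma\hookrightarrow\CA_{\RP,\Sigma}(\RG)$ induces an isomorphism in degree $b$, and then concludes from $b$ being the bottom degree by citing the proof of \cite[Proposition 1.6]{G18}, which your filtration and long-exact-sequence argument essentially reconstructs. One correction: surjectivity rests on the Shapiro--K\"unneth description, not on non-splitness of the extension alone. In that description, cup product with $\check\a_{\RP}$ (central directions $\check\a_\RG$ included, not just $\check\a_\RP^\RG$) is injective on bottom-degree classes, because those classes carry no positive-degree exterior factor from the split center of $\RM(\rk_\infty)$.
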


\begin{proof}
In view of Theorem \ref{lemti00}, the lemma is equivalent to saying that the natural map
\[
  \RH^b(\g,K_\infty^0; F_{\Pi}\otimes \Ind_{\RP(\A)}^{\RG(\A)} \Sigma) 
  \rightarrow \RH^b(\g,K_\infty^0; F_{\Pi}\otimes \CA_{\RP,\Sigma}(\RG))
  \]
 is an isomorphism (see \eqref{autp}). This holds true because $b$ is the minimal degree of non-vanishing cohomology (see the proof of \cite[Proposition 1.6]{G18}). 
\end{proof}

By Lemma \ref{injbetti00}, the inclusion maps $\RI^{\mathrm{aut}}(\Pi)\hookrightarrow \CA(\RG)$ and $F_\Pi\hookrightarrow \CF$ induce an injective linear map 
\[
  \RH^b(\RI^{\mathrm{aut}}(\Pi))\rightarrow \RH^b(\CX).
\]
Using this injective map, we identify 
$\RH^b(\RI^{\mathrm{aut}}(\Pi))$ as a subspace of $\RH^b(\CX)$. 

\begin{lemp}\label{actbetti}
   For every $\sigma\in \Aut(\C)$, 
    \[
\,^\sigma(\RH^b(\RI^{\mathrm{aut}}(\Pi)))= \RH^b(\RI^{\mathrm{aut}}(\,^\sigma\Pi)).
    \]
\end{lemp}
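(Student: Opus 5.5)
We will characterize $\RH^b(\RI^{\mathrm{aut}}(\Pi))$ inside $\RH^b(\CX)$ purely in terms of Hecke eigenvalues at unramified finite places, and then use that $\sigma$-twisting of Hecke eigensystems is compatible with the sesquilinear action \eqref{actbetti000}.

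\emph{Step 1: a Hecke-theoretic description.} Let $S$ be a finite set of places containing the Archimedean ones, outside of which $\Pi$ is unramified. For $v\notin S$, the spherical Hecke algebra $\mathcal H_v$ (with $\BQ$-coefficients, normalized as usual with half-integral twists absorbed so that it is defined over $\BQ$) acts on $\RH^b(\CX)$. We claim that $\RH^b(\RI^{\mathrm{aut}}(\Pi))$ is the subspace of $\RH^b(\CX)$ on which the whole away-from-$S$ Hecke algebra acts by the character attached to $\Pi$ (the Satake parameters of $\Pi_v$, suitably shifted), and which carries the coefficient system $F_\Pi$. By Franke's theorem, $\RH^*(\CX)$ is computed by $(\g,K_\infty^0)$-cohomology of $\CF\otimes\CA(\RG)$. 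Using the decomposition \eqref{decautf} we have $\CA=\bigoplus_{[\Sigma']}\CA_{[\Sigma']}$. The unramified Hecke eigensystem determines the cuspidal datum: by strong multiplicity one for isobaric sums \cite[Theorem 4.4]{JS81b}, distinct cuspidal data give distinct eigensystems on all subquotients. Hence the eigenspace (generalized or not) for the eigensystem of $\Pi$ in degree $b$ is $\RH^b(\g,K_\infty^0;F_\Pi\otimes\CA_{[\Sigma]}(\RG))$. Here only the summand $F_\Pi$ of $\CF$ contributes, because the infinitesimal character must match. By Lemma \ref{injbetti00}, this eigenspace equals $\RH^b(\RI^{\mathrm{aut}}(\Pi))$. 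One subtlety must be addressed: generalized versus true eigenspaces. We will argue that $\CA_{[\Sigma]}\cong\CA_{\RP,\Sigma}$ by Theorem \ref{lemti00}, and the unramified Hecke algebra acts on $\Ind(\C[\a_{\RP,\C}]\otimes\Sigma)$ only through the polynomial part. The degree-$b$ cohomology reduces to that of $\RI(\Pi)$, which is an honest eigenspace, so the generalized eigenspace coincides with the eigenspace.

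\emph{Step 2: equivariance.} The action \eqref{actbetti000} comes from a $\BQ$-rational structure: it is induced by the smooth action on $\CF$, which commutes with $\RG(\rk)$ and with the $\RG(\A_\mathrm f)$-action (the latter acts on $\CX$, not on coefficients). Consequently $T\circ\sigma=\sigma\circ T$ for every Hecke operator $T$ with rational coefficients. Moreover $\sigma$ sends the $F_\mu$-component to the $F_{{}^\sigma\mu}$-component. Thus $\sigma$ maps the $\chi$-eigenspace to the $\sigma\circ\chi$-eigenspace. By Lemma \ref{sigmaiso}, $\sigma\circ\chi_\Pi=\chi_{{}^\sigma\Pi}$, once the half-integral normalizations are handled. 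This is where the twist by $\abs{\det}^{\nu_i}$ in Lemma \ref{sigmaiso} and the choice of $\BQ$-rational normalization of Hecke operators must match; we would check this with the usual $q_v^{(n-1)/2}$ bookkeeping, as in \cite{Clo}. Combining with Step 1 applied to ${}^\sigma\Pi\in\CR^{\mathrm{coti}}$ (Proposition \ref{prpactpi}) gives $\,^\sigma\RH^b(\RI^{\mathrm{aut}}(\Pi))\subset\RH^b(\RI^{\mathrm{aut}}({}^\sigma\Pi))$. Applying the same argument with $\sigma^{-1}$ yields equality.

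The main obstacle is Step 1. It requires that no other cuspidal datum $[\Sigma']$ produces the same unramified eigensystem in degree $b$ of sheaf cohomology, including contributions from non-tamely-isobaric data. Strong multiplicity one for isobaric representations handles this, since every irreducible subquotient of $\CA_{[\Sigma']}$ has away-from-$S$ Satake parameters determined by $[\Sigma']$. A secondary obstacle is verifying the generalized-eigenspace issue via Theorem \ref{lemti00}.
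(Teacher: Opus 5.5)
Your proof is correct, but it takes a different route from the paper. The paper's proof is a single citation. By Lemma \ref{injbetti00}, $\RH^b(\RI^{\mathrm{aut}}(\Pi))$ is the degree-$b$ part of the $[\Sigma]$-summand in the cuspidal-support decomposition of $\RH^*(\CX)$. Then \cite[Theorem 4.3]{FS}, the rationality of that decomposition (valid for arbitrary reductive groups), says that $\sigma$ carries this summand to the summand of the twisted cuspidal datum.

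You instead prove the needed rationality directly for $\GL_n$. Jacquet--Shalika strong multiplicity one for isobaric sums lets the unramified Hecke eigenvalues cut out the $[\Sigma]$-summand. Moreover, $\sigma$ commutes with Hecke operators because it commutes with $\RG(\A_\mathrm f)$-translations on $\CX$. Your version is self-contained, and it gets the compatibility with the definition of ${}^\sigma\Pi$ in Lemma \ref{sigmaiso} for free, since ${}^\sigma\Pi$ is characterized by its finite local components. The paper's version is shorter and applies beyond $\GL_n$.

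Two places should be tightened, though neither is a real gap.

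First, with $K^S:=\prod_{v\notin S}\GL_n(\CO_v)$, the unramified Hecke operators act only on $\RH^b(\CX)^{K^S}$. So your eigenspace is a priori only $\RH^b(\RI^{\mathrm{aut}}(\Pi))^{K^S}$. To recover the whole space, you can do either of the following:
\begin{itemize}
\item take the union over all finite $S'\supset S$ of the $\chi_\Pi^{S'}$-eigenspaces in $\RH^b(\CX)^{K^{S'}}$;
\item note that $\RI_\mathrm f(\Pi)$ is generated by its $K^S$-invariants, and then use that $\sigma$ commutes with $\RG(\A_\mathrm f)$. The generation holds because each unramified standard module $\RI_v(\Pi)$ is generated by its spherical vector, its Langlands quotient $\Pi_v$ being spherical.
\end{itemize}

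Second, both obstacles you announce are moot. Use unnormalized double-coset operators, which are $\Z$-linear combinations of translations. Then $\sigma\circ\chi_\Pi=\chi_{{}^\sigma\Pi}$ follows directly from $({}^\sigma\Pi)_v\cong{}^\sigma(\Pi_v)$ in Lemma \ref{sigmaiso}, with no $q_v^{(n-1)/2}$ bookkeeping. Also, since $\sigma$ commutes with each such operator, it carries generalized eigenspaces to generalized eigenspaces. So you never need to compare generalized and honest eigenspaces; Theorem \ref{lemti00} enters only through Lemma \ref{injbetti00}.
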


\begin{proof}
    This follows from Lemma  \ref{injbetti00} and \cite[Theorem 4.3]{FS}.  
\end{proof}

In view of Lemma \ref{actbetti}, the action \eqref{actbetti000} induces a sesquilinear action
 \be\label{actfinite1}
\Aut(\C) \curvearrowright \bigsqcup_{\Pi\in  \CR^{\mathrm{coti}} }  \RH^b(\RI^{\mathrm{aut}}(\Pi)),
    \ee
    which is smooth by Lemma \ref{ratk}.

\subsection{Betti-Whittaker periods} \label{sec:actf}

We define Betti-Whittaker periods following \cite[Sections 3.3, 3.4]{Mah} and \cite{RS08}.



Suppose $\Pi\in \CR^{\mathrm{coti}}_\varepsilon$
so that 
\[
\dim \RH^b_\varepsilon(\RI_\infty^{\mathrm{whi}}(\Pi))=1. 
\]
Then Lemma \ref{lem20} implies that 
\be\label{mulone2}
 \dim \Hom_{\RG(\A_\mathrm f)}(\RH_\varepsilon^b(\RI^{\mathrm{aut}}(\Pi)), \CW_\mathrm f)= 1.
\ee
The actions \eqref{actfinite1} and \eqref{actwhi00} induce a sesquilinear action 
\be\label{acthom}
  \Aut(\C)\curvearrowright \bigsqcup_{\Pi\in  \CR_\varepsilon^{\mathrm{coti}} }  \Hom_{\RG(\A_\mathrm f)}(\RH_\varepsilon^b(\RI^{\mathrm{aut}}(\Pi)), \CW_\mathrm f).
\ee
We prove the following theorem, see also \cite[Section 3.4.1]{Mah}.

\begin{thmp}\label{smooth2}
    The sesquilinear action \eqref{acthom} is smooth. 
\end{thmp}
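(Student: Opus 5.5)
By Lemma \ref{rationalform0}, it suffices to show the following. For each $\Pi\in\CR^{\mathrm{coti}}_\varepsilon$, the one-dimensional space
\[
V_\Pi:=\Hom_{\RG(\A_\mathrm f)}(\RH_\varepsilon^b(\RI^{\mathrm{aut}}(\Pi)),\CW_\mathrm f)
\]
contains a nonzero element fixed by the stabilizer $\Aut(\C/\BQ[\Pi])$. We may also shrink to an open subgroup, since the fixed vectors of an open subgroup span a line that then descends by Hilbert 90 for a finite extension. Because $V_\Pi$ is a line by \eqref{mulone2}, Lemma \ref{ratk} reduces this further. It suffices to produce $\BQ[\Pi]$-rational structures on both the source and the target, compatible with the $\RG(\A_\mathrm f)$-actions, and to check that the unique (up to scalar) homomorphism maps one rational structure into the other after rescaling.

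\textbf{Source.} The action \eqref{actfinite1} is smooth, as noted after Lemma \ref{actbetti}. It commutes with $\RG(\A_\mathrm f)$ and with $\pi_0(\rk_\infty^\times)$, because the latter acts through $\RG(\rk)$-compatible translations. Hence $\RH^b_\varepsilon(\RI^{\mathrm{aut}}(\Pi))$ carries the $\Aut(\C/\BQ[\Pi])$-stable rational form given by its fixed vectors, and this form is $\RG(\A_\mathrm f)$-stable. By Lemma \ref{injbetti00} and Theorem \ref{lemti00}, this representation is isomorphic to $\RI_\mathrm f(\Pi)$, since $\RH^b_\varepsilon(\RI_\infty(\Pi))$ is one-dimensional.

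\textbf{Target.} For the Whittaker side, I would use the model $\RI^{\mathrm{whi}}_\mathrm f(\Pi)=\jmath_{\Pi,\mathrm f}(\RI_\mathrm f(\Pi))\subset\CW_\mathrm f$. The key claim is that the action \eqref{actwhi00} maps $\RI^{\mathrm{whi}}_\mathrm f(\Pi)$ onto $\RI^{\mathrm{whi}}_\mathrm f({}^\sigma\Pi)$, and that the restricted action is smooth. To prove this, argue place by place. For $v\notin S$, use the explicit Shintani/Casselman–Shalika formula for the spherical Whittaker function of the standard module: its values are symmetric polynomials in the Satake parameters, normalized by the factor $\mathbf t_\sigma$, so ${}^\sigma W_v^\circ$ is the spherical vector of ${}^\sigma\Pi_v$, suitably normalized. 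For $v\in S$, use that $\RI_v(\Pi)$ is a standard module whose Whittaker model is unique by \cite{JS83} and \cite[Lemma 2.5]{J09}. Then $\sigma$ applied to the Whittaker model of $\RI_v(\Pi)$ is a Whittaker model of the $\sigma$-twist, and that twist is $\RI_v({}^\sigma\Pi)$ by Lemma \ref{sigmaiso} and Theorem \ref{prop:std}. Since the twisted Whittaker space is $\sigma$-stable, Lemma \ref{ratk} gives smoothness, because a finitely generated admissible representation has a finite-level space with an $\Aut(\C/\mathrm{E})$-stable form.

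\textbf{Combining.} The map $f\mapsto{}^\sigma\circ f\circ\sigma^{-1}$ on $V_\Pi$ sends $V_\Pi$ to $V_{{}^\sigma\Pi}$, and for $\sigma$ in the stabilizer it maps the line $V_\Pi$ to itself. Both source and target are now spanned by their fixed vectors, and they are irreducible generic standard modules in the sense relevant here: they have multiplicity one and are cyclic. A nonzero $f\in V_\Pi$ maps the rational form of the source into a $\sigma$-stable subspace after a scalar adjustment. Concretely, pick a fixed vector $x$ in the source with $f(x)\neq0$, and rescale $f$ so that some value of $f(x)$ at a point is rational. The twisted map then agrees with $f$ at $x$. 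Since $x$ generates the source as a $\RG(\A_\mathrm f)$-module, by irreducibility or cyclicity of the standard module at the relevant level, the two maps coincide.

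The main obstacle is the target claim at the ramified places. There $\RI_v(\Pi)$ may be reducible, and the rationality of its Whittaker model needs the uniqueness of Whittaker functionals on standard modules together with a cyclicity statement (e.g. generation by the essential vector). I would first try to handle this through the Jacquet–Shalika/Jacquet essential-vector theory, which gives a rationally normalized generator.
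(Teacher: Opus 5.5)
The mechanism in your ``Concretely'' step is the right one, but as written the argument does not close. You make it conditional on a rational structure on the target $\RI^{\mathrm{whi}}_\mathrm f(\Pi)$, which you leave unresolved at the ramified places. You then finish with a cyclicity claim that is false in general: a $\BQ[\Pi]$-rational vector $x$ need not generate $\RI_\mathrm f(\Pi)$, which can be a reducible standard module.

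Neither ingredient is needed. Since $V_\Pi$ is a line by \eqref{mulone2}, for $\sigma\in\Aut(\C/\BQ[\Pi])$ you have $\sigma.\Lambda=a_\sigma\Lambda$. By Definition \ref{defsmooth} it suffices to get $a_\sigma=1$ on some open subgroup; one nonzero smooth vector spans a line, so no descent argument is needed. The proof then runs as follows.
\begin{enumerate}
\item Pick $u$ in the $\BQ[\Pi]$-form of the source with $\lambda_{\psi,\mathrm f}(\Lambda(u))\neq 0$. This is possible because the form spans.
\item Rescale $\Lambda$ so that this value is $1$.
\item Shrink to those $\sigma$ for which $\mathbf t_\sigma$ lies in an open compact subgroup fixing $\Lambda(u)$ under right translation. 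Then
\[
a_\sigma=\lambda_{\psi,\mathrm f}((\sigma.\Lambda)(u))=\lambda_{\psi,\mathrm f}({}^\sigma(\Lambda(u)))=\sigma(\lambda_{\psi,\mathrm f}(\mathbf t_\sigma^{-1}.\Lambda(u)))=\sigma(1)=1 .
\]
\end{enumerate}
Only this single value is controlled. The vector ${}^\sigma(\Lambda(u))$ itself need not equal $\Lambda(u)$, so ``the twisted map agrees with $f$ at $x$'' holds only in this weak sense. Also, the factor $\mathbf t_\sigma$ from \eqref{sigmaf}, which you did not address, must be absorbed by the right-smoothness of $\Lambda(u)$.

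This is precisely the paper's proof. The paper additionally uses an $\RN(\A_\mathrm f)$-coinvariant argument to make the Whittaker functional $\overline\BQ$-valued on the rational form. Your rescaling shows that step can be skipped.

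The target claim you regard as the main obstacle is, in the paper, a consequence of the theorem (the Remark following it). It is also easy directly, without Casselman--Shalika formulas or essential vectors.
\begin{itemize}
\item On the $i$-th block, $\delta_\RP^{1/2}=\abs{\det}^{\nu_i}$ with $\nu_i$ as in \eqref{nu_i}. So the definition of ${}^\sigma\Sigma_i$ in the proof of Lemma \ref{sigmaiso} gives ${}^\sigma\RI_\mathrm f(\Pi)\cong\RI_\mathrm f({}^\sigma\Pi)$.
\item Then \eqref{twohom} forces ${}^\sigma\RI^{\mathrm{whi}}_\mathrm f(\Pi)=\RI^{\mathrm{whi}}_\mathrm f({}^\sigma\Pi)$.
\item Smoothness follows by evaluating a $K$-fixed level at finitely many points $g_i$. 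For $\sigma$ with all $g_i^{-1}\mathbf t_\sigma g_i\in K$, evaluation intertwines the action with the coordinatewise action on $\C^N$, and Lemma \ref{ratk} applies.
\end{itemize}
Still, the efficient order is the paper's: it uses only the source's rational structure and \eqref{mulone2}.
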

\begin{proof}
   We first claim that there exists a linear functional 
    \be\label{ratinallambda}
    0\neq \lambda\in \Hom_{\RN(\A_\mathrm f)}(\RH_\varepsilon^b(\RI^{\mathrm{aut}}(\Pi)), \psi_{\RN,\mathrm f})
    \ee
    that takes values in $\overline \BQ$ on the $\BQ[\Pi]$-form $\RH_\varepsilon^b(\RI^{\mathrm{aut}}(\Pi))^{\Aut(\C/\BQ[\Pi])}$.
    This claim follows from  the equality 
    \[
    \left(\RH_\varepsilon^b(\RI^{\mathrm{aut}}(\Pi))^{\Aut(\C/\overline \BQ)}\otimes_{\overline \BQ} \psi_{\RN,\mathrm f,\overline \BQ}^{-1}\right)_{\RN(\A_\mathrm f)}\otimes_{\overline \BQ} \C=\left(\RH_\varepsilon^b(\RI^{\mathrm{aut}}(\Pi))\otimes_\C \psi_{\RN,\mathrm f}^{-1}\right)_{\RN(\A_\mathrm f)}, 
    \]
    where $\psi_{\RN,\mathrm f,\overline \BQ}: \RN(\A_\mathrm f)\rightarrow \overline \BQ^\times$ is the character $\psi_{\RN,\mathrm f}$ viewed as a $\overline \BQ^\times$-valued character, and the subscript group indicates the coinvariant space. 

    Fix an element $\lambda$ as in \eqref{ratinallambda}, which corresponds to an element 
    \[
    0\neq \Lambda\in \Hom_{\RG(\A_\mathrm f)}(\RH_\varepsilon^b(\RI^{\mathrm{aut}}(\Pi)), \CW_\mathrm f)
    \]
    by Frobenius reciprocity. We claim that $\Lambda$ is smooth with respect to the sesquilinear action \eqref{acthom}. Fix a vector $u \in \RH_\varepsilon^b(\RI^{\mathrm{aut}}(\Pi))^{\Aut(\C/\BQ[\Pi])}$ such that \[
    0\neq \la \lambda, u \ra \in \overline \BQ. 
\]

    To prove the claim, let $\sigma\in \Aut(\C/\BQ[\Pi])$. Then the uniqueness  property \eqref{mulone2} implies that 
    \[
    \sigma. \Lambda= a_\sigma\cdot  \Lambda\quad \textrm{for some }a_\sigma\in \C^\times.  
    \]
Now we suppose that $\sigma$ is in a sufficiently small open subgroup of $\Aut(\C/\BQ[\Pi])$ so that
\[
\mathbf t_\sigma^{-1}.(\Lambda(u ))=\Lambda(u )\quad\textrm{and}\quad \sigma(\la \lambda,  u \ra)=\la \lambda,  u \ra.
\]
Then 
\begin{eqnarray*}
\la\lambda_{\psi,\mathrm f}, (\sigma. \Lambda)(u )\ra  
&=&\la \lambda_{\psi,\mathrm f}, \,^\sigma\left(\Lambda(^{\sigma^{-1}} u )\right)\ra \\
&=&\la \lambda_{\psi,\mathrm f}, \,^\sigma\left(\Lambda(u)\right)\ra \\
&=&\sigma((\Lambda(u )_\psi)(\mathbf t_\sigma^{-1}))\\
&=&\sigma(\la \lambda_{\psi,\mathrm f}, \mathbf t_\sigma^{-1}.(\Lambda(u ))\ra) \\
&=&\sigma(\la \lambda,  u\ra) \\
&=&\la \lambda,  u \ra.
\end{eqnarray*}
Here $\Lambda(u )_\psi\in \Ind_{\RN(\A_\mathrm f)}^{\RG(\A_\mathrm f)}\psi_{\RN,\mathrm f}$ is the $\psi$-component of $\Lambda(u )\in \CW_\mathrm f$. 

On the other hand
\begin{eqnarray*}
 \la\lambda_{\psi,\mathrm f}, (\sigma. \Lambda)(u )\ra  
&=&a_\sigma\cdot \la\lambda_{\psi,\mathrm f}, \Lambda(u )\ra   \\
&=&a_\sigma\cdot \la \lambda,  u \ra.
\end{eqnarray*}
Therefore $a_\sigma=1$ and the theorem follows. 
\end{proof}

\begin{remarkp}
      Theorem \ref{smooth2} has the following consequence: the action \eqref{actwhi00} induces a well-defined smooth sesquilinear action  
 \[
\Aut(\C) \curvearrowright \bigsqcup_{\Pi\in  \CR^{\mathrm{coti}} }  \RI^\mathrm{whi}_\mathrm{f}(\Pi).
    \]  
\end{remarkp}

\subsection{$\Aut(\C)$-action on the Archimedean cohomology}

Suppose $\Pi\in \CR^{\mathrm{coti}}_\varepsilon$ as before. 
The homomorphism 
\[
\jmath:  \CA(\RG)\rightarrow 
\CW
\]
restricts to a homomorphism 
\[
\jmath: \RI^{\mathrm{aut}}(\Pi)\rightarrow \RI_\infty^{\mathrm{whi}}(\Pi)\otimes \CW_\mathrm f,
\]
which further induces a homomorphism 
\[
\jmath: \RH^b_{\varepsilon}(\RI^{\mathrm{aut}}(\Pi))\rightarrow \RH^b_{\varepsilon}(\RI_\infty^{\mathrm{whi}}(\Pi))\otimes \CW_\mathrm f. 
\]
The last homomorphism induces a linear isomorphism 
\be\label{jmathiso}
\jmath:  \RH^b_{\varepsilon}(\RI_\infty^{\mathrm{whi}}(\Pi))^\vee\xrightarrow{\sim}  \Hom_{\RG(\A_\mathrm f)}(\RH^b_{\varepsilon}(\RI^{\mathrm{aut}}(\Pi)), \CW_\mathrm f) 
\ee
of one-dimensional vector spaces, where a superscript `$^\vee$' indicates the dual space.
Using these isomorphisms for various $\Pi$, the smooth sesquilinear action \eqref{acthom} induces a smooth sesquilinear action 
\[
  \Aut(\C)\curvearrowright \bigsqcup_{\Pi\in  \CR_\varepsilon^{\mathrm{coti}} }   \RH^b_{\varepsilon}(\RI_\infty^{\mathrm{whi}}(\Pi))^\vee.
\]

As in the Introduction,  by using Lemma \ref{sectionpi00}, we fix an $\Aut(\C)$-equivariant section
\be\label{varpi}
\varpi: \CR_\varepsilon^{\mathrm{coti}}\rightarrow  \bigsqcup_{\Pi\in  \CR_\varepsilon^{\mathrm{coti}} } \left(\RH^b_\varepsilon(\RI_\infty^{\mathrm{whi}}(\Pi))^\vee\setminus \{0\}\right),
\ee
and define the Betti-Whittaker period 
    \[
   \Omega_\varepsilon(\Pi) := \Omega_\varepsilon(\Pi; \varpi, \kappa_\varepsilon):=\la \varpi(\Pi), \kappa_{\varepsilon,\Pi}\ra. 
    \]

\section{MVW-involutions} 

\subsection{MVW-involutions on tamely isobaric automorphic  representations}
Following Moeglin, Vign\'eras, and  Waldspurger (\cite{MVW87}), we define the MVW-involution of $\RG$ to be the algebraic automorphism 
\be\label{mvw0}
 \breve{\ } : \RG\rightarrow \RG, \quad g \mapsto  \breve g:=w_n g^{-t} w_n^{-1}, 
\ee
where the superscript `$-t$' indicates the inverse transpose of a matrix, and 
\[
w_n:=\begin{bmatrix}
    & & & &  1\\
    & & & -1 &\\
    & &  1 & \\
    &\iddots & & \\
    (-1)^{n-1}
\end{bmatrix}\in\GL_n(\Z).
\]

      It is well known and easy to see that 
      \be\label{conj}
        \textrm{ for all $g\in \RG(\rk')$, $\breve g$ is conjugate to $g^{-1}$ in $\RG(\rk')$,}
      \ee
      where $\rk'$ is a field extension of $\rk$. 
      
      The MVW-involution  induces an involutive  map 
      \be\label{mvwcd}
      \breve{\ } : \{\textrm{cuspidal data for $\RG$}\}\rightarrow \{\textrm{cuspidal data for $\RG$}\}. 
      \ee
In view of Harish-Chandra's character theory (see \cite{HC65a, HC65b}), the property \eqref{conj} implies that the above map sends every cuspidal datum $[(\RM, \Sigma)]$ to $[(\RM, \Sigma^\vee)]$, where $\Sigma^\vee$ denotes the irreducible subrepresentation of $\CA_{\mathrm{cusp}}(\RM)$ that is isomorphic to the contragredient of $\Sigma$. 

Since the set of cuspidal data for $\RG$ is naturally in bijection with the set $\CR^{\mathrm{isob}}$ of isomorphism classes of isobaric automorphic representations of $\RG(\A)$, the involutive map \eqref{mvwcd} yields an involutive map 
\be\label{mvwcd2}  
\breve{\ } :\CR^{\mathrm{isob}}\rightarrow \CR^{\mathrm{isob}}, \quad \Pi\mapsto \breve \Pi.
\ee
    Similarly, for every $\Pi\in \CR^{\mathrm{isob}}$, $\breve \Pi$ is isomorphic to the contragredient of $\Pi$. 

 The MVW-involution also  induces an involution
    \be\label{mvw06} 
\breve{\ }:\CA(\RG)\rightarrow \CA(\RG), \quad \phi\mapsto \breve \phi.
\ee
For every  subrepresentation $\Pi_1\subset \CA(\RG)$, we define its MVW-involution 
\[
  \breve \Pi_1:=(\Pi_1)\breve\, :=\{\breve  \phi \,:\, \phi\in \Pi_1\}\subset \CA(\RG).
\]

\begin{lemp}\label{lemmvw24}
The equality
\[
(\RI^{\mathrm{aut}}(\Pi))\breve\, = \RI^{\mathrm{aut}}(\breve \Pi)
\]
holds for all tamely isobaric automorphic representations $\Pi$ of $\RG(\A)$.
\end{lemp}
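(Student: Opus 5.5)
The plan is to compute the MVW-involution of the Eisenstein series defining $\RI^{\mathrm{aut}}(\Pi)$ directly, and then use Remark \ref{iaut}, which says $\RI^{\mathrm{aut}}$ may be computed from any representative $(\RM',\Sigma',\RP')$ of the cuspidal datum.

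First, since $\breve{\ }$ is an algebraic automorphism of $\RG$ defined over $\rk$, it preserves $\RG(\rk)$, and it sends the standard parabolic $\RP=\RM\ltimes\RN$ to a parabolic $\breve\RP$. The map $g\mapsto w_n g^{-t}w_n^{-1}$ sends upper triangular matrices to upper triangular matrices. It sends the block-diagonal Levi $\RM=\GL_{n_1}\times\dots\times\GL_{n_r}$ to $\breve\RM=\GL_{n_r}\times\dots\times\GL_{n_1}$, acting blockwise by the MVW-involution of each $\GL_{n_i}$ composed with reversal of the blocks. Hence $\breve\RP$ is the standard parabolic with Levi $\breve\RM$.

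Second, set $\phi^\natural:=\phi\circ\breve{\ }^{-1}$ for $\phi\in\Ind_{\RP(\A)}^{\RG(\A)}\Sigma\subset\CA_{\RP,\Sigma}(\RG)$. Then $\phi^\natural$ lies in $\Ind_{\breve\RP(\A)}^{\RG(\A)}\breve\Sigma$, where $\breve\Sigma:=\{f\circ\breve{\ }^{-1}|_{\breve\RM}: f\in\Sigma\}\subset\CA_{\mathrm{cusp}}(\breve\RM)$. Blockwise, $\breve\Sigma$ is the tensor product of the $\breve\Sigma_i$ taken in reversed order, and $\breve\Sigma_i\cong\Sigma_i^\vee$ by the discussion after \eqref{mvwcd}. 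Because $\breve{\ }$ preserves $\RG(\rk)$ and maps $\RP(\rk)\backslash\RG(\rk)$ bijectively onto $\breve\RP(\rk)\backslash\RG(\rk)$, the sum \eqref{defeis} transforms termwise:
\[
\RE(\underline s,\phi;\breve g)=\RE(\underline s',\phi^\natural;g), \qquad \underline s'=(-s_r,\dots,-s_1),
\]
in the region of absolute convergence, and hence everywhere by meromorphic continuation. The sign change comes from $|\det \breve m_i|_\A=|\det m_i|_\A^{-1}$. I also need to check that $\breve{\ }$ preserves $K_\infty\GL_n(\widehat\CO)$, which holds because $w_n\in\GL_n(\Z)$ and the standard $K_\infty$ is stable under inverse transpose. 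Consequently the family $\phi_{\underline s}$ is carried to $\phi^\natural_{\underline s'}$, and the convergence cone $\Re(s_i-s_j)\gg0$ is carried to the corresponding cone for the reversed ordering. Evaluating at $\underline s=\underline 0$, where both sides are holomorphic by Proposition \ref{lemti0}(a) and Remark \ref{iaut}, gives $\CE(\phi)\breve{\ }=\RE(\underline 0,\phi^\natural;\cdot)$.

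Third, I check that $\breve\Pi=\Sigma_1^\vee\boxplus\dots\boxplus\Sigma_r^\vee$ is tamely isobaric. This is immediate because the tame conditions \eqref{deftame0} are preserved under taking contragredients, up to swapping $i$ and $j$ in the second condition. Its cuspidal datum is $[(\RM,\Sigma^\vee)]=[(\breve\RM,\breve\Sigma)]$. Remark \ref{iaut}, applied to the triple $(\breve\RM,\breve\Sigma,\breve\RP)$ in the datum of $\breve\Pi$, then says that $\{\RE(0,\phi';\cdot):\phi'\in\Ind_{\breve\RP(\A)}^{\RG(\A)}\breve\Sigma\}=\RI^{\mathrm{aut}}(\breve\Pi)$. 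Since $\phi\mapsto\phi^\natural$ is a bijection onto $\Ind_{\breve\RP(\A)}^{\RG(\A)}\breve\Sigma$, the lemma follows.

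The main obstacle I expect is bookkeeping rather than substance. I need to make sure that the representative $(\breve\RM,\breve\Sigma)$ genuinely belongs to the cuspidal datum of $\breve\Pi$, which is the identification $\breve\Sigma_i\cong\Sigma_i^\vee$ as subrepresentations of the cusp forms, using multiplicity one. I also need the orderings of exponents to play no role, which is exactly what Remark \ref{iaut} provides, since it allows an arbitrary parabolic $\RP'$ rather than the one ordered by decreasing exponents.
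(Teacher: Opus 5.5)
Your proposal is correct and follows the same route as the paper: the paper's proof consists of asserting that $\CE$ intertwines the MVW-involution on $\mathrm{Ind}^{\RG(\A)}_{\RP(\A)}\Sigma$ with the MVW-involution on $\CA(\RG)$, where the image lands in the Eisenstein series attached to $(\breve\RM,\breve\Sigma,\breve\RP)$, and then invoking Remark \ref{iaut}. Your termwise computation supplies exactly the ``routine'' verification the paper leaves to the reader, namely the reparametrization $\underline s'=(-s_r,\dots,-s_1)$, the $\breve{\ }$-stability of $K_\infty\GL_n(\widehat\CO)$, and the check that $\breve\Pi$ is again tamely isobaric.
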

\begin{proof}
  Let $\breve \RP$ denote the upper-triangular parabolic subgroup of $\RG$ with Levi factor $\breve \RM:=\GL_{n_r}\times \GL_{n_{r-1}}\times \dots \times \GL_{n_1}$. Applying the MVW-involution to $\Sigma$, we obtain an irreducible subrepresentation $\breve \Sigma\subset \CA_{\mathrm{cusp}}(\breve \RM)$.  It is routine to check that the diagram \[
\begin{CD}
\mathrm{Ind}^{\RG(\A)}_{\RP(\A)}\Sigma @>\CE >>\CA(\RG)\\
    @V \breve \   VV @VV \breve\     V\\
\mathrm{Ind}^{\RG(\A)}_{\breve \RP(\A)}\breve \Sigma @>\CE >>\CA(\RG)
\end{CD}
\]
commutes, where the horizontal arrows are defined as in \eqref{ce00}, and the left vertical arrow is the map induced by the MVW-involution. This proves the lemma, in view of Remark \ref{iaut}. 
\end{proof}

It is clear that the map \eqref{mvwcd2} restricts to an involutive map 
\be\label{mvwoncoti}
\breve{\ }: \CR_\varepsilon^{\mathrm{coti}}\rightarrow \CR_\varepsilon^{\mathrm{coti}}, \quad \Pi\mapsto \breve \Pi.
\ee

The following lemma is easily verified. 
\begin{lemp}\label{lemaute0}
    The map \eqref{mvwoncoti} is $\Aut(\C)$-equivariant. 
\end{lemp}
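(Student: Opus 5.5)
We must show that for every $\sigma\in\Aut(\C)$ and every $\Pi\in\CR_\varepsilon^{\mathrm{coti}}$ one has $\,^\sigma(\breve\Pi)\cong(\,^\sigma\Pi)\breve{\ }$. Both sides are isobaric automorphic representations of $\RG(\A)$, and $\breve\Pi\cong\Pi^\vee$. So, by the strong multiplicity one theorem for isobaric representations (the uniqueness part of Lemma \ref{sigmaiso}, via \cite[Theorem 4.4]{JS81b}), it suffices to compare the two sides at each finite place $v$.

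The steps, in order, are as follows.

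\emph{Step 1.} Show that $\breve\Pi$ is again regular algebraic, so that $\,^\sigma(\breve\Pi)$ is defined. This is already implicit in \eqref{mvwoncoti}, since $\breve\Pi\in\CR_\varepsilon^{\mathrm{coti}}$. The reason is that the infinitesimal character of $\Pi_\infty^\vee$ is that of $F_\Pi^\vee$.

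\emph{Step 2.} Fix a finite place $v$. By Lemma \ref{sigmaiso}, $(\,^\sigma\breve\Pi)_v$ is the $\sigma$-twist of $\breve\Pi_v\cong\Pi_v^\vee$. Also, $((\,^\sigma\Pi)\breve{\ })_v\cong(\,^\sigma\Pi)_v^\vee$, which is the contragredient of the $\sigma$-twist of $\Pi_v$.

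\emph{Step 3.} Show that $\sigma$-twisting commutes with taking contragredients for irreducible admissible representations of $\GL_n(\rk_v)$. Let $\tau\colon\Pi_v\to{}^\sigma\Pi_v$ be a $\sigma$-linear $\RG(\rk_v)$-equivariant isomorphism. Then $\lambda\mapsto\sigma\circ\lambda\circ\tau^{-1}$ is a $\sigma$-linear equivariant isomorphism from the smooth dual $\Pi_v^\vee$ to $({}^\sigma\Pi_v)^\vee$. Admissibility ensures that smooth vectors correspond: on each $K$-fixed subspace this is just the $\sigma$-semilinear dual of a finite-dimensional space.

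Combining Steps 2 and 3 gives $(\,^\sigma\breve\Pi)_v\cong((\,^\sigma\Pi)\breve{\ })_v$ for all $v\nmid\infty$. Strong multiplicity one then yields the isomorphism of the global representations, since both are isobaric.

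The main obstacle is a subtlety in the uniqueness step. Lemma \ref{sigmaiso} characterizes $\,^\sigma\Pi$ among \emph{regular algebraic} isobaric representations. So one must know that $(\,^\sigma\Pi)\breve{\ }$ is regular algebraic and isobaric. Isobaricity is clear from \eqref{isobspi}, because the contragredient of an isobaric sum is the isobaric sum of the contragredient cuspidal pieces. Regular algebraicity follows as in Step 1. Alternatively, one can argue directly at the level of the cuspidal pieces, checking $\,^\sigma(\Sigma_i^\vee)\cong(\,^\sigma\Sigma_i)^\vee$. This requires care with the twists by $\abs{\det}^{\pm\nu_i}$ in the proof of Lemma \ref{sigmaiso}: for $\breve\Pi$ the pieces appear in reversed order with negated exponents, so the $\nu_i$ transform consistently. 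I would nonetheless use the place-by-place argument to avoid this bookkeeping.
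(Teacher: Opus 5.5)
Your proof is correct, and it is the routine check the paper leaves to the reader; the paper itself only says the lemma is ``easily verified''. You compare finite components using that $\sigma$-twisting commutes with contragredients, and conclude by strong multiplicity one for isobaric representations, with the regular algebraicity required by Lemma \ref{sigmaiso} supplied because both $\Pi\mapsto\breve\Pi$ and the $\Aut(\C)$-action preserve $\CR_\varepsilon^{\mathrm{coti}}$ (in Step 3, admissibility is not even needed, since the $\RG(\rk_v)$-equivariance of $\tau$ already shows that $\lambda\mapsto\sigma\circ\lambda\circ\tau^{-1}$ preserves invariance under open compact subgroups).
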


\subsection{MVW-involutions on the Whittaker spaces} \label{sec:mvwwhit}
Note that the MVW-involution stabilizes $\RN$ and the character $\psi_{\RN}$. Thus it induces an involution 
\be\label{mvw07700} 
\breve{\ }:\Ind_{\RN(\BA)}^{\RG(\BA)} \psi_{\RN}\rightarrow \Ind_{\RN(\BA)}^{\RG(\BA)} \psi_{\RN}, \quad f\mapsto \breve f. 
\ee
Denote by 
\be\label{mvwwinfty0000}
\imath_{\psi}: \CW\rightarrow \CW
\ee
the involution that makes the diagram
\[
\begin{CD}
  \CW @>\imath_{\psi} \ >>\CW\\
    @V \textrm{projection} VV @VV   \textrm{projection} V\\
\Ind_{\RN(\A)}^{\RG(\A)} \psi_{\RN} @>\breve \  >>\Ind_{\RN(\A)}^{\RG(\A)} \psi_{\RN}
\end{CD}
\]
commute. 

Note that for each subrepresentation $\tau$ of $\CW$, 
\[
\breve \tau:=\imath_\psi(\tau)
\]
is  a subrepresentation of $\CW$ which is independent of $\psi$. 

Similarly, the MVW-involution  induces involutions 
\be\label{mvw07701} 
\breve{\ }:\Ind_{\RN(\rk_\infty)}^{\RG(\rk_\infty)} \psi_{\RN,\infty}\rightarrow \Ind_{\RN(\rk_\infty)}^{\RG(\rk_\infty)} \psi_{\RN,\infty}, \quad f\mapsto \breve f
\ee
and
\be\label{mvw07702} 
\breve{\ }:\Ind_{\RN(\BA_\mathrm f)}^{\RG(\BA_\mathrm f)} \psi_{\RN, \mathrm f}\rightarrow \Ind_{\RN(\BA_\mathrm f)}^{\RG(\BA_\mathrm f)} \psi_{\RN,\mathrm f}, \quad f\mapsto \breve f. 
\ee
Similar to \eqref{mvwwinfty0000}, we have involutions
\be\label{mvwwinfty0001}
\imath_{\psi}: \CW_\infty\rightarrow \CW_\infty
\quad\textrm{and}\quad 
\imath_{\psi}: \CW_\mathrm f\rightarrow \CW_\mathrm f.
\ee
For all subrepresentations $\tau_\infty\subset \CW_\infty$ and  $\tau_\mathrm f\subset \CW_\mathrm f$, 
\[
\breve \tau_\infty:=\imath_{\psi}(\tau_\infty)\quad \textrm{and }\quad \breve \tau_\mathrm f:=\imath_{\psi}(\tau_{\rm f})
\]
are respectively subrepresentations of $\CW_\infty$ and $\CW_\mathrm f$, which are independent of $\psi$.

\begin{lemp}\label{leminvf}
 Let $\tau\subset \CW_\mathrm f$ be a subrepresentation with central character $\omega'$. Then for every $\sigma\in \Aut(\C)$, 
 \[
\imath_{\psi}(\sigma(\tau))=\sigma(\breve \tau)
\] 
and the diagram 
    \[
\begin{CD}
   \tau @>\sigma >>  \sigma(\tau)\\
    @V \mathscr{G}(\omega',\psi)^{1-n}\cdot\imath_{\psi}     VV @VV\mathscr{G}(\sigma\circ\omega', \psi)^{1-n}\cdot\imath_{\psi} V\\
  \breve \tau @>\sigma >> \sigma(\breve \tau)
\end{CD}
\]
commutes. 
\end{lemp}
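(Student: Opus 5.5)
The plan is to reduce everything to an explicit computation on the $\psi$-component $\Ind_{\RN(\A_\mathrm f)}^{\RG(\A_\mathrm f)}\psi_{\RN,\mathrm f}$, where both $\imath_\psi$ and the $\Aut(\C)$-action are given by explicit formulas. Concretely, $\imath_\psi$ acts there by $\breve f(g)=f(\breve g)$, and ${}^\sigma\! f(g)=\sigma(f(\mathbf t_\sigma^{-1}g))$ by \eqref{sigmaf}. Since both maps on $\CW_\mathrm f$ are defined through this component (using the compatible diagrams with $L_a$ established in Section \ref{secsesq} and in the definition of $\imath_\psi$), it suffices to compare ${}^\sigma(\breve f)$ and $({}^\sigma\! f)\breve{}$ for $f$ in the $\psi$-component of $\tau$.

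\textbf{Step 1: how $\mathbf t_\sigma$ transforms under the MVW-involution.} Because $\breve{\ }$ is a group automorphism, we have
\[
{}^\sigma(\breve f)(g)=\sigma\bigl(f((\mathbf t_\sigma^{-1}g)\breve{}\,)\bigr)=\sigma\bigl(f(\breve{\mathbf t}_\sigma^{-1}\,\breve g)\bigr).
\]
We compute $(\mathbf t_\sigma^{-1})\breve{}=w_n\mathbf t_\sigma w_n^{-1}$. Conjugation by $w_n$ reverses the diagonal, and the signs in $w_n$ cancel for a diagonal matrix. This gives
\[
(\mathbf t_\sigma^{-1})\breve{}=\diag(1,t_{\sigma,\A},\dots,t_{\sigma,\A}^{n-1})=t_{\sigma,\A}^{\,n-1}\cdot\mathbf t_\sigma^{-1},
\]
where $t_{\sigma,\A}^{n-1}$ is a central element of $\RG(\A_\mathrm f)$. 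Since $\tau$ has central character $\omega'$, and $f$ lies in the $\psi$-component of $\tau$, we obtain
\[
{}^\sigma(\breve f)(g)=\sigma\bigl(\omega'(t_{\sigma,\A})^{n-1}\bigr)\,\sigma\bigl(f(\mathbf t_\sigma^{-1}\breve g)\bigr)=(\sigma\circ\omega')(t_{\sigma,\A})^{n-1}\,({}^\sigma\! f)\breve{}\,(g).
\]
Hence ${}^\sigma\circ\imath_\psi=(\sigma\circ\omega')(t_{\sigma,\A})^{n-1}\cdot\imath_\psi\circ{}^\sigma$ on $\tau$. Because these differ only by a nonzero scalar, the first assertion $\imath_\psi(\sigma(\tau))=\sigma(\breve\tau)$ follows at once. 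We also note that $\sigma(\tau)$ has central character $\sigma\circ\omega'$, since the central element commutes with $\mathbf t_\sigma$.

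\textbf{Step 2: commutativity of the diagram.} The action is sesquilinear, so
\[
{}^\sigma\bigl(\mathscr G(\omega',\psi)^{1-n}\breve f\bigr)=\sigma(\mathscr G(\omega',\psi))^{1-n}\,{}^\sigma(\breve f).
\]
By \eqref{gauss}, $\sigma(\mathscr G(\omega',\psi))^{1-n}=(\sigma\circ\omega')(t_{\sigma,\A})^{1-n}\,\mathscr G(\sigma\circ\omega',\psi)^{1-n}$. Combining this with Step 1, the factors $(\sigma\circ\omega')(t_{\sigma,\A})^{\pm(n-1)}$ cancel, leaving exactly $\mathscr G(\sigma\circ\omega',\psi)^{1-n}\cdot\imath_\psi({}^\sigma\! f)$, which is the required commutativity.

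The only real obstacle is bookkeeping. First, one must get the direction of the reversal right and confirm that the sign matrix $w_n$ contributes nothing on diagonal elements, so that the scalar is exactly $t_{\sigma,\A}^{n-1}$ and not its inverse. Second, one must check that the definitions on $\CW_\mathrm f$ via the inverse limit are compatible, namely that $\imath_\psi$ and the $\Aut(\C)$-action are computed on the same $\psi$-component. Since $\imath_\psi$ is defined through the $\psi$-projection and the $\sigma$-action commutes with the projections, this compatibility should be immediate.
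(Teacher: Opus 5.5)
Your proposal is correct and follows essentially the same computation as the paper. Both work on the $\psi$-component, use $\breve{\mathbf t}_\sigma^{-1}=t_{\sigma,\A}^{\,n-1}\mathbf t_\sigma^{-1}$ together with the central character $\omega'$, and cancel the resulting factor $(\sigma\circ\omega')(t_{\sigma,\A})^{n-1}$ against the Gauss sum identity \eqref{gauss}. The only difference is organisational: you first isolate the scalar intertwining relation, which also yields the first assertion explicitly, and then insert the Gauss sum, whereas the paper does everything in a single chain of equalities.
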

\begin{proof}
 Let $f\in \tau$, to be viewed as an element of $\Ind^{\RG(\A_\mathrm f)}_{{\rm N}(\A_\mathrm f)}\psi_{\RN,\mathrm f}$ via the projection map $\CW_\mathrm f\rightarrow \Ind^{\RG(\A_\mathrm f)}_{{\rm N}(\A_\mathrm f)}\psi_{\RN,\mathrm f}$.  For every $x\in \RG(\A_\mathrm f)$, by \eqref{gauss} we have
 \begin{eqnarray*}
   && \left((\sigma\circ (\mathscr{G}(\omega',\psi)^{1-n}\cdot\imath_{\psi})). f\right)(x)\\
   &=& \sigma\left( (\mathscr{G}(\omega',\psi)^{1-n}\cdot\imath_{\psi}). f)(\mathbf t_\sigma^{-1} x)\right)\\
    &=& \sigma\left( \mathscr{G}(\omega',\psi)^{1-n}\cdot f(\breve{\mathbf t}_\sigma^{-1}\breve x)\right)\\
     &=& \mathscr{G}( \sigma\circ \omega',\psi)^{1-n}\cdot \sigma\left( \omega'(t_{\sigma,\A}^{1-n}) \cdot f(\breve{\mathbf t}_\sigma^{-1}\breve x)\right)\\
      &=& \mathscr{G}(\sigma\circ \omega',\psi)^{1-n}\cdot \sigma\left( f(t_{\sigma,\A}^{1-n}\breve{\mathbf t}_\sigma^{-1}\breve x)\right)\\
      &=& \mathscr{G}(\sigma\circ \omega',\psi)^{1-n}\cdot \sigma\left( f(\mathbf t_{\sigma}^{-1}\breve x)\right)\\
       &=& \left((\mathscr{G}(\sigma\circ \omega',\psi)^{1-n}\cdot \imath_{\psi})\circ \sigma).f \right)(x).
\end{eqnarray*}
This proves the lemma. 
\end{proof}

The following lemma is routine to verify, and the proof is omitted. 

 \begin{lemp}\label{vw}
   The diagram 
\[
\begin{CD}
  \CA(\RG) @>\breve \  >>\CA(\RG)\\
    @V \jmath  VV @VV  \jmath  V\\
    \CW@>\imath_\psi\ >>\CW\\
\end{CD}
\]
commutes.
\end{lemp}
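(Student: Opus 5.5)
The plan is to check the commutativity pointwise, one non-trivial character $\psi$ at a time. By Lemma \ref{jmathpi}, $\jmath$ is determined by its $\psi$-components $\lambda_\psi\circ(\text{right translation})$ for all non-trivial $\psi$. Likewise, $\imath_\psi$ is determined by the MVW-involution \eqref{mvw07700} on the $\psi$-component. So it suffices to prove, for every $\phi\in\CA(\RG)$ and $g\in\RG(\A)$, the identity
\[
\lambda_\psi\bigl(g.\breve\phi\bigr)=\lambda_\psi\bigl(\breve g.\phi\bigr),
\]
where $\breve\phi(x)=\phi(\breve x)$ and $(g.\phi)(x)=\phi(xg)$. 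The left side is the $\psi$-component of $\jmath(\breve\phi)$ at $g$. The right side is the $\psi$-component of $\jmath(\phi)$ at $\breve g$, i.e.\ the value at $g$ of $\breve{\ }$ applied to the $\psi$-component of $\jmath(\phi)$.

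First, $\breve{\ }$ on $\CA(\RG)$ is well defined. It is an algebraic automorphism of $\RG$ over $\rk$, so it preserves $\RG(\rk)$ and maps automorphic forms to automorphic forms (moderate growth and $\mathcal Z(\g)$-finiteness are preserved, and $K$-finiteness holds after conjugating $K_\infty$ if needed, which is harmless for smooth automorphic forms). The same reasoning shows that \eqref{mvw07700} is well defined once we know that $\breve{\ }$ preserves $\RN$ and $\psi_\RN$.

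Next, unfold
\[
\lambda_\psi(g.\breve\phi)=\int_{\RN(\rk)\backslash\RN(\A)}\phi(\breve x\,\breve g)\,\psi_\RN^{-1}(x)\,\od x,
\]
and substitute $u=\breve x$. Three points are needed here.
\begin{itemize}
\item[(i)] $\breve{\ }$ restricts to a $\rk$-automorphism of $\RN$, since $x^{-t}$ is lower unipotent and conjugation by the antidiagonal $w_n$ makes it upper unipotent. Hence it preserves $\RN(\rk)$ and $\RN(\A)$.
\item[(ii)] It preserves the invariant probability measure on $\RN(\rk)\backslash\RN(\A)$. The pushforward is again an invariant probability measure on the same compact quotient, so uniqueness applies.
\item[(iii)] $\psi_\RN(\breve x)=\psi_\RN(x)$. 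This is a short sign computation. The superdiagonal entries of $x^{-1}$ are $-x_{i,i+1}$. Transposing moves them to position $(i+1,i)$. Conjugating by $w_n$, whose entries are $(-1)^{j-1}$ at position $(j,n+1-j)$, sends position $(i+1,i)$ to $(n-i,n+1-i)$ and multiplies by $(-1)^{n-i-1}(-1)^{n-i}=-1$. The result is $+x_{i,i+1}$, so the sum of superdiagonal entries is unchanged.
\end{itemize}
After the substitution the integral becomes $\int\phi(u\breve g)\psi_\RN^{-1}(u)\,\od u=\lambda_\psi(\breve g.\phi)$, which is the required identity.

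Finally, I would check compatibility with the inverse limit defining $\CW$. The identity holds for every $\psi$ simultaneously. The maps $L_a$ commute with $\breve{\ }$ up to replacing $\mathbf t_a$ by $\breve{\mathbf t}_a$, and $\breve{\mathbf t}_a=a^{1-n}\mathbf t_a$ is central times $\mathbf t_a$. The resulting central factor has to be checked carefully; it is cleanest to avoid it altogether by arguing componentwise in $\psi$, since $\imath_\psi$ is defined only through the $\psi$-projection. Then the $\psi$-projections of $\jmath\circ\breve{\ }$ and $\imath_\psi\circ\jmath$ agree, and since $\imath_\psi$ is by definition the map on $\CW$ lifting $\breve{\ }$ on the $\psi$-component, the diagram commutes. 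The only genuinely delicate step is the sign bookkeeping in (iii); everything else is formal.
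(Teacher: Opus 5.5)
Your proposal is correct and is the routine verification the paper has in mind. The paper omits the proof after noting that the MVW-involution stabilizes $\RN$ and $\psi_{\RN}$. You reduce to the $\psi$-component via the isomorphism $\CW\to\Ind_{\RN(\A)}^{\RG(\A)}\psi_{\RN}$, which is a bijection since all transition maps $L_a$ are isomorphisms. You then unfold the Whittaker integral and substitute $u=\breve x$ using (i)--(iii), and your sign computation in (iii) checks out.

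A minor aside on well-definedness: no conjugation of $K_\infty$ is needed. Since $w_n\in K_\infty$ and $k^{-t}\in K_\infty$ for all $k\in K_\infty$, the map $\breve{\ }$ already preserves $K_\infty$.
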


\subsection{MVW-involutions on the Archimedean cohomologies} \label{sec:mvwarch}

Similar to \eqref{mvwwinfty0000}, we have an involution 
\be\label{mvwcf}
\imath_{\psi}: \CF\rightarrow \CF
\ee
 that makes the diagram
\[
\begin{CD}
  \CF @>\imath_{\psi} \ >>\CF\\
    @V \textrm{projection to the $\psi$-component} VV @VV   \textrm{projection to the $\psi$-component} V\\
^{\mathrm{alg}}\Ind_{\RN(\rk\otimes_\BQ \C)}^{\RG(\rk\otimes_\BQ \C)} \C @>\breve \  >>^{\mathrm{alg}}\Ind_{\RN(\rk\otimes_\BQ \C)}^{\RG(\rk\otimes_\BQ \C)} \C \end{CD}
\]
commute. Here the bottom horizontal arrow is the involution induced by the MVW-involution. For each subrepresentation $F\subset \CF$, write $\breve F:=\imath_{\psi}(F)$, which is a subrepresentation of $\CF$ independent of $\psi$. 

Recall the representation 
\[
\pi_\varepsilon\in \CR_\varepsilon^{\mathrm{coge},\infty}\subset \CR^{\mathrm{coge},\infty}.
\]
Note that $\breve{\pi_\varepsilon}=\pi_\varepsilon$. 
Let $\pi\in \CR_\varepsilon^{\mathrm{coge},\infty}$. Then $\breve \pi\in \CR_\varepsilon^{\mathrm{coge},\infty}$, and the coefficient systems of $\pi$ and $\breve \pi$ are related by 
\[
\breve{ F_{\pi}}=F_{\breve \pi}.
\]

\begin{lemp}\label{mvwiotapi}
    The diagram 
\[
\begin{CD}
  \pi_\varepsilon @>\iota_\pi >>F_\pi\otimes \pi\\
    @V \imath_\psi  VV @VV  \imath_\psi\otimes \imath_\psi  V\\
   \pi_\varepsilon @>\iota_\pi >>F_{\breve \pi} \otimes \breve \pi
\end{CD}
\]
commutes.
\end{lemp}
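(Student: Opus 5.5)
The plan is to reduce Lemma \ref{mvwiotapi} to the uniqueness statement in Lemma \ref{iotapi}.

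Consider the composition
\[
\jmath':=(\imath_\psi\otimes\imath_\psi)\circ\iota_\pi\circ\imath_\psi^{-1}:\ \pi_\varepsilon\rightarrow F_{\breve\pi}\otimes\breve\pi .
\]
Here $\imath_\psi$ on $\pi_\varepsilon$ is the restriction of the involution of $\CW_\infty$ in \eqref{mvwwinfty0001}; this makes sense because $\breve{\pi_\varepsilon}=\pi_\varepsilon$. The first point is that $\jmath'$ is $\RG(\rk_\infty)$-equivariant up to the MVW-twist. For $g\in\RG(\rk_\infty)$ we have $\imath_\psi(g.f)=\breve g.\imath_\psi(f)$ on $\CW_\infty$. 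The same relation holds on $\CF$ for $g\in\RG(\rk\otimes_\BQ\C)$, since both are induced by $f\mapsto\breve f$ with $\breve{(g.f)}=\breve g.\breve f$. Conjugating by $\imath_\psi$ therefore turns a $g$-equivariant map into a $\breve g$-equivariant one. Because $g\mapsto\breve g$ is an automorphism of $\RG(\rk_\infty)$, the map $\jmath'$ is a genuine element of $\Hom_{\RG(\rk_\infty)}(\pi_\varepsilon,F_{\breve\pi}\otimes\breve\pi)$.

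The second point is that $\jmath'$ satisfies the normalization of Lemma \ref{iotapi} for the representation $\breve\pi$. We need
\[
(\lambda_\psi\otimes\lambda_\psi)\circ\jmath'=\lambda_\psi \quad\text{on } \pi_\varepsilon
\]
for every nontrivial $\psi$. Since $\breve 1=1$ and $\imath_\psi$ acts on the $\psi$-component by $f\mapsto\breve f$, we get $\lambda_\psi\circ\imath_\psi=\lambda_\psi$ on both $\CW_\infty$ and $\CF$.

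This is the one place where some care is needed. The involution $\imath_\psi$ is defined through the $\psi$-component, while $\lambda_{\psi'}$ for a different character $\psi'$ uses the $\psi'$-component. So one must check that $\imath_\psi$ is compatible with the transition maps $L_a$, that is, that $\imath_\psi$ and $\imath_{\psi'}$ agree, or at least that $\lambda_{\psi'}\circ\imath_\psi=\lambda_{\psi'}$ holds.

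I would handle this directly from the definition of $w_n$. Writing $\mathbf t_a^{-1}=\diag(a^{1-n},\dots,1)$, a direct matrix computation should give $\breve{\mathbf t}_a=a^{1-n}\mathbf t_a$, an element of the center times $\mathbf t_a$. Two consequences follow. On $\CF$ the centre $\GL_1(\rk)$ acts trivially on the $\psi$-components after transport, because evaluation intertwines the relevant scalars, or else these scalars cancel in the tensor product $F\otimes\pi$. On $\CW_\infty$ the central factor contributes $\omega_\pi(a^{1-n})$ and $\omega_{F_\pi}(a^{1-n})$. In $F_\pi\otimes\pi$ these two factors should cancel, since $F_\pi\otimes\pi$ has a central character that is trivial on $\rk^\times$-type elements. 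A cleaner route is to observe that, by Lemma \ref{jmathpi}, it suffices to verify the identity for a single $\psi$, because the equality $\mathbf t_a.\lambda_\psi=\lambda_{\psi'}$ propagates it to all $\psi'$.

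Once the normalization holds, the uniqueness in Lemma \ref{iotapi}, applied to $\breve\pi\in\CR_\varepsilon^{\mathrm{coge},\infty}$ with coefficient system $F_{\breve\pi}=\breve{F_\pi}$, gives $\jmath'=\iota_{\breve\pi}$. This is precisely the commutativity of the diagram in Lemma \ref{mvwiotapi}, with the bottom arrow understood as $\iota_{\breve\pi}$. I expect the main obstacle to be the bookkeeping of $\psi$-independence just described. If the single-$\psi$ reduction does not apply directly, I would fall back on showing that the uniqueness in \cite[Proposition 1.1]{JLS25} already follows from the identity for one fixed $\psi$, since the Hom space is one-dimensional.
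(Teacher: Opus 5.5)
Your argument is correct and is essentially the paper's proof: conjugate $\iota_\pi$ by the MVW-involutions, note that $\lambda_\psi\circ\imath_\psi=\lambda_\psi$ since $\breve 1=1$, and conclude by the uniqueness in Lemma \ref{iotapi}, reading the bottom arrow as $\iota_{\breve\pi}$. Of your two ways of handling the $\psi$-bookkeeping, keep the single-$\psi$ route: normalize at one $\psi$ and propagate to every $\psi'$ using the identity $\mathbf t_a.\lambda_\psi=\lambda_{\psi'}$ on $\CW_\infty$ and $\CF$ from the proof of Lemma \ref{iotapi} (not Lemma \ref{jmathpi}, which states the analogous identity on $\CA(\RG)$ and $\CW$); you can then drop the central-character discussion, which as written is imprecise (the centre does not act trivially on $\CF$).
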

\begin{proof}
   As in Lemma \ref{iotapi}, we have a linear functional $\lambda_\psi\otimes \lambda_\psi: F_{\breve \pi} \otimes \breve \pi\rightarrow \C$.  It is clear that the diagram 
\[
\begin{CD}
  \pi_\varepsilon @> \iota_\pi >>F_\pi\otimes \pi\\
    @V \imath_\psi  VV @VV  (\lambda_\psi\otimes \lambda_\psi)\circ(\imath_\psi\otimes \imath_\psi)  V\\
   \pi_\varepsilon @>(\lambda_\psi\otimes \lambda_\psi)\circ\iota_\pi >>\C 
\end{CD}
\]
commutes. This implies the lemma. 
\end{proof}

The MVW-involution $\breve \ : \RG(\rk\otimes_\BQ \C)\rightarrow \RG(\rk\otimes_\BQ \C)$, together with the 
involutions \eqref{mvwwinfty0000} and \eqref{mvwcf}, yields a linear isomorphism 
\be\label{imathinfty}
  \imath_\psi: \RH^b_\varepsilon(\pi)\rightarrow \RH^b_\varepsilon(\breve \pi).
\ee

\begin{thmp}
    \label{mvwpiepsilon}
    The map 
    \[
  \imath_\psi: \RH^b_\varepsilon(\pi_{\varepsilon})\rightarrow \RH^b_\varepsilon( \pi_{\varepsilon})
\]
is equal to the multiplication by 
\be \label{delta}
\delta_n : = (-1)^{r_1\cdot\frac{m(m+1)}{2}},\quad \text{where } m:=\left\lfloor \frac{n-1}{2}\right\rfloor,
\ee
and $r_1$ is the number of real places of $\rk$.
\end{thmp}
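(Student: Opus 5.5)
The proof reduces to a local computation at each Archimedean place, and then to a sign computation on the bottom-degree cohomology of one explicit representation.

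\emph{Reduction to single places.} Everything factors over the infinite places: $\pi_\varepsilon=\widehat\otimes_{v\mid\infty}\pi_{\varepsilon_v}$, the MVW-involution acts componentwise, and the relative Lie algebra cohomology satisfies a Künneth formula. The bottom degree $b$ is the sum of the local bottom degrees $b_v$, so the bottom-degree space is the tensor product of the local one-dimensional bottom-degree spaces. Hence $\imath_\psi$ on $\RH^b_\varepsilon(\pi_\varepsilon)$ is the tensor product of the local maps $\imath_{\psi_v}$ on $\RH^{b_v}_{\varepsilon_v}(\pi_{\varepsilon_v})$. I also need that no Koszul sign arises from reordering degrees; this holds because the involution preserves each local factor. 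It therefore suffices to show that the local scalar is $1$ at complex places and $(-1)^{m(m+1)/2}$ at real places.

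\emph{Setting up the local computation.} At a place $v$, the map $\imath_\psi$ acts on
\[
\RH^{b_v}(\g_v,K_v^0;\pi_{\varepsilon_v})=\Hom_{K_v^0}\bigl(\wedge^{b_v}(\g_v/\k_v),\pi_{\varepsilon_v}\bigr)
\]
by combining two pieces. The first is the action of $\theta(g)=w_ng^{-t}w_n^{-1}$ on $\wedge^{b_v}(\g_v/\k_v)$. The second is the map $f\mapsto f\circ\theta$ on Whittaker functions. Here the bottom-degree class is represented by a cocycle with values in the minimal $K$-type, so no coboundaries enter. I will write $\theta=\mathrm{Ad}(w_n')\circ c$, where $c(g)=g^{-t}$ is the Cartan involution. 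Then $c$ is trivial on $K_v$ and acts by $-1$ on $\p_v=\g_v/\k_v$, while $w_n'\in K_v$ up to a sign.

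On the cochain side, the factor $c$ contributes $(-1)^{b_v}$, because it is $-1$ on $\p_v$. On the representation side, $c$ alone does not preserve the Whittaker model. For this reason I will work directly with the composite $\theta$, which preserves $\psi_\RN$. On the $K$-finite vectors, $f\mapsto f\circ\theta$ agrees with $f\mapsto f\circ c$ followed by right translation by $w_n$. The scalar by which it acts on the minimal $K$-type has to be fixed by normalizing against $\lambda_\psi$: $\lambda_\psi(\breve f)=\lambda_\psi(f)$, since $\breve f(1)=f(1)$.

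\emph{Complex places.} Here $K_v^0=K_v=\RU(n)$ is connected. Since $\theta$ is inner up to $c$, and the minimal $K$-type occurs with multiplicity one in both the cochains and $\pi_{\varepsilon_v}$, the total scalar equals $(-1)^{b_v}$ times the scalar from the representation side. With $b_v=n(n-1)/2$, I expect the representation side to contribute the compensating sign $(-1)^{n(n-1)/2}$. This can be checked on the explicit principal series $\times\chi_{a,b}$ through the rank-one reduction, giving total scalar $1$.

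\emph{Real places.} This is the main obstacle. Here $K_v^0=\mathrm{SO}(n)$, and $w_n$ may lie outside $K_v^0$ or have a nontrivial action on the $\varepsilon_v$-isotypic part. I will reduce to $\pi_{\varepsilon_v}=D_{1/2,-1/2}$-type discrete series parabolically induced as in \eqref{pigamma}, using the block-diagonal structure of the Levi $\GL_2^{m'}(\times\GL_1)$ where $m'=\lfloor n/2\rfloor$. The bottom-degree class is then a wedge product of the $\GL_2$ bottom classes with the $\mathfrak n$-cohomology contribution. Conjugation by $w_n$ reverses the order of the $\GL_2$ blocks, which produces a Koszul sign from permuting the one-forms. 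The resulting sign is $(-1)^{m(m+1)/2}$, where $m=\lfloor (n-1)/2\rfloor$ counts the number of transpositions of odd-degree pieces. I expect this sign bookkeeping, namely matching the block-reversal permutation sign with the sign of $w_n$'s action on the minimal $K$-type, to be the delicate step. I will verify it first for $n=2,3,4$ by direct computation and then argue by induction on $n$ via the Levi decomposition.
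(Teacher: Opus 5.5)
\textbf{There is a genuine gap: the local sign is never actually computed, and the route you sketch would not produce it.}

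Your reduction to a single Archimedean place via the K\"unneth formula is fine. It is exactly how the paper proceeds: Theorem \ref{mvwpiepsilon} is deduced from the local Theorem \ref{mvwpiK}. The gap is in the local computation, which is the entire content of the result.

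\emph{The normalization does not determine the scalar.} Write $\phi_0$ for a generator of $\Hom_{K_n}(\rho_n,\pi_n)$. You must determine the scalar by which $f\mapsto w_n.\breve f$ acts on the minimal $K$-type $\phi_0(\rho_n)$ of the Whittaker model, relative to $\theta$ on $\rho_n$. The normalization $\lambda_\psi(\breve f)=\lambda_\psi(f)$ does not fix this scalar. It only yields $\lambda_\psi(w_n.\breve f)=f(w_n)$. Reading off the scalar would then require comparing the functional $u\mapsto\phi_0(u)(1)$ on $\rho_n$ with its composite with $\mathrm{Ad}(w_n)\circ\theta$. That needs explicit knowledge of Whittaker functions in the minimal $K$-type, which you never supply.

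\emph{The complex-place setup is incorrect.} On $\GL_n(\C)$ the map $g\mapsto g^{-t}$ is not the Cartan involution. It acts on $\RU(n)$ by $k\mapsto\bar k$ and on Hermitian matrices by $x\mapsto-\bar x$. So it is neither trivial on $K$ nor $-1$ on $\p$. This is why the paper restricts to the $\RO(n)$-subrepresentation $\rho_n'$. There $\theta$ acts by $\xi_{n,\C}$, which is $1$ for $n$ odd and $(-1)^{n/2}$ for $n$ even. This differs from $(-1)^{n(n-1)/2}$ when $n\equiv 3\pmod 4$. Your ``compensating sign'' on the representation side is only asserted.

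\emph{The real-place bookkeeping is not a Koszul sign.} The MVW-involution carries $\Ind_{\RP}\sigma$ to an induction from the standard parabolic with the blocks in reversed order. Returning to the original induced model, and then to the Whittaker model, requires a standard intertwining operator. Its normalization against the Jacquet integrals is a local coefficient built from $\gamma$- and $\varepsilon$-factors, and that is precisely the nontrivial constant. Your argument never sees it.

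The count itself is also off. Reversing $\lfloor n/2\rfloor$ degree-one $\GL_2$-classes gives $(-1)^{\lfloor n/2\rfloor(\lfloor n/2\rfloor-1)/2}$. For $n=3$ this is $+1$, while $\delta_{3,\R}=-1$.

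\emph{The missing idea is the paper's Rankin--Selberg induction.*}
\begin{itemize}
\item The normalized Rankin--Selberg integral $\oZ^\circ$ on $\pi_n\times\pi_{n-1}$ satisfies $\oZ^\circ(w_n.\breve f,w_{n-1}.\breve f')=\varepsilon(\tfrac12,\pi_n\times\pi_{n-1},\psi_\K)\,\oZ^\circ(f,f')$, with an explicitly computable sign.
\item By Sun's nonvanishing theorem, $\oZ^\circ$ is nonzero on $\tau_n'\times\tau_{n-1}'$.
\item Combining these with the scalars $\xi_{n,\K}$ of $\theta$ on $\rho_n'$ gives $\delta'_{n,\K}\xi_{n,\K}\delta'_{n-1,\K}\xi_{n-1,\K}=\varepsilon(\tfrac12,\pi_n\times\pi_{n-1},\psi_\K)$.
\end{itemize}
The sign then follows by induction on $n$, without computing any Whittaker function.
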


Theorem \ref{mvwpiepsilon} will be proved in the next section, as a consequence of Theorem \ref{mvwpiK}.

\begin{corp}
  \label{mvwkappa}
    The map  
  \[
 \imath_\psi : \RH^b_\varepsilon(\pi)  \rightarrow \RH^b_\varepsilon(\breve \pi)
\]
sends $\kappa_{\varepsilon, \pi}$ to $\delta_n \cdot \kappa_{\varepsilon, \breve \pi}$, where $\delta_n$ is in \eqref{delta}.   
\end{corp}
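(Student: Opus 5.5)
We prove the corollary by reducing it to the case of trivial coefficients, using the compatibility of the Zuckerman--Jantzen translation maps $\iota_\pi$ with the MVW-involution (Lemma \ref{mvwiotapi}) together with Theorem \ref{mvwpiepsilon}.

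First we check that the linear isomorphism $\imath_\psi$ of \eqref{imathinfty} is functorial. It is induced at the level of relative Lie algebra cochains. Concretely, $\imath_\psi$ acts on $\mathrm{Hom}_{K_\infty^0}(\wedge^\bullet(\g/\k), F_\pi\otimes\pi)$ by composing with the MVW-involution on $\g/\k$ and applying $\imath_\psi\otimes\imath_\psi$ to the values. For this to make sense, $K_\infty^0$ must be stable under $g\mapsto \breve g$. It is, since transposition and conjugation by $w_n\in \GL_n(\Z)\cap K_\infty$ preserve orthogonal and unitary groups. The $\pi_0(\rk_\infty^\times)$-action is preserved as well, because $\det \breve g=\det g^{-1}$ and $\varepsilon$ is quadratic. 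Consequently, any $\RG(\rk_\infty)$-homomorphism $T:\pi_1\to\pi_2$ (with coefficient systems included) satisfying $(\imath_\psi\otimes\imath_\psi)\circ T=T'\circ\imath_\psi$ induces a commutative square on $\RH^b_\varepsilon$. Applying this to Lemma \ref{mvwiotapi}, with $T=\iota_\pi$ and $T'=\iota_{\breve\pi}$, gives
\[
\imath_\psi\circ\iota_\pi=\iota_{\breve\pi}\circ\imath_\psi\quad\text{as maps }\RH^b_\varepsilon(\pi_\varepsilon)\to\RH^b_\varepsilon(\breve\pi).
\]
Here the bottom arrow in Lemma \ref{mvwiotapi} should be read as $\iota_{\breve\pi}$. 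This reading is justified because $\breve\pi\in\CR^{\mathrm{coge},\infty}_\varepsilon$, $F_{\breve\pi}=\breve F_\pi$, and $\iota_{\breve\pi}$ is characterized uniquely by the Whittaker-functional condition of Lemma \ref{iotapi}. Indeed, the composite $(\imath_\psi\otimes\imath_\psi)\circ\iota_\pi\circ\imath_\psi$ satisfies that condition, since $\lambda_\psi\circ\imath_\psi=\lambda_\psi$ (evaluation at the identity, which is MVW-fixed). The condition must hold for every nontrivial character $\psi'$, not just the fixed one. This holds because the $\mathbf t_a$-compatibility of the involutions is built into the definition of $\imath_\psi$ on the inverse limits.

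With this square, the computation is short:
\[
\imath_\psi(\kappa_{\varepsilon,\pi})=\imath_\psi(\iota_\pi(\kappa_\varepsilon))=\iota_{\breve\pi}(\imath_\psi(\kappa_\varepsilon))=\iota_{\breve\pi}(\delta_n\kappa_\varepsilon)=\delta_n\,\kappa_{\varepsilon,\breve\pi}.
\]
The third equality is Theorem \ref{mvwpiepsilon}, applied to $\pi_\varepsilon=\breve{\pi_\varepsilon}$. The last equality is the definition of $\kappa_{\varepsilon,\breve\pi}$ together with the linearity of $\iota_{\breve\pi}$.

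The main point requiring care is the first step: verifying that the cohomological map $\imath_\psi$ is natural with respect to morphisms of coefficient-twisted representations, and that the bottom arrow of Lemma \ref{mvwiotapi} is genuinely $\iota_{\breve\pi}$. Both are formal once one writes the involution on the standard cochain complex, as described above.
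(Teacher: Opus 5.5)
Your proof is correct and follows the paper's own argument. Lemma \ref{mvwiotapi} (whose bottom arrow should indeed be $\iota_{\breve\pi}$, as you note) gives $\imath_\psi\circ\iota_\pi=\iota_{\breve\pi}\circ\imath_\psi$ on $\RH^b_\varepsilon(\pi_\varepsilon)$, and Theorem \ref{mvwpiepsilon} then yields $\imath_\psi(\kappa_{\varepsilon,\pi})=\delta_n\cdot\kappa_{\varepsilon,\breve\pi}$.

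One side remark needs correcting, though it is harmless. $\imath_\psi$ is not literally compatible with the maps $L_a$: on a representation with central character $\omega$ one has $\imath_\psi=\omega(a)^{n-1}\cdot\imath_{\psi'}$ for $\psi'=\psi(a^{-1}\,\cdot\,)$. In your composite these factors multiply to $\omega_{\pi_\varepsilon}(a)^{2(n-1)}=1$, because $\omega_{F_\pi}\omega_\pi=\omega_{\pi_\varepsilon}$ is $\{\pm1\}$-valued.
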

\begin{proof}
Lemma \ref{mvwiotapi} implies that the diagram 
\[
\begin{CD}
  \RH^b_\varepsilon(\pi_\varepsilon) @>\iota_\pi >>\RH^b_\varepsilon(\pi)\\
    @V \imath_\psi  VV @VV  \imath_\psi  V\\
\RH^b_\varepsilon(\pi_\varepsilon)  @>\iota_\pi >>\RH^b_\varepsilon(\breve \pi)
\end{CD}
\]
commutes. Therefore the corollary follows from Theorem \ref{mvwpiepsilon}. 
\end{proof}

The map in \eqref{imathinfty} yields a bundle involution 
\[
\imath_\psi: \bigsqcup_{\Pi\in  \CR_\varepsilon^{\mathrm{coti}} }  \RH^b_{\varepsilon}(\RI_\infty^{\mathrm{whi}}(\Pi))\rightarrow \bigsqcup_{\Pi\in  \CR_\varepsilon^{\mathrm{coti}} }  \RH^b_{\varepsilon}(\RI_\infty^{\mathrm{whi}}(\Pi)) 
\]
over the involution $\breve\ : \CR_\varepsilon^{\mathrm{coti}}\rightarrow \CR_\varepsilon^{\mathrm{coti}}$. This further induces a bundle involution 
\be\label{invimath2}
\imath_\psi: \bigsqcup_{\Pi\in  \CR_\varepsilon^{\mathrm{coti}} }  \RH^b_{\varepsilon}(\RI_\infty^{\mathrm{whi}}(\Pi))^\vee\rightarrow \bigsqcup_{\Pi\in  \CR_\varepsilon^{\mathrm{coti}} }  \RH^b_{\varepsilon}(\RI_\infty^{\mathrm{whi}}(\Pi))^\vee 
\ee
over $\breve\ : \CR_\varepsilon^{\mathrm{coti}}\rightarrow \CR_\varepsilon^{\mathrm{coti}}$,
which is specified by
\[
\la \imath_\psi(\phi),u\ra=\la \phi, \imath_\psi(u)\ra
\]
for all $\Pi\in  \CR_\varepsilon^{\mathrm{coti}}$, $\phi\in \RH^b_{\varepsilon}(\RI_\infty^{\mathrm{whi}}(\Pi))^\vee$,  and $u\in \RH^b_{\varepsilon}(\RI_\infty^{\mathrm{whi}}(\breve \Pi))$. 

\subsection{MVW-involutions on the Betti  cohomologies}
 Let $\Pi\in \CR^{\mathrm{coti}}_\varepsilon$. The linear maps
\[
\breve \ : \RI^{\mathrm{aut}}(\Pi)\rightarrow \RI^{\mathrm{aut}}(\breve \Pi)\qquad (\textrm{see Lemma \ref{lemmvw24}})
\]
and 
\[
\imath_\psi: F_{\Pi}\rightarrow F_{\breve \Pi},
\]
together with the MVW-involution $\breve\ : \RG(\rk_\infty)\rightarrow \RG(\rk_\infty) $,
induce a linear map
\be\label{invhaut1}
\imath_\psi: \RH^b(\RI^{\mathrm{aut}}(\Pi))\rightarrow \RH^b(\RI^{\mathrm{aut}}(\breve \Pi)). 
\ee
Putting these linear maps for various $\Pi$ together, we get an involution
\be\label{mvwco1}
\imath_\psi: \bigsqcup_{\Pi\in  \CR_\varepsilon^{\mathrm{coti}} } \RH_\varepsilon^b(\RI^{\mathrm{aut}}(\Pi))\rightarrow \bigsqcup_{\Pi\in  \CR_\varepsilon^{\mathrm{coti}} } \RH_\varepsilon^b(\RI^{\mathrm{aut}}(\Pi)).
 \ee
\begin{lemp}\label{leminvf2}
    The involution \eqref{mvwco1} is $\Aut(\C)$-equivariant. 
\end{lemp}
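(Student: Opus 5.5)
Since \eqref{mvwco1} is built from the involution $\breve{\ }$ on $\CA(\RG)$ and the involution $\imath_\psi$ on $\CF$, and the $\Aut(\C)$-action on $\RH^b_\varepsilon(\RI^{\mathrm{aut}}(\Pi))$ is the restriction of the action on $\RH^b(\CX)$ via the embedding of Lemma \ref{injbetti00}, I would prove the statement by lifting everything to the sheaf cohomology $\RH^b(\CX)$. The MVW-involution $g\mapsto \breve g$ is defined over $\Z$ and preserves $\RG(\rk)$, $K_\infty^0$ and the directed system of open compact subgroups of $\RG(\A_\mathrm f)$. So it gives a homeomorphism $\CX/K_\mathrm f\to \CX/\breve K_\mathrm f$. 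Together with $\imath_\psi:\CF\to\CF$, which is compatible with the $\RG(\rk)$-actions twisted by $\breve{\ }$, this yields an involution $\imath_\psi$ of $\RH^b(\CX)$. On de Rham representatives this involution is exactly $\phi\otimes f\mapsto \breve\phi\otimes\imath_\psi(f)$, combined with the MVW-involution on $\g$ and $K_\infty^0$. Hence the embeddings $\RH^b(\RI^{\mathrm{aut}}(\Pi))\hookrightarrow\RH^b(\CX)$ intertwine \eqref{invhaut1} with this global involution, using Lemma \ref{lemmvw24} to identify the targets. The statement therefore reduces to showing that $\imath_\psi$ on $\RH^b(\CX)$ commutes with the action \eqref{actbetti000}.

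The action \eqref{actbetti000} comes purely from the coefficient action $\Aut(\C)\curvearrowright\CF$, and the geometric map $\breve{\ }$ on $\CX$ does not involve $\Aut(\C)$ at all. So it suffices to check that $\imath_\psi:\CF\to\CF$ commutes with $\sigma$. Take the $\psi$-component. For $f\in{}^{\mathrm{alg}}\Ind_{\RN(\rk\otimes_\BQ\C)}^{\RG(\rk\otimes_\BQ\C)}\C$ we compute
\[
{}^\sigma(\breve f)(g)=\sigma\bigl(f((\sigma^{-1}.g)\breve{\ })\bigr)=\sigma\bigl(f(\sigma^{-1}.\breve g)\bigr)=({}^\sigma f)\breve{\ }(g),
\]
because the MVW-involution is given by an integral matrix $w_n$ and inverse transpose, and therefore commutes with the Galois action on $\RG(\rk\otimes_\BQ\C)$. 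Unlike in Lemma \ref{leminvf}, no $\mathbf t_\sigma$ or Gauss sum appears, since the action on $\CF$ has no torus translation. One must also check that this commutation is compatible with the identifications $L_a$ defining $\CF$ as an inverse limit. This holds because $\breve{\mathbf t}_a$ is a scalar multiple of $\mathbf t_a$ by $a^{1-n}$, and the central twist acts trivially after projection. Note also that the $\varepsilon$-isotypic parts are preserved, because $\det\breve g=(\det g)^{-1}$ and $\varepsilon$ is quadratic.

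The step I expect to be the main obstacle is the first one: checking carefully that the relative Lie algebra description of \eqref{invhaut1} (built from $\breve{\ }$ on $\RI^{\mathrm{aut}}(\Pi)$, on $F_\Pi$, and on $\RG(\rk_\infty)$) really matches, under the de Rham comparison, the map induced on sheaf cohomology by the homeomorphism of $\CX$ and the sheaf map $\imath_\psi$. This includes orientation and sign issues, since the involution acts on $\g/\k$ and hence on the forms. I would handle this by working directly at the level of $\Hom_{K_\infty^0}(\wedge^b(\g/\k),F\otimes\CA(\RG))$. The same formula defines the map on the full de Rham complex of $\CX$, which makes naturality of the comparison isomorphism formal. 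Once both sides are seen to be given by the same cochain map, $\Aut(\C)$-equivariance follows from the coefficient computation above.
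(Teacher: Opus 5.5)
Your proposal is correct and follows essentially the same route as the paper. You build $\imath_\psi$ on $\RH^b(\CX)$ from the MVW-involution of $\CX$ and $\imath_\psi:\CF\to\CF$, deduce its $\Aut(\C)$-equivariance from that of $\imath_\psi$ on $\CF$ (your computation on the $\psi$-component is exactly the needed check), and conclude from the commutative diagram comparing $\RH^b_\varepsilon(\RI^{\mathrm{aut}}(\Pi))\subset \RH^b(\g,K_\infty^0;\CF\otimes\CA(\RG))=\RH^b(\CX)$ for $\Pi$ and $\breve\Pi$.

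The only blemish is your aside on the maps $L_a$, for two reasons.
\begin{itemize}
\item That check is unnecessary: $\imath_\psi$ is defined through the single $\psi$-component, and the Galois action on $\CF$ is already compatible with every $L_a$.
\item The claim that the central twist acts trivially is wrong in general. In fact $\imath_{\psi'}$ differs from $\imath_\psi$ by the action of $a^{1-n}\in\rk^\times$, which acts on an irreducible summand of $\CF$ by a scalar that need not be $1$; this is why the paper keeps the subscript $\psi$.
\end{itemize}
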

 \begin{proof}
    The MVW-involution induces an involution
\[
\breve{\ }: \CX\rightarrow \CX.
\]
Together with the involution $\imath_\psi: \CF\rightarrow \CF$, this induces an involution
\be\label{invcx}
  \imath_\psi: \RH^b(\CX)\rightarrow \RH^b(\CX). 
\ee
Because $\imath_\psi: \CF\rightarrow \CF$ is   $\Aut(\C)$-equivariant, the involution  \eqref{invcx} is also $\Aut(\C)$-equivariant. 
The lemma then follows by noting  that the diagram
\[
\begin{CD}
\RH_\varepsilon^b(\RI^{\mathrm{aut}}(\Pi)) @>\subset >>\RH^b(\g,K_\infty^0; \CF\otimes \CA(\RG))@> = >>\RH^b(\CX)\\
    @V \imath_\psi VV @VV    V @VV \imath_\psi V\\
\RH_\varepsilon^b(\RI^{\mathrm{aut}}(\breve \Pi)) @>\subset >>\RH^b(\g,K_\infty^0; \CF\otimes \CA(\RG))@> = >>\RH^b(\CX)
\end{CD}
\]
is commutative, where the middle vertical arrow is the map induced by the MVW-involution, as in \eqref{invhaut1}.
 \end{proof}

Recall from \eqref{jmathiso} the linear isomorphism 
\be\label{jmathiso2}
\jmath:  \RH^b_{\varepsilon}(\RI_\infty^{\mathrm{whi}}(\Pi))^\vee\xrightarrow{\sim}  \Hom_{\RG(\A_\mathrm f)}(\RH^b_{\varepsilon}(\RI^{\mathrm{aut}}(\Pi)), \CW_\mathrm f) 
\ee
of one-dimensional vector spaces. 
 Putting  together these linear isomorphisms  for various $\Pi$, we get a bundle isomorphism 
\[
\jmath:  \bigsqcup_{\Pi\in  \CR_\varepsilon^{\mathrm{coti}} }  \RH^b_{\varepsilon}(\RI_\infty^{\mathrm{whi}}(\Pi))^\vee \xrightarrow{\sim} \bigsqcup_{\Pi\in  \CR_\varepsilon^{\mathrm{coti}} } \Hom_{\RG(\A_\mathrm f)}(\RH^b_{\varepsilon}(\RI^{\mathrm{aut}}(\Pi)), \CW_\mathrm f). 
\]
By using Lemma \ref{vw}, it is routine to check that the diagram
\be\label{j}
\begin{CD}
  \bigsqcup_{\Pi\in  \CR_\varepsilon^{\mathrm{coti}} }  \RH^b_{\varepsilon}(\RI_\infty^{\mathrm{whi}}(\Pi))^\vee   @>\jmath >>\bigsqcup_{\Pi\in  \CR_\varepsilon^{\mathrm{coti}} } \Hom_{\RG(\A_\mathrm f)}(\RH^b_{\varepsilon}(\RI^{\mathrm{aut}}(\Pi)), \CW_\mathrm f) \\
    @V \imath_\psi \ (\textrm{see \eqref{invimath2})} VV @VV   \imath_\psi V\\
 \bigsqcup_{\Pi\in  \CR_\varepsilon^{\mathrm{coti}} }  \RH^b_{\varepsilon}(\RI_\infty^{\mathrm{whi}}(\Pi))^\vee   @>\jmath >>\bigsqcup_{\Pi\in  \CR_\varepsilon^{\mathrm{coti}} } \Hom_{\RG(\A_\mathrm f)}(\RH^b_{\varepsilon}(\RI^{\mathrm{aut}}(\Pi)), \CW_\mathrm f) 
\end{CD}
\ee
commutes, where the right vertical arrow $\imath_\psi$ is the bundle map over $\breve\ : \CR_\varepsilon^{\mathrm{coti}}\rightarrow \CR_\varepsilon^{\mathrm{coti}}$
 specified by
\[
(\imath_\psi(\phi))(u)=\imath_\psi( \phi( \imath_\psi(u)))
\]
for all $\Pi\in  \CR_\varepsilon^{\mathrm{coti}}$, $\phi\in \Hom_{\RG(\A_\mathrm f)}(\RH^b_{\varepsilon}(\RI^{\mathrm{aut}}(\Pi)), \CW_\mathrm f)$,  and $u\in \RH^b_{\varepsilon}(\RI^{\mathrm{aut}}(\breve \Pi))$. 

Define a function 
\be\label{gpsi}
\CG_\psi: \CR_\varepsilon^{\mathrm{coti}}\rightarrow \C^\times, \quad \Pi\mapsto \mathscr{G}(\omega_\Pi,\psi)^{1-n},
\ee
where $\omega_\Pi$ denotes the central character of $\RI^{\mathrm{aut}}(\Pi)$. We also denote by
\[
  \CG_\psi:  
    \bigsqcup_{\Pi\in  \CR_\varepsilon^{\mathrm{coti}} }  \RH^b_{\varepsilon}(\RI_\infty^{\mathrm{whi}}(\Pi))^\vee \rightarrow \bigsqcup_{\Pi\in  \CR_\varepsilon^{\mathrm{coti}} }  \RH^b_{\varepsilon}(\RI_\infty^{\mathrm{whi}}(\Pi))^\vee 
    \]
the fiberwise scalar multiplication map by \eqref{gpsi}. 
\begin{lemp}\label{lem32}
    The bundle map 
 \[
 \imath_\psi \circ \CG_\psi:  
    \bigsqcup_{\Pi\in  \CR_\varepsilon^{\mathrm{coti}} }  \RH^b_{\varepsilon}(\RI_\infty^{\mathrm{whi}}(\Pi))^\vee \rightarrow \bigsqcup_{\Pi\in  \CR_\varepsilon^{\mathrm{coti}} }  \RH^b_{\varepsilon}(\RI_\infty^{\mathrm{whi}}(\Pi))^\vee 
    \]
    over $\breve\ : \CR_\varepsilon^{\mathrm{coti}}\rightarrow \CR_\varepsilon^{\mathrm{coti}}$ is $\Aut(\C)$-equivariant.  
\end{lemp}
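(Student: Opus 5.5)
We will transport the statement through the bundle isomorphism $\jmath$ of \eqref{jmathiso2}. By construction, the $\Aut(\C)$-action on $\bigsqcup_{\Pi}\RH^b_\varepsilon(\RI_\infty^{\mathrm{whi}}(\Pi))^\vee$ is defined so that $\jmath$ is $\Aut(\C)$-equivariant. Diagram \eqref{j} identifies $\imath_\psi$ on the left with the bundle map $\imath_\psi$ on $\bigsqcup_\Pi \Hom_{\RG(\A_\mathrm f)}(\RH^b_\varepsilon(\RI^{\mathrm{aut}}(\Pi)),\CW_\mathrm f)$. That map is given by $(\imath_\psi(\phi))(u)=\imath_\psi(\phi(\imath_\psi(u)))$. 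Scalar multiplication by $\CG_\psi$ obviously commutes with $\jmath$. It therefore suffices to show the following: for all $\sigma\in\Aut(\C)$, $\Pi\in\CR_\varepsilon^{\mathrm{coti}}$, and $\phi\in \Hom_{\RG(\A_\mathrm f)}(\RH^b_\varepsilon(\RI^{\mathrm{aut}}(\Pi)),\CW_\mathrm f)$,
\[
{}^\sigma\!\left(\mathscr G(\omega_\Pi,\psi)^{1-n}\cdot \imath_\psi(\phi)\right)=\mathscr G(\omega_{{}^\sigma\Pi},\psi)^{1-n}\cdot \imath_\psi({}^\sigma\phi),
\]
where both sides lie in the fibre over ${}^\sigma\breve\Pi=({}^\sigma\Pi)\breve{}$ (Lemma \ref{lemaute0}).

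Recall the action on $\Hom$: $({}^\sigma\phi)(u)={}^\sigma(\phi({}^{\sigma^{-1}}u))$. Evaluate the left side at $u\in \RH^b_\varepsilon(\RI^{\mathrm{aut}}({}^\sigma\breve\Pi))$. By sesquilinearity it equals
\[
\sigma\bigl(\mathscr G(\omega_\Pi,\psi)^{1-n}\bigr)\cdot{}^\sigma\Bigl(\imath_\psi\bigl(\phi(\imath_\psi({}^{\sigma^{-1}}u))\bigr)\Bigr).
\]
We then apply two earlier results. First, Lemma \ref{leminvf2} gives $\imath_\psi({}^{\sigma^{-1}}u)={}^{\sigma^{-1}}(\imath_\psi u)$ on Betti cohomology. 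Second, Lemma \ref{leminvf} applies to the image $\tau:=\phi(\RH^b_\varepsilon(\RI^{\mathrm{aut}}(\Pi)))\subset\CW_\mathrm f$, whose central character is $\omega'=\omega_{\Pi,\mathrm f}$. It states that $\sigma\circ(\mathscr G(\omega',\psi)^{1-n}\imath_\psi)=(\mathscr G(\sigma\circ\omega',\psi)^{1-n}\imath_\psi)\circ\sigma$ on $\tau$, where the Gauss sum on the left is already inside $\sigma$. Combining these, the expression becomes
\[
\mathscr G(\sigma\circ\omega_\Pi,\psi)^{1-n}\cdot \imath_\psi\Bigl({}^\sigma\bigl(\phi({}^{\sigma^{-1}}\imath_\psi u)\bigr)\Bigr)=\mathscr G(\sigma\circ\omega_\Pi,\psi)^{1-n}\cdot \bigl(\imath_\psi({}^\sigma\phi)\bigr)(u).
\]

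It remains to identify $\sigma\circ\omega_\Pi$ (finite part) with the finite part of $\omega_{{}^\sigma\Pi}$. This follows from Lemma \ref{sigmaiso}: locally at every $v\nmid\infty$, $({}^\sigma\Pi)_v$ is the $\sigma$-twist of $\Pi_v$, so its central character is $\sigma\circ\omega_{\Pi_v}$. Here $\omega_\Pi$ is the central character of $\RI^{\mathrm{aut}}(\Pi)\cong\RI(\Pi)$, which agrees with that of $\Pi$ locally everywhere.

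The main subtlety I expect is bookkeeping rather than substance. One point is that Lemma \ref{leminvf} is stated for an arbitrary subrepresentation with a central character, so $\tau$ must be seen to have one; this holds because $\RI^{\mathrm{aut}}(\Pi)$ has central character $\omega_\Pi$. The other point is checking that the $\sigma$-linear evaluation against the functional matches the conventions used in \eqref{j}, in particular that $\imath_\psi$ is an involution, so inverses cause no issue.
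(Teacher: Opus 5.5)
Your proposal is correct and follows the paper's proof: you transport the statement through $\jmath$ using the commutative diagram \eqref{j} (the action on $\RH^b_\varepsilon(\RI_\infty^{\mathrm{whi}}(\Pi))^\vee$ being defined by transport of structure), and then combine Lemma \ref{leminvf2} on the Betti side with Lemma \ref{leminvf} on the Whittaker side. You also spell out a step the paper leaves implicit, namely that the finite part of $\omega_{{}^\sigma\Pi}$ equals $\sigma\circ\omega_{\Pi}$ by Lemma \ref{sigmaiso}, which is exactly what makes the Gauss sum factors match.
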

\begin{proof}
 In view of the commutative diagram \eqref{j}, this follows by Lemmas \ref{leminvf} and \ref{leminvf2}.
\end{proof}

Recall from \eqref{varpi} the $\Aut(\C)$-equivariant section \be\label{varpi22}
\varpi: \CR_\varepsilon^{\mathrm{coti}}\rightarrow  \bigsqcup_{\Pi\in  \CR_\varepsilon^{\mathrm{coti}} } \left(\RH^b_\varepsilon(\RI_\infty^{\mathrm{whi}}(\Pi))^\vee\setminus \{0\}\right),
\ee
and  the Betti-Whittaker period 
    \[
   \Omega_\varepsilon(\Pi) := \Omega_\varepsilon(\Pi; \varpi, \kappa_\varepsilon):=\la \varpi(\Pi), \kappa_{\varepsilon,\Pi}\ra. 
    \]
Define another section 
\begin{eqnarray*}
\breve \varpi_\psi  :\CR_\varepsilon^{\mathrm{coti}}&\rightarrow  &\bigsqcup_{\Pi\in  \CR_\varepsilon^{\mathrm{coti}} } \left(\RH^b_\varepsilon(\RI_\infty^{\mathrm{whi}}(\Pi))^\vee\setminus \{0\}\right),\\
\breve \Pi&\mapsto &\mathscr G(\omega_{\Pi}, \psi)^{1-n}\cdot \imath_\psi(\varpi({\Pi})),
\end{eqnarray*}
    which is $\Aut(\C)$-equivariant by Lemmas \ref{lemaute0} and \ref{lem32}. 

Now by Corollary \ref{mvwkappa} we have that 
\begin{eqnarray*}
    && \Omega_\varepsilon(\breve \Pi; \breve \varpi_\psi,\kappa_\varepsilon)\\
    &=& \la \breve \varpi_\psi(\breve \Pi), \kappa_{\varepsilon,\breve \Pi}\ra\\
    &=& \la \mathscr G(\omega_{\Pi}, \psi)^{1-n}\cdot \imath_\psi(\varpi({\Pi})), \kappa_{\varepsilon,\breve \Pi}\ra\\
   &=& \mathscr G(\omega_{\Pi}, \psi)^{1-n}\cdot \la  \varpi({\Pi}), \imath_\psi(\kappa_{\varepsilon,\breve \Pi})\ra\\
   &=& \mathscr G(\omega_{\Pi}, \psi)^{1-n}\cdot \delta_n\cdot \la  \varpi({\Pi}), \kappa_{\varepsilon, \Pi}\ra\\
          &=& \mathscr G(\omega_{\Pi}, \psi)^{1-n}\cdot \delta_n\cdot \Omega_\varepsilon(\Pi;\varpi, \kappa_\varepsilon). 
\end{eqnarray*}
In view of Lemma \ref{sectionpi00}, this proves Theorem \ref{maintheorem}.

\section{Proof of Theorem \ref{mvwpiepsilon}}

 Let $\K=\R$ or $\C$, and let $\pi_n$ be a cohomological  generic irreducible Casselman-Wallach representation of $\GL_n(\K)$ with trivial coefficient system. Up to isomorphism, the representation $\pi_n$ is unique unless $\K=\R$ and $n$ is odd, in which case there are two such representations. Let $\psi_\K: \K \to\C^\times$ be a non-trivial unitary character. Similar to \eqref{imathinfty}, the MVW-involution on $\GL_n(\K)$ and on the Whittaker model (associated to $\psi_\K$) of $\pi_n$  induce an involution 
\[
\imath_{\psi_\K}: \RH^{b_{n,\K}}(\pi_n) \to \RH^{b_{n,\K}}(\pi_n),
\]
where $\RH^{b_{n,\K}}(\pi_n):=\RH_{\mathrm{ct}}^{b_{n,\K}}(\GL_n(\K)^0; \pi_n)$ and 
\[
b_{n,\K} := \begin{cases} \lfloor \frac{n^2}{4}\rfloor, & \quad\textrm{if } \K =\R;  \\
\frac{n(n-1)}{2},  &  \quad \textrm{if }\K=\C.
\end{cases}
\]
Theorem \ref{mvwpiepsilon} is a direct consequence of the following result.

\begin{thmp} \label{mvwpiK}
The map 
\[
\imath_{\psi_\K}: \RH^{b_{n,\K}}(\pi_n) \to \RH^{b_{n,\K}}(\pi_n)
\]
is equal to the multiplication by 
\be\label{defdelta}
\delta_{n,\K}: =\begin{cases}(-1)^{\frac{m(m+1)}{2}}, & \quad \textrm{if  }\K=\R; \\
1, & \quad \textrm{if  }\K=\C,
\end{cases}
\ee
where $m:=\lfloor \frac{n-1}{2}\rfloor$.
\end{thmp}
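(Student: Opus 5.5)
The plan is to compute the scalar through the standard description of the bottom-degree cohomology, $\RH^{b_{n,\K}}(\pi_n)\cong \Hom_{K^0}(\wedge^{b_{n,\K}}(\g/\k),\pi_n)$, where $\g,\k$ are the complexified Lie algebras of $\GL_n(\K)$ and $K^0$. For the trivial coefficient system all differentials vanish on the minimal $K$-type, so this is an honest identification. Set $K=\RO(n)$ or $\RU(n)$. The MVW-involution $g\mapsto \breve g=w_ng^{-t}w_n^{-1}$ preserves $K$: on $K$ it is just conjugation by $w_n$, since $k^{-t}=k$ for $k\in\RO(n)$ and $k^{-t}=\bar k$ for $k\in\RU(n)$ (the latter twisted by complex conjugation). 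Hence $\imath_{\psi_\K}$ acts on a class $\eta$ by
\[
\eta\mapsto \imath\circ\eta\circ\wedge^{b}(d\breve{\ }),
\]
where $\imath$ is the MVW-involution on the Whittaker model of $\pi_n$. Since the space is one-dimensional, it suffices to evaluate on a single nonzero element.

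The scalar then splits as a product of two contributions. The first is the sign by which $d\breve{\ }$ acts on the line $\wedge^{b}(\p)$ singled out by the minimal $K$-type; here $\p=\g/\k$, and $d\breve{\ }$ restricted to $\p$ is $X\mapsto -w_nX^tw_n^{-1}$. The second is the scalar by which $\imath$ acts on the minimal $K$-type of $\pi_n$, which I would pin down via the Whittaker functional.

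For $\K=\C$ the answer should be $1$, and I would try to see this by a connectedness argument. The MVW-involution composed with the Cartan involution and conjugation by an element of the identity component should yield an automorphism homotopic to the identity on the relevant data. Alternatively, one computes both contributions directly: $\pi_n$ is a principal series induced from characters $z^{a}\bar z^{b}$, the MVW-involution reverses the order of these characters, and the intertwining operator between the two orderings is normalized by the Whittaker functional, so its action on the $K$-type cancels the sign from $\wedge^b\p$.

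For $\K=\R$, I would first reduce by transitivity of parabolic induction to $\pi_n=D_{\cdot}\times\cdots\times D_{\cdot}$ (times a character if $n$ is odd). The bottom class then decomposes via a Künneth/Frobenius argument into tensor products of $\GL_2$ classes, together with a wedge of the unipotent part. For each $\GL_2$ block one checks directly on $D$ that the local sign is trivial, using lowest weight vectors and the fact that $w_2$ swaps the weights $\pm k$. The global sign then comes from two sources:
\begin{itemize}
\item $w_n$ reverses the order of the $\lfloor n/2\rfloor$ blocks, introducing a permutation sign on the exterior product of the unipotent and toral parts;
\item the signs $(-1)^{n-1}$ in $w_n$ combine with the induced action on the Whittaker vector.
\end{itemize}
Counting this sign should give $(-1)^{m(m+1)/2}$ with $m=\lfloor (n-1)/2\rfloor$. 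To confirm, I would check small cases: $n=1,2$ gives sign $1$, $n=3$ gives $-1$.

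The main obstacle is this real-case sign bookkeeping. It requires fixing compatible orientations of the bottom-degree classes under induction and tracking how reversing the blocks permutes the $\p$-directions. I would first attempt it through Zuckerman-type cohomological induction from the $\theta$-stable parabolic with Levi $\GL_1(\C)^{n/2}$, possibly times $\GL_1(\R)$. Then $\imath$ acts on the bottom layer $\wedge^{\dim(\u\cap\p)}$ by the determinant of the permutation induced by $w_n$ on $\u\cap\p$, and I would compute that determinant explicitly.
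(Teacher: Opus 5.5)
Your reduction is sound as far as it goes, but it stops where the theorem begins, so there is a genuine gap. Since $\breve{\ }$ is an involution of $\GL_n(\K)$ and $\pi_n\circ\breve{\ }\cong\pi_n$, the maps $T:\pi_n\to\pi_n$ with $T(g.v)=\breve g.T(v)$ and $T^2=1$ are exactly $\pm T_0$ for any one of them. The Whittaker realization singles out the one preserving $f\mapsto f(1)$, and that sign is the whole content of the statement.

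After your splitting, the ``first contribution'' is bookkeeping. For $\K=\R$, $d\breve{\ }=-\mathrm{Ad}(w_n)$ on $\p_n$ and $w_n\in K^0$, which gives $(-1)^{b_{n,\R}}$. The ``second contribution'' is everything. For $\K=\R$ it is the scalar $c$ by which $f\mapsto w_n.\breve f$ acts on the minimal $K$-type $\tau_n$. Equivalently, $c$ is the eigenvalue in $\lambda(w_n.f)=c\,\lambda(f)$ for $f\in\tau_n$, where $\lambda(f)=f(1)$. For $\K=\C$ the same holds after twisting by $k\mapsto\bar k$. Saying you would ``pin it down via the Whittaker functional'' presupposes knowing $\lambda|_{\tau_n}$, and nothing in your plan produces it.

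None of your routes supplies this either:
\begin{itemize}
\item For $\K=\C$, MVW composed with the Cartan involution is $g\mapsto w_n\bar g w_n^{-1}$. This is antiholomorphic, so it is not in the identity component of $\Aut(\GL_n(\C))$ and there is no homotopy to use. Topology could not fix a Whittaker-normalized sign anyway.
\item The cohomological-induction route computes the action of the natural twisted intertwiner on $A_{\mathfrak q}$. That can differ from the Whittaker-normalized one by exactly the unknown sign.
\item In the $\GL_2$-block route, $\breve{\ }$ carries $\Ind(D_1\otimes\cdots\otimes D_k)$ to the induction with the blocks reversed. Returning to the Whittaker model requires the long intertwining operator normalized by Jacquet integrals. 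Its scalar on the bottom class involves local coefficients of the pairs of inducing data, and it is not among your listed sign sources.
\end{itemize}
Even your check ``$n=3$ gives $-1$'' (where $\pi_3=D\times\chi$) needs this missing computation.

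The paper gets the Whittaker-level information from the archimedean Rankin--Selberg functional equation, which is itself a statement about Whittaker models. For the normalized integral $Z^\circ:\pi_n\times\pi_{n-1}\to\C$ it gives $Z^\circ(w_n.\breve f,w_{n-1}.\breve f')=\varepsilon(\tfrac12,\pi_n\times\pi_{n-1},\psi_\K)\,Z^\circ(f,f')$, with the $\varepsilon$-factor computed explicitly. Write $w_n.\breve{\phi(u)}=\phi(\delta'_{n,\K}\theta(u))$ for $\phi\in\Hom_{K_n^0}(\wedge^{b_{n,\K}}\p_n,\pi_n)$. Two further inputs are needed:
\begin{itemize}
\item Sun's nonvanishing of $Z^\circ$ on the minimal $K$-types, passed to the $\RO(n)$-pieces $\tau'_n\times\tau'_{n-1}$ when $\K=\C$.
\item The elementary computation of $\theta$ on $\rho'_n$, which gives the sign $\xi_{n,\K}$.
\end{itemize}
Together these give $\delta'_{n,\K}\xi_{n,\K}\,\delta'_{n-1,\K}\xi_{n-1,\K}=\varepsilon(\tfrac12,\pi_n\times\pi_{n-1},\psi_\K)$, and induction from $n=1$ yields $\delta'_{n,\K}=\delta_{n,\K}$. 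Some input of this kind, controlling the Whittaker functional on the minimal $K$-type, is what your proposal lacks.
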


The rest of this section is devoted to a proof of Theorem \ref{mvwpiK}. By twisting the sign character if necessary (only when $\K=\R$ and $n$ is odd), in the following subsections we assume  without loss of generality that $\pi_n$ has trivial central character.

\subsection{MVW-involution on the Lie algebra}

Let $K_n$ be the standard maximal compact subgroup of $\GL_n(\K)$, which is  $\RO(n)$ or $\RU(n)$ respectively when $\K=\R$ or $\C$. Its complexified Lie algebra  is equal to 
\[
\k_n:=\begin{cases}
    \{x\in \g\l_n(\C)\,:\, \textrm{$x$ is skew symmetric}\}, \quad &\textrm{if $\K=\R$};\\
    \g\l_n(\C), \quad &\textrm{if $\K=\C$}. 
    \end{cases}
\]
Let $\frak p_{n,\R} \subset \frak{gl}_n(\K)$ be the space of $n\times n$ symmetric or Hermitian matrices, respectively when $\K=\R$ or $\C$. Its complexification equals 
\[
\frak p_{n}:=\begin{cases}
    \{x\in \g\l_n(\C)\,:\, \textrm{$x$ is symmetric}\}, \quad &\textrm{if $\K=\R$};\\
    \g\l_n(\C), \quad &\textrm{if $\K=\C$},
    \end{cases}
\]
which is a representation of $K_n$ under the adjoint action. 

Define a Lie algebra automorphism 
\be\label{theta}
\theta :\g\l_n(\C)\rightarrow \g\l_n(\C),\qquad x\mapsto -x^t.
\ee

\begin{lemp}\label{unib}
   Up to conjugation by $\RO(n)$, there exists a unique Borel subalgebra of $\g\l_n(\C)$ that is $\theta$-stable.  
\end{lemp}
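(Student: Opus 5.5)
Throughout, $\theta(x)=-x^t$ on $\g\l_n(\C)$, and a Borel subalgebra $\b$ is $\theta$-stable if $\theta(\b)=\b$.

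\emph{Existence.} Let $w_0$ be the antidiagonal permutation matrix, i.e.\ $w_0 e_i=e_{n+1-i}$. It is orthogonal. Choose $g\in\GL_n(\C)$ with $g^tg=w_0$; this is possible because $w_0$ is a nondegenerate symmetric matrix over $\C$. Set $\b:=g^{-1}\b_0 g$, where $\b_0$ is the upper-triangular Borel. Then
\[
\theta(\b)=g^t\,\theta(\b_0)\,g^{-t}=g^t\,\b_0^t\,g^{-t}.
\]
Since $\b_0^t=w_0\b_0w_0$, we get $\theta(\b)=g^tw_0\,\b_0\,w_0g^{-t}$. Using $g^tw_0=g^{-1}$ (because $g^tg=w_0$ and $w_0^2=1$), this equals $g^{-1}\b_0g=\b$. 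So $\b$ is $\theta$-stable.

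\emph{Reformulation of $\theta$-stability.} A Borel subalgebra is the same as the stabilizer of a complete flag $0\subset V_1\subset\cdots\subset V_n=\C^n$. Let $(u,v)=u^tv$ be the standard symmetric bilinear form. Since $\theta(x)=-x^t$ is minus the adjoint of $x$ for this form, $\theta(\b_{V_\bullet})=\b_{V_\bullet^\perp}$, where $V_\bullet^\perp$ is the flag $V_{n-1}^\perp\subset\cdots\subset V_1^\perp$. Hence $\b$ is $\theta$-stable exactly when the flag is self-dual, i.e.\ $V_i^\perp=V_{n-i}$ for all $i$. The flag is determined by $\b$ as its unique stable flag.

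\emph{Uniqueness.} Self-dual flags for a nondegenerate symmetric form are exactly the full isotropic flags: $V_1\subset\cdots\subset V_{\lfloor n/2\rfloor}$ isotropic, with the remaining terms given by perpendiculars. By Witt's theorem, $\RO(n,\C)$ acts transitively on these.

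\emph{Passing from $\RO(n,\C)$ to the compact $\RO(n)$.} Every self-dual flag is stabilized by $\b$, so the orbit is the closed orbit $\RO(n,\C)/B_{\RO}$, where $B_{\RO}$ is a Borel subgroup of $\RO(n,\C)^0$ (or the relevant parabolic). Since this orbit is a compact flag variety, the Iwasawa decomposition $\RO(n,\C)=\RO(n)\cdot B_{\RO}$ shows that the compact real form $\RO(n)$ already acts transitively. Disconnectedness causes no trouble, because $\RO(n)$ meets both components of $\RO(n,\C)$.

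The main point to check is this last step: transitivity must hold for the compact group $\RO(n)$, not only for $\RO(n,\C)$. It follows from the Iwasawa decomposition for the complex group $\RO(n,\C)$, which the paper needs as $\RO(n)$-conjugacy in the surrounding $(\g,K)$ computation.
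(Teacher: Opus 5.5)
Your overall route matches the paper's sketch. You identify $\theta$-stable Borel subalgebras with self-dual, i.e.\ full isotropic, flags for the standard symmetric form, which are points of the flag variety of $\RO(n,\C)$. You then use transitivity of $\RO(n,\C)$ (your Witt argument) and pass to $\RO(n)$ via the Iwasawa decomposition. The remark that $\RO(n)$ meets both components, which is genuinely needed for $n$ even, is right. The reformulation, the Witt step and the Iwasawa step are all correct.

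The explicit existence computation, however, is wrong as written. From $g^tg=w_0$ you get $g^t=w_0g^{-1}$, hence $g^tw_0=w_0g^{-1}w_0$. This equals $g^{-1}$ only if $gg^t=w_0$, which does not follow from $g^tg=w_0$.

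The construction really does fail. For $n=2$, take $g$ with columns $\frac{1}{\sqrt2}(1,i)^t$ and $\frac{1}{\sqrt2}(1,-i)^t$. Then $g^tg=w_0$, but $g^{-1}\b_0g$ stabilizes the line $g^{-1}\C e_1=\C(1,1)^t$. This line is not isotropic, so by your own criterion $g^{-1}\b_0g$ is not $\theta$-stable.

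The repair is to conjugate the other way. Keep $g^tg=w_0$ and set $\b:=g\b_0g^{-1}$. Since $g^{-t}w_0=g$ and $w_0g^t=g^{-1}$, you get $\theta(\b)=g^{-t}w_0\,\b_0\,w_0g^{t}=\b$. Here the columns $c_i$ of $g$ satisfy $c_i^tc_j=\delta_{i+j,n+1}$ and span the standard isotropic flag. Alternatively, choose $g$ with $gg^t=w_0$. Or simply note that existence already follows from your flag reformulation, since full isotropic flags exist; this is how the paper's sketch handles existence implicitly.
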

\begin{proof}
   This is known to experts and we sketch a proof for the convenience of the reader. 
A $\theta$-stable Borel subalgebra of $\mathfrak{gl}_n(\mathbb C)$ corresponds to a complete isotropic flag in $\C^n$ with respect to the standard symmetric bilinear form, i.e. a point of the full flag variety of $\RO_n(\C)$. Then the lemma follows by the Iwasawa decomposition for $\RO_n(\C)$.
\end{proof}

Let $\b_n$ be a $\theta$-stable Borel subalgebra of $\g\l_n(\C)$. Denote  by 
\be\label{defnn}
\n_n:=\textrm{ the nilpotent radical of $\b_n\cap \s\l_n(\C)$}.
\ee
Set
\[
\frak n'_{n}:=
    \n_n\cap \p_n, 
\]
which equals $\n_n$ when $\K=\C$. 
Note that 
\[
\dim \mathfrak{n}_n'=b_{n,\K},
\]
and the one-dimensional subspace $\wedge^{b_{n,\K}} \n'_n$ of $\wedge^{b_{n,\K}} \p_n$ generates an irreducible  $K_n$-subrepresentation, to be denoted by $\rho_n$. Moreover, the subrepresentation $\rho_n$  is independent of $\b_n$, has trivial central character, and has highest weight 
\[
\begin{cases}
   \left(n, n-2, \dots, n+2-2\left\lfloor \frac{n}{2}\right\rfloor\right), \quad &\textrm{if $\K=\R$};\\
    \left(n-1, n-3, \dots, 1-n\right), \quad &\textrm{if $\K=\C$}.
    \end{cases}  
\]

The one-dimensional subspace $\wedge^{b_{n,\K}} \n'_n$ of $\wedge^{b_{n,\K}} \p_n$ also generates an irreducible $\RO(n)$-subrepresentation, to be denoted by $\rho'_n$. It equals $\rho_n$ when $\K=\R$. When $\K=\C$, it is also independent of $\b_n$, has trivial central character, and has highest weight
\[
\left(2n-2, 2n-6,\dots, 2n+2-4\left\lfloor \frac{n}{2}\right\rfloor\right). 
\]

Note that as an irreducible representation of $K_n$, $\rho_n$ occurs in $\wedge^{b_{n,\K}} \p_n$ with multiplicity one, and likewise, as an irreducible representation of $\RO(n)$, $\rho'_n$ occurs in $\wedge^{b_{n,\K}} \p_n$ with multiplicity one.

The Lie algebra automorphism \eqref{theta} induces a linear automorphism 
\[
\theta :\frak p_n\rightarrow \frak p_n,
\]
which further induces a linear  automorphism 
\be\label{theta00003}
\theta :\wedge^{b_{n,\K}}\p_n\rightarrow \wedge^{b_{n,\K}} \frak p_n. 
\ee
The above map is equivariant with respect to the isomorphism
\be\label{bark}
K_n\rightarrow K_n, \quad g\mapsto \bar g\quad (\textrm{the entrywise complex conjugation}).
\ee
When $\K=\R$, the map \eqref{theta00003} is equal to the scalar multiplication by $(-1)^{b_{n,\K}}$. 

\begin{lemp}\label{xinK}
   The  restriction to $\rho'_n$ of the linear  automorphism 
   \[
\theta :\wedge^{b_{n,\K}}\p_n\rightarrow \wedge^{b_{n,\K}} \frak p_n 
\]
is equal to the scalar multiplication by
\[
\xi_{n,\K}:=\begin{cases}
    (-1)^{b_{n,\K}},\qquad&\textrm{if $\K=\R$};\\
    1,\qquad&\textrm{if $\K=\C$ and $n$ is odd};\\
    (-1)^{\frac{n}{2}},\qquad&\textrm{if $\K=\C$ and $n$ is even}.
\end{cases}
\]
\end{lemp}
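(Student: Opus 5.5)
The plan is to reduce both cases to computing a determinant of $\theta$ on a single line inside $\rho'_n$, after checking that $\theta$ acts on $\rho'_n$ by a scalar at all.

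\textbf{The case $\K=\R$.} This case is immediate. On $\p_n$, the space of symmetric matrices, we have $\theta(x)=-x^t=-x$. Hence $\theta$ is $-1$ on $\p_n$, and on $\wedge^{b_{n,\K}}\p_n$ it is $(-1)^{b_{n,\K}}$, as already noted before the lemma.

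\textbf{The case $\K=\C$: $\theta$ acts on $\rho'_n$ by a scalar.} Here $\p_n=\g\l_n(\C)$. For $g\in \RO(n)$ we have $g^{-t}=g$, so
\[
\theta(gxg^{-1})=-(gxg^{-1})^t=g\,\theta(x)\,g^{-1}.
\]
Thus $\theta$ on $\wedge^{b_{n,\K}}\p_n$ commutes with $\RO(n)$; equivalently, the twist \eqref{bark} is trivial on $\RO(n)$. The isotypic component of type $\rho'_n$ has multiplicity one, so $\theta$ preserves $\rho'_n$. By Schur's lemma it acts on $\rho'_n$ by a scalar, which is $\pm1$ since $\theta$ is an involution.

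\textbf{Evaluating the scalar.} Evaluate on the generating line $\wedge^{b_{n,\K}}\n_n\subset\rho'_n$ (recall $\n'_n=\n_n$ when $\K=\C$).
\begin{enumerate}
\item Since $\b_n$ is $\theta$-stable, so is $\n_n$. Hence $\theta$ preserves the top exterior power of $\n_n$, and the scalar equals $\det(\theta|_{\n_n})$.
\item Decompose $\n_n$ into the $\pm1$ eigenspaces of $\theta$:
\[
\n_n=(\n_n\cap\mathfrak{so}_n(\C))\oplus(\n_n\cap\{\text{symmetric matrices}\}).
\]
Since $\theta$ is $+1$ on the first summand and $-1$ on the second, the scalar is $(-1)^d$ with $d=\dim(\n_n\cap\{\text{symmetric}\})$.
\item Identify the first summand. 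By Lemma \ref{unib}, $\b_n$ corresponds to a complete isotropic flag. Then $\b_n\cap\mathfrak{so}_n(\C)$ is a Borel subalgebra of $\mathfrak{so}_n(\C)$, and $\n_n\cap\mathfrak{so}_n(\C)$ is its nilradical, of dimension equal to the number of positive roots of $\mathfrak{so}_n$. This is the step needing the most care: it requires that a $\theta$-stable Borel contains a $\theta$-stable Cartan subalgebra, and one can check this directly on the standard isotropic-flag model.
\item Count. Since $\dim\n_n=n(n-1)/2$, we get $d=n(n-1)/2-\#\{\text{positive roots of }\mathfrak{so}_n\}$.
\begin{itemize}
\item For $n=2k$, $\mathfrak{so}_{2k}$ has $k(k-1)$ positive roots, so $d=k(2k-1)-k(k-1)=k^2$ and $(-1)^d=(-1)^{n/2}$.
\item For $n=2k+1$, $\mathfrak{so}_{2k+1}$ has $k^2$ positive roots, so $d=k(2k+1)-k^2=k(k+1)$, which is even; hence $(-1)^d=1$.
\end{itemize}
\end{enumerate}
These values agree with $\xi_{n,\K}$ in the statement.
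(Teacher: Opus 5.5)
Your proof is correct and follows the paper's route. The paper likewise treats $\K=\R$ directly, and for $\K=\C$ reduces to $\det(\theta|_{\n_n})$, which it computes by counting imaginary and complex roots of $\GL_n(\R)$ for a fundamental Cartan subalgebra; your count $\dim\n_n-\#\{\text{positive roots of }\mathfrak{so}_n\}$ is the same computation, packaged via the fact that $\n_n\cap\mathfrak{so}_n(\C)$ is the nilradical of the Borel subalgebra $\b_n\cap\mathfrak{so}_n(\C)$.
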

\begin{proof}
This is clear when $\K=\R$. When $\K=\C$, we only need to calculate the determinant of the linear automorphism 
  \[
 \theta: \n_n \rightarrow \n_n,
  \]  
  where $\n_n$ is as in \eqref{defnn}. This can be done by counting the imaginary roots and the complex roots for $\GL_n(\R)$ with respect to a fundamental Cartan subgroup. We omit the details. 
\end{proof}

\subsection{A characterization of $\delta_{n,\K}$}

Denote by $\tau_n\subset \pi_n$ the minimal $K_n$-type in the sense of Vogan (see \cite{VZ84}). It is an irreducible representation of $K_n$ which is isomorphic to $\rho_n$ and occurs in $\pi_n$ with multiplicity one. By \cite[Proposition 9.4.3]{W88} and \cite[Proposition 3.2]{VZ84}, we have identifications 
\begin{equation}\label{vz84}
 \RH^b_{\rm ct}(\GL_n(\K);  \pi_n)= \Hom_{K_n}( \wedge^{b_{n,\K}}\p_n, \pi_n)= \Hom_{K_n}( \rho_n, \pi_n)=\Hom_{K_n}( \rho_n, \tau_n)
\end{equation}
of one-dimensional vector spaces. 


As before, $\pi_n$ is realized as its  Whittaker model (associated to $\psi_\K$) and we have a linear automorphism 
\be\label{equ2}
\pi_n\rightarrow \pi_n, \quad f\mapsto w_n.\breve f.
\ee
This map is also equivariant with respect to the isomorphism
\eqref{bark}.

\begin{lemp}\label{lemdelta}
  There exists a unique number $\delta'_{n,\K}\in \C^\times$ such that   the diagram 
    \be\label{comta1}
\begin{CD}
  \wedge^{b_{n,\K}}\frak p_{n} @>\phi  >>\pi_n\\
    @V\delta'_{n,\K}\cdot \theta  VV @VV  f\mapsto w_n.\breve f V\\
    \wedge^{b_{n,\K}}\frak p_{n}@>\phi \ >>\pi_n\\
\end{CD}
\ee
commutes for all $\phi\in \Hom_{K_n^0}( \wedge^{b_{n,\K}}\frak p_n, \pi_n)$.
\end{lemp}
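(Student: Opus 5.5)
The plan is to reduce the statement to a one-dimensionality argument combined with a twisted-equivariance check. Set $T:=\theta$ on $\wedge^{b_{n,\K}}\p_n$ and $S: f\mapsto w_n.\breve f$ on $\pi_n$. I will first check that both maps are equivariant with respect to the automorphism \eqref{bark}, $k\mapsto \bar k$, of $K_n$.

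For $T$ this was noted above. For $S$, the key point is that for $k\in K_n$ we have $\breve k = w_n \bar k w_n^{-1}$, since $k^{-t}=\bar k$ for $k\in K_n$. Hence
\[
S(k.f)=w_n.(\breve k.\breve f)=w_n\breve k w_n^{-1}.(w_n.\breve f)=w_n^2\bar k w_n^{-2}.S(f).
\]
Here $w_n^2=\pm 1$ is central and acts trivially, because $\pi_n$ has trivial central character. So $S(k.f)=\bar k.S(f)$.

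Next, given $\phi\in \Hom_{K_n^0}(\wedge^{b_{n,\K}}\p_n,\pi_n)$, I consider the map $\phi^\sharp:=S^{-1}\circ\phi\circ T$. By the twisted equivariance of $S$ and $T$, the map $\phi^\sharp$ is again $K_n^0$-equivariant, since $\bar{\ }$ preserves $K_n^0$. The space $\Hom_{K_n^0}(\wedge^{b_{n,\K}}\p_n,\pi_n)$ is one-dimensional. When $\K=\C$, $K_n^0=K_n$ and this is \eqref{vz84}. When $\K=\R$, the $K_n^0=\mathrm{SO}(n)$-types require care: I will use that the minimal $K$-type $\tau_n$ restricted to $\mathrm{SO}(n)$ has multiplicity-one constituents, and that the constituents of $\wedge^{b}\p_n$ matching $\pi_n$ are exactly those of $\rho_n$. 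Equivalently, I will invoke that $\RH^{b}_{\rm ct}(\GL_n(\K)^0;\pi_n)$ is one-dimensional, which was stated in the introduction. It follows that $\phi\mapsto\phi^\sharp$ is a linear endomorphism of a line, hence multiplication by a scalar $c$.

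Setting $\delta'_{n,\K}:=c^{-1}$ gives $\phi\circ(\delta'_{n,\K}T)=S\circ\phi$, which is the commutativity of \eqref{comta1}. The scalar $c$ is nonzero because $S$ and $T$ are bijective. Uniqueness follows by taking any $\phi\neq0$: if $\delta\phi\circ T=\delta'\phi\circ T$, then $\delta=\delta'$ because $\phi\circ T\neq0$.

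I expect the main obstacle to be the one-dimensionality over $K_n^0$ in the real case. The fallback is to work with $\Hom_{K_n}$ and separately check that both $S$ and $T$ commute with the component group action up to the same sign.
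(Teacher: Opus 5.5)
Your reduction (twisted equivariance of $S$ and $T$, plus a one-dimensional $\Hom$ space) works for $\K=\C$, where $K_n^0=K_n$. It also works for $\K=\R$ with $n$ odd, where $K_n=K_n^0\times\{\pm 1\}$ and $-1$ acts trivially on both $\wedge^{b_{n,\K}}\p_n$ and $\pi_n$. It fails for $\K=\R$ with $n$ even, which is exactly the case in which the lemma has content.

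In that case $\Hom_{K_n^0}(\wedge^{b_{n,\K}}\p_n,\pi_n)$ is two-dimensional, not a line. The introduction only says that each $\varepsilon$-isotypic part $\RH^b_\varepsilon$ is one-dimensional. For $n$ even one has $\pi_n\otimes\sgn\cong\pi_n$, so both characters of $\pi_0(\R^\times)$ occur in $\RH^{b_{n,\K}}_{\rm ct}(\GL_n(\R)^0;\pi_n)$. Concretely, $\rho_n|_{\mathrm{SO}(n)}=\rho_n^+\oplus\rho_n^-$ with $\rho_n^+\not\cong\rho_n^-$, and each summand contributes a line. Your own multiplicity-one count for $\tau_n|_{\mathrm{SO}(n)}$ also gives dimension $2$.

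On a plane the operator $\phi\mapsto S^{-1}\circ\phi\circ T$ need not be a scalar, so the step ``endomorphism of a line, hence a scalar'' is unjustified. Your fallback does not repair this. Complex conjugation is trivial on $\RO(n)$, so $S$ and $T$ are genuinely $K_n$-equivariant, and your operator commutes with the $\pi_0(K_n)$-action on $\Hom_{K_n^0}$. That only makes it diagonal with respect to the trivial and sign isotypic lines, with a priori different eigenvalues.

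What is missing is an argument forcing the two eigenvalues to agree, and this is what the paper supplies. Fix a generator $\phi_0$ of the one-dimensional space $\Hom_{K_n}(\rho_n,\pi_n)$ for the full group $K_n$. Your argument applies there and gives a single $\delta'_{n,\K}$ with $S\circ\phi_0=\phi_0\circ(\delta'_{n,\K}\,\theta)$ on $\rho_n$.

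Since $S$ and $\theta$ are $K_n^0$-equivariant, they preserve the $\rho_n^+$- and $\rho_n^-$-isotypic pieces. Restricting the identity therefore shows that $\phi_0|_{\rho_n^+}$ and $\phi_0|_{\rho_n^-}$ satisfy the diagram with the same scalar. These two restrictions generate the lines $\Hom_{K_n^0}(\rho_n^\pm,\pi_n)$ that make up $\Hom_{K_n^0}(\wedge^{b_{n,\K}}\p_n,\pi_n)$, which gives the lemma for every $\phi$.

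Equivalently: for $\K=\R$, $\theta$ is the scalar $(-1)^{b_{n,\K}}$, and $S$ preserves the irreducible $K_n$-type $\tau_n$. Schur's lemma for $K_n$, rather than $K_n^0$, then makes $S|_{\tau_n}$ a scalar. The essential input is the $K_n$-equivariance of $\phi_0$, which ties $\rho_n^+$ to $\rho_n^-$; information about $K_n^0$ alone cannot do this.
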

\begin{proof}
  Fix a generator $\phi_0$ of the one-dimensional space $\Hom_{K_n}( \rho_n, \pi_n)$. Then by the equivariance of \eqref{theta00003} and \eqref{equ2}  with respect to the isomorphism
\eqref{bark}, there exists a unique number $\delta'_{n,\K}\in \C^\times$ such that   the diagram 
    \be\label{comta2}
\begin{CD}
  \rho_n @>\phi_0  >>\pi_n\\
    @V\delta'_{n,\K}\cdot \theta  VV @VV  f\mapsto w_n.\breve f V\\
   \rho_n@>\phi_0 \ >>\pi_n\\
\end{CD}
\ee
commutes.

When $\K=\R$ and $n$ is even, we have a $K_n^0$-stable decomposition (see \cite[Chapter V]{We97}) \[
\rho_n|_{K_n^0}=\rho_n^+\oplus \rho_n^-,
\]
 where  $\rho_n^+$ and $\rho_n^-$ are irreducible subrepresentations of $\rho_n|_{K_n^0}$ which are not isomorphic to each other. Then 
 \[
\phi_0|_{\rho_n^+}\in \Hom_{K_n^0}( \rho_n^+, \pi_n)\qquad\textrm{and}\qquad \phi_0|_{\rho_n^-}\in  \Hom_{K_n^0}( \rho_n^-, \pi_n) 
 \]
 are generators in the one-dimensional spaces, and  the diagrams 
    \[
\begin{CD}
  \rho_n^+ @>\phi_0|_{\rho_n^+}  >>\pi_n\\
    @V\delta'_{n,\K}\cdot \theta  VV @VV  f\mapsto w_n.\breve f V\\
   \rho_n^+@>\phi_0|_{\rho_n^+} \ >>\pi_n\\
\end{CD} \qquad\qquad 
\textrm{and}\qquad \qquad 
\begin{CD}
  \rho_n^- @>\phi_0|_{\rho_n^-}  >>\pi_n\\
    @V\delta'_{n,\K}\cdot \theta  VV @VV  f\mapsto w_n.\breve f V\\
   \rho_n^-@>\phi_0|_{\rho_n^-} \ >>\pi_n\\
\end{CD} 
\]
commute. This implies that the diagram \eqref{comta1} commutes. In all other cases, the commutativity of the diagram \eqref{comta1} directly follows from the commutativity of the diagram \eqref{comta2}. This proves the lemma. 
\end{proof}

Let $\delta'_{n,\K}\in \C^\times$ be as in Lemma \ref{lemdelta}. Since $w_n\in K_n^0$ and $w_n^2$ acts trivially on $\wedge^{b_{n,\K}}\frak p_{n} $, Lemma  \ref{lemdelta} implies that the diagram 
\[
\begin{CD}
  \wedge^{b_{n,\K}}\frak p_{n} @>\phi  >>\pi_n\\
    @V u\mapsto \delta'_{n,\K}\cdot w_n.(\theta(u)) VV @VV   \breve \  V\\
    \wedge^{b_{n,\K}}\frak p_{n}@>\phi \ >>\pi_n\\
\end{CD}
\]
commutes for all $\phi\in \Hom_{K_n^0}( \wedge^{b_{n,\K}}\frak p_{n}, \pi_n)$. 
Therefore, in order to prove Theorem \ref{mvwpiK}, it remains to show that 
\be\label{eqdelta}
\delta'_{n,\K}=\delta_{n,\K},
\ee
where $\delta_{n,\K}$ is defined in \eqref{defdelta}.

\subsection{Rankin-Selberg convolutions}
The equality \eqref{eqdelta} clearly holds when $n=1$. We prove it for general $n$ by induction. Thus we assume that $n\geq 2$ and \eqref{eqdelta} holds  for $n-1$. 

As usual, $\GL_{n-1}(\K)$ is identified as a subgroup of $\GL_n(\K)$ via the embedding
\[
g\mapsto \begin{bmatrix} g &0 \\ 0 & 1\end{bmatrix}.
\]
Let $\pi_{n-1}$ be the cohomological  generic irreducible Casselman-Wallach representation of $\GL_{n-1}(\K)$ with trivial coefficient system and trivial central character. We realize it as the Whittaker model associated to $\psi_\K^{-1}$, and then define an involution
\[
\breve \ : \pi_{n-1}\rightarrow \pi_{n-1}
\]
as before. 

\begin{lemp} \label{lemFE}
There exists a nonzero $\GL_{n-1}(\K)$-invariant continuous bilinear map 
\[
\oZ^\circ: \pi_n\times \pi_{n-1}\rightarrow \C
\]
such that 
 the diagram 
\begin{equation}\label{comta}
\begin{CD}
  \pi_n\times \pi_{n-1} @>\oZ^\circ  >>\C \\
    @V (f,f')\mapsto (w_n.\breve f,\, w_{n-1}.\breve f') VV @VV   \textrm{multiplication by }\, \varepsilon\left(\frac{1}{2}, \pi_n\times \pi_{n-1}, \psi_\K\right) V\\
   \pi_n\times \pi_{n-1} @>  \oZ^\circ>>\C \\
\end{CD}
\end{equation}
commutes, 
where
\[
 \varepsilon\left(\frac{1}{2}, \pi_n\times \pi_{n-1}, \psi_\K\right) = \begin{cases}1 , & \quad \textrm{if  $\K=\R$ and  $n$ is odd}; \\
 (-1)^{\lfloor n/2\rfloor}, & \quad \textrm{otherwise.}
 \end{cases}
\]
\end{lemp}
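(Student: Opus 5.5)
We take $\oZ^\circ$ to be the value at $s=\tfrac12$ of the normalized Jacquet--Piatetski-Shapiro--Shalika Rankin--Selberg integral, and obtain \eqref{comta} from the local functional equation. For $f\in\pi_n$ (Whittaker model for $\psi_\K$) and $f'\in\pi_{n-1}$ (Whittaker model for $\psi_\K^{-1}$), set
\[
\oZ(s,f,f'):=\int_{\RN_{n-1}(\K)\backslash \GL_{n-1}(\K)} f(g)f'(g)\abs{\det g}_\K^{s-\frac12}\,\od g .
\]
This converges for $\Re(s)\gg0$ and continues meromorphically. The quotient $\oZ(s,f,f')/\oL(s,\pi_n\times\pi_{n-1})$ is entire (Jacquet's archimedean theory). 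Its value at $s=\frac12$ is a continuous $\GL_{n-1}(\K)$-invariant bilinear form; call it $\oZ^\circ$. It is nonzero because the normalized integrals span the constant functions, again by Jacquet's result.

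Next we apply the archimedean local functional equation (Jacquet, \emph{Archimedean Rankin--Selberg integrals}):
\[
\frac{\widetilde{\oZ}(1-s,\tilde f,\tilde f')}{\oL(1-s,\pi_n^\vee\times\pi_{n-1}^\vee)}=\varepsilon(s,\pi_n\times\pi_{n-1},\psi_\K)\,\frac{\oZ(s,f,f')}{\oL(s,\pi_n\times\pi_{n-1})},
\]
with $\tilde f(g)=f(w_n g^{-t})$ and $\tilde f'(g)=f'(w_{n-1}g^{-t})$ (the Weyl element with the usual signs), and $\widetilde{\oZ}$ including the right translation by $\diag(1_{n-2},\omega_{1,1})$-type element customary in the JPSS setup. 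Rewriting $\tilde f$ via $\breve f(g)=f(w_n g^{-t} w_n^{-1})$, we get $\tilde f=\breve f(\,\cdot\,w_n)=w_n.\breve f$, and similarly for $f'$. The additional unipotent/Weyl correction in $\widetilde{\oZ}$ disappears after the change of variables $g\mapsto w_{n-1} g w_{n-1}^{-1}$ combined with the embedding of $\GL_{n-1}$; this bookkeeping needs care with the signs in $w_n$.

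Because $\pi_n$ and $\pi_{n-1}$ are self-dual (trivial coefficient system, trivial central character, by Proposition \ref{class00}), we have $\pi_n^\vee\cong\pi_n$ and the $L$-factors on the two sides agree at $s=\tfrac12$. Specializing at $s=\tfrac12$ then gives \eqref{comta} with the constant $\varepsilon(\tfrac12,\pi_n\times\pi_{n-1},\psi_\K)$. The main obstacle is evaluating this constant. We compute it from the Langlands parameters: $\pi_n$ has parameter $\bigoplus D_{a,-a}$ (plus possibly a character), with $a$ running over the half-integers $\frac{n-1}{2},\frac{n-3}{2},\dots$. By multiplicativity, the $\varepsilon$-factor is a product over pairs, using $\varepsilon(s,D_\ell,\psi)=i^{\ell+1}$ for the discrete series of weight $\ell+1$ and $\varepsilon(s,\sgn^\delta,\psi)=i^{\delta}$. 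Tensor products decompose as $D_k\otimes D_l=D_{k+l}\oplus D_{|k-l|}$, so in the real case the exponent of $i$ is a sum that can be tallied. In the complex case each character $z^a\bar z^{-a}$ contributes $i^{|2a|}$ (up to normalization) and the pairs $(a_i+b_j)$ are counted. We expect the count to yield $(-1)^{\lfloor n/2\rfloor}$, except when $\K=\R$ and $n$ is odd, where the signs cancel to give $1$; this should be checked on the cases $n=2,3$ first.
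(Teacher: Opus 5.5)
Your proposal is correct and follows the paper's own argument: the normalized Rankin--Selberg integral at $s=\tfrac12$, Jacquet's archimedean functional equation, the self-duality and trivial central characters of $\pi_n$ and $\pi_{n-1}$, and then an $\varepsilon$-factor computation. The loose ends you flag are harmless. For $m=n-1$ the dual JPSS integral carries no extra Weyl translation, and the discrepancy between the signed $w_n$ and the unsigned JPSS Weyl element is absorbed by a substitution $g\mapsto eg$ ($e$ a diagonal sign matrix) at the cost of $\omega_{\pi_{n-1}}(-1)=1$; the omitted factor $\omega_{\pi_{n-1}}(-1)^{n-1}$ and the dependence on $\psi_\K$ disappear for the same reason. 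The count you defer does give the stated value. For $\K=\C$ every $2(a+b)$ is odd, so the product of the $i^{|2(a+b)|}$ is $(-1)^{n(n-1)/2}=(-1)^{\lfloor n/2\rfloor}$. For $\K=\R$ the pair terms $D_k\otimes D_l$ and the terms involving the one-dimensional constituent contribute $(-1)^{k(k-1)/2}$ and $(-1)^{k(k+1)/2}$ when $n=2k$, and $(-1)^{k(k+1)/2}$ each when $n=2k+1$.
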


\begin{proof}
We choose  $\oZ^\circ$ to be the normalized Rankin-Selberg integral (see \cite[Section 1.4.1]{JLS25} and \cite{J09} for the precise definition). 
Note that $\pi_n$ and $\pi_{n-1}$ are both self-dual, and they have trivial central characters by assumption. 
The lemma follows easily from the functional equation of Rankin-Selberg integrals (\cite[Theorem 2.1]{J09}) and a straightforward computation of local $\varepsilon$-factors. 
\end{proof}

 Let $\RZ^\circ$ be as in Lemma \ref{lemFE}. Denote by $\tau_n'$ the $\RO(n)$-subrepresentation of $\tau_n$ that corresponds to the $\RO(n)$-subrepresentation $\rho_n'$ of $\rho_n$. Similarly define $\tau_{n-1}'\subset \tau_{n-1}$.

\begin{lemp} \label{nonvh}
It holds that
   \[
  \RZ^\circ|_{\tau_n'\times \tau_{n-1}'}\neq 0.
  \]
\end{lemp}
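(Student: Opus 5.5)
The plan is to reduce the non-vanishing of $\RZ^\circ$ on $\tau_n'\times\tau_{n-1}'$ to the known non-vanishing of the Rankin--Selberg pairing on cohomology, namely the archimedean non-vanishing hypothesis proved in \cite{JLS25} (see also \cite{LLS}). Since $\pi_n$ and $\pi_{n-1}$ have trivial coefficient systems, the trivial representation of $\GL_{n-1}(\K\otimes_\R\C)$ occurs in $F_{\pi_n}\otimes F_{\pi_{n-1}}$, and we are in the balanced situation treated there. By \eqref{vz84}, the bottom-degree cohomology of $\pi_n$ (resp. $\pi_{n-1}$) is identified with $\Hom_{K_n}(\rho_n,\tau_n)$ (resp. $\Hom_{K_{n-1}}(\rho_{n-1},\tau_{n-1})$). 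The cohomological Rankin--Selberg map is then obtained by composing three maps: the cup product of these classes, restriction to $\GL_{n-1}(\K)$ (using $\dim\GL_{n-1}(\K)/K_{n-1}=b_{n,\K}+b_{n-1,\K}$ in the balanced case), and the pairing $\RZ^\circ$ followed by a generator of top-degree relative Lie algebra cohomology. The theorem of \cite{JLS25} states that this map is nonzero.

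Unwinding the composition gives an expression of the form
\[
\sum \RZ^\circ\bigl(\phi_n(u_j),\phi_{n-1}(u'_j)\bigr),
\]
where $\phi_n\colon\rho_n\to\tau_n$ and $\phi_{n-1}\colon\rho_{n-1}\to\tau_{n-1}$ are the generators, and $u_j\otimes u_j'$ runs over the image in $\rho_n\otimes\rho_{n-1}$ of a generator of $\wedge^{\mathrm{top}}(\p_{n-1})$ under the map induced by $\p_{n-1}\hookrightarrow\p_n$ and the decomposition $\wedge\p_n\supset\wedge^{b_{n,\K}}\p_n\otimes\ldots$. Every vector $u_j$ entering this sum lies in the $K_{n-1}$-span of the relevant projection. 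The key claim is that this projection actually lies in $\rho_n'\otimes\rho_{n-1}'$, that is, in the $\RO$-submodules generated by $\wedge^{b}\n'$. Once this is established, nonvanishing of the sum forces $\RZ^\circ|_{\tau_n'\times\tau_{n-1}'}\neq0$, because the images of $\rho'$ under $\phi$ are exactly $\tau'$.

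For $\K=\R$ the claim is trivial, since $\rho_n'=\rho_n$ and $\tau_n'=\tau_n$, so the argument is immediate. The main work is the case $\K=\C$, where $K_n=\RU(n)$, $\p_n=\g\l_n(\C)$, and $\rho_n'$ is a proper $\RO(n)$-submodule. Here I would argue as follows. The element $\wedge^{\mathrm{top}}\p_{n-1,\R}$ of $\wedge\p_{n-1}$ is invariant under the real group $\RO(n-1)\subset\RU(n-1)$, and its contributions come from wedge products of $\theta$-stable pieces. I would pick compatible $\theta$-stable Borel subalgebras $\b_{n-1}\subset\b_n$ (possible by Lemma \ref{unib}), in which $\n'_{n-1}$ sits inside $\n'_n$ suitably. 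I would then show that the top-degree element decomposes as $\wedge^{b}\n'_n\otimes\wedge^{b}\bar{\n}'_{n-1}$ plus terms whose projection to $\rho_n\otimes\rho_{n-1}$ is obtained from these by $\RO(n-1)$-translates and $\RO$-equivariant operations. Since $\RO(n-1)\subset\RO(n)$, the projection lands in $\rho_n'\otimes\rho_{n-1}'$.

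If this weight bookkeeping becomes unwieldy, the fallback is more direct: restrict $\RZ^\circ$ to $\tau_n\times\tau_{n-1}$, which is nonzero by the cited non-vanishing result. By $\RO(n-1)$-invariance and multiplicity-one branching $\RO(n)\downarrow\RO(n-1)$, determine which $\RO(n)$-isotypic components of $\tau_n|_{\RO(n)}$ can pair with which components of $\tau_{n-1}$. Then show that the highest-weight $\RO$-components, namely $\tau'$, must pair nontrivially, using that $\wedge^{\mathrm{top}}\p_{n-1}$ has a nonzero component along the extreme $\RO$-types. This verification for $\K=\C$ is the main obstacle.
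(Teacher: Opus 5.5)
\paragraph{Verdict.} Your proposal has a genuine gap in the case $\K=\C$; the case $\K=\R$ is fine.

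\paragraph{The case $\K=\R$.} Here $\tau_n'=\tau_n$, and the cohomological nonvanishing hypothesis does force $\RZ^\circ|_{\tau_n\times\tau_{n-1}}\neq 0$. The paper gets this step directly from [Sun, Proposition 4.1], so nothing is lost.

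\paragraph{The case $\K=\C$: the key claim is false for $n\geq 3$.} Let $v_0\in\rho_n\otimes\rho_{n-1}$ be the image of a generator of $\wedge^{\mathrm{top}}\p_{n-1}$.
\begin{itemize}
\item The vector $v_0$ is invariant under all of $K_{n-1}=\RU(n-1)$, not merely $\RO(n-1)$.
\item $\rho_{n-1}$ is self-dual, and its highest weight $(n-2,\dots,2-n)$ interlaces $(n-1,\dots,1-n)$. Hence $\rho_{n-1}$ occurs in $\rho_n|_{\RU(n-1)}$ with multiplicity one, and $v_0$ spans $(\rho_n\otimes\rho_{n-1})^{\RU(n-1)}$.
\item Viewed in $\Hom(\rho_{n-1}^\vee,\rho_n)$, $v_0$ is a nonzero $\RU(n-1)$-map out of an irreducible module, so it is injective. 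Therefore $v_0\notin\rho_n\otimes W$ for any proper subspace $W\subsetneq\rho_{n-1}$.
\item But $\rho_{n-1}'\subsetneq\rho_{n-1}$ once $n-1\geq 2$. For $n=3$, $\rho_2\cong\s\l_2(\C)$ under $\RU(2)$, while $\rho_2'$ is only the $2$-dimensional space of symmetric traceless matrices.
\end{itemize}
So $v_0\notin\rho_n'\otimes\rho_{n-1}'$. Your heuristic that $\RO(n-1)$-invariance together with $\RO(n-1)\subset\RO(n)$ pushes the projection into the $\RO$-modules generated by $\wedge^{b}\n'$ does not follow. The much stronger $\RU(n-1)$-invariance of $v_0$ is exactly what forces it to have full support.

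\paragraph{What is actually needed.} The required statement is non-orthogonality, not containment. The unique-up-to-scalar $\RU(n-1)$-invariant functional on $\rho_n\otimes\rho_{n-1}$ must not vanish identically on $\rho_n'\otimes\rho_{n-1}'$. Equivalently, the component of $v_0$ in $\rho_n'\otimes\rho_{n-1}'$ must be nonzero.

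\paragraph{Why the fallback does not close the gap.} Your fallback names this condition ("a nonzero component along the extreme $\RO$-types") but gives no argument for it. Multiplicity-one branching for $\RO(n)\downarrow\RO(n-1)$ only shows that the induced $\RO(n-1)$-invariant pairing on $\rho_n'\otimes\rho_{n-1}'$ is unique up to a scalar. It cannot show that this scalar is nonzero, and that is the whole content of the lemma.

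\paragraph{How the paper handles it.} For $\K=\C$, the paper first proves $\RZ^\circ|_{\tau_n\times\tau_{n-1}}\neq 0$ by the argument of [Sun, Proposition 4.1]; your appeal to the cohomological nonvanishing hypothesis is an acceptable substitute for this step. It then imports the missing finite-dimensional input from [Sun, Lemmas 2.1 and 2.2]. Without some such explicit computation, your proposal leaves the case $\K=\C$ unproved. One sufficient computation would be to exhibit $\theta$-stable Borel subalgebras $\b_n,\b_{n-1}$ for which the invariant functional is nonzero on the line $\wedge^{b_{n,\C}}\n_n\otimes\wedge^{b_{n-1,\C}}\n_{n-1}$; that line does lie in $\rho_n'\otimes\rho_{n-1}'$.
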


\begin{proof}   
If $\K=\R$, then it is proved in   \cite[Proposition 4.1]{Sun} that  
\be\label{nonzero1}
\RZ^\circ|_{\tau_n\times \tau_{n-1}}\neq 0,
\ee
and hence the lemma follows. 

Now we assume that $\K=\C$.  
The same proof as that of \cite[Proposition 4.1]{Sun} shows that \eqref{nonzero1} also holds in this case. Then  by \cite[Lemmas 2.1 and 2.2]{Sun}, \eqref{nonzero1} implies that
  \[
  \RZ^\circ|_{\tau_n'\times \tau_{n-1}'}\neq 0.
  \]
  This finishes the proof of the lemma. 
\end{proof}

Fix isomorphisms   
\[
0\neq \phi_n\in \Hom_{K_n}(\rho_n, \tau_n)\qquad\textrm{and}\qquad 0\neq \phi_{n-1}\in \Hom_{K_{n-1}}(\rho_{n-1}, \tau_{n-1}). 
\]
Combining the diagrams \eqref{comta1} and \eqref{comta}, we see that the diagram
\[
\begin{CD}
 \rho'_n\otimes \rho'_{n-1}  @>\phi_n|_{\rho'_n}\otimes\phi_{n-1}|_{\rho'_{n-1}}>>\tau'_n\otimes \tau'_{n-1} @>\oZ^\circ  >>\C \\
  @V(\delta_{n,\K}'\theta)\otimes (\delta_{n-1,\K}'\theta) VV  @V f\otimes f'\mapsto (w_n.\breve f)\otimes (w_{n-1}.\breve f') VV @VV  (\,\cdot\,) \cdot  \varepsilon\left(\frac{1}{2}, \pi_n\times \pi_{n-1}, \psi_\K\right) V\\
 \rho'_n\otimes \rho'_{n-1}  @>\phi_n|_{\rho'_n}\otimes\phi_{n-1}|_{\rho'_{n-1}}>>  \tau'_n\otimes \tau'_{n-1} @>  \oZ^\circ>>\C \\
\end{CD}
\]
commutes. 
By Lemmas \ref{xinK} and \ref{nonvh}, we have that
\[
\delta_{n,\K}'\xi_{n,\K}\cdot\delta_{n-1,\K}'\xi_{n-1,\K}=\varepsilon\left(\frac{1}{2}, \pi_n\times \pi_{n-1}, \psi_\K\right).
\]
It is straightforward to check that $\delta_{n-1,\K}'=\delta_{n-1,\K}$ implies that $\delta_{n,\K}'=\delta_{n,\K}$. This completes the proof of Theorem \ref{mvwpiK} by induction on $n$.

	\section*{Acknowledgments}
D. Liu was supported in part by National Key R \& D Program of China No. 2022YFA1005300 and National Natural Science Foundation of China No. 12526208.  B. Sun was supported in part by National Key R \& D Program of China No. 2022YFA1005300  and New Cornerstone Science Foundation. The authors thank Shih-Yu Chen for the communication about the work \cite{CCDR}.
	

\end{document}